\numberwithin{equation}{section}       
\numberwithin{equation}{section}
\theoremstyle{plain} 
\numberwithin{equation}{section}
\newtheorem{theorem}[equation]{Theorem}
\newtheorem{lemma}[equation]{Lemma}
\newtheorem{proposition}[equation]{Proposition}
\newtheorem{conjecture}[equation]{Conjecture}
\newtheorem*{thma}{Theorem A}
\newtheorem*{thmb}{Theorem B}
\newtheorem*{thmc}{Theorem C}
\newtheorem*{conjed}{Conjecture D}
\newtheorem*{conjee}{Conjecture E}
\newtheorem*{conjef}{Conjecture F}
\newtheorem*{thmg}{Theorem G}
\newtheorem{exemple}[equation]{Example}
\newcommand{\Z}{\mathbb{Z}}
\newcommand{\Q}{\mathbb{Q}}
\newcommand{\K}{\mathbf{k}}
\newcommand{\F}{\mathbb{F}}
\newcommand{\GL}{\mathbf{GL}}
\newcommand{\G}{\mathbb{G}}
\newcommand{\Vect}{\mathrm{Vect}}
\newcommand{\IC}{\mathrm{IC}}
\newcommand{\R}{\mathrm{R}}
\newcommand{\g}{\mathfrak{g}}
\newcommand{\B}{\mathcal{B}}
\newcommand{\borel}{\mathfrak{b}}
\newcommand{\Obb}{\mathbb{O}}
\newcommand{\Hche}{\check{H}}
\newcommand{\Gche}{\check{G}}
\newcommand{\m}{\mathfrak{m}}
\newcommand{\locring}{\mathcal{O}}
\newcommand{\qelbar}{\overline{\Q}_{\ell}}
\newcommand{\affinehecke}[1]{\ensuremath{\mathbb{H}_{#1}}}
\newcommand{\N}[1]{\ensuremath{\mathcal{N}_{#1}}}
\newcommand{\Ntilde}[1]{\ensuremath{\widetilde{\mathcal{N}}_{#1}}}
\newcommand{\weyl}[1]{\ensuremath{\widetilde{W}_{#1}}}
\newcommand{\pervsph}[2]{\ensuremath{P_{#1(\locring)}(Gr_{#2})}}
\newcommand{\iwahorihecke}[1]{\ensuremath{\mathcal{H}_{I_{#1}}}}
\newcommand{\flagvar}[1]{\ensuremath{\mathcal{F}l_{#1}}}
\newcommand{\heckefunc}[1]{\ensuremath{\overset{\leftarrow}{H}_{#1}}}
\newcommand{\I}[1]{\ensuremath{I_{#1}}}
\newcommand{\Gstrat}{{}_{{}_{s_{1},s_{2}}}G(F)}
\newcommand{\Flagstrat}{{}_{{}_{s_{1},s_{2}}}\mathcal{F}l_{G}}
\newcommand{\newtimes}{\tilde{\times}}
\newcommand{\bt}{\tilde{\boxtimes}}
\newcommand{\iso}{\widetilde{\longrightarrow}}
\newcommand{\isom}{{\widetilde\to}}
\newcommand{\on}{\operatorname}
\newcommand{\Gm}{\mathbb{G}_m}
\newcommand{\Fl}{{\mathcal{F}l}}
\newcommand{\Qlb}{\mathbb{\bar Q}_\ell}
\newcommand{\rh}{{\stackrel{\rightarrow}{H}}}
\newcommand{\SL}{\on{SL}}
\newcommand{\gb}{\mathfrak{b}}
\newcommand{\diag}{\on{diag}}
\newcommand{\id}{\on{id}}
\newcommand{\toup}[1]{\stackrel{#1}{\to}}
\newcommand{\cT}{\mathcal{T}}
\newcommand{\cA}{\mathcal{A}}
\newcommand{\cO}{\mathcal{O}}
\newcommand{\cH}{\mathcal{H}}
\newcommand{\cZ}{\mathcal{Z}}
\newcommand{\cB}{\mathcal{B}}
\newcommand{\cN}{\mathcal{N}}
\newcommand{\cX}{\mathcal{X}}
\newcommand{\ZZ}{\mathbb{Z}}
\newcommand{\HH}{\mathbb{H}}
\newcommand{\gJ}{\mathfrak{J}}
\address{Universit\'e Paris-Sud \endgraf
Institut Math\'ematiques d'Orsay, Bat 425, 91405 Orsay Cedex
France 
}
\email{bfhariri@gmail.com}
\begin{document}
\selectlanguage{english}
\title[ Geometric Howe correspondence and Langlands functoriality]{Geometric tamely ramified local theta correspondence in the framework of the geometric Langlands program}
\author{Banafsheh Farang-Hariri}

\begin{abstract}
This paper deals with the geometric local theta correspondence at the Iwahori level for dual reductive pairs of type II over a non Archimedean field $F$ of characteristic $p\neq 2$ in the framework of the geometric Langlands program.  First we construct and study the geometric version of the invariants of the Weil representation of the Iwahori-Hecke algebras. In the particular case of $(\GL_1, \GL_m)$ we give a complete geometric description of the corresponding category.  The second part of the paper deals with geometric local Langlands functoriality at the Iwahori level in a general setting. Given two reductive connected groups $G$, $H$ over $F$ and a morphism $\check{G}\times\SL_2\to\check{H}$ of Langlands dual groups, we construct a bimodule over the affine extended Hecke algebras of $H$ and $G$ that should realize the  geometric local Arthur-Langlands functoriality at the Iwahori level. Then, we propose a conjecture describing the geometric local theta correspondence at the Iwahori level constructed in the first part in terms of this bimodule and we prove our conjecture for pairs $(\GL_1, \GL_m)$. 

Keywords--- Local theta correspondence, geometric Langlands program, Langlands functoriality, Hecke algebras, perverse sheaves, K-theory.
\par\medskip
Mathematics Subject Classification (2010)--- Primary 22E57; 14D24; Secondary 19L47; 32S60; 14L30; 20C08
\end{abstract}

\maketitle
{\small\tableofcontents}
\section{Introduction} 
In this paper, our aim  is to study the geometric local theta correspondence  (also known as the geometric Howe correspondence) at the Iwahori level for dual reductive pairs of type II in the framework of the geometric Langlands program. We develop this work in two directions. The first path consists in geometrizing the classical Howe correspondence at the Iwahori level by means of perverse sheaves and understanding the underlying geometry.  The second path consists in  constructing a bimodule  that should realize the geometric local Arthur-Langlands funtoriality  at the Iwahori level and studying the relation between the bimodule realizing the geometric Howe correspondence and the one realizing  the geometric local Arthur-Langlands funtoriality  at the Iwahori level. Some of the constructions are done in all  generality while some others are only established for  dual reductive pairs of type II. 
\par\medskip
The basic notions of the Howe correspondence  from the classical point of view have been presented in \cite{MVW}. Let $\K=\F_{q}$ be a finite field of characteristic $p$  different from $2$ and let $F=\K((t))$ and $\mathcal{O}=\K[[t]]$. All representations are assumed smooth and will be defined over $\qelbar$, where $\ell$ is a prime number different from $p$. Let $(G,H)$ be a split dual reductive pair in some symplectic group  $Sp(W)$ over $\K$.  Denote by $(\mathcal{S},\omega)$ the Weil (metaplectic) representation of the meteplectic group associated to $Sp(W)$, see \cite{MVW} and \cite{Kudla}. We assume that the metaplectic cover admits a section over $G(F)$ and $H(F)$. Then, the Howe correspondence is a correspondence between some classes of representations of $G(F)$ and $H(F)$ by means of the restriction of the Weil representation to $G(F)\times H(F)$. This correspondence has been proved in odd characteristic for dual pairs of type I in \cite{Wal} and for dual pairs of type II by Howe and Minguez \cite{Minguez1}. 
\par\medskip
It is interesting to understand the geometry underlying the Howe correspondence and establish its analog  in the geometric Langlands program. This has been initiated by  V.Lafforgue and  Lysenko in  \cite{Lysenko-Lafforgue}, where the authors construct a geometric version of the Weil representation. The second author then studied the unramified case in \cite{Lysenko1} from  global and local point of view for dual reductive pairs $(\mathrm{Sp}_{2n},\mathrm{S}\mathbb{O}_{2m})$ and $(\GL_{m},\GL_{n})$. One of our motivations is to extend the results in \cite{Lysenko1} to the geometric setting of tamely ramified case (the Iwahori level). 
\par\medskip
It is known that the Howe correspondence realizes the Langlands functoriality in some special cases. In the classical setting the reader may refer to \cite{Howe}, \cite{Kudla},  \cite{Minguez1}, \cite{Rallis}, and in the geometric setting one can refer to \cite{Lysenko1}. Adams in \cite{Adams} suggested conjectural relations between Howe correspondence and Langlands functoriality.  Let $\check{G}$ (resp. $\check{H}$) denotes the Langlands dual group of $G$ (resp. of $H$) over $\qelbar$. Under some assumptions, it is expected that there is a morphism $\check{G}\times \mathrm{SL}_{2}\to \check{H}$ such that if  $\pi$ is a smooth irreducible representation of $G(F)$ appearing as a quotient of the Weil representation $\mathcal{S}$ and $\pi^{'}$ is the smooth irreducible representation of $H(F)$ which is the image of $\pi$ under the Howe correspondence, then the Arthur packet of $\pi^{'}$ is the image of the Arthur packet of $\pi$ under the above morphism. For more details  we refer the reader to  \cite{Arthur},  \cite{Kudla}, \cite{Moeglin2}, \cite{Rallis}.   
\par\medskip
Let us describe the basic setting of this paper. Let $I_G$ (resp. $I_{H}$) be a Iwahori subgroup of $G(F)$ (resp. $H(F)$). At the Iwahori level we are interested  in the class of tamely ramified representations. A 
irreducible smooth representation  of $(\pi, V)$ of $G(F)$ is called tamely ramified if the space of invariants under the Iwahori 
subgroup $I_{G}$ is non-zero. The category of tamely ramified representations is the full subcategory of smooth representations 
of finite length consisting of those  representations whose all irreducible subquotients are tamely ramified. Denote by 
$\iwahorihecke{G}$ the Iwahori Hecke algebra of $G$.  According to \cite[Theorem 4.10]{Borel1} there exists an equivalence of categories between the category of tamely ramified smooth representations of $G(F)$ and the category of finite-dimensional 
$\iwahorihecke{G}$-modules. By using this result, we are going to work with finite-dimensional $\iwahorihecke{G}$-modules instead of smooth tamely ramified representations of $G(F)$.  Our strategy is to study the bimodule structure of the space of invariants $\mathcal{S}^{I_{G}\times I_{H}}$ as a 
module over the tensor product $\iwahorihecke{G}\otimes \iwahorihecke{H}$ of Iwahori-Hecke algebras in a geometrical setting. In the sequel (except for \S $\ref{conjecturegtrlf}$, where we will consider any reductive connected group), we restrict ourselves to the case of dual reductive pairs of type II. More precisely, let $L_{0}=\K^{n}$ and $U_{0}=\K^{m}$ with $n\leq m$ and let $G=\GL(L_{0})$ and $H=\GL(U_{0})$. Denote by $\Pi(F)$ the space $U_{0}\otimes L_{0}(F)$ and $\mathcal{S}(\Pi(F))$ the Schwartz space of locally constant functions with compact support on $\Pi(F)$. This Schwartz space realizes the restriction of the Weil representation to $G(F)\times H(F)$, see \cite{MVW}. 
\par\medskip
Let us explain in details the constructions as well as the main results of the geometrization of the Howe correspondence.
Our first step is to define the geometric counterpoint of the space of invariants $\mathcal{S}(\Pi(F))^{I_{G}\times I_{H}}$  and the Hecke 
actions of  $\iwahorihecke{G}$ and $\iwahorihecke{H}$ on this space.  This is done in \S  $\ref{section3}$, where we define the category of 
$I_{G}\times I_{H}$-equivariant perverse sheaves $P_{I_{G}\times I_{H}}(\Pi(F))$ on the  pro-ind scheme  $\Pi(F)$ as well as the 
derived category $D_{I_{G}\times I_{H}}(\Pi(F)).$ The  construction of these categories uses some limit procedure (this issue has been taken care of in \cite[Appendix B]{Lysenko1}). Moreover, we 
define two  Hecke functors geometrizing the bimodule structure of $\mathcal{S}^{I_{G}\times I_{H}}$ in \S $\ref{section4}$; which define the 
action of the category $D_{I_{G}}(\flagvar{G})$ of $I_{G}$-equivariant  $\ell$-adic sheaves on the affine flag variety $\flagvar{G}$ (the same for  $H$)  on $D_{I_{G}\times I_{H}}(\Pi(F))$. These are the generalization to the Iwahori case of the Schwartz space and Hecke functors defined in \cite{Lysenko1} at the unramified level. 
\par\medskip
Next we study the action of the Hecke functors on $D_{H(\locring)\times I_{G}}(\Pi(F))$ of $H(\locring)\times I_{G}$-equivariant perverse sheaves on $\Pi(F)$. This category is 
acted on by  the category $P_{H(\locring)}(Gr_{H})$ of $H(\locring)$-equivariant perverse sheaves on the affine Grassmannian 
$Gr_{H}$ and the category $P_{I_{G}}(\flagvar{G})$.  In \S $\ref{5}$, combining our computations and a result of \cite{Lysenko1} in the unramified case, we prove the following result:
\begin{thma}[Theorem $\ref{H(O)}$]
The Hecke functor 
$$D_{I_{G}}(Gr_G)\to D_{H(\locring)\times I_{G}}(\Pi(F))$$
yields an isomorphism at the level Grothendieck groups between $K(P_{I_{G}}(Gr_G))$ and  $K(P_{H(\locring)\times I_{G}}(\Pi(F)))$ commuting with the above actions of $K(P_{H(\locring)}(Gr_H))$ and $K(P_{I_{G}}(\flagvar{G}))$.
\end{thma}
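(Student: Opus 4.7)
The plan is to bootstrap from the analogous unramified statement of \cite{Lysenko1}, which provides a $K(P_{H(\locring)}(Gr_H))$- and $K(P_{G(\locring)}(Gr_G))$-equivariant isomorphism
$$\Phi_{\mathrm{sph}}\colon K(P_{G(\locring)}(Gr_G))\isom K(P_{H(\locring)\times G(\locring)}(\Pi(F))).$$
Writing $\Phi_{\mathrm{Iw}}$ for the Iwahori-level Hecke functor in the statement, I would first observe that $\Phi_{\mathrm{Iw}}$ and $\Phi_{\mathrm{sph}}$ are built by the same convolution template and therefore intertwine the natural change-of-equivariance (forgetful) functors $\mathrm{Res}\colon D_{G(\locring)}(\bullet)\to D_{I_G}(\bullet)$ on both source and target. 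At the Grothendieck group level, this yields a commutative square with $\Phi_{\mathrm{sph}}$ on top and the map to be established on the bottom.

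The core of the argument is to present both Iwahori-equivariant Grothendieck groups as cyclic modules over the Iwahori Hecke algebra $\iwahorihecke{G}=K(P_{I_G}(\flagvar{G}))$, generated by a distinguished class coming from the spherical theory. On the source, the $I_G$-orbits on $Gr_G$ are indexed by $\widetilde{W}_G/W_0$, and convolution with IC sheaves of affine Schubert cells in $\flagvar{G}$ sweeps out all orbit classes from the basepoint, so $K(P_{I_G}(Gr_G))$ is cyclic over $\iwahorihecke{G}$ generated by the basepoint IC class. On the target, I would verify that $\Phi_{\mathrm{Iw}}$ sends the basepoint IC sheaf to the IC sheaf of the closed $H(\locring)\times I_G$-orbit $\{0\}\subset\Pi(F)$, and then use the compatibility of $\Phi_{\mathrm{Iw}}$ with the $\iwahorihecke{G}$-action—built into its construction as a convolution—to argue that $K(P_{H(\locring)\times I_G}(\Pi(F)))$ is likewise generated over $\iwahorihecke{G}$ by this origin class. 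Granting cyclicity and matching of generators, $\Phi_{\mathrm{Iw}}$ is an $\iwahorihecke{G}$-module homomorphism between cyclic modules sending generator to generator; matching annihilators, guaranteed by applying $\Phi_{\mathrm{sph}}$ to $G(\locring)$-invariants, then forces it to be a bijection.

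Compatibility with the $K(P_{H(\locring)}(Gr_H))$-action is inherited from the spherical case in \cite{Lysenko1}: this action comes entirely from the $H$-factor, which remains spherically equivariant on both sides, and the commutative square with $\mathrm{Res}$ transports the spherical equivariance of $\Phi_{\mathrm{sph}}$ to the Iwahori level. Compatibility with the $K(P_{I_G}(\flagvar{G}))$-action is tautological from the construction of $\Phi_{\mathrm{Iw}}$ as a convolution, which is manifestly associative in the convolution variable.

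The main anticipated obstacle is the cyclic-generation step on the target. Unlike $Gr_G$, the pro-ind scheme $\Pi(F)=\on{Mat}_{m\times n}(F)$ carries a nontrivial rank stratification: the $H(\locring)\times I_G$-orbits include not only full-rank strata (naturally indexed by coweights of $G$) but also rank-degenerate strata corresponding to matrices of rank $r<n$, and these have no direct counterpart on the $Gr_G$ side. Reaching these degenerate orbits from the origin by $\iwahorihecke{G}$-convolution requires an explicit dimension and closure analysis, likely patterned after a type II Smith normal form refined by an ordering on a basis of $L_0$ and combined with the decomposition theorem.
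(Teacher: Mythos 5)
Your route is genuinely different from the paper's, but as written it has two real gaps.

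\emph{What the paper does.} The paper's argument is a direct comparison of simple objects on both sides. First, Lemma~\ref{Horbit} and Proposition~\ref{irreducible} show that, because $n\le m$, the $H(\locring)\times I_G$-orbits on $\Pi(F)$ are in bijection with $X_G$ and the simple objects of $P_{H(\locring)\times I_G}(\Pi(F))$ are exactly the $\IC(\Pi_\lambda)$, $\lambda\in X_G$. The heart of the section is Proposition~\ref{H(O)-equiv}: $\heckefunc{G}(\cA^\lambda,I_0)\iso\IC(\Pi_\lambda)$, proved by a semismallness-type dimension estimate (inequality~\eqref{desiredineq}, reduced via Lemma~\ref{ineq2} to $\langle\delta, n\check\omega_n-2\check\rho_G\rangle\ge 0$ for very positive $\delta$). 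Since the $\cA^\lambda$ and the $\IC(\Pi_\lambda)$ are respectively $\ZZ$-bases of the two Grothendieck groups, the isomorphism is immediate; compatibility with the $K(P_{H(\locring)}(Gr_H))$ and $K(P_{I_G}(\flagvar{G}))$-actions then comes from Propositions~\ref{lysenko} and~\ref{Pp_action_H(O)_I_G}.

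\emph{Where your argument goes wrong or stops short.} First, the ``rank-degenerate orbits'' you worry about do not occur on this side of the theory: $H(\locring)$-equivariance together with $n\le m$ means (Lemma~\ref{orbit}) that two elements of $\Pi_{N,r}$ are $H(\locring)$-conjugate if and only if they have the same image lattice $U_{v,r}$, so the $H(\locring)\times I_G$-orbits are indexed exactly by $X_G$. Lower-rank phenomena only appear at the $I_H\times I_G$ level (\S~\ref{6}), which is not what Theorem~A is about. Second, your cyclicity step for the target is flagged by you as the obstacle, and rightly so: establishing that the origin class $\iwahorihecke{G}$-generates $K(P_{H(\locring)\times I_G}(\Pi(F)))$ requires essentially the same dimension-and-support analysis as Proposition~\ref{H(O)-equiv}; you have not avoided the hard estimate, only postponed it. Third, and more fundamentally, the annihilator-matching step has a genuine hole. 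Knowing $\mathrm{Ann}_{\iwahorihecke{G}}(\delta_0)\subset\mathrm{Ann}_{\iwahorihecke{G}}(I_0)$ is automatic, but the reverse inclusion is exactly the injectivity you want. Passing to $G(\locring)$-invariants only controls the intersection with the spherical subalgebra, which is a proper subalgebra of $\iwahorihecke{G}$; the spherical isomorphism $\Phi_{\mathrm{sph}}$ does not constrain $\mathrm{Ann}(\delta_0)$ or $\mathrm{Ann}(I_0)$ outside that subalgebra, and both annihilators could a priori differ there. So this step does not follow, and one is pushed back to a basis-by-basis comparison --- which is what the paper actually carries out.
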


At the unramified level, this isomorphism is actually verified at level of categories themselves (see \cite{Lysenko1}) and one would hope that the same would be true at the Iwahori level. 
\par\medskip
  
In \S $\ref{6}$, we present one of our key results on the simple objects of the category $P_{I_{H}\times I_{G}}(\Pi(F))$. In Proposition $\ref{r}$, we describe these simple objects in the case $n=m$ and then we establish the general case: 
  
 \begin{thmb}[Theorem $\ref{irrec}$]
 Assume $n\leq m$. 
 
Any irreducible object of $P_{I_{H}\times I_{G}}(\Pi(F))$ is of the form $\IC(\Pi_{N,r}^{w})$ for some $w$ in  $X_{G}\times S_{n,m},$ where $\Pi_{N,r}^{w}$ is the $I_{H}\times I_{G}$-orbit indexed by $w$ on $\Pi(F)$.
 \end{thmb}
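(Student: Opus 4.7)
The plan is to reduce the classification to the square case $n=m$ already handled in Proposition~$\ref{r}$, and then invoke the standard machinery for simple equivariant perverse sheaves on a variety stratified by orbits of a group action.

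Using the approximation procedure of Section~$\ref{section3}$ (based on Appendix~B of \cite{Lysenko1}), write $\Pi(F)=\varinjlim_{N}\Pi_{N}$ as an increasing union of $I_{H}\times I_{G}$-stable finite-dimensional closed subschemes. Any perverse sheaf on $\Pi(F)$ is supported on some $\Pi_{N}$, so it suffices to classify the simple objects of the usual equivariant category $P_{I_{H}\times I_{G}}(\Pi_{N})$ for every $N$. I would next extend the orbit decomposition of Proposition~$\ref{r}$ from the square to the rectangular case: an $m\times n$ matrix can be completed to an $m\times m$ one by adjoining $m-n$ extra columns in a fixed complement, and by tracking the residual action of the corresponding block of $I_{G}$ one transfers the $I_{H}\times I_{G}$-orbit structure on $\Pi_{N}$ to the $n=m$ setting. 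This yields an $I_{H}\times I_{G}$-stable decomposition of each $\Pi_{N}$ into locally closed smooth orbits $\Pi_{N,r}^{w}$ indexed by an initial segment of $X_{G}\times S_{n,m}$, together with Bruhat-type closure relations inherited from the square case.

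Once the stratification is in place, it remains to check that no non-trivial irreducible $I_{H}\times I_{G}$-equivariant local system appears on any orbit. For a convenient normal-form representative of each stratum, the stabilizer in $I_{H}\times I_{G}$ is an extension of a subtorus of $T_{H}\times T_{G}$ by a pro-unipotent group coming from the Iwahori radicals, and is therefore connected. The general classification of simple equivariant perverse sheaves on a stratified variety then forces any simple object of $P_{I_{H}\times I_{G}}(\Pi_{N})$ to be of the form $\IC(\Pi_{N,r}^{w})$, and passing to the colimit over $N$ yields the theorem. I expect the main difficulty to lie in rigorously extending the orbit decomposition in the rectangular case and in checking that the resulting index set is uniform in $N$ and compatible with the transition maps $\Pi_{N}\hookrightarrow\Pi_{N+1}$; by comparison, the connectedness of stabilizers is essentially formal once explicit normal forms are fixed.
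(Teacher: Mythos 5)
Your overall plan—reduce to finite pieces, establish the orbit stratification, and then show that each orbit carries no non-trivial equivariant local system by proving stabilizers are connected—matches the skeleton of the paper's argument. But the reduction step you propose to go from the rectangular to the square case does not hold up, and this is where the paper diverges.

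You suggest completing an $m\times n$ matrix $v$ to an $m\times m$ one by "adjoining $m-n$ extra columns in a fixed complement" and then importing the orbit structure from Proposition~\ref{r}. The problem is equivariance: the group acting on the second factor is $I_G\subset\GL_n(F)$, not $I_{\widetilde{G}}\subset\GL_m(F)$, and a choice of complement of $L_0$ inside $\K^m$ is not preserved by $I_G$, nor is $I_G$ a suitable slice of $I_{\widetilde{G}}$. There is no natural map of stratified spaces that would carry $I_H\times I_{\widetilde{G}}$-orbits on $U_0\otimes\K^m(F)$ down to $I_H\times I_G$-orbits on $\Pi(F)$; in particular the index set changes qualitatively from $\weyl{\widetilde{G}}\cong W_{\GL_m}\ltimes X_{\GL_m}$ to $X_{G}\times S_{n,m}$, which records a subset $I_s\subset\{1,\dots,m\}$ of size $n$ together with a bijection to $\{1,\dots,n\}$, and that extra combinatorial data cannot be extracted from a fixed completion. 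The paper sidesteps this entirely: it keeps $n\leq m$ throughout, replaces the isomorphism $\overline{v}:U^*/tU^*\to t^{\lambda}L/t^{\lambda+\omega_n}L$ used when $n=m$ by a surjection, and reparametrizes the $I_H\times\mathrm{Stab}_\lambda$-orbits on the space of such surjections by $S_{n,m}$ using Lemma~\ref{lemmatec}. This is done before the theorem, so the stratification you assume "is in place" is itself the substantive content you have skipped.

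Your connectedness argument is in the right direction but vague where the paper is precise. The stabilizer $St(v)$ is not literally an extension of a subtorus of $T_H\times T_G$ by a pro-unipotent group. The paper exhibits $St(v)$ as an extension $1\to St_0(v)\to St(v)\to St(\overline{v})\to 1$, where $St_0(v)$ lies inside the pro-unipotent kernels $I_{0,H}\times I_{0,\lambda}$ and is shown, via Lemma~\ref{lemmatec}, to be an affine space (hence connected), and $St(\overline{v})$ is the stabilizer inside the finite-dimensional Borels $B_H\times B_\lambda$ of a surjection of vector spaces, which is a connected solvable group but typically not a torus. The surjectivity of $St(v)\twoheadrightarrow St(\overline{v})$ also needs Lemma~\ref{lemmatec}; it is not automatic. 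So the connectedness is "essentially formal" only after one has the right normal forms and that lemma in hand, which is exactly what your sketch leaves out.

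Finally, the preliminary reduction to $a_i<r$ (so that the orbit picture is clean) is done in the paper by a short argument passing to $\Pi_{\lambda,s}$ for $s>r$; you would also need this step but do not mention it.
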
 
 
In \S $\ref{7}$ we restrict ourselves to the case of the dual pairs $\GL_{1}$ 
 and $\GL_{m}$  for all $m\geq 1$. In this setting, in a series of propositions, we are able to give a complete geometric description of  the module 
 structure of $K(P_{I_{G}\times I_{H}}(\Pi(F)))$ under the action of the Hecke functors. More precisely we work with the category 
 $DP_{I_{G}\times I_{H}}(\Pi(F))$ which takes in consideration the action of the multiplicative group $\G_{m}$ by cohomological shift $-1$. All our computations are at the level of perverse sheaves and the symmetry 
 in this case comes from the action of the perverse sheaves in $P_{I_{H}}(\flagvar{H})$ associated with the elements of length 
 zero in the affine extended Weyl group of $H$.
 
\begin{thmc}[Theorem $\ref{theon1m}$] Let $n=1$ and $m\geq 1$.

The bimodule $K(DP_{I_{G}\times I_{H}}(\Pi(F)))$ is free of rank $m$ over the representation ring of $\check{G}\times \G_{\m}$ with basis $\{\IC^{0},\dots,\IC^{m-1}\}$ and the explicit action of $\affinehecke{H}$ is given by the following formulas: 

\[
\left\{
\begin{split}
&\mathrm{For}\,  1\leq i\leq m:\,  \heckefunc{H}(L_{s_{i}},\IC^{i})\iso \IC^{i+1}\oplus \IC^{i-1}.\\
&\mathrm{For}\, 1\leq i\leq m:\, \heckefunc{H}(L_{s_{i!}},\IC^{i})\iso \IC^{i+1,!}\oplus \IC^{i-1}.\\
&\mathrm{If}\, j\neq i\, \mathrm{mod}\, m : \heckefunc{H}(L_{s_{i}},\IC^{j})\iso \IC^{j}(\qelbar[1](1/2)+\qelbar[-1](-1/2)).\\
&\mathrm{If}\,  j\neq i\, \mathrm{mod}\, m : \heckefunc{H}(L_{s_{i!}},\IC^{j})\iso \IC^{j}[-1](-1/2).\\
& \mathrm{For\,\,  any}\,  i\,\, \mathrm{and}\,\,  k\,\,  in\,\,  \Z:\, \,   \heckefunc{H}(L_{w_{i}},\IC^{k})\iso  \IC^{k+i}
\end{split}
\right.
\]
\end{thmc}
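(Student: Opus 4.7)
\emph{Proof proposal.}
The plan is to give an explicit description of the $I_H\times I_G$-orbit stratification of $\Pi(F)=F^m$, identify the basis elements $\IC^0,\dots,\IC^{m-1}$ among the simple objects provided by Theorem~B, and then reduce the computation of the $\affinehecke{H}$-action to intersection cohomology on convolution diagrams over $\flagvar{H}$.

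Since $G=\GL_1$, the Iwahori $I_G$ coincides with $\cO^\times$, while $I_H$ is the preimage in $\GL_m(\cO)$ of the upper triangular Borel modulo~$t$. A direct normal-form argument shows that the nonzero $I_H\times I_G$-orbits on $F^m$ are parametrised by pairs $(N,r)\in\Z\times\{0,\dots,m-1\}$, with canonical representatives $t^N e_{r+1}$ in the standard basis $e_1,\dots,e_m$; by Theorem~B this enumerates the simple objects of $P_{I_H\times I_G}(\Pi(F))$. One takes $\IC^r$ to be the IC-sheaf of the orbit with $N=0$: the translation $N\mapsto N+1$ is implemented by the Hecke functor attached to the standard character of $\check G=\GL_1$, and the $\G_m$-factor records the cohomological shift. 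This immediately yields freeness of rank~$m$ over the representation ring of $\check G\times\G_m$.

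For the $\affinehecke{H}$-action I would use the convolution formula $\heckefunc{H}(L,\cF)=p_*(L\bt\cF)$, where $p:\flagvar{H}\widetilde\times\Pi(F)\to\Pi(F)$ is the action map, and analyse it stratum by stratum. When the affine simple reflection $s_i$ ``crosses a wall'' of the orbit labelled by $k$, which a direct check shows to occur exactly for $i\equiv k\pmod m$, the relevant restriction of $p$ is small, with generic $\mathbb{P}^1$-fibres that collapse to points above two boundary strata; the decomposition theorem then produces the splitting $\IC^{k+1}\oplus\IC^{k-1}$. In the opposite case $i\not\equiv k\pmod m$, the convolution is a $\mathbb{P}^1$-bundle over the support of $\IC^k$, giving the tensor product with $H^*(\mathbb{P}^1)$ that appears in the statement. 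The length-zero elements $w_i$, generating $\weyl{H}/W_{\mathrm{aff}}\cong\Z$, act on $\flagvar{H}$ by the cyclic automorphism of the Iwahori chamber and hence on the labels by $(N,r)\mapsto(N,r+i\bmod m)$, which yields the last formula directly.

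The principal obstacle will be the simple-reflection cases: one must verify that the convolution morphism is indeed semi-small, compute the fibres above the boundary strata, and then distinguish the IC- from the $!$-extensions to account for the $L_{s_{i!}}$-variants, which come from the standard (rather than intermediate) extensions on $\overline{I_H s_i I_H}/I_H\cong\mathbb{P}^1$. The IC/$!$ dichotomy will then follow from the standard distinguished triangle linking the two kinds of extensions along the boundary point of this Schubert curve, combined with the explicit fibre computation carried out in the preceding step.
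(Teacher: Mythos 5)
The overall architecture of your proposal matches the paper: parametrise the $I_H\times I_G$-orbits and identify $\IC^0,\dots,\IC^{m-1}$ as a basis, use the $\check G\times\G_m$-action (Hecke functor of the standard character, and cohomological shift) to get freeness of rank~$m$, then compute the $\affinehecke H$-action via the convolution diagram and the decomposition theorem, and finally exploit the free transitive action of the length-zero group $\Omega_H$ on the $\IC^k$'s. The paper organises this slightly more efficiently: it computes only $\heckefunc{H}(\cT,I_0)$ using the identity $\heckefunc{H}(\cT,I_0)\cong\heckefunc{H}(p_{H!}\cT,I_0)$ with $p_H:\flagvar H\to Gr_H$ -- so for a finite reflection $s_i$ ($1\le i<m$) the whole Schubert curve collapses to a point in $Gr_H$, giving $I_0\otimes \mathrm{R}\Gamma(\mathbb P^1,\qelbar)[1]$ essentially for free, and only the affine $s_m$ requires a genuine fibre computation -- and then transports everything by the $L_{w_j}$-symmetry.

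The genuine problem with your argument is in the wall-crossing case $i\equiv k\pmod m$. You state that the relevant map is \emph{small}, with \emph{generic} $\mathbb P^1$-fibres that collapse to points over two boundary strata, and that the decomposition theorem then yields $\IC^{k+1}\oplus\IC^{k-1}$. This is backwards and internally inconsistent: a map whose generic fibre is $\mathbb P^1$ is neither small nor semi-small, and its pushforward would have a two-dimensional generic stalk, which is incompatible with the conclusion $\IC^{k+1}\oplus\IC^{k-1}$ (whose two summands are supported on nested closed subsets, the open one carrying a rank-one generic stalk). The correct picture -- which is what the paper's proof of its Proposition on $\heckefunc{H}(L_{s_m},I_0)$ actually establishes -- is the opposite: the generic fibre over the open stratum $(U^{k+1}\setminus U^{k-1})\otimes L$ is a single point (the lattice $U'$ is forced), while the fibre over the lower stratum $U^{k-1}\otimes L$ is the whole $\mathbb P^1$. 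The lower stratum has codimension $2$ and the fibre has dimension $1$, so the map is \emph{semi}-small with that stratum relevant, which is exactly what produces the second summand $\IC^{k-1}$. There is also only one boundary stratum here, not two. As written, your fibre analysis would not let you invoke the decomposition theorem in the way needed; you would have to redo it with the corrected fibre dimensions (and replace ``small'' by ``semi-small'') before the argument goes through.

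A second, smaller point: the proposal is silent on how one distinguishes, in this semi-small pushforward, the $\IC^{k+1}$ summand from its $!$-variant $\IC^{k+1,!}$ when one replaces $L_{s_i}$ by the standard object $L_{s_i!}$. The paper handles this by the same explicit fibre computation (the map is an isomorphism over the open stratum, and one keeps track of whether one extends by $!$ or by intermediate extension from there), together with the open-closed distinguished triangle on $\overline{\Fl}_H^{s_i}\cong\mathbb P^1$. Your last paragraph gestures at this but does not make the computation precise; once the fibre picture above is corrected, that step should fall out along the same lines.
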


 \par\medskip
   Section $\ref{conjecturegtrlf}$ is devoted to a purely general construction on the geometric local Arthur-Lagnlands functoriality at the Iwahori level. Consider $G$ and $H$  two split reductive connected groups over $\K$ and 
   a map $\check{G}\times \mathrm{SL}_{2}\to \check{H}$ of dual Langlands groups over $\qelbar$. To this data we attach a bimodule 
   $K(\mathcal{X})$ over the affine extended Hecke algebras $\affinehecke{G}$ and $\affinehecke{H}$. We propose the following conjecture: 
   
\begin{conjed}[Conjecture $\ref{firstconjecture}$, \cite{BFH}]
The bimodule over the affine extended Hecke algebras $\affinehecke{G}$ and $\affinehecke{H}$ realizing the local geometric Langlands functoriality at the Iwahori level for the map $\sigma:\check{G}\times \G_{\m}\longrightarrow \check{H}$ identifies with $K(\mathcal{X}).$
\end{conjed}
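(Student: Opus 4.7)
The statement is labelled a conjecture in the paper, and its proof in full generality is open; the paper itself only verifies it for the pair $(\GL_1,\GL_m)$. My plan would therefore be twofold: first, give a precise characterization of what it means for a bimodule to realize local Arthur-Langlands functoriality at the Iwahori level, and second, verify that $K(\mathcal{X})$ satisfies this characterization via equivariant $K$-theory.

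For the first stage, I would adopt the following property as the defining feature of the bimodule $M_\sigma$ attached to $\sigma\colon\check{G}\times\SL_2\to\check{H}$. Using the Kazhdan-Lusztig-Ginzburg realization $\affinehecke{G}\iso K^{\check{G}\times\Gm}(Z_{\check{G}})$, every tamely ramified Arthur parameter $\psi\colon W_F\times\SL_2\to\check{G}$ (with $\psi|_{W_F}$ tame) gives rise to a standard module $M_\psi$ over $\affinehecke{G}$ indexed by the corresponding semisimple-nilpotent data. The property demanded of $M_\sigma$ is the compatibility $M_\sigma\otimes_{\affinehecke{G}} M_\psi\iso M_{\sigma\circ(\psi\times\mathrm{id})}$ as $\affinehecke{H}$-modules, together with the analogous right-module compatibility for parameters of $H$. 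This reduces the conjecture to a computation of a tensor product in equivariant $K$-theory.

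For the second stage, I would interpret $K(\mathcal{X})$ as the equivariant $K$-theory of a fiber product built from the Springer-Steinberg varieties for $\check{G}$ and $\check{H}$ glued along the morphism of nilpotent cones induced by $\sigma$ (and twisted by the $\SL_2$-factor through its associated cocharacter of $\Gm$). The desired tensor-product identity would then follow from the Chriss-Ginzburg convolution formalism together with base change and a projection formula on this fiber product, reducing the global check to a localization at fixed points of the relevant torus, where it becomes a verification of characters of the standard modules.

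The main obstacle is precisely the presence of the Arthur $\SL_2$: its unipotent part must be combined with the Jacobson-Morozov $\SL_2$ arising from the Springer fiber of the parameter $\psi|_{W_F}$, and keeping track of this combination requires a delicate Thom-isomorphism-type argument on the nilpotent cone, as well as a precise match between the geometric $\Gm$-grading and the Arthur $\SL_2$-grading. As a reality check, I would first revisit Theorem C in the case $(\GL_1,\GL_m)$ and match it against the $K$-theoretic computation on the $\check{H}$-Steinberg side; then I would verify the spherical quotient against the results of Lysenko in \cite{Lysenko1}, which should emerge by restricting both modules from the Iwahori-Hecke side to the spherical Hecke algebra.
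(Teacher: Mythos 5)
This statement is labelled a conjecture in the paper, and the paper offers no proof of it in general; you correctly identify this. Your proposal is therefore a research plan rather than a verification against the paper's argument, and the meaningful question is whether your plan could in principle fill the gap the paper leaves open.

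There is a foundational obstacle that your plan glosses over. The left-hand side of the conjecture, ``the bimodule over $\affinehecke{G}$ and $\affinehecke{H}$ realizing the local geometric Langlands functoriality at the Iwahori level,'' is not given a precise independent definition in the paper: the object is supposed to arise as $K((\mathcal{C}_{G,nilp}\times_{LS_{\check H}}\mathcal{C}_{H,nilp})^{I_G\times I_H})$ via the conjectural Frenkel--Gaitsgory categories $\mathcal{C}_G$ and the Lafforgue equivalence $\eqref{Lafforgue}$, but both of these are themselves conjectural. So Conjecture~D is really a proposal: $K(\mathcal{X})$ \emph{is} the sought-after bimodule, with Conjecture~E and Conjecture~F (and Theorem~G for $(\GL_1,\GL_m)$) serving as consistency checks against concrete models coming from the Weil representation. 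Your first stage, adopting the compatibility $M_\sigma\otimes_{\affinehecke{G}}M_\psi\iso M_{\sigma\circ(\psi\times\mathrm{id})}$ with standard modules as the defining property, is a sensible way to \emph{make precise} what the conjecture asserts, but you are then proving $K(\mathcal{X})$ satisfies a definition you have chosen, not the paper's (which it never states). Your second stage is broadly consistent with what the paper does do---Proposition~$\ref{jj}$ and the discussion in \S~$\ref{jj}$ realize $K(\mathcal{X})$ as $K^{\check H\times\Gm}(\Ntilde{\check G,\check H}\times_{\cN_{\check H}}\Ntilde{\check H})$, and the convolution-module structure is set up exactly via the Chriss--Ginzburg formalism in Appendix~$\ref{Appendix1}$---but you would still need to actually carry out the fixed-point localization and verify the matching of characters of standard modules, which is where all the work lies and where the paper restricts to the $(\GL_1,\GL_m)$ case. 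In short: your plan is a reasonable direction, but it neither proves the statement nor mirrors an existing proof, because no such proof exists; and the crucial first step of your plan (fixing the definition of the LHS) is itself part of the open problem.
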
   
   
 We also describe some additional properties of the bimodule $K(\mathcal{X})$ in \S $\ref{jj}$. 
Let us explain some motivation for this conjecture, as well as the forthcoming Conjecture F.
    In \cite{Frenkel-Gaitsgory}, the authors conjecture the existence of some category  
   $\mathcal{C}_{G}$ over the stack of $\check{G}$-local systems  over $D^{*}=\mathrm{Spec}(\K((t)))$ endowed with a "fiberwise" action of $G(F)$. Some conjectures about this category have been formulated in \cite{Frenkel-Gaitsgory}. The  
    construction of this category is more tractable at the Iwahori level. Denote by $\N{\check{G}}$ the nilpotent cone of 
   $\check{G}$ and $\Ntilde{\check{G}}$ its Springer resolution. 
The stack quotient $\N{\check{G}}/\check{G}$ classifies  $\check{G}$-local systems  with regular singularities at the origin and unipotnent monodromy. These  $\check{G}$-local systems  are called tamely ramified. Denote by $\mathcal{C}_{G,nilp}$ the category obtained from $\mathcal{C}_{G}$ via the base change $\N{\check{G}}/\check{G}\to LS_{\check{G}}(D^{*})$, where $LS_{\check{G}}(D^*)$ stands for the $\check{G}$-local 
   systems on $D^*$. The authors conjecture \cite[Formula 0.20]{Frenkel-Gaitsgory} the following isomorphism
\begin{equation}
\label{Arkhi}
K(\mathcal{C}_{G,nilp}^{I_{G}})\iso K(\Ntilde{\check{G}}/\check{G}),
\end{equation}
where the left hand side  is the Grothendieck group of the category of $I_{G}$-invariants in the category 
$\mathcal{C}_{G,nilp}$ and the right hand side is the Grothendieck group of the category of coherent sheaves on the 
stack $\Ntilde{\check{G}}/\check{G}$. Moreover, this 
isomorphism should be compatible with the action of the 
affine extended Hecke algebra.  The stack $\mathcal{X}$ appearing in Conjecture E is a refinement of the stack $\Ntilde{\check{G}}/\check{G}$ in our setting. 
\par\medskip
Let us now explain the link between $K(\mathcal{X})$ and geometric Howe correspondence at the Iwahori level. Consider a dual (split) reductive pair $(G,H)$ over $\K$ with a given map $\check{G}\times \mathrm{SL}_{2}\to \check{H}.$ In 
\cite{Lysenko-Lafforgue} the authors construct a category $\mathcal{W}$ called the Weil category  equipped with an action of $(G\times  
H)(F)$. This is a geometrization of the Weil representation.
Inspired from the series of conjectures presented in \cite{Frenkel-Gaitsgory},  Vincent Lafforgue conjectured that there should exist an equivalence of  categories 
\begin{equation}
\label{Lafforgue}
\mathcal{W}\iso \mathcal{C}_{G}\times_{LS_{\check{H}}(D^{*})}\mathcal{C}_{H}
\end{equation}
as categories equipped with an action of $(G\times H)(F).$ 
\par\medskip
Building on Conjectures $\eqref{Arkhi}$ and $\eqref{Lafforgue}$, we present a new  conjecture at the level of Grothendieck groups linking  the geometric Howe correspondence and local Arthur-Langlands functoriality at the Iwahori level. Let us first give the context of this conjecture. Denote by $(D\mathcal{W})^{I_{G}\times I_{H}}$ the invariants of the category $D\mathcal{W}$,    the latter being a graded version of $\mathcal{W}$.  Denote by $DP_{I_{G}}(\flagvar{G})$  the category whose objects are direct sums of shifted $I_{G}$-equivariant perverse sheaves on $\flagvar{G}$. This monoidal category takes in consideration the action of $\G_m$ by cohomological shift. The category $(D\mathcal{W})^{I_{G}\times I_{H}}$ is acted on by  $DP_{I_{G}}(\flagvar{G})$ and $DP_{I_{H}}(\flagvar{H})$.
The group $K(DP_{I_{G}}(\flagvar{G}))\otimes \qelbar$ is isomorphic 
to the Iwahori-Hecke algebra $\iwahorihecke{G}$. Hence, $K((D\mathcal{W})^{I_{G}\times I_{H}})$ is a bimodule under the action of  $\iwahorihecke{G}$ and  $\iwahorihecke{H}$. According to Iwahori-Matsumoto \cite{Iwahori-mat} the Iwahori-Hecke algebra $\iwahorihecke{G}$ is isomorphic  
to the  affine extended Hecke algebra $\affinehecke{G}$ after specialization. Hence, the algebra  $K((D\mathcal{W})^{I_{G}\times I_{H}})$ is a bimodule over $\affinehecke{G}$ and $\affinehecke{H}$.
Moreover, by the Kazhdan-Lusztig-Ginzburg isomorphism \cite{KH} and \cite{CG},  the affine extended Hecke algebra $\affinehecke{G}$  is isomorphic to  the $\check{G}\times \G_{m}$-equivariant K-theory  $K^{ \check{G}\times \G_{m}}
(Z_{\check{G}})$ of the Steinberg variety $Z_{\check{G}}$ of $\check{G}$.
 This isomorphism is used to define  $\affinehecke{G}$ and $\affinehecke{H}$ module structure on $K(\mathcal{X})$. 
Remark that the usual Kazhdan-Lusztig-Ginzburg isomorphism can be upgraded to the following isomorphism:
$$K(DP_{I_{G}}(\flagvar{G}))\iso K^{ \check{G}\times \G_{m}}
(Z_{\check{G}}).$$
\noindent Recently, Bezrukavnikov proved in \cite{Bezrukavnikov} an equivalence of categories  between $D_{I_{G}}(\flagvar{G})$ and  the category of coherent sheaves on the Steinberg variety of $\check{G}$  lifting this isomorphism to the categorical level.
\par\medskip
We may now present our second conjecture: 

\begin{conjee}[\cite{BFH}]
\label{pff}
The bimodule $K(\mathcal{X})$ is isomorphic to the Grothendieck group of the category $(D\mathcal{W})^{I_{G}\times I_{H}}$ under the action of the affine extended Hecke algebras  $\affinehecke{G}$  and $\affinehecke{H}$. 
\end{conjee}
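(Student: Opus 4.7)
The plan is to establish the conjecture first in the concrete case $(\GL_1,\GL_m)$, where Theorem C furnishes a fully explicit presentation of $K((D\mathcal{W})^{I_G \times I_H})$ as a free module of rank $m$ over $\Rep(\check{G} \times \Gm)$, with closed-form action formulas for the Hecke generators $L_{s_i}$, $L_{s_{i!}}$, and $L_{w_i}$. The strategy is to compute $K(\mathcal{X})$ independently for the corresponding map $\check{G} \times \SL_{2} \to \check{H}$, which for this pair is the standard $m$-dimensional representation of $\check{H} = \GL_m(\qelbar)$ with $\check{G} = \Gm$ acting by the grading induced from $\SL_2$, and then to match the two presentations.

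The first substantive step is a direct description of $K(\mathcal{X})$. The stack $\mathcal{X}$ from \S\ref{conjecturegtrlf} is built from $\Ntilde{\check{H}}/\check{H}$ together with the $\check{G} \times \SL_{2}$ datum; for the standard representation the relevant nilpotent in $\check{H}$ is regular, so the fibres are essentially points (or line stacks) and the equivariant K-theory should be free of rank $m$ over $\Rep(\check{G} \times \Gm)$, matching Theorem C numerically. A candidate isomorphism is then provided by sending a basis of $K(\mathcal{X})$ indexed by the standard flag in the tautological representation to the collection $\{\IC^{0},\ldots,\IC^{m-1}\}$.

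Checking the $\affinehecke{H}$-bimodule compatibility reduces to verifying the five identities in Theorem C one by one against the convolution action on $K^{\check{H} \times \Gm}(Z_{\check{H}}) \cong \affinehecke{H}$ given by the Kazhdan--Lusztig--Ginzburg presentation. The cohomological shifts and $\qelbar[\pm 1](\pm 1/2)$-twists on the perverse side correspond to the $\Gm$-grading on the K-theoretic side; the $L_{w_i}$ identity is combinatorial, since $w_i$ has length zero, while the $L_{s_i}$ and $L_{s_{i!}}$ identities amount to a pair of Demazure-type computations inside the Steinberg variety. The $\affinehecke{G} = \qelbar[v,v^{-1}]$ action on both sides is essentially the given grading and is compatible by construction.

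The main obstacle, and the reason the general statement remains a conjecture, is the absence of an analogue of Theorem C beyond $n=1$. For general dual pairs of type II one would first have to compute $K((D\mathcal{W})^{I_G \times I_H})$ as an $\affinehecke{G} \otimes \affinehecke{H}$-bimodule by enumerating orbits via Theorem B and applying the Hecke functors of \S\ref{section4} orbit by orbit, a substantially more combinatorial undertaking whose outcome is not currently a closed formula. A conceptually cleaner route would invoke the conjectural equivalences \eqref{Arkhi} and \eqref{Lafforgue}, which together should identify $(D\mathcal{W})^{I_G \times I_H}$ with coherent sheaves on a fibre product of Springer-resolved nilpotent cones that is by construction the stack $\mathcal{X}$; this reduction, however, is contingent on those conjectures, and even in type II a head-on verification keeping track of signs, $q^{1/2}$-twists, and Frobenius weights through the Kazhdan--Lusztig--Ginzburg isomorphism is likely to be where the bulk of the remaining difficulty lies.
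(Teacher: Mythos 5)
The statement you were asked to prove is Conjecture E, and the paper itself leaves it open: it appears as a conjecture, not a theorem, and the text is explicit that what \emph{is} proved (Theorem G in \S\ref{9}) is the type-II refinement Conjecture F for the pair $(\GL_1,\GL_m)$, i.e.\ an identification of $K(\mathcal{X})$ with $K(DP_{I_G\times I_H}(\Pi(F)))$. Conjecture E concerns $K((D\mathcal{W})^{I_G\times I_H})$ for the Lafforgue--Lysenko Weil category, which is a priori a different object; the passage from Conjecture F to Conjecture E even for $(\GL_1,\GL_m)$ rests on the (unproved) identifications \eqref{Arkhi} and \eqref{Lafforgue}. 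Your proposal silently conflates the two, using Theorem C (a statement about $DP_{I_G\times I_H}(\Pi(F))$) as if it already described $(D\mathcal{W})^{I_G\times I_H}$. That gap is exactly why E remains conjectural even after Theorem G is established.

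Within the $(\GL_1,\GL_m)$ case your sketch is broadly in the same spirit as the paper's proof of Theorem G (compute $K(\mathcal{X})$ as equivariant K-theory of a Springer fibre, produce an explicit $\mathrm{R}(\check{G}\times\G_m)$-basis, and match the Hecke-generator action against the formulas \eqref{coeurcoeur}), but it contains a fatal factual error: for this pair the nilpotent $x=d\xi(e)$ is \emph{subregular}, not regular. The map $\xi:\SL_2\to\check{H}=\GL_m$ factors through the Levi $\GL_1\times\GL_{m-1}$ and is principal only in the $\GL_{m-1}$ factor, so $x$ kills $u_1,u_2$ and is a single Jordan block on $u_2,\dots,u_m$. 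The associated Springer fibre $\cB_{\check{H},x}$ is a chain of $m-1$ projective lines (Lemma \ref{springerfibre}), giving a free $\mathrm{R}(\check{G}\times\G_m)$-module of rank $m$. Had $x$ been regular as you assert, the Springer fibre would be a point and $K(\mathcal{X})$ would have rank $1$, contradicting the rank-$m$ output of Theorem C; your claimed "fibres are essentially points" cannot be reconciled with the basis $\{\IC^0,\dots,\IC^{m-1}\}$ you propose to match. You would also need to choose the non-obvious basis $\{\cO, L_{-\omega_1},\dots, L_{-\omega_{m-1}}\}$ (rather than Lusztig's canonical one) and push the affine simple reflection $T_{s_m}$ through the polynomial representation, as the paper does in Propositions \ref{Tsm} and \ref{basis}; that computation is where the actual content lies and your proposal does not engage with it.
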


\par\medskip 
If  $G=\GL_{n}$ and 
$H=\GL_m$,  we can give a more concrete conjecture building on our construction of the geometric version of the Howe correspondence $K(DP_{I_{G}\times I_{H}}(\Pi(F)))$:
\begin{conjef}[Conjecture $\ref{secondeconjecture}$, \cite{BFH}]  Let $G=\GL_{n}$ and 
$H=\GL_m$.

The bimodule $K(\mathcal{X})$ is isomorphic to the Grothendieck group of the category $K(DP_{I_{G}\times I_{H}}(\Pi(F)))$ under the action of the affine extended Hecke algebras  $\affinehecke{G}$ and $\affinehecke{H}$
\end{conjef}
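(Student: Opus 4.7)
The plan is to reduce Conjecture F to the combination of Theorem A (the bimodule isomorphism at the $H(\locring)\times I_{G}$ level) together with a descent step from $H(\locring)$-equivariance to $I_{H}$-equivariance, matched against an explicit analysis of the stack $\mathcal{X}$. For the type II pair $(\GL_{n},\GL_{m})$ with $n\leq m$, the Langlands parameter $\sigma:\check{G}\times\SL_{2}\to\check{H}$ is the block-diagonal embedding in which $\check{G}=\GL_{n}$ occupies $n$ diagonal entries of $\check{H}=\GL_{m}$ and $\SL_{2}$ is mapped principally into a $\GL_{m-n+1}$ block. Unwinding the construction of $\mathcal{X}$ for this particular $\sigma$ should yield a Springer-type stack fibring over a Steinberg-like variety of $\check{G}$, with fibre the preimage under the Springer resolution $\Ntilde{\check{H}}\to\N{\check{H}}$ of the subregular orbit $e_{\sigma}$, quotiented by the appropriate centraliser in $\check{H}\times\G_{m}$.

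On the geometric side, Theorem A identifies $K(P_{I_{G}}(Gr_{G}))$ with $K(P_{H(\locring)\times I_{G}}(\Pi(F)))$ as bimodules over the spherical Hecke algebra of $H$ and the Iwahori-Hecke algebra of $G$. To upgrade the $H(\locring)$-equivariance to $I_{H}$-equivariance I would use the smooth projection with fibre the finite flag variety $H(\locring)/I_{H}\simeq H(\K)/B$, along which pullback plus convolution realises $K(P_{I_{H}\times I_{G}}(\Pi(F)))$ as an induction of $K(P_{H(\locring)\times I_{G}}(\Pi(F)))$ along the inclusion of the finite Hecke algebra into $\affinehecke{H}$. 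Theorem B then guarantees that the resulting basis of IC sheaves is indexed by $X_{G}\times S_{n,m}$, which should match the expected cardinality on the algebraic side.

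The algebraic side uses the Kazhdan-Lusztig-Ginzburg isomorphism to present both $\affinehecke{G}$ and $\affinehecke{H}$ as equivariant K-theories of Steinberg varieties. Combining the explicit form of $\sigma$ with Thomason localisation and the self-intersection formula for the Springer fibre over $e_{\sigma}$, I would compute a basis of $K(\mathcal{X})$ again labelled by $X_{G}\times S_{n,m}$, and compare it directly with the IC basis supplied by Theorem B. Since the Hecke actions on each side are controlled by explicit generators (simple reflections, translations and length-zero elements of the extended affine Weyl groups of $G$ and $H$), the matching reduces to a finite list of structural formulas per generator.

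The main obstacle will be verifying the compatibility of the Hecke actions at the refined $I_{H}$-equivariant level: Theorem A controls only the $H(\locring)$-equivariant level, and Theorem C handles only $n=1$. My preferred route would be a double induction, on $n$ and on $m-n$, with the base case $n=1$ supplied by Theorem C and the inductive step coming from a parabolic descent relating $(\GL_{n},\GL_{m})$ to $(\GL_{n-1},\GL_{m-1})$ via a Levi factor, together with the Arkhipov-Bezrukavnikov equivalence for $\check{G}$ to translate the remaining verification into coherent K-theory, where the bimodule structure of $K(\mathcal{X})$ is transparent. Showing that this parabolic induction is compatible on the nose with both the geometric convolution on $\Pi(F)$ and the K-theoretic bimodule on $\mathcal{X}$ is, in my view, the genuinely hard part of the argument.
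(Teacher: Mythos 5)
First, a point of framing: the statement you are proving is Conjecture~\ref{secondeconjecture}. The paper does not prove it for general $(\GL_n,\GL_m)$; it proves only the case $n=1$ (Theorem~\ref{theoremprincipal}), by an explicit computation. Your text is therefore a research program for an open statement, and the only part that can be checked against the paper is the base case. For that case the paper's route is: identify $K(\mathcal{X})\iso K^{\check{G}\times\G_{m}}(\B_{\check{H},x})$ where $\B_{\check{H},x}$ is an explicit chain of $m-1$ projective lines, exhibit the non-canonical line-bundle basis $\{\locring,s^{m-1}L_{-\omega_{1}},\dots\}$, compute the $\affinehecke{H}$-action on it via the polynomial representation (formulas \eqref{poly}, \eqref{Ts}, Proposition~\ref{Tsm}), and match generator by generator against Theorem~\ref{theon1m}. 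Your localisation-plus-self-intersection computation could plausibly reproduce this, but it is only the base case.

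There are two genuine gaps in the rest. (1) Your passage from the $H(\locring)\times I_{G}$-level to the $I_{H}\times I_{G}$-level is wrong as stated: $K(P_{I_{H}\times I_{G}}(\Pi(F)))$ is \emph{not} the induction of $K(P_{H(\locring)\times I_{G}}(\Pi(F)))$ along $\affinehecke{W_{H}}\hookrightarrow\affinehecke{H}$. Already for $n=1$ the paper's map $\bar\gamma_{1}\colon\affinehecke{H}\otimes_{\affinehecke{W_{H}}}\epsilon\to K(DP_{I_{H}\times I_{G}}(\Pi(F)))$ from the induced (polynomial) module is only surjective, and is an isomorphism only when $m=2$; the target is free of rank $m$ over $\mathrm{R}(\check{G}\times\G_{m})$, so the kernel is nontrivial for $m>2$. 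Identifying that kernel — i.e.\ showing the induced module factors through $K(\mathcal{X})$ isomorphically — \emph{is} the theorem, and your induction claim assumes it away. (2) The inductive step of your double induction, the parabolic descent from $(\GL_{n},\GL_{m})$ to $(\GL_{n-1},\GL_{m-1})$, is not an argument: no descent functors are constructed on either the perverse-sheaf side or the $K$-theory side, and you yourself flag its compatibility as the hard part; for $n>1$ this step is the entire content of the conjecture. Note also that for $n>1$ the stack $\mathcal{X}$ is not controlled by a single Springer fibre over a subregular element: by Proposition~\ref{jj} and the filtration of \S\ \ref{conjecturegtrlf}, $\Ntilde{\check{G}}\times_{\N{\check{H}}}\Ntilde{\check{H}}$ fibres over $\N{\check{G}}$ with fibre over $z$ involving $\B_{\check{G},z}\times\B_{\check{H},z+x}$, so all nilpotent orbits of $\check{\mathfrak{g}}$ intervene (and $\xi$ is principal in $\GL_{m-n}$, not $\GL_{m-n+1}$).
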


The last section \S $\ref{9}$ is devoted to the proof of the Conjecture F in a particular case. The result we obtain is  
 
\begin{thmg}[Theorem $\ref{theoremprincipal}$, \cite{BFH}]
For any $m\geq 1$, Conjecture F is true for the pair $(\GL_{1},\GL_{m})$.
\end{thmg}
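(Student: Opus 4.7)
The strategy is to leverage Theorem C as an explicit description of the right-hand side $K(DP_{I_G\times I_H}(\Pi(F)))$ and to compute $K(\mathcal{X})$ directly for the pair $(\GL_1,\GL_m)$, matching the two bimodule structures. First I would identify the morphism $\sigma:\check{G}\times\SL_2\to\check{H}$ relevant for the theta correspondence: for $(\GL_1,\GL_m)$ the natural choice is to let $\SL_2$ act on $\qelbar^m$ by its $m$-dimensional irreducible representation and to let $\check{G}=\check{\GL}_1=\G_m$ scale it, reflecting the fact that the theta lift of a character of $\GL_1(F)$ is the corresponding Speh-type Arthur packet on $\GL_m(F)$. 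All subsequent computations will be carried out with this $\sigma$.

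Next I would describe the stack $\mathcal{X}$ explicitly. From the construction in \S $\ref{conjecturegtrlf}$, $\mathcal{X}$ is built from the Steinberg/Springer data for $\check{H}=\GL_m$ pulled back along $\sigma$, and the $\check{G}\times\G_m$-equivariant K-theory reduces, via the fixed-point or Atiyah--Bott localization with respect to the $\SL_2$-image, to combinatorial data indexed by the $m$ standard coordinate flags preserved by a regular nilpotent of $\GL_m$. This should yield a free $\Rep(\check{G}\times\G_m)$-module of rank exactly $m$, with a distinguished basis $\{e_0,\dots,e_{m-1}\}$ canonically attached to these fixed loci. The first sanity check is then to match this basis with $\{\IC^0,\dots,\IC^{m-1}\}$ of Theorem C as $\Rep(\check{G}\times\G_m)$-modules.

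The core of the proof is to compare the two Hecke actions. On the K-theoretic side, the $\affinehecke{H}$-action comes from convolution with Steinberg classes via the Kazhdan--Lusztig--Ginzburg isomorphism and from its upgraded version $K(DP_{I_H}(\flagvar{H}))\iso K^{\check{H}\times\G_m}(Z_{\check{H}})$; the generators $L_{s_i}$, their $!$-versions $L_{s_{i!}}$ and the length-zero elements $L_{w_i}$ all have explicit K-theoretic incarnations that I would apply to each basis vector $e_i$. The $\affinehecke{G}$-action for $G=\GL_1$ is over a commutative ring and reduces to twisting by characters and cohomological shifts, so it is essentially formal. The task is then to verify, line by line, that the resulting formulas on $K(\mathcal{X})$ reproduce the five identities in Theorem C, and that the left and right actions commute in the same way.

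The hardest step will be to recover the asymmetric entries of Theorem C involving the $!$-basis, in particular the appearance of $\IC^{i+1,!}$ together with $\IC^{i-1}$ in $\heckefunc{H}(L_{s_{i!}},\IC^i)$ and the extra shift $[-1](-1/2)$ in the off-diagonal cases. K-theoretically these reflect the subtle distinction between standard and costandard classes under convolution, and matching them requires carefully tracking the twist coming from the $\SL_2$-factor in $\sigma$ and from the normalization of the Kazhdan--Lusztig--Ginzburg isomorphism. A secondary but genuine difficulty is checking that the candidate isomorphism intertwines the two actions \emph{simultaneously} rather than separately; this compatibility will be established by reducing to the action of the generators on a single basis vector and comparing with the explicit formulas of Theorem C, once the rank and basis match has been fixed.
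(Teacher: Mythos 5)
Your high-level plan — compute both sides explicitly and match bases and Hecke actions — is the right one, and it is broadly the approach of the paper. But the very first concrete choice you make is incorrect and would make the entire computation collapse.

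You take $\sigma:\check{G}\times\SL_2\to\check{H}=\GL_m$ to be the $m$-dimensional irreducible representation of $\SL_2$ with $\check{G}=\G_m$ acting by scalars. With this choice the nilpotent $x=d\xi(e)$ is a \emph{regular} nilpotent in $\mathfrak{gl}_m$, whose Springer fiber is a single point. Then $K(\mathcal{X})\iso K^{\check{G}\times\G_m}(\cB_{\check{H},x})$ would be free of rank $1$ over $\mathrm{R}(\check{G}\times\G_m)$, which cannot match the rank-$m$ module $K(DP_{I_G\times I_H}(\Pi(F)))$ from Theorem C. The correct map, dictated by the unramified theta lift as in \S\ref{5} (see \eqref{kappa}), factors through the Levi $\GL_1\times\GL_{m-1}\hookrightarrow\GL_m$: the $\check{G}$-factor is the coweight $(1,0,\dots,0)$ (not the scalar torus), and $\xi:\SL_2\to\GL_{m-1}$ is principal into the \emph{smaller} block. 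Consequently $x$ is the \emph{subregular} nilpotent of Jordan type $(m-1,1)$, whose Springer fiber is a chain of $m-1$ projective lines with $m$ torus-fixed points (Lemma \ref{springerfibre}), and $K(\mathcal{X})$ has the right rank. Your phrase ``the $m$ standard coordinate flags preserved by a regular nilpotent'' also signals this confusion, since a regular nilpotent preserves a unique complete flag.

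Even granting the corrected $\sigma$, you would still need the paper's two main technical devices, which are not present in your sketch: first, the reduction to the polynomial representation $\affinehecke{H}\otimes_{\affinehecke{W_H}}\epsilon$ of the affine Hecke algebra, through which both surjections $\gamma_1,\gamma_2$ factor, so that it suffices to produce a single $\affinehecke{H}$-module map $\gJ$; and second, the construction of the explicit basis $\{\cO,\,s^{m-1}L_{-\omega_1},\dots,s^{m-1}L_{-\omega_{m-1}}\}$ of line bundles on $\cB_{\check{H},x}$ — which, as the paper remarks, is \emph{not} Lusztig's canonical basis — together with the elementary exact sequences that let one compute $T_{w_i}(\cO)$ and $T_{s_m}(\cO)$ and compare them with \eqref{coeurcoeur}. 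Atiyah--Bott localization, which you propose, is an alternative way of organizing the same K-group, but by itself it will not produce the basis that matches $\{\IC^0,\dots,\IC^{m-1}\}$; the match comes from the line bundle basis and the length-zero symmetry, not from fixed-point classes.
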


This theorem expresses the Howe 
correspondence in terms of $K(\mathcal{X})$ for pairs $(\GL_1,\GL_m)$. The idea underlying this Theorem is that the explicit description of the Howe correspondence in the classical setting obtained 
by Minguez in \cite{Minguez1} should be upgraded to a finer description of the bimodule  in terms of the stack $\mathcal{X}$ 
attached to the map $\check{G}\times \mathrm{SL}_{2}\to \check{H}$. This opens an important perspective as the same 
description should also hold for other dual pairs. Especially it should be interesting to obtain a similar result for the dual 
pairs $(\mathrm{Sp_{2n}}$, $\mathrm{S}\mathbb{O}_{2m})$ and provide in this way a conceptually new approach to the computations done in
\cite{Aubert} and \cite{Aub2}. Another important perspective is a hope that the whole derived category $D_{I_{G}\times I_{H}}(\Pi(F))$ could 
possibly be described in terms of the derived category of coherent sheaves over the stack $\mathcal{X}$ in the same spirit of the recent work in \cite{Bezrukavnikov} and \cite{Arkhipov}. 
\par\bigskip
\textbf{Acknowledgments}: The author would like to thank Vincent Lafforgue who agreed that an unpublished conjecture of his be included in the manuscript. The results presented in this paper are  part of the Ph.D. thesis of the author under the supervision of Sergey Lysenko.   
\par\medskip
\section{Notation}
In this paper, $\K$ is an algebraically closed field of characteristic $p>2$ except for \S $\ref{conjecturegtrlf}$ and \S $\ref{9}$ where $\K$ is assumed to be finite. Let $F=\K((t))$ be the field of Laurent series with coefficients in $\K$ and $\mathcal{O}=\K[[t]]$ its ring of integers. Let $\ell$ be a prime number different from $p$. We will denote by $G$ a connected reductive group over $\K$ and by $G(F)$ the set of its $F$-points. Fix a maximal torus $T$ and a Borel subgroup $B$ of $G$ containing $T$.  Throughout the paper we denote by $\check{X}$ the lattice of  characters of $T$ and  by $X$ the cocharacters lattice of $T$, see for background and details \cite{Bourbaki}. We denote by $\check{R}$ the set of roots and by $R$ the set of coroots.  Denote by $(\check{X},\check{R},X,R, \Delta)$ the root datum associated with $(G,T,B)$, where $\Delta$ denotes the basis of simple roots. Denote by $X^{+}$ the set of dominant cocharacters of $G$.  Denote by $I_{G}$ the Iwahori subgroup of $G(F)$ associated with $B$. Denote by $\check{G}$ the Langlands dual group of $G$ over $\qelbar$. All representations are assumed to be smooth and are considered over $\qelbar$. We denote by  $\mathrm{Rep}(\check{G})$ (resp. $\mathrm{R}(\check{G})$) the category (resp. ring) of smooth representations of $\check{G}$ over $\qelbar$.
\par\medskip
Denote by $W_{G}$ the finite Weyl group of the root datum  $(\check{X},\check{R},X,R, \Delta)$ and by $s_{\check{\alpha}}$ the simple reflection corresponding to the root $\check{\alpha}$. We denote by $w_{0}$ the longest element of the Weyl group $W_{G}$. In all our notation, if there is no ambiguity we will omit the subscript $G$. Denote by $\weyl{G}$ the affine extended Weyl group,  which is the semi-direct product $W_{G}\ltimes X$ where $W_{G}$ acts on $X$ in a natural way. We will assume additionally that the root datum is irreducible and the unique highest root will be denoted by $\check{\alpha_{0}}$. Let $S_{aff}=\{s_{\alpha}\vert \alpha\in\Delta\}\cup\{s_{0}\}$, where $s_{0}=t^{-\alpha_{0}}s_{\check{\alpha}_{0}}$. The subgroup $W_{aff}$ of $\weyl{G}$ generated by $S_{aff}$ is the affine Weyl group associated with the root datum.  Denote by $\ell$ the length function defined on the coxeter group $W_{aff}$ which extends to a length function on $\weyl{G}$. Let $Q$ denote a subgroup of $X$ generated by coroots. One has $W_{aff}\iso W_{G}\ltimes Q$ and the subgroup $W_{aff}$ is normal in $\weyl{G}$ and admits a complementary subgroup $\Omega=\{w\in{\weyl{G}}\vert \ell(w)=0\},$
the elements of length zero. Moreover, we have $\weyl{G}\iso W_{aff}\rtimes \Omega$, which we will use as a description of $\weyl{G}$. 
\par\medskip
For any scheme or stack $S$ locally of finite type over $\K$, we denote by $D(S)$ the bounded derived category of constructible $\qelbar$-sheaves over $S$. Write $\mathbb{D}:D(S)\to D(S)$ for the Verdier duality functor. We denote by $P(S)$ the full subcategory of perverse sheaves in $D(S)$. We will also use a subcategory $DP(S)$ of $D(S)$  defined over any scheme  or stack $S.$ The objects of $DP(S)$ are the objects of  $\bigoplus_{i\in\Z} P(S)[i],$ and for $K,K^{'}\in P(S)$ and  $i, j \in \Z$ the morphisms are: 
\[ Hom_{DP(S)}(K[i],K^{'}[j]) = \left\{ \begin{array}{ll}
         Hom_{P(S)}(K,K^{'}) & \mbox{if $i=j$};\\
        0 & \mbox{if $i\neq j$}.\end{array} \right. \]
Let $X$ be a scheme of finite type over $\K$ and let $G$ be a connected algebraic group acting  on $X$. We denote by $P_{G}(X)$ the full subcategory of $P(X)$ consisting of $G$-equivariant perverse sheaves. The derived category  of $G$-equivariant $\qelbar$-sheaves on $X$ is denoted by $D_{G}(X).$ For  any smooth $d$-dimensional irreducible locally closed subscheme $Z$ of $X$, if $i:Z\to X$ is the corresponding immersion, we define the intersection cohomology sheaf (IC-sheaf for short) $\IC(Z)$ as the perverse sheaf $i_{Z!*}(\qelbar)[d].$ 
\par\medskip 
Let us recall the affine Grassmannian and affine flag variety and some of their properties, see \cite{BD} and \cite{Mirkovic}.
We denote by $Gr_{G}$ the affine Grassmannian defined as the $\K$-space quotient 
 $G(F)/G(\locring)$.  If $G$ is the linear algebraic 
group $\GL_{n}$ over $\K$, the $\K$-points of $Gr_{G}$ is 
naturally identified with the set of lattices in 
$\K((t))^{n}$, see \cite{Laszlo}. The affine Grassmannian is 
an ind-scheme of ind-finite type. 
Given $\lambda$ in $X,$ the $G(\locring)$-orbit associated with $W_{G}.{\lambda}$ is $G(\locring).t^{\lambda}$, we denote by $Gr_{G}^{\lambda}.$ We have the Cartan decomposition of $G(F):$
$$G(F)=\bigcup_{\lambda\in{X^{+}}}G(\locring)t^{\lambda}G(\locring).$$
For any $\lambda$ and $\mu$ in $X^{+}$, $Gr_{G}^{\mu}\subset \overline{Gr_{G}^{\lambda}}$, if and only if $\lambda-\mu$ is a sum of positive coroots and 
 \[\overline{Gr^{\lambda}}=\bigsqcup_{\mu\leq{\lambda}} Gr^{\mu}.\] 
For any $\lambda$ in $X^{+}$, the dimension of $Gr_{G}^{\lambda}$ is $\langle 2\check\rho,\lambda\rangle,$ where $\check\rho=\dfrac{1}{2}\sum _{\check{\alpha}\in \check{R}^{+}}\check\alpha$ is the half sum of positive roots.
\par\medskip
Denote by $\flagvar{G}$ the affine flag variety for $G$ defined as the quotient $\K$-space $G(F)/I_{G}$, which is an ind-scheme of ind-finite type as well.
The affine flag variety decomposes as a disjoint union
\[\flagvar{G}=\bigcup_{w\in{\weyl{G}}}I_{G}wI_{G}/I_{G}.\]
The closure of each Schubert cell $I_{G}wI_{G}/I_{G}$ is a union of Schubert cells and the closure relations are given by the Bruhat order:
\[\overline{I_{G}wI_{G}/I_{G}}=\bigcup_{w^{'}\leq w}I_{G}w^{'}I_{G}/I_{G}.\]
For any $w\in{\weyl{G}}$  we will denote the Schubert cell $I_{G}wI_{G}/I_{G}$ by $\flagvar{G}^{w}$. It is isomorphic to $\mathbb{A}^{\ell(w)}.$
\par\medskip
Let $R$ be a $\K$-algebra. A complete periodic flag of lattices inside $R((t))^{n}$ is a flag
$$L_{-1} \subset L_{0} \subset L_{1} \subset \dots$$
 such that each $L_{i}$ is a lattice in $R((t))^{n},$   each quotient $L_{i+1}/L_{i}$ is a locally free $R$-module of rank one and $L_{n+k}=t^{-1}L_{k}$ for any $k$ in $\Z.$
\par\medskip
 For $1\leq i\leq n$, denote by $\{e_{1}, \dots, e_{n}\}$ a basis of $L_{0}$ and set 
 $$\Lambda_{i,R}=(\oplus_{j=1}^{i} t^{-1}R[[t]]e_{j})\oplus(\oplus_{j=i+1}^{n}R[[t]]e_{j}).$$ 
For all $i$ in $\Z$, we set $\Lambda_{i+n,R}=t^{-1}\Lambda_{i,R}$. 
This defines the standard complete lattice flag 
$$\Lambda_{-1,R}\subset\Lambda_{0,R}\subset\Lambda_{1,R}\subset \dots $$
denoted by $\Lambda_{\bullet,R}$ in $R((t))^{n}.$  
 Each point of $\GL_{n}(R((t)))$ gives rise to a flag of lattices inside $R((t))^{n}$ by applying it to the standard lattice flag.
The Iwahori subgroup $I_{G}\subset \GL_{n}(\K[[t]])$ is precisely the stabilizer of the standard lattice flag $\Lambda_{\bullet,\K}.$  For any $\K$-algebra $R$, $\flagvar{\GL_{n}}(R)$ is naturally in bijection with the set of complete periodic lattice flags in $R((t))^{n}.$   
\par\medskip
Denote by $P_{G(\locring)}(Gr_{G})$ (resp. $P_{I_{G}}(Gr_{G})$)  the category of $G(\locring)$-equivariant (resp. $I_{G}$-equivariant) perverse sheaves on the affine Grassmannian $Gr_{G}$ and denote by $P_{I_{G}}(\flagvar{G})$ the category of $I_{G}$-equivariant perverse sheaves on the affine flag variety $\flagvar{G}$. 
The category $P_{G(\locring)}(Gr_{G})$ is equipped with a geometric convolution functor denoted by $\star$ which preserves perversity and makes $P_{G(\locring)}(Gr_{G})$ into a symmetric monoidal category, see \cite{Mirkovic}. 
We define the extended geometric Satake equivalence in the following way: 
$$DP_{G(\locring)}(Gr_{G})\iso \mathrm{Rep}(\check{G}\times \G_{m}),$$ 
for any  perverse sheaf  $K$ in $P_{G(\locring)}(Gr_{G})$ and any integer $i$, this functor sends $K[i]$ to $\mathrm{loc}(K)\otimes I^{\otimes -i}$, where $I$ is the standard representation of $\G_{m}$ and $\mathrm{loc}:P_{G(\locring)}(Gr_{G})\to \mathrm{Rep}(\check{G})$ is the Satake equivalence. 
\par\medskip
One may define a geometric convolution functor on $D_{I_{G}}(\flagvar{G})$ as well but this convolution functor does not preserve perversity, see \cite{Gaitsgory} and \cite{Arkhipov}.
\par\medskip
Assume temporarily that the ground field $\K$ is the finite field $\F_{q}$.
We define the Iwahori-Hecke algebra  $\iwahorihecke{G}$ to be the space $C_{c}(I_{G}\backslash G(F) / I_{G})$ of locally constant, $I_{G}$-bi-invariant compactly supported $\qelbar$-valued functions on $G(F).$ We fix a Haar measure $dx$ on $G(F)$ such that $I_{G}$ be of measure $1$  and endow  $\iwahorihecke{G}$ with the convolution functor. There are two well-known  presentations of this algebra by generators and relations. The first is due to Iwahori-Matsumoto \cite{Iwahori-mat} and the second is by Bernstein in \cite{Lu1} and \cite{Lu2}. We will use the second presentation. Moreover, we have  the isomorphism $K(DP_{I_{G}}(\flagvar{G}))\otimes \qelbar\iso \iwahorihecke{G}.$

\section{Geometric model of the Schwartz space at the Iwahori level}
\label{section3}
Let $M_{0}$ be a finite-dimensional representation of $G$ over $\K$ and let $M=M_{0}\otimes _{\K}\mathcal{O}$. The definitions of the derived category  $D(M(F))$ of $\ell$-adic sheaves on $M(F)$ and the category $P(M(F))$ of $\ell$-adic perverse sheaves on $M(F)$ are given in \cite{Lysenko1}. The category $D(M(F))$ is a geometric analogue of the Schwartz space of locally constant functions with compact support on $M(F).$  We recall their definition briefly and the use them to define the $I_G$-equivariant version of these categories.  One can find general details on ind-pro systems in \cite[Appendix B]{Lysenko1}. These are the generalizations of the construction in \cite{Lysenko1} in the tamely ramified case. 
\par\medskip
For any two integers $N,r\geq 0$ with $N+r>0,$  set $M_{N,r}=t^ {-N}M/t^{r}M.$ Given positive integers $N_{1}\geq {N_{2}}$, $r_{1}\geq r_{2},$ we have the following Cartesian diagram 
 
\begin{equation}
\label{diag1}
\xymatrix{
M_{{}_{N_{2},r_{1}}}  \ar@{^(->}[r]^{i} \ar@{->>}[d]^{p} & M_{{}_{N_{1},r_{1}}} \ar@{->>}[d]^{p} \\
M_{{}_{N_{2},r_{2}}} \ar@{^(->} [r]^{i} &M_ {N_{1},r_{2}} 
}
\end{equation}
where $i$ is the natural closed immersion and $p$ is the projection. 
Consider the following  functor 
\begin{align}
\label{p*}
D(M_{N,r_{2}})& \longrightarrow {D(M_{N,r_{1}})} \nonumber\\
              K & \longrightarrow p^*K[\dim rel(p)]
\end{align}
According to \cite[ Proposition 4.2.5]{BBD} the functor $\eqref{p*}$ is fully faithful and exact for the perverse t-structure.  The functor $i_*$ is fully faithful and exact for the perverse $t$-structure as well. This yields a commutative diagram of triangulated categories:
\begin{equation}
\label{transitionfunctors}
 \xymatrix{
D(M_{{}_{N_{2},r_{1}}})  \ar@{^(->}[r]^{i_{*}} & D(M_{{}_{N_{1},r_{1}}})  \\
D(M_{{}_{N_{2},r_{2}}})  \ar[u]^{p^{*}[\dim rel(p)]} \ar@{^(->} [r]^{i_{*}} & D(M_{{}_{N_{1},r_{2}}}) \ar[u]_{p^{*}[\dim rel(p)]}
}
\end{equation}
The derived category $D(M(F))$ is defined as the inductive $2$-limit of derived categories $D(M_{N,r})$ as $N,r$ go to infinity. Similarly, $P(M(F))$ is defined as the inductive $2$-limit of the categories $P(M_{N,r})$. 
\par\medskip
Assume $N+r>0$. The subgroup $G(\locring)$  acts on $M_{N,r}$ via its finite dimensional quotient $G(\locring/t^{N+r}\mathcal{O}).$ Denote by $I_{s}$ the kernel of the map $G(\locring)\longrightarrow G(\locring/t^ {s}\locring).$ The Iwahori subgroup $I_G$ acts on $M_{N,r}$ via its finite-dimensional quotient $I_G/I_{N+r}$. For $s>0$ denote by $K_s$ the quotient $I_G/I_{s}.$
\par\medskip
Let $r_1\geq N+r>0,$ we have the projection $K_{r_{1}}\twoheadrightarrow K_{N+r}.$ This leads to the following projection between stack quotients
 $$q: K_{r_1}\backslash M_{N,r}\twoheadrightarrow K_{N+r}\backslash M_{N,r},$$
and gives rise to an equivalence of equivariant derived categories 
$$ D_{K_{N+r}}(M_{N,r})\iso D_{K_{r_{1}}}(M_{N,r}).$$
This equivalence is also exact for perverse $t$-structure. Denote by $D_{I_G}(M_{N,r})$ the derived category of $K_{r_1}$-equivariant $\ell$-adic sheaves $D_{K_{r_1}}(M_{N,r})$ for any $r_{1}\geq N+r.$
\par\medskip
By taking the stack quotient of Diagram $\eqref{diag1}$ by $K_{N_{1}+r_{1}}$, we obtain
\begin{equation} 
\label {diag2}
\xymatrix{
D_{I_G}(M_{{}_{N_{2},r_{1}}})  \ar@{^(->} [r]^{i_{*}} & D_{I_{G}}(M_{{}_{N_{1},r_{1}}})  \\
D_{I_{G}}(M_{{}_{N_{2},r_{2}}})  \ar[u]^{p^{*}[\dim rel(p)]} \ar@{^(->} [r]^{i_{*}} & D_{I_{G}}(M_{N_{1},r_{2}}) \ar[u]_{p^{*}[\dim rel(p)]}
}
\end{equation}
 where each arrow is fully faithful and exact for the perverse $t$-structure.   Define $D_{I_G}(M(F))$ as the inductive $2$-limit of $D_{I_G}(M_{{}_{N,r}})$ 
as $N,r$ go to infinity. Similarly we define the category $P_{I_{G}}(M(F)).$ Since the Verdier duality $\mathbb{D}$ is compatible with the transition functors in both diagrams $\eqref{transitionfunctors}$ and $\eqref{diag2}$ we have the Verdier duality self-functors $\mathbb{D}$ on $D_{I_{G}}(M(F))$ and $D(M(F))$.
\par\medskip
In order to define an action of the Hecke functors on $D_{I_{G}}(M(F))$, let us first
define the equivariant derived category $D_{I_{G}}(M(F)\times \flagvar{G}).$ Let $s_1,s_2\geq 0$ and set 
\begin{equation}
\label{gstrat}
\Gstrat=\{ g\in G(F) \vert \hspace{2mm} t^{s_{1}}M\subset{gM}\subset{t^{-s_{2}}M} \}.
\end{equation}
Then $\Gstrat\subset G(F)$ is closed and stable under the left and right multiplication by $G(\locring).$ 
Further, $\Flagstrat=\Gstrat/ I_{G}$ is closed in $\flagvar{G}.$ For $s^{'}_{1}\geq s_{1}$ and $s^{'}_{2}\geq s_{2},$
 we have the closed embeddings ${}_{s_{1},s_{2}}\mathcal{F}\ell_{G}\hookrightarrow{{}_{s^{'}_{1},s_{2}^{'}}\mathcal{F}\ell_{G}}$
and the union of ${}_{s_{1},s_{2}}\mathcal{F}\ell_{G}$ is the affine flag variety $\flagvar{G}.$  The map sending $g$ to $g^{-1}$ yields an isomorphism between ${{}_{{}_{s_{1},s_{2}}}G(F)}$ and ${{}_{{}_{s_{2},s_{1}}}G(F)}.$
\par\medskip
From now on let us assume $M_0$ is a faithful representation of $G$. Then ${}_{s_{1},s_{2}}\mathcal{F}\ell_{G}\subset \mathcal{F}\ell_{G}$ is a closed subscheme of finite type.
\begin{lemma}
For any $s_1,s_2\geq 0,$ the action of $G(\locring)$ on $\Flagstrat$ factors through the quotient $G(\locring/ t^{s_{1}+s_{2}+1}\locring).$
\end{lemma}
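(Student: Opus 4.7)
The plan is to reduce the statement to showing $g^{-1}kg \in I_G$ for every $g \in \Gstrat$ and every $k$ in the kernel $I_{s_1+s_2+1}$ of $G(\locring) \to G(\locring/t^{s_1+s_2+1}\locring)$, and then verify this by a direct matrix computation using the faithful representation $M_{0}$. Since $G(\locring)$ acts on $\flagvar{G}$ by left translation, the action of $G(\locring)$ on $\Flagstrat$ factors through $G(\locring/t^{s_1+s_2+1}\locring)$ if and only if $I_{s_1+s_2+1}$ acts trivially, i.e.\ $g^{-1}kg \in I_G$ whenever $g \in \Gstrat$ and $k \equiv 1 \pmod{t^{s_1+s_2+1}}$. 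The faithfulness of $M_{0}$ realizes $G$ as a closed subgroup of $\GL(M_{0})$, so that $M = M_{0}\otimes_{\K}\locring$ is an $\locring$-lattice of rank $n = \dim_{\K}M_{0}$ in $M_{0}(F)$, and all the arithmetic can be performed inside $\End(M)$.

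The second step is to turn the defining condition $t^{s_1}M \subset gM \subset t^{-s_2}M$ into matrix bounds: the right inclusion gives $g \in t^{-s_{2}}\End(M)$, and applying $g^{-1}$ to the left inclusion yields $g^{-1} \in t^{-s_{1}}\End(M)$. Writing $k = 1 + t^{s_{1}+s_{2}+1}y$ with $y \in \End(M)$, a direct computation shows
\[
g^{-1}kg \;=\; 1 + t^{s_{1}+s_{2}+1}\, g^{-1} y g,
\]
and combining the two bounds yields $g^{-1}yg \in t^{-s_{1}-s_{2}}\End(M)$. Consequently $g^{-1}kg$ lies in $1 + t\,\End(M)$.

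The final step is to conclude using closedness. Since $g^{-1}kg \in G(F)$ and $G \hookrightarrow \GL(M_{0})$ is closed, the element $g^{-1}kg$ automatically lies in $G(\locring)$ and reduces to the identity in $G(\K)$. Any such element belongs to the pro-unipotent radical $I_{1}$ of $G(\locring)$, which is contained in the Iwahori $I_{G}$ (the preimage of a Borel $B \subset G(\K)$ containing $1$). The only point that genuinely requires care is to make sure the manipulations take place inside $G$ rather than merely in $\GL(M_{0})$; this is automatic from the closedness of the embedding $G \hookrightarrow \GL(M_{0})$ afforded by the faithful representation $M_{0}$. Beyond this, the argument is a straightforward piece of $t$-adic bookkeeping.
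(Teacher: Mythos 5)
Your proof is correct, but it proceeds along a genuinely different route from the paper's. The paper uses the faithful representation to embed $\flagvar{G}$ as a closed subscheme of $\flagvar{\GL(M_0)}$, identifies ${}_{s_1,s_2}\mathcal{F}\ell_G$ with a closed subscheme of the space of full lattice flags $M'\subset M_1'\subset\dots\subset t^{-1}M'$ with $t^{s_1}M\subset M'\subset t^{-s_2}M$, and then simply observes that since this flag data lives inside the finite-length $\locring$-module $t^{-s_2-1}M/t^{s_1}M$, the left-translation action of $G(\locring)$ visibly factors through $G(\locring/t^{s_1+s_2+1}\locring)$. You instead verify directly that the congruence subgroup $I_{s_1+s_2+1}$ acts trivially: writing $k=1+t^{s_1+s_2+1}y$, the two inclusions defining $\Gstrat$ give $g\in t^{-s_2}\End(M)$ and $g^{-1}\in t^{-s_1}\End(M)$, whence $g^{-1}kg\in 1+t\,\End(M)$; closedness of $G\hookrightarrow\GL(M_0)$ then puts $g^{-1}kg$ in $G(\locring)$ with trivial reduction mod $t$, hence in $I_G$. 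Both arguments hinge on the same $t$-adic bookkeeping (and give the same exponent $s_1+s_2+1$), but yours makes the cancellation of valuations explicit at the level of group elements, while the paper's is a one-line geometric observation about where the relevant moduli problem lives. The only point worth being slightly more careful about in your write-up is the phrase \emph{automatically lies in $G(\locring)$}: one should note that $1+t\End(M)$ is closed under taking inverses (Neumann series), so that $g^{-1}kg\in\GL(M_0)(\locring)$ and not merely in $M_{\dim M_0}(\locring)$, and then closedness of $G$ gives $G(F)\cap\GL(M_0)(\locring)=G(\locring)$; you gesture at this but it deserves to be stated.
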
 
\begin{proof}
 Choose a Borel $B^{'}$ in $GL(M_{0})$ such that $B=G\cap B^{'}$. Denote by 
$$M\subset M_{1}\subset \dots \subset M_{n}=t^{-1}M$$
 the full flag preserved by $B^{'}.$ The Iwahori subgroup $I_{G}$ consists of the elements $g$ of $G(F)$ preserving $M$ together with the flag $(M)_{i}$ above.  Hence the map  from $\mathcal{F}\ell_{G}$ to $\mathcal{F}\ell_{GL(M_{0})}$ sending a point $gI_{G}$ to the flag $(gM\subset gM_{1}\subset \dots \subset gM_{n})$ is a closed immersion. Thus $\Flagstrat$ is realized as the closed subscheme in the scheme classifying a lattice $M^{'}$ such that $t^{s_{1}}M\subset M^{'}\subset t^{-s_{2}}M$ together with the full flag 
 $$M^{'}\subset M_{1}^{'}\subset \dots \subset M^{'}_{n}=t^{-1}M^{'}.$$
The action of $G(\locring)$ on the latter scheme factors through $G(\mathcal{O}/t^{s_{1}+{s_{2}}+1}\locring)$. 
 \end{proof}
\par\medskip 
 The action of $I_{G}$ on $\Flagstrat$ factors through $K_{s}=I_{G}/I_{s}$ for $s\geq s_{1}+s_{2}+1$. If $s\geq \max\{N+r, s_{1}+s_{2}+1\},$ the group $K_{s}$ acts on $M_{{}_{N,r}}\times{\Flagstrat}$ diagonally and we can consider the equivariant derived category $D_{K_{s}}(M_{{}_{N,r}}\times{\Flagstrat})$. For $s^{'}\geq{s}$ one has a canonical equivalence
$$D_{K_s}(M_{{}_{N,r}}\times{\Flagstrat})\iso {D_{K_{s^{'}}}(M_{{}_{N,r}}\times{\Flagstrat})}.$$
\par\medskip
Define $D_{I_{G}}(M_{N,r}\times{\Flagstrat})$ as the category $D_{K_{s}}(M_{N,r}\times{\Flagstrat})$ for any $s\geq \max\{N+r,s_{1}+s_{2}+1\}.$
\par\medskip
We define the category $D_{I_G}(M_{{}_{}}(F)\times{\flagvar{G}})$ as the inductive $2$-limit of the category $D_{I_G}(M_{{}_{N,r}}\times{\Flagstrat})$ as $N,r,s_1,s_2$ go to infinity. The subcategory $P_{I_{G}}(M(F)\times \flagvar{G})\subset D_{I_{G}}(M(F)\times \flagvar{G})$ of perverse sheaves is defined along the same lines.

\section{Hecke functors at the Iwahori level}
\label{section4}
 \label{heckefunctors}
We use the same notation as in the previous section. Denote by $\check\mu$ in $\check{X}^{+}$ the character by which $G$ acts on $\det(M_{0}).$  The connected components of the affine Grassmannian $Gr_{G}$ are indexed by the algebraic fundamental group $\pi_{1}(G)$ of $G$, see \cite{BD}. For $\theta$ in $\pi_{1}(G)$, choose $\lambda$ in $X^{+}$ whose image in $\pi_{1}(G)$ equals $\theta.$ Denote by $Gr_{G}^{\theta}$ the connected component of $Gr_G$ containing $Gr_{G}^{\lambda}$. The affine flag manifold $\flagvar{G}$ is a fibration over $Gr_{G}$ with typical fiber $G/B.$ Hence the connected components of the affine flag variety $\flagvar{G}$ are also indexed  by $\pi_{1}(G).$ For $\theta$ in $\pi_{1}(G),$ denote by $\flagvar{G}^{\theta}$ the preimage of $Gr_G^{\theta}$ in $\flagvar{G}$. Set $\Flagstrat^{\theta}=\flagvar{G}^{\theta}\cap{\Flagstrat}.$ 
\par\medskip
Let us now define the Hecke functors (geometrization of the action of the Iwahori-Hecke algebra $\iwahorihecke{G}$ on the invariants of the Weil representation under the action of $I_{G}$) of $P_{I_{G}}(\flagvar{G})$ on $D_{I_{G}}(M(F)),$ denoted by
\begin{equation}
\label{heckefuncG}
\heckefunc{G}:D_{I_{G}}(\flagvar{G})\times D_{I_{G}}(M(F))\longrightarrow D_{I_{G}}(M(F)).
\end{equation}
Consider the following isomorphism 
 \[
\xymatrix @R=.3em{
 \alpha:M(F)\times G(F) \ar[r] & M(F)\times G(F)  \\
(v,g)   \ar[r] & (g^{-1}v,g). 
}
\]
Any element $(a,b)\in{I_G}\times{I_G}$ act on the source by $(a,b).(v,g)=(av,agb)$ and act on the target $(v^{'},g^{'})$ by $(a,b).(v^{'},g^{'})=(b^{-1}v^{'},ag^{'}b).$ The map $\alpha$ is $I_{G}\times I_{G}$-equivariant with respect to these two actions. Hence this yields a morphism of stacks 
$$M(F)\times \flagvar{G}\longrightarrow (M(F)/I_{G})\times \flagvar{G},$$
and enables us to define the following morphism of stack quotients 
$$act_{q}:I_G\backslash(M(F)\times \flagvar{G})\longrightarrow (M(F)/I_{G})\times({I_G}\backslash\flagvar{G}), $$
 where the action of $I_{G}$ on $M(F)\times \flagvar{G}$ is the diagonal one.  
 The following lemma generalizes a construction done in \cite[\S 4]{Lysenko1} to the Iwahori case.
\begin{lemma}
\label{Lm_229}
There exists  an inverse image functor 
$$act_{q}^{*}:D_{I_G}(M(F))\times D_{I_G}(\flagvar{G}) \longrightarrow D_{I_G}(M(F)\times \flagvar{G})$$
which preserves perversity and is compatible with the Verdier duality in the following way:
for any $\mathcal{K}$ in $D_{I_{G}}(M(F))$ and $\mathcal{F}$ in $D_{I_{G}}(\flagvar{G})$ we have  $$\mathbb{D}(act_{q}^{*}(\mathcal{K},\mathcal{T}))\iso act_{q}^{*}(\mathbb{D}(\mathcal{K}),\mathbb{D}(\mathcal{T})).$$
\end{lemma}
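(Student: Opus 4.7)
The plan is to define $act_q^*(\mathcal{K}, \mathcal{T})$ via pullback along the scheme isomorphism $\alpha : M(F) \times G(F) \to M(F) \times G(F)$, $(v, g) \mapsto (g^{-1}v, g)$, which underlies the stack morphism $act_q$. Given $\mathcal{K} \in D_{I_G}(M(F))$ and $\mathcal{T} \in D_{I_G}(\flagvar{G})$, I first lift $\mathcal{T}$ along the quotient $G(F) \to \flagvar{G}$ to a right-$I_G$-invariant sheaf $\widetilde{\mathcal{T}}$ on $G(F)$ and form the external product $\mathcal{K} \boxtimes \widetilde{\mathcal{T}}$ on $M(F) \times G(F)$, which is perverse whenever $\mathcal{K}$ and $\mathcal{T}$ are. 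Pulling back via $\alpha$ produces a sheaf that stays right-$I_G$-invariant --- since $\alpha(v, gh) = (h^{-1}g^{-1}v, gh)$ and the $I_G$-equivariance of $\mathcal{K}$ absorbs the $h^{-1}$-twist on the first coordinate --- so it descends to the desired object $act_q^*(\mathcal{K}, \mathcal{T}) \in D_{I_G}(M(F) \times \flagvar{G})$ for the diagonal left $I_G$-action.

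To realize this within the inductive $2$-limit, fix finite-dimensional representatives $\mathcal{K}$ on $M_{N, r}$ and $\mathcal{T}$ on ${}_{s_1, s_2}\flagvar{G}$. The morphism $\alpha$ restricts to
$$\alpha_{\mathrm{fin}} : M_{N - s_1,\, r + s_1} \times {}_{s_1, s_2}G(F) \to M_{N, r} \times {}_{s_1, s_2}G(F)$$
(after enlarging $N$ via $i_*$ to ensure $N \geq s_1$ if necessary), where source and target have the same dimension. Pulling back $\mathcal{K} \boxtimes \widetilde{\mathcal{T}}$ via $\alpha_{\mathrm{fin}}$ and descending by the right $I_G$-action on the second factor yields a well-defined sheaf in $D_{I_G}(M_{N - s_1,\, r + s_1} \times {}_{s_1, s_2}\flagvar{G})$. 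Compatibility with the transition functors $i_*$ and $p^*[\dimrel(p)]$ defining the inductive $2$-limits reduces to a routine diagram chase exploiting the functoriality of $\alpha$ in the parameters $(N, r, s_1, s_2, s)$. Perversity preservation and the Verdier duality formula $\mathbb{D}(act_q^*(\mathcal{K}, \mathcal{T})) \iso act_q^*(\mathbb{D}\mathcal{K}, \mathbb{D}\mathcal{T})$ follow from the corresponding ind-pro statements: $\alpha$ is there a genuine isomorphism of relative dimension zero, and pullback along it is $t$-exact and intertwines Verdier duality without any additional shift.

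The main technical obstacle is that $\alpha_{\mathrm{fin}}$ is not itself an isomorphism of finite-dimensional schemes --- its linear part $v \mapsto g^{-1}v$ acquires a nontrivial kernel over non-minimal Schubert strata of ${}_{s_1, s_2}\flagvar{G}$ --- so the $t$-exactness of a bare iso-pullback cannot be invoked at any single finite level. The resolution is to argue intrinsically in the $2$-inductive limit, using the fully-faithfulness and perverse $t$-exactness of the transition functors established in the previous section to reduce the perversity and Verdier duality checks to the genuine isomorphism $\alpha$ at the ind-pro level. Assembling the finite-level pullbacks into a coherent functor and verifying the resulting shift cancellations is where the bookkeeping is most delicate.
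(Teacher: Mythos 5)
Your plan captures the right geometric idea — pull back along the action map and descend — and you correctly identify the central difficulty, namely that the finite-level restriction $\alpha_{\mathrm{fin}}$ of $\alpha$ is not an isomorphism because $v\mapsto g^{-1}v$ has fibers of varying dimension over the Schubert strata of ${}_{s_1,s_2}\flagvar{G}$. However, the proposal stops exactly at the point where the actual work begins, and the ``resolution'' you offer does not hold up. There is no purely formal escape to the $2$-limit: the objects of $D_{I_G}(M(F)\times \flagvar{G})$ are by definition represented by objects at finite levels, and $t$-exactness and compatibility with $\mathbb{D}$ must be verified there, where the dimension discrepancies are real.

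The missing ingredient is the explicit, \emph{component-dependent} cohomological shift. The paper does not pull back the external product $\mathcal{K}\boxtimes\widetilde{\mathcal{T}}$ along $\alpha_{\mathrm{fin}}$; it pulls back $\mathcal{K}$ alone along the map $act_{q,s}$ (a smooth affine fibration over each connected component $\Flagstrat^{\theta}$), tensors with $pr_2^*\mathcal{T}$, and then applies the shift $[\dim(K_s) - c + s_1\dim M_0]$, where $c=\langle\theta,\check\mu\rangle$ varies with the connected component $\theta\in\pi_1(G)$ through the determinant character $\check\mu$ of $M_0$. This shift is precisely what normalizes away the variation in fiber dimension that you flag as the obstacle. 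Without it the resulting complex is not perverse (the relative dimension of $act_{q,s}$ is not constant on $\Flagstrat$), and the Verdier duality isomorphism fails by a $\theta$-dependent twist. Your proposal instead asserts that pullback at the ind-pro level is ``$t$-exact and intertwines Verdier duality without any additional shift,'' and then defers the ``shift cancellations'' as delicate bookkeeping — but those shift cancellations are not a routine check: they are the content of the lemma, and the formula for $c$ is what has to be discovered and inserted. There is also the more mundane but necessary point that the definition must be shown compatible with the pro-transition functors $p^*[\dim\mathrm{rel}(p)]$ and the ind-transition functors $i_*$ of both inductive $2$-limits simultaneously; this is exactly where the term $s_1\dim M_0$ in the shift comes from, and it is not addressed in your proposal beyond the claim that it is ``a routine diagram chase.'' As written, the proposal would not produce a well-defined functor on the limit categories, let alone one with the stated exactness and duality properties.
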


\begin{proof}
Given $N,r,s_{1},s_{2}\geq 0$ with $r\geq{s_{1}}$ and $s\geq{\max\{N+r,s_{1}+s_{2}+1\}},$
one can define the following commutative diagram 
\begin{center}
\[
\xymatrix @R=2cm{
&& {M}_{{}_{N,r}}\times \Gstrat \ar[rr]^{act} \ar[d]_{q_G} && M_{{}_{N+s_{1},r-s_{1}}}\ar[d]^{q_{M}} \\
  {M}_{{}_{N,r}} \ar[d] &&  {M}_{{}_{N,r}}\times{\Flagstrat} \ar[d]  \ar[ll]_-{pr_{1}} \ar[rr]^{act_{q}}   && K_{s}\backslash{M_{N+s_{1},r-s_{1}}} \\
K_{s}\backslash {M}_{{}_{N,r}}  && K_s\backslash({{M}_{{}_{N,r}}\times\Flagstrat)}  \ar[ll]_-{pr} \ar[rru]^{act_{q,s}} \ar[rr] ^{pr_{2}} && K_s\backslash\hspace{1mm}(\Flagstrat)}
\]
\end{center}
The action map $act$ sends the couple $(v,g)$ to $g^{-1}v$. The maps  $pr_{1}$, $pr_2$ and $pr$ are projections. The map $q_{G}$ sends the couple $(v,g)$ to $(v,g{I_{G}}).$ All the vertical arrows are the projections of the stack quotients for the action of the corresponding group. The group $K_{s}$ acts diagonally on $M_{N,r}\times \Flagstrat$ and the map $act_{q}$ is equivariant with respect to this action. This enables us to define  the following functor:
$$D_{I_G}(M_{N+s_{1},r-s_{1}})\times D_{I_G}(\Flagstrat)\overset{temp}{\longrightarrow}{D_{I_G}(M_{N,r}\times{\Flagstrat})}$$
sending $(\mathcal{K},\mathcal{T})$ to 
$$(act_{q,s}^{*}\mathcal{K})\otimes{pr_{2}^{*}}\mathcal{T}[\dim(K_s)-c+s_1\dim M_0]
$$
where $c$ equals $\langle\theta,\check\mu\rangle$ over $\Flagstrat^{\theta}.$
\par\medskip
Set $r_{1}\geq{r_{2}}$ and $s\geq{\max\{s_{1}+s_{2},N+r_{1}\}}.$ Then we have the diagram 
\begin{equation}
 \label{diag3}
  \xymatrix{
K_s\backslash(M_{N,r_{1}}\times{\Flagstrat}) \ar[d] \ar[r]^{act_{q,s}} & K_s\backslash(M_{N+s_{1},r_{1}-s_{1}}) \ar[d]\\
K_s\backslash(M_{N,r_{2}}\times{\Flagstrat})  \ar [r]^{act_{q,s}} & K_s\backslash(M_{N+s_{1},r_{2}-s_{1}})
}
\end{equation}

The functors $temp$ and the transition functors in $\eqref{diag3}$ are compatible. This gives rise to a functor
$$temp_{N,s_{1},s_{2}}:D_{I_{{}_{G}}}(M_{N+s_{1}})\times{D_{I_{{}_{G}}}(\Flagstrat)\longrightarrow{D_{I_{{}_{G}}}(M_{N}\times{\Flagstrat})}},$$
where $M_{N}=t^{-N}M.$  
\par\medskip
Let $N_{1}\geq{N+s_{2}}$. Then $N\leq N_{1}-s_{2} \leq N_{1}+s_{1}$ and we have the closed immersion $M_{N}\hookrightarrow M_{N_{1}+s_{1}}$. Thus we have
 \begin{equation}
 \label{diag4}
\xymatrix{
D_{I_G}(M_{N})\times{D_{I_G}(\Flagstrat)}  \ar[r] & D_{I_G}(M_{N_{1}+s_{1}})\times{D_{I_G}(\Flagstrat) \ar[d]^{temp_{N_{1},s_{1},s_{2}}}}\\
   & D_{I_{{}_{G}}}(M_{N_{1}}
  \times{\Flagstrat}) \ar[d]\\
  & D_{I_{G}}(M(F)\times{\Flagstrat})
}
\end{equation}
where the first arrow is the extension by zero under the closed immersion $M_{N}\hookrightarrow M_{N_{1}+s_{1}}$. 
For any $\mathcal{K}$ in $D_{I_{G}}(M_{N})$ and any $\mathcal{T}$ in $D_{I_{G}}(\Flagstrat)$, the image of $(\mathcal{K},\mathcal{T})$ under the composition $\eqref{diag4}$ does not depend on $N_{1}.$ So we get a functor 
$$temp_{s_1,s_2}:D_{I_{{}_{G}}}(M_{N})\times{D_{I_{G}}(\Flagstrat)}\longrightarrow{D_{I_{{}_{G}}}}(M(F)\times{\Flagstrat}).$$

For any $s^{'}_{1}\geq{s_{1}},$ and $s^{'}_{2}\geq{s_{2}},$ we have the extension by zero functors
$$D_{I_{{}_{G}}}(\Flagstrat)\hookrightarrow{D_{I_{{}_{G}}}({}_{{}_{s^{'}_{1},s^{'}_{2}}}\mathcal{F}\ell_{G})},$$ 
which are compatible with our functor $temp_{s_{1}s_{2}}$, so 
this yields the desired functor 
 $$act_q^*: D_{I_{{}_{G}}}(M(F))\times{D_{{I}_{{}_{G}}}}(\flagvar{G})\overset{temp}{\longrightarrow}{D_{I_{{}_{G}}}(M(F)\times{\flagvar{G}})}.$$
\par\medskip

One checks that $\mathbb{D}(act_{q}^{*}(\mathcal{K},\mathcal T))\iso act_{q}^{*}(\mathbb{D}(\mathcal{ K}),\mathbb{D}(\mathcal{T}))$,
and $act_{q}^{*}$ preserves perversity. 
\end{proof}

 For any $N,r,s_{1},s_{2}$ greater than  zero satisfying the following condition $s\geq{\max\{N+r,s_{1}+s_{2}+1\}},$ consider the projection 
$$pr:K_s\backslash{(M_{N,r}\times{\Flagstrat})}\longrightarrow K_s\backslash M_{N,r},$$
which gives us 
$$pr_{!}:D_{K_s}({M}_{{}_{N,r}}\times{\Flagstrat})\longrightarrow{D_{K_s}({M}_{{}_{N,r}})}.$$
These functors are compatible with the transition functors in $\eqref{diag3}$ and yield a functor
$$pr_{!}:D_{I_G}(M(F)\times{\flagvar{G})\longrightarrow{D_{I_G}(M(F))}}.$$

For any $\mathcal{K}$ in $D_{I_{{}_{G}}}(M(F))$ and $\mathcal{T}$ in $D_{I_{{}_{G}}}(\flagvar{G})$, the Hecke operator  $\overset{\leftarrow}{H_{G}}(\, ,\, )$ $\eqref{heckefuncG},$ is defined by 
$$
\heckefunc{G}(\mathcal{T}, \mathcal{K})=pr_{!}(act_q^*(\mathcal{K}, \mathcal{T})).
$$
Moreover, this functor is compatible with the convolution functor on $D_{I_{G}}(\flagvar{G})$. Namely, given $\mathcal{T}_1, \mathcal{T}_2$ in $D_{I_G}(\flagvar{G})$ and $\mathcal{K}$ in $D_{I_G}(M(F))$, one has naturally
$$
\heckefunc{G}(\mathcal{T}_1, \heckefunc{G}(\mathcal{T}_2, \mathcal{K}))\iso \heckefunc{G}(\mathcal{T}_1\star  \mathcal{T}_2, \mathcal{K}).
$$ 
One may also consider the category $DP_{I_G}(\flagvar{G})$ and consider the Hecke functors in the form
\begin{equation}
\heckefunc{G}:  DP_{I_{G}}(\flagvar{G})\times D_{I_{G}}(M(F))\longrightarrow D_{I_{G}}(M(F))
\end{equation}
defined by $\heckefunc{G}(\mathcal{T}[i], K)=\heckefunc{G}(\mathcal{T}, K)[i]$ for $i\in\mathbb{Z}$ and $\cT\in P_{I_{G}}(\flagvar{G})$. 
\par\medskip
Let $\ast: P_{I_G}(\Fl_G)\isom P_{I_G}(\Fl_G)$ be the covariant equivalence of categories induced by the map $G(F)\to G(F)$, $g\mapsto g^{-1}$. We may define the right action $\rh_G: D_{I_G}(\Fl_G)\times D_{I_G}(M(F))\to D_{I_G}(M(F))$ by $\rh_G(\cT, K)=\heckefunc{G}(\ast\cT, K)$. 
\par\medskip
\begin{exemple}
\label{Example}
Let $R,r\geq 0$ and $t^rM\subset V\subset t^{-R}M$ be an intermediate lattice stable under $I_G.$
Let $K\in{P_{I_{G}}}(M_{R,r})$ be a shifted local system on $V/t^{r}M\subset t^{-R}M/t^{r}M.$ We are going to explain the above construction explicitly in this case. Let $\mathcal{T}$ be in $D_{I_{G}}(\Flagstrat).$  Choose $r_{1}\geq r+s_{1}.$ If $g$ is a point in $\Flagstrat$ then $t^{r_{1}}M\subset gV.$ So we can define the scheme 
$$(V/t^ {r}M)\tilde{\times}\Flagstrat$$
as  the scheme classifying pairs $(g\I{G},m)$ such that $gI_{G}$ is an element of $\Flagstrat$ and $m$ is in $(gV)/(t^{r_1}M).$ For a point $(m,g)$ of this scheme, the element $g^ {-1}m$  lies in $V/t^{r}M.$ Assuming $s\geq R+r$ we get the digram 
$$M_{R+s_{2},r_{1}}\overset{p}{\longleftarrow}(V/t^{r}M)\tilde{\times}{\Flagstrat}\overset{act_{q,s}}{\longrightarrow} K_{s}\backslash (V/t^{r}M),$$
where $p$ is the map sending $(g\I{G},m)$ to $m$. For $gG(\locring)$ in ${Gr^{\theta}_G},$  the virtual dimension  \footnote{Recall that for an $\mathcal{O}$-sublattice $W\subset L(F)$,	its virtual dimension is $\dim(W):= \dim(W/W\cap L)-\dim(L/W\cap L)$} of $V/gV$ is $\langle\theta,\check{\mu}\rangle.$ The space $(V/t^ {r}M)\tilde{\times}\Flagstrat^{\theta}$ is locally trivial fibration over $\Flagstrat^{\theta}$ with fiber isomorphic to an affine space of dimension $\dim(V/t^{r_1}M)-\langle\theta,\check{\mu}\rangle.$ Since $K$ is a shifted local system, the tensor product 
$act_{q,s}^ {*}K \otimes pr_{2}^{*}\mathcal{T}$ is a shifted perverse sheaf. Let $K\tilde{\boxtimes}\mathcal{T}$ be the perverse sheaf $act_{q,s}^{*}K\otimes pr_{2}^{*}\mathcal{T}[\dim].$ The shift $[\mathrm{dim}]$ is the unique integer for which this complex is perverse and this shift depends on $\check{\mu}$. Then we have
$$
\heckefunc{G} (\mathcal{T},K)=p_{!}(K\tilde{\boxtimes}\mathcal{T}).
$$
\end{exemple}
\par\medskip
\paragraph{\textbf{Compatibility}}
Till now we have been working over an algebraic closed field and we have ignored all the Tate twists. Let us explain how our construction is compatible with the classical local theta correspondence when the ground field $\K$ is $\F_{q}$. 
\par\medskip
Assume temporarily that $\K=\F_{q}.$ Let us explain the relation between this geometrical convolution and classical convolution action in \cite{Minguez1} and \cite{MVW} at the level of functions. Given $K$ in 
$D_{I_{G}}(M(F))$ we can associate with it the following function $a_{K}$ in the Schwartz space $\mathcal{S}^{I_{G}}(M(F))$.  If $K$ is represented by the ind-pro-system $K_{N,r}$ in $D_{I_{G}}(M_{N,r})$ then for $m$ in $t^{-N}M_{0}(\locring)$ one has
$$
a_{K}(m)=Tr(Fr_{\overline{m}}, K_{N,r,\overline{m}})q^{\frac{rd}{2}},
$$
where $d=\dim M_{0},$  the point $\overline{m}$ is the image of $m$ in $M_{N,r}$, and $Fr_{\overline{m}}$ is the geometric Frobenius at $\overline{m}.$ For large enough $r$, this is independent of $r$. The Hecke functors on $D_{I_G}(M(F))$ defined above geometrize the action of the Hecke algebras on $\mathcal{S}^{I_G}(M(F))$ corresponding to the following
left action of $G(F)$ on $\mathcal{S}(M(F))$: for a point $g$ in $G(F)$ and a function $f$ in $\mathcal{S}(M(F))$ then
$$g.f(m)= \vert\det g\vert^{\frac{-1}{2}} f(g^{-1}m),$$
for any $m$ in $M(F).$ To any $\mathcal{T}$ in ${P_{I_{G}}(\flagvar{G})}$ one can associate a function on $G(F)/I_{G}$ given by 
$a_{\mathcal{T}}(x)=Tr(Fr_{x},\mathcal{T}_{x})$ for $x$ in $G(F)/I_{G}$. For $\mathcal{T}_{i}\in{P_{I_{G}}(\flagvar{G})}$, denoting by $f_{i}$ the corresponding function,  we have 
\[Tr(Fr_{g},(\mathcal{T}_{1}\star \mathcal{T}_{2})_{g})=\int_{x\in G(F)}f_{1}(x)f_{2}(x^{-1}g)dx,\]
 where  $dx$  is the Haar measure on $G(F)$ such that $I_{G}$ is of volume $1$. Now if $\mathcal{F}$ is in $D_{I_{G}}(M(F))$, let $K=\overset{\leftarrow }{H}_{G}(\mathcal{T},\mathcal{F})$ and denote by $f$ the function associated to $\mathcal{F}.$ Then the function $a_{K}$ associated to $K$ is
$$a_{K}(m)=\int_{x\in G(F)}\vert\det x\vert^{-\frac{1}{2}}f(x^{-1}m)a_{\mathcal T}(x)dx,$$
for any $m$ in $M(F).$
\par\bigskip
In the following (except  for \S $8$) we will restrict ourselves to the case of dual reductive pairs of type II. Let $L_{0}=\K^n$  and $U_{0}=\K^m$ with $n\leq m$ and let $G=\GL(L_{0})$ and $H=\GL(U_{0}).$ We put $\Pi_{0}=U_{0}\otimes L_{0}$, $L=L_{0}(\locring)$, $U=U_{0}(\locring),$ and $\Pi=\Pi_{0}(\locring).$ For any $\locring$-module of finite rank $M$ and any pair $N,r$ of  integers such that $N+r> 0,$ we set $M_{N,r}=t^{-N}M/t^{r}M.$ Let $T_{G}$ (resp. $T_{H}$) be the maximal torus of diagonal matrices in $G$ (resp. in $H$). Let $B_G$ (resp. $B_H$) be the Borel subgroup of upper-triangular matrices in $G$ (resp. $H$). Let $I_{G}$ and $I_{H}$ be the corresponding Iwahori subgroups.  Let $I_0$ denote the constant perverse sheaf on $\Pi$.
Using the previous construction we have 
the well-defined category of  $I_{G}\times I_{H}$-equivariant  perverse sheaves on $\Pi(F)$ inside the derived category $D_{I_{G}\times I_{H}}(\Pi(F))$ which is the geometrization of the invariants of the Schwartz space $\mathcal{S}(\Pi(F))^{I_{G}\times I_{H}}$,
and two Hecke functors corresponding to the actions of $DP_{I_{G}}(\flagvar{G})$ and  $DP_{I_{H}}(\flagvar{H})$ on $D_{I_{G}\times I_{H}}(\Pi(F)):$
$$\heckefunc{G}:DP_{I_{G}}(\flagvar{G})\times D_{I_{G}\times I_{H}}(\Pi(F))\longrightarrow D_{I_{G}\times I_{H}}(\Pi(F))$$
and 
$$\heckefunc{H}:DP_{I_{H}}(\flagvar{H})\times D_{I_{G}\times I_{H}}(\Pi(F))\longrightarrow D_{I_{G}\times I_{H}}(\Pi(F)).$$
%

\section{Structure of the category \texorpdfstring{$P_{H(\locring)\times I_{G}}(\Pi(F))$}{P_{H(\locring)\times I_{G}}(\Pi(F))}} 
\label{5}
The purpose of this section is to understand the module structure of the category$P_{H(\locring)\times I_{G}}(\Pi(F))$ under the action of  $P_{I_{G}}(\flagvar{G})$ and  $P_{H(\locring)}(Gr_H)$.  Let $U^*$ denote the dual of $U$.  A point $v$ in  $\Pi(F)$ may be seen as a $\locring$-linear map $v:U^{*}\to L(F).$ For $v$ in $\Pi_{N,r},$ let $U_{v,r}=v(U^*)+t^{r}L.$ Then $U_{v,r}$  is a $\locring$-module in $L(F).$ By identifying $Gr_G$ with the ind-scheme of lattices in $L(F)$, we may view $U_{v,r}$ as a point of the affine Grassmannian $Gr_G$. 
The Iwahori subgroup $I_G$  acts on the affine Grassmannian $Gr_{G}$ as well. The $I_G$-orbits are parametrized by cocharacters $\lambda$  in $X$. Each orbit is an affine space. We have the decomposition
\begin{equation}
\label{Iwahoriorbit}
G(F)=\bigsqcup_{\lambda\in{X}} I_{G}t^{\lambda}G(\locring).
\end{equation}
For any $\lambda$  in $X^{+},$  each  $G(\locring)$-orbit $Gr_{G}^{\lambda}$ decomposes into $I_G$-orbits which  are parametrized by $W.\lambda$ and the orbit $I_{G}t^{\lambda}G(\locring)$ is open in $Gr_{G}^{\lambda}$. For any $\lambda$ in $X$  denote by $O^{\lambda}$  the $I_G$-orbit through $t^{\lambda}G(\mathcal{O})$ in $Gr_{G}.$ Denote by $\overline{O^{\lambda}}$ its closure. The scheme $\overline{O^{\lambda}}$ is stratified by locally closed subschemes $O^{\mu},$ where $\mu$ is in $X.$  Remark that  $O^{\mu} \subset \overline{O^{\lambda}}$ does not necessarily imply $\mu\leq\lambda.$ Denote by $\mathcal{A}^{\lambda}$ the $\mathrm{IC}$-sheaf of $O^{\lambda}$ which is an object of $P_{I_{G}}(Gr_G).$

\begin{lemma}
\label{orbit}
The  set of $H(\locring)$-orbits on $\Pi_{N,r}$ identifies with the set of lattices $R$ such that $t^rL\subset{R}\subset{t^{-N}L}$ via the map sending $v$ to $U_{v,r}$. 
\end{lemma}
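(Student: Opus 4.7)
The plan is to rephrase the construction $v\mapsto U_{v,r}$ in linear-algebraic terms and then apply the structure theorem for finitely generated modules over the DVR $\locring$. Since $U^{*}\cong \locring^{m}$ is projective, the canonical map
\[
t^{-N}\Pi/t^{r}\Pi \iso \Hom_{\locring}(U^{*},\, t^{-N}L/t^{r}L)
\]
identifies a point $v\in \Pi_{N,r}$ with an $\locring$-linear map $v\colon U^{*}\to M$, where $M:=t^{-N}L/t^{r}L$, and the lattice $U_{v,r}$ is then characterized by $U_{v,r}/t^{r}L = v(U^{*})$. Under this translation, the action of $h\in H(\locring)=\GL(U_{0})(\locring)$ becomes precomposition with an automorphism of $U^{*}$, so $U_{v,r}$ depends only on the $H(\locring)$-orbit of $v$ and takes values in the set of lattices $R$ with $t^{r}L\subset R\subset t^{-N}L$ (equivalently, submodules of $M$). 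It therefore suffices to prove that the induced map on orbits is a bijection with this set.

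For surjectivity, given such an $R$, the quotient $\bar R:=R/t^{r}L$ is a finitely generated torsion $\locring$-module, and because $R$ is free of rank $n$, the structure theorem gives $\bar R\cong \bigoplus_{i=1}^{n}\locring/t^{a_{i}}$; in particular $\bar R$ is generated by $n\leq m$ elements. The assumption $m\geq n$ and the freeness of $U^{*}$ let me choose a surjection $U^{*}\twoheadrightarrow \bar R$; composing with the inclusion $\bar R\hookrightarrow M$ and viewing the result as a point of $\Pi_{N,r}$ realizes $R$ as $U_{v,r}$.

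For injectivity, suppose $v, v'\colon U^{*}\to M$ both have image $\bar R$; I need to produce $g\in \GL_{m}(\locring)$ with $v'=v\circ g^{-1}$. Applying Smith normal form to $\ker v\subset \locring^{m}$ yields a basis $f_{1},\dots,f_{m}$ of $\locring^{m}$ with $\ker v = \bigoplus t^{b_{i}}\locring f_{i}$, the multiset $\{b_{i}\}$ being the elementary divisors of $\bar R$; the element of $\GL_{m}(\locring)$ carrying the standard basis to $(f_{i})$ then transports $v$ to a fixed model surjection $\phi_{0}\colon \locring^{m}\twoheadrightarrow \bigoplus \locring/t^{a_{i}}\oplus 0^{m-n}$. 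Doing the same for $v'$ exhibits both in the $\GL_{m}(\locring)$-orbit of $\phi_{0}$, hence in a common orbit.

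I expect the only subtle point to be that the identification $\locring^{m}/\ker v\iso \bar R$ used to normalize $\phi_{0}$ is well-defined only up to $\on{Aut}(\bar R)$, so one must check that every automorphism of $\bigoplus\locring/t^{a_{i}}$ lifts through $\phi_{0}$ to an element of $\GL_{m}(\locring)$. This is where the locality of $\locring$ is essential and not merely the PID property: units of $\locring/t^{a_{i}}$ lift to units of $\locring$, so an automorphism of $\bar R$, represented by a block matrix with invertible reductions modulo the various $t^{a_{i}}$, lifts to $\GL_{n}(\locring)$ and is then extended by the identity on the remaining $m-n$ coordinates to reach an element of $\GL_{m}(\locring)=H(\locring)$.
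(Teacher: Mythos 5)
Your proof is correct, but the route differs substantively from the paper's. The paper lifts each $v_i\in\Pi_{N,r}$ (modulo $t^r\Pi$) to a genuine surjection $U^*\twoheadrightarrow U_{v,r}$ of \emph{free} $\locring$-modules and then invokes the one-line fact that two surjections between free modules differ by an automorphism of the source (the kernel of such a surjection is a free direct summand, and one matches the two splittings). This side-steps torsion quotients entirely: there is no Smith normal form, no lifting of $\on{Aut}(\bar R)$, and "locality" enters only in the weak form that finitely generated projectives over $\locring$ are free. Your argument instead works inside the torsion module $M=t^{-N}L/t^rL$, normalizes $v$ and $v'$ to a model surjection via Smith normal form, and is therefore forced to confront the non-canonical identification of $\locring^m/\ker v$ with $\bar R$ --- exactly the automorphism-lifting subtlety you flag, and precisely the complication the paper's lift-to-the-free-lattice device is designed to avoid. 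Your resolution of that subtlety is in substance right, though the phrase "block matrix with invertible reductions modulo the various $t^{a_i}$" is an imprecise description of $\on{Aut}(\bar R)$: the clean criterion is that any entrywise lift $h\in M_n(\locring)$ of an automorphism $\beta$ has reduction modulo the maximal ideal $(t)$ equal to the matrix of $\beta$ on $\bar R/t\bar R\cong\K^n$, which is invertible, hence $h\in\GL_n(\locring)$. Stating it that way would make the final step airtight. Both proofs work; the paper's is shorter and needs less from $\locring$, while yours is more explicit and makes the elementary-divisor structure of the orbits visible.
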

\begin{proof}
Let $M$ and $M^{'}$ be two free $\mathcal{O}$-modules of finite type. If $f_{1}$ and $f_{2}$ are two surjections from  $M$ to $M^{'},$ then there is $h$ in $Aut(M)$ such that $f_{1}\circ h=f_{2}$.
 Let us now consider two elements $v_{1}$ and $v_{2}$ of $\Pi_{N,r}$ such that $U_{v_{1},r}=U_{v_{2},r}.$
Adding to $v_{i}$ a suitable element $t^{r}\Pi$, we may assume that $v_{i}:U^{*}\to U_{v,r}$ is surjective for $i=1,2$.  Then the previous argument implies that there exists $h$ in $H(\mathcal{O})$ such that $v_{1}\circ h=v_{2}.$  Thus, for $v_{1}$ and $v_{2}$ in $\Pi_{N,r}$, the $H(\locring)$-orbits through $v_{1}$ and $v_{2}$ coincide if and only if  $U_{v_{1},r}=U_{v_{2},r}.$ Since $n\leq m$,  each lattice $R$  such that  $t^rL\subset{R}\subset{t^{-N}L}$ is exactly  of the form $U_{v,r}$ for some $v$ in $\Pi_{N,r}$.
\end{proof}

Let $\check{\omega}_{1}=(1,0\dots,0)$ be  the highest weight of the standard representation of $G$ and recall that $w_0$ is the longest element of the finite Weyl group $W_{G}$. 
\begin{lemma}
\label{Horbit}
There is a bijection $\lambda\to \Pi_{\lambda,r}$ between   $H(\locring)\times{I_G}$-orbits on $\Pi_{N,r}$ and elements  $\lambda$ in  $X_{G}$ such that for any $\nu$ in $W_{G}.\lambda$
\begin{equation}
\label{condition1}
\langle\nu,\check{\omega}_{1}\rangle\leq{r}\hspace{1cm} and\hspace{1cm}\langle w_0(\nu),\check{\omega}_{1}\rangle\leq N.
\end{equation}
Each orbit $\Pi_{\lambda,r}$  consists of points $v$ such that $U_{v,r}$ lies in $I_G t^ {\lambda} G(\locring)$.
\end{lemma}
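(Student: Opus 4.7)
My plan is to chain together Lemma \ref{orbit}, the Iwahori decomposition \eqref{Iwahoriorbit} of $Gr_G$, and an elementary lattice-inclusion analysis. The first step is to observe that the bijection $v \mapsto U_{v,r}$ provided by Lemma \ref{orbit} is in fact $I_G$-equivariant: any $g \in I_G \subset G(\locring)$ preserves $L$ and hence $t^r L$, so
$$U_{gv,r} = g(v(U^*)) + t^r L = g\bigl(v(U^*) + t^r L\bigr) = g\, U_{v,r}.$$
Combined with Lemma \ref{orbit}, this identifies the set of $H(\locring) \times I_G$-orbits on $\Pi_{N,r}$ with the set of $I_G$-orbits on the locus
$$\Omega_{N,r} = \{\, R \subset L(F)\text{ a lattice} \ :\ t^r L \subset R \subset t^{-N} L \,\} \subset Gr_G.$$

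The second step uses \eqref{Iwahoriorbit}: the $I_G$-orbits on $Gr_G$ are precisely the $O^\lambda$ for $\lambda \in X$, and since $I_G$ preserves every lattice $t^j L$, the subset $\Omega_{N,r}$ is $I_G$-stable. Consequently each orbit $O^\lambda$ either lies entirely inside $\Omega_{N,r}$ or is disjoint from it, so it suffices to test the standard representative $t^\lambda L \in O^\lambda$. Writing $\lambda = (\lambda_1,\dots,\lambda_n)$, a direct comparison of $t^\lambda L = \bigoplus_i t^{\lambda_i}\locring\, e_i$ with $t^r L$ and $t^{-N} L$ shows that $t^r L \subset t^\lambda L \subset t^{-N} L$ if and only if $-N \leq \lambda_i \leq r$ for every coordinate $i$.

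The third step is to translate these coordinate inequalities into the $W_G$-invariant form used in the statement. Since $\check{\omega}_1$ is the first-coordinate projection and $W_G = S_n$ permutes the coordinates of $\lambda$, the requirement $\langle \nu, \check{\omega}_1\rangle \leq r$ for all $\nu \in W_G\cdot \lambda$ is exactly $\max_i \lambda_i \leq r$; the analogous rewriting via $w_0$ captures the lower bound $-N \leq \min_i \lambda_i$. The resulting orbit $\Pi_{\lambda,r}$ is then, by construction, the preimage of $O^\lambda \cap \Omega_{N,r}$ under $v \mapsto U_{v,r}$, i.e.\ the set of $v$ with $U_{v,r} \in I_G t^\lambda G(\locring)$, which is the concluding assertion.

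The substantive geometric content — the equivariance of $v \mapsto U_{v,r}$ and the $I_G$-stability of $\Omega_{N,r}$ — is essentially immediate from $I_G \subset G(\locring)$, and the classification of $I_G$-orbits on $Gr_G$ is given. The only place requiring real care is the third step: one must match the sign and $W_G$-invariance conventions so that the pair of inequalities $-N \leq \lambda_i \leq r$ is correctly repackaged as the pair of conditions $\langle \nu,\check{\omega}_1\rangle \leq r$ and $\langle w_0\nu,\check{\omega}_1\rangle \leq N$ of the lemma.
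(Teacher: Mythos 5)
Your proof is correct and follows the same route as the paper's (very terse) argument: reduce via Lemma~\ref{orbit} to classifying $I_G$-orbits on the set of lattices $R$ with $t^rL\subset R\subset t^{-N}L$, then invoke the Iwahori decomposition $\eqref{Iwahoriorbit}$. You usefully fill in two steps the paper leaves implicit — the $I_G$-equivariance of $v\mapsto U_{v,r}$ and the $I_G$-stability of the locus $\Omega_{N,r}$, which together justify testing only the standard representative $t^\lambda L$ — and your coordinate computation $-N\le\lambda_i\le r$ is the correct content of condition~$\eqref{condition1}$. Your closing caveat about matching sign conventions is well-placed: as literally written, $\langle w_0(\nu),\check\omega_1\rangle$ is the last coordinate of $\nu$, so the stated inequality needs a sign (e.g.\ reading it as $\langle -w_0(\nu),\check\omega_1\rangle\le N$) to encode the lower bound $\min_i\lambda_i\ge -N$; this is an ambiguity in the lemma's formulation, not a gap in your argument.
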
  

\begin{proof}
Any lattice $R$ satisfying $t^{r}L\subset R \subset t^{-N}L$ is of the form $U_{v,r}$ for some $v$ in $\Pi_{N,r}.$ Consider the lattice $U_{v,r}$ as a point in $Gr_G.$ Then by Lemma $\ref{orbit}$ the  $H(\locring)\times I_G$-orbits on $\Pi_{N,r}$ are exactly  the locally closed subschemes  $(\Pi_{\lambda,r})_{\lambda\in{X_{G}}}$  in $\Pi_{N,r}$ such that  $\lambda$ satisfies $\eqref{condition1}.$  
 \end{proof}
 
\par\medskip
For any  $\lambda$ in $X_G$, the perverse sheaves $\IC(\Pi_{\lambda,r})$ in $P_{H(\locring)\times I_{G}}(\Pi(F))$ are independent of the choice of $r$ if $\langle\nu,\check{\omega}_1\rangle <r$ for any $\nu$ in $W_G\lambda$. The resulting object of $P_{H(\locring)\times I_{G}}(\Pi(F))$ will be denoted by $\IC(\Pi_{\lambda})$. Hence we obtain the following:

\begin{proposition}
\label{irreducible}
 The irreducible objects of $P_{H(\locring)\times I_{G}}(\Pi(F))$ are in  bijection with $X_{G}$: the irreducible object corresponding to a cocharacter $\lambda$ in $X_{G}$ is the intersection cohomology sheaf $\IC(\Pi_{\lambda})$.
\end{proposition}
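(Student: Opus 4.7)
The plan is to combine Lemma \ref{Horbit} with the standard classification of equivariant perverse sheaves on a space stratified by a single orbit type. First, since $P_{H(\locring)\times I_{G}}(\Pi(F))$ is defined as the inductive $2$-limit of the categories $P_{H(\locring)\times I_{G}}(\Pi_{N,r})$ along transition functors that are fully faithful and exact for the perverse $t$-structure, it suffices to classify irreducibles at each finite level $(N,r)$ and check compatibility with the system. Any irreducible in the limit is represented by an irreducible at some level, and the notion of being represented by the same IC sheaf is stable under the transition functors.

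Next, fix $N, r$. By Lemma \ref{Horbit}, $\Pi_{N,r}$ decomposes into finitely many smooth locally closed $H(\locring)\times I_G$-orbits $\Pi_{\lambda,r}$ indexed by the $\lambda \in X_G$ satisfying condition $\eqref{condition1}$. Any $H(\locring)\times I_G$-equivariant perverse sheaf on $\Pi_{N,r}$ is constructible with respect to this stratification, so the Deligne--Goresky--MacPherson classification tells us that the irreducible ones are exactly the objects of the form $\IC(\Pi_{\lambda,r},\cL)$ where $\cL$ is an irreducible $H(\locring)\times I_G$-equivariant local system on $\Pi_{\lambda,r}$.

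The key remaining point is to show that the only such $\cL$ is the constant sheaf, so that $\IC(\Pi_{\lambda,r},\cL) = \IC(\Pi_{\lambda,r})$. The map $v\mapsto U_{v,r}$ realizes $\Pi_{\lambda,r}$ as a fibration over the Iwahori orbit $O^{\lambda}\subset Gr_G$, whose fibers are affine spaces by the same type of computation as in Example \ref{Example} (the fiber over a lattice $R$ parametrizes surjections $U^{*}\twoheadrightarrow R$ modulo $t^{r}L$, which is an affine space once one picks a base point). Since $O^{\lambda}$ is itself an affine space, $\Pi_{\lambda,r}$ is simply connected, hence any equivariant local system on it is geometrically constant; and since $\Pi_{\lambda,r}$ is a single orbit, equivariance rigidifies the constant sheaf up to isomorphism. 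Thus $\IC(\Pi_{\lambda,r})$ is the unique irreducible supported on the closure of $\Pi_{\lambda,r}$.

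Finally, for compatibility with the limit: for any $\lambda\in X_G$, the condition $\eqref{condition1}$ is satisfied whenever $N$ and $r$ are large enough, and in that case $\IC(\Pi_{\lambda,r})$ is independent of $r$ (by the discussion preceding the Proposition) and of $N$ (by the shifted-pullback transition functor, which preserves IC-sheaves on orbits that persist under the closed immersion). This yields a well-defined object $\IC(\Pi_{\lambda})$ of $P_{H(\locring)\times I_{G}}(\Pi(F))$ for each $\lambda\in X_G$, and the argument above shows that these are exactly the simple objects. The step I expect to require the most care is the identification of the fibers of $v\mapsto U_{v,r}$ as affine spaces and the resulting simple-connectedness of each orbit, since this is what rules out non-trivial equivariant local systems and makes the parametrization clean.
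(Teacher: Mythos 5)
Your overall architecture is the right one — decompose into $H(\locring)\times I_G$-orbits via Lemma \ref{Horbit}, invoke the standard fact that simple equivariant perverse sheaves are IC-extensions of irreducible equivariant local systems on orbits, then show these local systems must be constant and check stability under the transition functors defining the $2$-limit. But the step you flagged as the delicate one is in fact where the argument breaks. The fiber of $v\mapsto U_{v,r}$ over a lattice $R\in O^{\lambda}$ is \emph{not} an affine space: it is the set of surjective $\locring$-linear maps $U^{*}\to R/t^{r}L$, which is the open subvariety of the affine space $\Hom(U^{*},R/t^{r}L)$ cut out by the nonvanishing of the determinant of the induced map $U^{*}/tU^{*}\to R/(tR+t^{r}L)$. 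Picking a base point does not change this. When $m>n$ the complement of this open locus has codimension $\geq m-n+1\geq 2$, so simple connectedness is fine; but when $m=n$ the complement is a hypersurface, so the fiber (and hence $\Pi_{\lambda,r}$) has non-trivial étale $\pi_1$. For instance, with $n=m=1$ and $\lambda<r$, the orbit $\Pi_{\lambda,r}$ is the unit group $(\locring/t^{r-\lambda}\locring)^{\times}\cong\G_m\times\mathbb{A}^{r-\lambda-1}$, which carries non-constant $\qelbar$-local systems (Kummer sheaves). So your simple-connectedness claim is false in precisely the boundary case $m=n$, and the conclusion that every local system is geometrically constant does not follow from the argument given.

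The correct route, and the one the paper uses for the closely analogous statements (see the proofs of Proposition \ref{r} and Theorem \ref{irrec}), is not simple connectedness of the orbit but connectedness of the stabilizer: on a single orbit of a connected group, equivariant irreducible local systems are parametrized by irreducible representations of $\pi_0$ of the stabilizer of a point, so it suffices to show that the stabilizer in $H(\locring)\times I_G$ of a fixed $v\in\Pi_{\lambda,r}$ is connected. That is what rules out the Kummer-type local systems: in the $\GL_1$ example above, the stabilizer is $\{(g,h)\in\locring^{\times}\times\locring^{\times}: g\equiv h \bmod t^{r-\lambda}\}$, which is connected even though the orbit is not simply connected. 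So you should replace the fibration/simple-connectedness step with a stabilizer computation in the style of Lemma \ref{lemmatec} and Theorem \ref{irrec}; everything else in your proof (the reduction to finite level, the indexation by $X_G$, the independence of $N$ and $r$ for $N,r\gg 0$) is correct and matches the paper's intended reasoning.
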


%

\begin{proposition}
\label{H(O)-equiv} 
For any $\lambda$ in $X_{G}$, the complex $\heckefunc{G}
(\mathcal{A}^{\lambda},I_{0})$ is canonically isomorphic to 
$\mathrm{IC}(\Pi_{\lambda}).$
\end{proposition}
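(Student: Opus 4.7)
The plan is to realize $\heckefunc{G}(\mathcal{A}^\lambda, I_0)$ as the proper pushforward of a shifted IC-sheaf along the convolution correspondence associated with $\overline{O^\lambda}$, and then to identify this pushforward with $\IC(\Pi_\lambda)$ via a smallness argument using the lattice dictionaries of Lemmas~\ref{orbit} and~\ref{Horbit}. Unraveling the definition from Section~\ref{section4} (adapted to an input sheaf on $Gr_G$ rather than $\flagvar{G}$, via the smooth projection $\flagvar{G} \to Gr_G$), the Hecke functor fits into the correspondence
\[
\Pi(F) \xleftarrow{\;q_2\;} Y_\lambda \xrightarrow{\;q_1\;} \overline{O^\lambda},
\]
where $Y_\lambda$ is (the appropriate ind-pro-version of) the twisted product classifying pairs $(v, [g])$ with $v \in \Pi$, $[g] \in \overline{O^\lambda}$, modulo the diagonal $G(\locring)$-action, and $q_2(v, [g]) = g^{-1} v$. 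Then $\heckefunc{G}(\mathcal{A}^\lambda, I_0) \cong (q_2)_!\bigl(q_1^*\mathcal{A}^\lambda\bigr)[c]$ for the shift $c$ dictated by Example~\ref{Example}, absorbing the terms $\langle \theta, \check\mu\rangle$ and $s_1 \dim \Pi_0$.

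Next I would identify the image of $q_2$ with $\overline{\Pi_\lambda}$ through the lattice translation. For a test point $v' \in \Pi(F)$, the fiber $q_2^{-1}(v')$ parametrizes pairs $(v, [g])$ with $v = gv' \in \Pi$ and $[g] \in \overline{O^\lambda}$; equivalently, lattices $\mathcal{L} = g^{-1}L$ containing $v'(U^*)$ whose class in $Gr_G$ has $I_G$-orbit type obtained from $\overline{O^\lambda}$ by the inversion involution $g \mapsto g^{-1}$. By Lemma~\ref{Horbit} this fiber is non-empty precisely when $v' \in \overline{\Pi_\lambda}$, so $\heckefunc{G}(\mathcal{A}^\lambda, I_0)$ is supported in $\overline{\Pi_\lambda}$. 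Moreover, over the open stratum $\Pi_\lambda$, Lemma~\ref{orbit} implies that $\mathcal{L} = U_{v', r}$ is the unique lattice satisfying both conditions, so the restriction $q_2^{-1}(\Pi_\lambda) \to \Pi_\lambda$ is an isomorphism.

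The third step is to verify that $q_2$ is small with respect to the stratification of $\overline{\Pi_\lambda}$ by the orbits $\Pi_\mu$. For each such $\mu$, a fiber $q_2^{-1}(v')$ with $v' \in \Pi_\mu$ is the space of lattices in a fixed Iwahori orbit of $Gr_G$ containing $v'(U^*)$; its dimension must be bounded strictly below $\tfrac12 \mathrm{codim}(\Pi_\mu \subset \overline{\Pi_\lambda})$. This bound would follow from comparing the Iwahori orbit dimensions in $Gr_G$ with the $I_G \times H(\locring)$-orbit dimensions in $\Pi(F)$, both being affine spaces whose dimensions are explicit linear functions of $\langle \nu, \check\omega_1\rangle$ for $\nu \in W_G\lambda$ via condition~\eqref{condition1}. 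Granting smallness together with the open-stratum isomorphism, the Beilinson--Bernstein--Deligne decomposition theorem and the simplicity of $\mathcal{A}^\lambda$ force $(q_2)_!(q_1^*\mathcal{A}^\lambda)[c]$ to equal $\IC(\Pi_\lambda)$ with no auxiliary summands, since Proposition~\ref{irreducible} classifies the simple perverse sheaves that could possibly appear.

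The main obstacle I expect is precisely the smallness check at the non-open strata: unlike $G(\locring)$-orbit closures in $Gr_G$, the Iwahori orbit closures $\overline{O^\lambda}$ do not respect the obvious partial order on $X_G$ (as noted in Section~\ref{5}), so the combinatorics of which $\Pi_\mu$ appear in $\overline{\Pi_\lambda}$ and their codimensions must be extracted from the lattice description rather than from Bruhat-theoretic shortcuts. A secondary bookkeeping obstacle is to verify that the shift $c$ matches the perverse normalization for $\Pi_\lambda$ exactly, so that no residual cohomological shift creeps into the isomorphism; this should follow from the virtual-dimension identity over the component $\flagvar{G}^{\theta}$ combined with condition~\eqref{condition1}, but requires careful sign tracking.
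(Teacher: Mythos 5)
Your overall scheme (realize $\heckefunc{G}(\mathcal{A}^\lambda, I_0)$ as a proper pushforward along a convolution-type correspondence, observe it is an isomorphism over the open stratum $\Pi_\lambda$, then control the fibers over the lower strata) does match the paper's strategy, but the engine driving the estimate is missing, and one structural assumption in your sketch is off. Since $\mathcal{A}^\lambda$ is the IC-sheaf of the (generally singular) Iwahori-orbit closure $\overline{O^{\vphantom{'}}}^\lambda$, not the constant sheaf on a smooth source, a raw bound ``$\dim q_2^{-1}(v') < \tfrac12\operatorname{codim}(\Pi_\mu)$'' is not the right statement: the paper must stratify each fiber $Y_v$ into pieces $Y_v^{\eta}$ according to the $I_G$-orbit $O^\eta$ of $gG(\locring)$, and on each piece combine the extra degree drop of $\mathcal{A}^\lambda|_{O^\eta}$ (strict unless $\eta=\lambda$) with a dimension bound on $Y_v^\eta$ itself. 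Omitting this refinement, your appeal to the decomposition theorem would need a smallness estimate for the \emph{total} fiber which is not what is actually available.

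More seriously, the fiber dimension bound is precisely the nontrivial content of the proposition, and it is not obtained by ``comparing explicit linear functions of $\langle\nu,\check\omega_1\rangle$.'' The fibers $Y_v^\eta$ are schemes of lattices, not affine spaces, and the relevant dimensions involve both $m\langle\cdot,\check\omega_n\rangle$ (not $\check\omega_1$) and the length function $\dim O^\eta$, which is piecewise-linear in $\eta$. The paper controls this only after a further stratification of $Y_v^\eta$ by very positive cocharacters $\delta$ recording the $G(\locring)$-relative position of $U_{v,r}$ and $gL$, using Lemma~\ref{ineq2} (the estimate $2\dim Y^{\eta,\delta}-\dim O^\eta - \dim Gr_G^\delta \le -\dim O^\nu$), whose proof itself invokes Gaitsgory's theorem that the convolution of $I_G$-equivariant with $G(\locring)$-equivariant perverse sheaves on $Gr_G$ stays perverse, and finally the explicit identity $n\check\omega_n - 2\check\rho_G = (1,3,5,\dots,2n-1)$, which yields $\langle\delta, n\check\omega_n-2\check\rho_G\rangle \ge 0$ with equality only for $\delta=0$ (i.e.\ only for $\nu=\lambda$). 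Without the stratified degree bookkeeping, the very positive cocharacter decomposition, and the Gaitsgory input, the argument does not close; you correctly flagged this as the main obstacle but did not overcome it.
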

This proposition implies that any irreducible object of 
$P_{H(\locring)\times I_{G}}(\Pi(F))$ is obtained 
by the action of $\mathcal{A}^{\lambda}$ on $I_{0}$ for some 
$\lambda$ in $X_{G}.$
We will give a proof of this proposition after some preparation. First remark that if $\lambda$ is dominant then $\mathcal{A}^{\lambda}$ is $G(\locring)$-equivariant and in this case Proposition $\ref{H(O)-equiv}$ results from \cite[Proposition 5]{Lysenko1}. 
\par\smallskip
Let us give  a  description of the complex $\heckefunc{G}(\mathcal{A}^{\lambda},I_{0})$.
Choose two integers $N,r$  satisfying $N+r >0$ such that  for any $\nu\in{W_{G}.\lambda},$ the condition $\eqref{condition1}$ be satisfied. Let $\Pi_{0,r}\tilde{\times }\overline{O^{\vphantom{'}}}^{\lambda}$ be the scheme classifying pairs $(v,gG(\locring))$, where  $gG(\locring)$ belongs to $\overline{O^{\vphantom{'}}}^{\lambda}$ and $v$ is an $\locring$-linear map from $U^*$ to $gL/t^{r}L.$ Let 
\begin{equation}
\label{equpi}
\pi:\Pi_{0,r}\tilde{\times}\overline{O^{\vphantom{'}}}^{\lambda}\longrightarrow \Pi_{N,r}
\end{equation}
be the map sending a pair $(v,gG(\locring))$ to the composition $$U^{*}\overset{v}{\longrightarrow}gL/t^{r}L{\hookrightarrow {t^{-N}L/t^{r}L}}.$$
This map is proper. 
The projection $p:\Pi_{0,r}\tilde{\times}\overline{O^{\vphantom{'}}}^{\lambda}\to\overline{O^{\vphantom{'}}}^{\lambda}$ is a vector bundle of rank $rnm-m\langle\lambda,\check{\omega}_{n}\rangle,$ where $\check{\omega}_{n}=(1,\dots,1).$  We obtain in this particular case an isomorphism
\begin{equation}
\label{directimage}
\heckefunc{G}(\mathcal{A}^{\lambda},I_0)\iso {\pi_{!}(\qelbar\tilde{\boxtimes}\mathcal{A}^{\lambda})},
\end{equation}
where the complex $\qelbar\tilde{\boxtimes}\mathcal{A^{\lambda}}$ is normalized to be perverse, i.e.
$$\qelbar\tilde{\boxtimes}\mathcal{A}^{\lambda}\iso {p^*{\mathcal{A}^{\lambda}}[\dim \mathrm{rel}(p)]}.$$

As mentioned before the category $P_{G(\locring)}(Gr_{G})$ is equipped with a convolution functor. Consider the following convolution diagram
\begin{equation}
\label{conv}
 Gr_{G}\times Gr_{G}\overset{p}{\leftarrow} G(F)\times Gr_{G}\overset{q}{\to} G(F)\times_{G(\locring)}Gr_{G}\overset{m}{\to}Gr_{G},
\end{equation}
where $m$ is the multiplication.
Let $\mathcal{F}_{1}$ and $\mathcal{F}_{2}$ be two $G(\locring)$-equivariant perverse sheaves over $Gr_{G}$, the convolution functor of these two perverse sheaves is by definition
 $\mathcal{F}_{1}\star \mathcal{F}_{2}=m_{!}(\mathcal{F}_{1}\tilde{\boxtimes}\mathcal{F}_{2}),$ where
the sheaf $\mathcal{F}_{1}\tilde{\boxtimes}\mathcal{F}_{2}$ is perverse equipped with an isomorphism 
 \begin{equation}
 \label{twisted}
 p^{*}(\mathcal{F}_{1}\boxtimes\mathcal{F}_{2})\iso q^{*}(\mathcal{F}_{1}\bt\mathcal{F}_{2}).
 \end{equation}
According to \cite[Proposition 6]{Gaitsgory} the category $P_{I_{G}}(Gr_{G})$ acts on $P_{G(\locring)}(Gr_{G})$ by convolution and  this convolution functor $\star$ preserves perversity.  We want to use this result in order to give a dimension estimate for the objects of $P_{I_G}(Gr_G).$ 
 \par\medskip
For  $\mu$ in $X_G^{+}$,  let $\mathcal{B}^{\mu}$  be the $\mathrm{IC}$-sheaf associated with the $G(\locring)$-orbit  $t^{\mu}G(\locring)$ in $Gr_G.$ Then, for any cocharacter $\lambda$ in $X_{G}$ the convolution functor $\mathcal{A}^{\lambda}\star \mathcal{B}^{\mu}$ is perverse. For any $\nu$ in $X_{G},$ and any  point $gG(\locring)$ in $O^{\nu}$, let $Y$ be the fiber of the map $m$ over this point. The fiber $Y$  identifies with the affine Grassmannian $Gr_{G}$. For $\eta$ in $X_{G}$ and $\delta$ in $X_{G}^{+},$ let $Y^{\eta,\delta}$ be the intersection of $Y$ with $I_{G}t^{\eta}G(\locring)\times_{G(\locring)}Gr_{G}^{\delta}$.
\par\medskip
The restriction of $\mathcal{A}^{\lambda}\star \mathcal{B}^{\mu}$ to $O^{\nu}$ is placed  in usual degrees smaller than or equal to $-\dim O^{\nu},$ and the restriction of $\mathcal{A}^{\lambda}\tilde{\boxtimes}\mathcal{B}^{\mu}\vert_{Y^{\eta,\delta}}$ is the constant complex  sitting in usual degrees smaller than or equal to $ -\dim O^{\eta}-\dim Gr_{G}^{\delta}.$

\begin{lemma}
\label{ineq2}
For any $\eta$, $\nu$ in $X_{G}$ and  any $\delta$ in $X_{G}^{+}$ the following inequality holds:
$$2\dim Y^{\eta,\delta}-\dim O^{\eta}-\dim Gr_{G}^{\delta}\leq -\dim O^{\nu}.$$
\end{lemma}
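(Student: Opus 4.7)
I would specialize Gaitsgory's perversity theorem to $\lambda=\eta$, $\mu=\delta$ and combine it with proper base change and a weight/excision argument. Write $Z^{\eta,\delta}:=I_G t^\eta G(\locring)\times_{G(\locring)} Gr_G^\delta$, so that $Y^{\eta,\delta}=Y\cap Z^{\eta,\delta}$. Since the multiplication map $m$ is proper when restricted to the closed support $\overline{Z^{\eta,\delta}}$ of $\mathcal{A}^\eta\tilde{\boxtimes}\mathcal{B}^\delta$, proper base change identifies
\[
(\mathcal{A}^{\eta}\star\mathcal{B}^{\delta})_{gG(\locring)}\;\cong\;H^{*}_{c}\bigl(Y,\mathcal{A}^{\eta}\tilde{\boxtimes}\mathcal{B}^{\delta}|_{Y}\bigr),
\]
and the perversity of $\mathcal{A}^\eta\star\mathcal{B}^\delta$ (via Gaitsgory's theorem just recalled) forces this stalk to live in ordinary degrees $\leq -\dim O^{\nu}$.

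Next I would analyse the stratum-wise structure. On the open stratum $Z^{\eta,\delta}$ of $\overline{Z^{\eta,\delta}}$, the sheaf $\mathcal{A}^\eta\tilde{\boxtimes}\mathcal{B}^\delta$ is the single shifted constant sheaf $\qelbar[\dim Z^{\eta,\delta}]$, placed in ordinary degree exactly $-\dim O^\eta-\dim Gr_G^\delta$; on each smaller stratum $Z^{\eta',\delta'}\subsetneq \overline{Z^{\eta,\delta}}$ the intermediate extension property of IC-sheaves forces it to lie in ordinary degrees strictly less than $-\dim O^{\eta'}-\dim Gr_G^{\delta'}$. Writing $D(\eta',\delta'):=2\dim Y^{\eta',\delta'}-\dim O^{\eta'}-\dim Gr_G^{\delta'}$ for brevity, the top compactly-supported cohomology
\[
H^{D(\eta,\delta)}_c\bigl(Y^{\eta,\delta},\mathcal{A}^\eta\tilde{\boxtimes}\mathcal{B}^\delta|_{Y^{\eta,\delta}}\bigr)
\]
is then non-zero whenever $Y^{\eta,\delta}\neq\emptyset$ (being top compactly-supported cohomology of a non-zero shifted constant sheaf), while each smaller $Y^{\eta',\delta'}$ contributes to compactly-supported cohomology only in degrees strictly below $D(\eta',\delta')$.

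Finally, I would argue that this top class survives in the global stalk $H^{D(\eta,\delta)}_c(Y,\mathcal{A}^\eta\tilde{\boxtimes}\mathcal{B}^\delta|_Y)$ via the excision long exact sequence for the open–closed decomposition $Y\cap\overline{Z^{\eta,\delta}}=Y^{\eta,\delta}\sqcup (Y\cap\partial Z^{\eta,\delta})$. After descending the whole setup to a model over $\F_q$ (harmless since the inequality is a purely geometric statement about dimensions), a weight/purity argument applies cleanly: the top class on $Y^{\eta,\delta}$ is pure of weight $D(\eta,\delta)$, whereas compactly-supported contributions from the boundary strata have strictly smaller weight in the excision sequence, so the relevant connecting map cannot kill the top class and an injection into $H^{D(\eta,\delta)}_c(Y,\cdot)$ follows. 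Combined with the perversity bound from the first step, this yields $D(\eta,\delta)\leq -\dim O^\nu$, as desired. The main obstacle is precisely this last survival step: degrees alone do not rule out cancellation between strata of comparable $D(\eta',\delta')$ in the excision sequence, and one needs the weight/purity input (or alternatively a careful induction on the Bruhat order of $(\eta,\delta)$) to conclude.
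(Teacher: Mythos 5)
The paper's proof is considerably more economical than yours and completely avoids the obstacle you flag at the end. Instead of convolving the $\IC$-sheaves $\mathcal{A}^{\eta}$ and $\mathcal{B}^{\delta}$, the paper convolves the \emph{shriek-extensions} $\mathcal{A}^{\eta,!}$ and $\mathcal{B}^{\delta,!}$ (the shifted constant sheaves on $O^{\eta}$ and $Gr_G^{\delta}$ extended by zero). Since $j_!$ is right $t$-exact, these objects lie in ${}^pD^{\leq 0}$, and because convolution with a $G(\locring)$-equivariant object on $Gr_G$ preserves perversity (Gaitsgory), it is $t$-exact, hence carries ${}^pD^{\leq 0}\times{}^pD^{\leq 0}$ to ${}^pD^{\leq 0}$. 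This gives the upper bound on ordinary degree at a point of $O^{\nu}$, just as in your first step. The decisive gain is on the stalk side: the twisted box product $\mathcal{A}^{\eta,!}\tilde{\boxtimes}\mathcal{B}^{\delta,!}$ restricted to $Y$ is literally the extension by zero from $Y^{\eta,\delta}$ of a shifted constant sheaf, so the stalk of $\mathcal{A}^{\eta,!}\star\mathcal{B}^{\delta,!}$ at a point of $O^{\nu}$ is $H^{*}_c(Y^{\eta,\delta},\qelbar)$ (shifted), with nothing coming from any other stratum. The nonvanishing of $H_c$ in top degree of $Y^{\eta,\delta}$ then gives the inequality on the nose.

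Your proposal, using the actual $\IC$-sheaves, runs straight into the difficulty you yourself name: the stalk over $O^\nu$ is $H^*_c$ over all of $Y$, and the top class from $Y^{\eta,\delta}$ could in principle be killed in the excision sequence by contributions from smaller strata $Y^{\eta',\delta'}$ whose $D(\eta',\delta')$ is comparable. You propose to resolve this with a weight/purity argument after spreading out over $\F_q$, but you neither carry it out nor verify that the boundary contributions indeed have strictly smaller weight in the relevant degree (the restriction of an $\IC$-sheaf to a smaller stratum is a mixed complex, and after taking $H_c$ on a possibly non-proper stratum the weight bookkeeping is not immediate). As it stands, this is a genuine gap. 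The cleaner move --- switching to $!$-extensions so that \emph{only} $Y^{\eta,\delta}$ contributes --- removes the need for any such survival argument and should replace the weight step entirely.
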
 
 
\begin{proof}
Let $\mathcal{B}^{\delta,!}$ (resp. $\mathcal{A}^{\eta,!}$) be  the constant perverse sheaf on $Gr_{G}^{\delta}$ (resp.$O^{\eta}$) extended by zero with adequate perverse shift on $Gr_G.$  The extension by zero functor is right exact  for the perverse $t$-structure. Hence $\mathcal{B}^{\delta,!}$ (resp. $\mathcal{A}^{\eta,!}$) lies in non positive perverse degrees and so  does the convolution functor $\mathcal{A}^{\eta,!}\star \mathcal{B}^{\delta,!}.$  The $*$-restriction of $\mathcal{A}^{\eta,!}\tilde{\boxtimes}{\mathcal{B}^{\delta,!}}$ to $Y$ is the extension by zero from $Y^{\eta,\delta}$ to $Y$ of the constant complex. Hence this complex lies in degrees
$2\dim Y^{\eta,\delta}-\dim O^{\eta}-\dim Gr_{G}^{\delta}+\dim O^{\nu}$ and so we have the desired inequality. 
\end{proof}
\par\smallskip 
{\it Proof of Proposition $\ref{H(O)-equiv}:$} \  \  
Let $\lambda$ be in $X_{G}$ and consider the complex $\pi_{!}(\qelbar\tilde{\boxtimes}\mathcal{A}^{\lambda})$ appearing in $\eqref{directimage}.$  For $\nu$ in $X_{G},$ take a $H(\locring)\times I_G$-orbit $\Pi_{\nu,r}$ in $\Pi_{N,r}.$ If $v$ is in $\Pi_{\nu,r}$, let $Y_{v}$ be the fiber of  the map $\pi$  over $v$ defined in $\eqref{equpi}.$
 The fiber $Y_{v}$ is the scheme classifying elements $gG(\locring)$ in $\overline{O^{\vphantom{'}}}^{\lambda}$ such that $U_{v,r}$ is a sublattice of $gL.$ If $v$ is in $\Pi_{\lambda,r}$ then  $Y_{v}$ is just a point and so the map $\pi$  is an isomorphism over the open subscheme $\Pi_{\lambda,r}$. On one hand  this implies directly  that $\mathrm{IC}(\Pi_{\lambda,r})$ appears with multiplicity one in the complex of sheaves $\heckefunc{G}(\mathcal{A}^{\lambda},I_0).$ On the other hand this gives
$$\dim(\Pi_{\lambda,r})=rnm-m\langle\lambda,\check{\omega}_{n}\rangle+\dim O^{\lambda}.$$

Let  $U$ be  the open subscheme of $\Pi_{0,r}\tilde{\times}\overline{O^{\vphantom{'}}}^{\lambda}$ consisting of pairs $(v,gG(\locring))$ such that $gG(\locring)$ lies in $O^{\lambda}$ and $v:U^{*}\longrightarrow gL/t^ {r}L$ is surjective. The image of $U$ by $\pi$ is contained in $\Pi_{\lambda,r}.$ So, $\pi$ induces a surjective proper map  
$$
\pi_{\lambda}:\Pi_{0,r}\tilde{\times}\overline{ O^{\vphantom{'}}}^{\lambda}\longrightarrow \overline{\Pi^{\vphantom{'}}}_{\lambda,r}.
$$
For $v$ in $\Pi_{\nu,r},$ we stratify $Y_v$ by locally closed subschemes $Y_{v}^{\eta}$ indexed by cocharacters $\eta$ in $X_{G}.$ For any $\eta,$ the stratum $Y_{v}^{\eta}$  parametrizes elements  $gG(\locring)$ in $O^{\eta}.$
The $*$-restriction of $\qelbar\tilde{\boxtimes}\mathcal{A}^{\lambda}$ to $Y_{v}^{\eta}$ lives in usual degrees smaller than or equal to $-\dim O^{\eta}-rnm+m\langle\eta,\check{\omega}_{n}\rangle$  and the inequality is strict unless $\eta=\lambda.$  We will show that 
\begin{equation}
\label{desiredineq}
2\dim Y_{v}^{\eta}-\dim O^{\eta}-rnm+m\langle\eta,\check{\omega}_{n}\rangle\leq -\dim \Pi_{\nu,r}
\end{equation}
and that the inequality is strict unless $\nu=\lambda$, this would imply our claim.
Since we have $\dim(\Pi_{\nu,r})=rnm-m\langle\nu,\check{\omega}_{n}\rangle+\dim O^{\nu},$ the inequality $\eqref{desiredineq}$ becomes 
\begin{equation}
\label{ineq}
2\dim Y_{v}^{\eta}\leq m\langle\nu-\eta,\check{\omega}_{n}\rangle+\dim O^{\eta}-\dim O^{\nu}.
\end{equation}

Considering the map $\pi_{\eta}:\Pi_{0,r}\tilde{\times}{\overline{O^{\vphantom{'}}}^{\eta}}\longrightarrow \overline{\Pi^{\vphantom{'}}}_{\eta,r},$ we see that $\Pi_{\nu,r}\subset{\overline{\Pi^{\vphantom{'}}}_{\eta,r}}$. A dominant cocharacter $\delta$ in $X_{G}^{+}$ is called {\it very positive} if
$$
\delta=(b_1\geq\dots\geq b_{n}\geq 0).
$$
It is natural to stratify $Y_{v}^{\eta}$ by locally closed subschemes $Y_{v}^{\eta,\delta},$ where $\delta$ runs through very positive cocharacters. For any such $\delta,$ the stratum $Y_{v}^{\eta,\delta}$ consists of elements $(v,gG(\locring))$ such that the lattice $U_{v,r}$ is in $G(\locring)$-position $\delta$ with respect to the lattice $gL.$ 
For a point $(v,gG(\locring))$ of $Y_{v}^{\eta,\delta}$ the formula of virtual dimensions $\dim(L/gL)+\dim(gL/U_{v,r})=\dim(L/U_{v,r})$ gives
$$
\langle\delta+\eta-\nu,\check{\omega}_{n}\rangle=0.
$$
Finally the equation $\eqref{ineq}$ is equivalent to
$$2\dim Y_{v}^{\eta,\delta}\leq n\langle\delta,\check{\omega}_{n}\rangle+\dim O^{\eta}-\dim O^{\nu}.$$
 By using Lemma $\ref{ineq2}$ we are reduced to show that for any very positive $\delta,$ $\langle\delta,n\check{\omega}_{n}-2\check{\rho}_{G}\rangle\geq 0$. To prove this inequality notice that  
$$n\check{\omega}_{n}-2\check{\rho}_{G}=(1,3,5,\dots,2n-1).$$
Thus $n\check{\omega}_{n}-2\check{\rho}_{G}$ is very positive and so for any very positive cocharacter $\delta$ we have  $\langle\delta,n\check{\omega}_{n}-2\check{\rho}_{G}\rangle\geq 0$. This proves the inequality $\eqref{ineq}$. Moreover
for any very positive $\delta$ this inequality is strict unless $\delta=0$ which is the case if and only if $\nu=\eta.$ 
This finishes the proof. $\square$
\par\smallskip
Recall that according to the Satake isomorphism,  $P_{H(\locring)}(Gr_H)$ is equivalent to  the category $\mathrm{Rep}(\check{H})$ of representations of the Langlands dual group $\check{H}$ over $\qelbar$. The module structure of  $P_{H(\locring)\times G(\locring)}(\Pi(F))$ under the action of the category $P_{H(\locring)}(Gr_H)$ has been described in \cite[\S\ $5$]{Lysenko1}.  Namely, let  $U_{1}$ (resp. $U_{2}$) be the  vector subspace of $U_{0},$  generated by the first $n$ basis vectors (resp. by the last $m-n$ basis vectors) of $U_{0}$. Thus, $U_{0}=U_{1}\oplus U_{2}.$  Let $P\subset H$ be the parabolic subgroup preserving $U_{1}$. Let $M\,\isom\,\GL(U_{1})\times \GL(U_{2})$  the standard Levi factor in $P$ and  the map $\kappa: \check{G}\times \G_{\m}\to \check{H}$ be the composition
\begin{equation}
\label{kappa}
\check{G}\times \G_{\m}\overset{id\times 2\check{\rho}_{\GL(U_{2})}}\longrightarrow \check{G}\times \check{\GL}(U_{2})=\check{M}\hookrightarrow \check{H}.
\end{equation}

By using the extended Satake equivalence, 

$$\mathrm{gRes}^{\kappa}:P_{H(\locring)}(Gr_{H})\longrightarrow DP_{G(\locring)}(Gr_G)$$ 
for the functor corresponding to the restriction $\mathrm{Rep}(\check{H})\to \mathrm{Rep}(\check{G}\times \G_{\m})$ with respect to $\kappa$.  
 
\begin{proposition}\cite[Proposition 4]{Lysenko1}$ $
\label{lysenko}
The two functors 
$$P_{H(\locring)}(Gr_{H})\to D_{H(\locring)\times G(\locring)}(\Pi(F))$$ 
given by 
$$\mathcal{T}\to \heckefunc{H}(\mathcal{T},I_{0})\hspace{1cm} and \hspace{1cm}\mathcal{T}\to \heckefunc{G}(\mathrm{gRes}^{\kappa}(\mathcal{T}),I_{0})$$
are isomorphic. 
\end{proposition}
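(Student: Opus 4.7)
The plan is to reduce, via the geometric Satake equivalence $P_{H(\locring)}(Gr_H)\iso \Rep(\check H)$, to checking the isomorphism on the simple generators $\IC(\overline{Gr_H^\mu})$, $\mu\in X_H^+$. Both functors $\mathcal{T}\mapsto \heckefunc{H}(\mathcal{T},I_0)$ and $\mathcal{T}\mapsto \heckefunc{G}(\mathrm{gRes}^\kappa(\mathcal{T}),I_0)$ are additive and exact — the first because $\heckefunc{H}$ is built from smooth pullback and proper pushforward, the second because $\mathrm{gRes}^\kappa$ is the geometric incarnation of the exact restriction of representations along $\kappa$ — so producing a functorial iso on each simple suffices.

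On the right-hand side, I would use the extended Satake equivalence to identify $\mathrm{gRes}^\kappa(\IC(\overline{Gr_H^\mu}))$ with the complex representing $\Res^\kappa V_{\check H}^\mu$, viewed as a $\check G\times \Gm$-representation. Decomposing this as $\bigoplus V_{\check G}^\lambda\otimes I^{\otimes -k}$ with multiplicities $m^{\mu}_{\lambda,k}$ and then invoking the spherical analogue of Proposition~\ref{H(O)-equiv} — namely, $\heckefunc{G}(\IC(\overline{Gr_G^\lambda}),I_0)\iso \IC(\Pi_\lambda^{\mathrm{sph}})$, where $\Pi_\lambda^{\mathrm{sph}}$ is the $H(\locring)\times G(\locring)$-orbit on $\Pi(F)$ indexed by $\lambda\in X_G^+$ — rewrites the right-hand side as $\bigoplus_{\lambda,k}\IC(\Pi_\lambda^{\mathrm{sph}})[k]^{\oplus m^{\mu}_{\lambda,k}}$.

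On the left-hand side, following the pattern of Example~\ref{Example}, $\heckefunc{H}(\IC(\overline{Gr_H^\mu}),I_0)$ is realized as $\pi^H_!(\qelbar\bt \IC(\overline{Gr_H^\mu}))$ for a proper map $\pi^H:\Pi\tilde\times \overline{Gr_H^\mu}\to \Pi(F)$. The decomposition theorem splits it into shifts of the $\IC(\Pi_\lambda^{\mathrm{sph}})$, since the natural stratification of the target is by the $H(\locring)\times G(\locring)$-orbits, and the multiplicity of $\IC(\Pi_\lambda^{\mathrm{sph}})[k]$ is read from the stalk cohomology at a generic $v\in\Pi_\lambda^{\mathrm{sph}}$; the fiber $(\pi^H)^{-1}(v)$ parametrizes lattices $U_{v,r}\subset M'$ in $\overline{Gr_H^\mu}$, and its cohomology realizes the hyperbolic restriction of $\IC(\overline{Gr_H^\mu})$ along the semi-infinite orbit for the parabolic $P\subset H$ at the $Gr_M$-point determined by $v$.

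The main obstacle, and the heart of the argument, is matching the two multiplicities. The required ingredient is the Braden / Mirkovic--Vilonen geometric realization of parabolic restriction, which identifies hyperbolic restriction of $\IC(\overline{Gr_H^\mu})$ along the $P$-strata in $Gr_H$ with the $\check M = \check G\times \check\GL(U_2)$-weight-space decomposition of $V_{\check H}^\mu$, the cohomological shifts being recorded by $\langle\cdot, 2\check\rho_{\GL(U_2)}\rangle$ so as to absorb the dimension defect between $\overline{Gr_H^\mu}$ and the ambient $M(\locring)$-Schubert cells. Composing with the map $2\check\rho_{\GL(U_2)}:\Gm\to \check\GL(U_2)$ appearing in~\eqref{kappa} then converts the $\check\GL(U_2)$-weight into the cohomological shift $[k]$ on the geometric side, matching the $\Gm$-grading that arose from $\mathrm{gRes}^\kappa$. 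A dimension estimate in the spirit of Lemma~\ref{ineq2} and the argument following~\eqref{desiredineq}, now carried out for $\pi^H$ in place of $\pi$, confirms that only the expected top-degree contributions survive and that all shifts align on the nose, completing the comparison and yielding the desired natural isomorphism.
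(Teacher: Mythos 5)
The paper does not prove Proposition~\ref{lysenko}; it is cited verbatim from \cite[Proposition~4]{Lysenko1} and used as a black box, so there is no in-paper proof to compare against. That said, your outline is broadly consistent with the methods used in the cited source, and the global structure is sound: reduce to the simple objects $\IC(\overline{Gr_H^\mu})$ using semisimplicity of $P_{H(\cO)}(Gr_H)$; interpret the right-hand side via the extended Satake equivalence, restriction along $\kappa$, and \cite[Proposition~5]{Lysenko1} (the spherical analogue of Proposition~\ref{H(O)-equiv}, which is logically independent of the statement being proved, so no circularity); expand the left-hand side by the decomposition theorem for the proper convolution map; and match multiplicities via hyperbolic localization/Mirkovi\'c--Vilonen, with a dimension estimate to control the shifts.

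Two cautions. First, describing both functors as ``exact'' is not quite right --- $\heckefunc{H}(-,I_0)$ need not preserve perversity and only becomes a sum of shifted IC's after invoking the decomposition theorem; what is actually needed, and what makes the reduction to simples work, is additivity on a semisimple source plus the multiplicity-free endomorphism property of irreducibles. Second, the central claim --- that the fiber of $\pi^H$ over a generic point of $\Pi_\lambda^{\mathrm{sph}}$ computes the hyperbolic restriction of $\IC(\overline{Gr_H^\mu})$ to the $P$-semi-infinite orbit through the $Gr_M$-point attached to $v$, and that the cohomological degrees are governed by $\langle\cdot,2\check\rho_{\GL(U_2)}\rangle$ so as to match the $\Gm$-grading coming from $\kappa$ --- is exactly the nontrivial content of Lysenko's proof; in a complete argument it must be established by identifying the fiber with the relevant attracting set and checking the degree normalization, rather than merely invoking Braden's theorem. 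Your text signals this as ``the main obstacle'' but defers it entirely, so as written the proposal is a correct roadmap rather than a proof.
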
  
\begin{proposition} 
\label{Pp_action_H(O)_I_G}
For any $\lambda$ in $X_G$ and $\cT$ in $P_{H(\locring)}(Gr_H)$ we have the following isomorphism
$$
\heckefunc{H}(\cT, \IC(\Pi_{\lambda}))\,\isom\, \heckefunc{G}(\cA^{\lambda}\star \mathrm{gRes}^{\kappa}(\mathcal{T}), I_0).
$$
\end{proposition}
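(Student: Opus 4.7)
The plan is to reduce the statement to a chain of three identities already at our disposal: Proposition~\ref{H(O)-equiv} identifying $\IC(\Pi_\lambda)$ with $\heckefunc{G}(\cA^\lambda, I_0)$; Proposition~\ref{lysenko} relating the $H$-action on $I_0$ to the $G$-action twisted by $\mathrm{gRes}^\kappa$; and the compatibility of $\heckefunc{G}$ with the convolution product $\star$ on $D_{I_G}(\flagvar{G})$, which was recorded just after the definition of the Hecke functor in Section~\ref{section4}. The missing ingredient that glues these together is the commutativity of the two Hecke actions $\heckefunc{G}$ and $\heckefunc{H}$, which I will need to establish geometrically.

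First I would apply Proposition~\ref{H(O)-equiv} to rewrite
$$\heckefunc{H}(\cT, \IC(\Pi_\lambda)) \isom \heckefunc{H}\bigl(\cT, \heckefunc{G}(\cA^\lambda, I_0)\bigr).$$
Then the central step is to exhibit a canonical isomorphism
$$\heckefunc{H}\bigl(\cT, \heckefunc{G}(\cA^\lambda, I_0)\bigr) \isom \heckefunc{G}\bigl(\cA^\lambda, \heckefunc{H}(\cT, I_0)\bigr),$$
expressing the commutation of the two Hecke actions. At the level of points, this is transparent: the groups $G(F)$ and $H(F)$ act on $\Pi(F)=U_0(F)\otimes L_0(F)$ through different tensor factors, and these actions commute strictly; the required isomorphism is obtained by running the construction of Lemma~\ref{Lm_229} in parallel for $G$ and $H$ on the double twisted product $\Pi(F) \tilde\times Gr_G \tilde\times Gr_H$, where the order of the two convolutions can be interchanged. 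Once this commutation is in place, I apply Proposition~\ref{lysenko} on the inner argument to replace $\heckefunc{H}(\cT, I_0)$ by $\heckefunc{G}(\mathrm{gRes}^\kappa(\cT), I_0)$, and then invoke the associativity of $\heckefunc{G}$ recorded in Section~\ref{section4},
$$\heckefunc{G}\bigl(\cA^\lambda, \heckefunc{G}(\mathrm{gRes}^\kappa(\cT), I_0)\bigr) \isom \heckefunc{G}\bigl(\cA^\lambda \star \mathrm{gRes}^\kappa(\cT), I_0\bigr),$$
which yields the desired formula.

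The main obstacle is the second step, namely making the commutation of $\heckefunc{G}$ and $\heckefunc{H}$ precise at the level of equivariant derived categories. Although the underlying stacks $H(\locring)\backslash\bigl(\Pi(F)\times \Flagstrat^G\bigr)$ and $I_G\backslash\bigl(\Pi(F)\times Gr_H\bigr)$ admit a common refinement $(H(\locring)\times I_G)\backslash\bigl(\Pi(F)\times Gr_H\times \flagvar{G}\bigr)$ on which both twisted products live, one has to carefully track the cohomological shifts by the relative dimensions (depending on $\check{\omega}_n$ on the $H$-side and on $\check{\mu}$ on the $G$-side) and to verify that the two natural proper pushforwards along the action maps agree up to a canonical isomorphism. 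This is essentially a base-change argument over the stack classifying a triple (lattice in $L(F)$, lattice in $U(F)$, linear map between them), together with a compatibility check for the ind-pro limit procedure of Section~\ref{section3}; once this bookkeeping is done the proof is essentially formal.
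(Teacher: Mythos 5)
Your proposal follows exactly the same four-step chain as the paper's proof: rewrite $\IC(\Pi_\lambda)$ via Proposition~\ref{H(O)-equiv}, commute the $\heckefunc{G}$- and $\heckefunc{H}$-actions, apply Proposition~\ref{lysenko} to the inner term, then fold the two $\heckefunc{G}$'s together using compatibility with $\star$. The only difference is cosmetic: the paper simply asserts the commutativity of the two Hecke actions in a single clause (since $G(F)$ and $H(F)$ act through different tensor factors of $\Pi(F)$), whereas you flag it as the point needing geometric justification and sketch how the double twisted product argument would go; you have correctly identified that this is the only non-formal step.
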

\begin{proof} 
Since the actions of $P_{I_G}(\flagvar{G})$ and $P_{H(\locring)}(Gr_H)$ on $D_{H(\locring)\times I_{G}}(\Pi(F))$ commute, we get from Proposition~\ref{H(O)-equiv} and  \cite[Proposition 4]{Lysenko1}
\begin{eqnarray*}
 \heckefunc{H}(\mathcal{T},\mathrm{IC}(\Pi_{\lambda}))& \iso &\heckefunc{H}(\mathcal{T},\heckefunc{G}(\mathcal{A}^{\lambda},I_{0})) \\
&\iso & \heckefunc{G}(\mathcal{A}^{\lambda},\heckefunc{H}(\mathcal{T},I_{0}))\\
&\iso & \heckefunc{G}(\mathcal{A^{\lambda}}, \heckefunc{G}(\mathrm{gRes}^{\kappa}(\mathcal{T}),I_{0}))\\
&\iso & \heckefunc{G}(\cA^{\lambda}\star \mathrm{gRes}^{\kappa}(\mathcal{T}), I_0).
\end{eqnarray*}
\end{proof}

From Proposition~\ref{H(O)-equiv} it also follows that the functor 
\begin{equation}
\label{functor_maybe_eq_H(O)_I_G}
D_{I_{G}}(Gr_G)\to D_{H(\locring)\times I_{G}}(\Pi(F))
\end{equation}
given by $\cA\mapsto \heckefunc{G}(\cA, I_0)$ is exact for the perverse t-structures. It suffices to verify this for simple objects and this follows from Proposition $\ref{H(O)-equiv}$.   It is easy to see that neither of the categories $P_{I_{G}}(Gr_G)$ or $P_{H(\locring)\times I_{G}}(\Pi(F))$ is semi-simple. The functor (\ref{functor_maybe_eq_H(O)_I_G}) commutes with the actions of $P_{I_{G}}(\flagvar{G})$ by convolutions on the left. Let $P_{H(\locring)}(Gr_H)$ act on $D_{I_{G}}(Gr_G)$ via $\mathrm{gRes}^{\kappa}$ composed with the natural action of $D_{G(\locring)}(Gr_G)$ by convolutions on the right. According to Proposition~\ref{Pp_action_H(O)_I_G}, it is natural to expect that (\ref{functor_maybe_eq_H(O)_I_G}) commutes with the action of $P_{H(\locring)}(Gr_H)$. From Proposition~\ref{H(O)-equiv} and  Proposition \ref{lysenko} one derives the following:

\begin{theorem} 
\label{H(O)}
The functor $\eqref{functor_maybe_eq_H(O)_I_G}$ yields an isomorphism at the level of Grothendieck groups between $K(P_{I_{G}}(Gr_G))$ and  $K(P_{H(\locring)\times I_{G}}(\Pi(F)))$ commuting with the actions of $K(P_{H(\locring)}(Gr_H))$ and $K(P_{I_{G}}(\flagvar{G}))$.
\end{theorem}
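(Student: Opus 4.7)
The plan is to reduce everything to the results already assembled in this section. The functor $\Phi: \mathcal{A}\mapsto \heckefunc{G}(\mathcal{A}, I_{0})$ was just noted to be exact for the perverse $t$-structures, so it descends to a homomorphism of Grothendieck groups $\Phi: K(P_{I_{G}}(Gr_{G})) \to K(P_{H(\locring)\times I_{G}}(\Pi(F)))$. Both categories consist of perverse sheaves of finite length, so their Grothendieck groups are free abelian on the isomorphism classes of simple objects. The simples on the source are the IC-sheaves $\mathcal{A}^{\lambda}$ of the $I_{G}$-orbits $O^{\lambda}$, indexed by $\lambda\in X_{G}$; by Proposition~\ref{irreducible} the simples on the target are the $\IC(\Pi_{\lambda})$, also indexed by $\lambda\in X_{G}$. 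Proposition~\ref{H(O)-equiv} gives $\Phi([\mathcal{A}^{\lambda}])=[\IC(\Pi_{\lambda})]$, which identifies the two bases bijectively and so shows $\Phi$ is an isomorphism of abelian groups.

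It remains to check that $\Phi$ intertwines the two sets of Hecke actions. Commutativity with the left action of $K(P_{I_{G}}(\flagvar{G}))$ is essentially formal: on $D_{I_{G}}(Gr_{G})$ the action is by left convolution $\mathcal{T}\star \mathcal{A}$, while on $D_{H(\locring)\times I_{G}}(\Pi(F))$ it is given by $\heckefunc{G}(\mathcal{T},-)$. The required compatibility $\heckefunc{G}(\mathcal{T}\star \mathcal{A}, I_{0})\iso \heckefunc{G}(\mathcal{T}, \heckefunc{G}(\mathcal{A}, I_{0}))$ is the associativity of the Hecke action recorded at the end of \S~\ref{section4}.

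For the commutativity with $K(P_{H(\locring)}(Gr_{H}))$, the action on the source is $\mathcal{A}\mapsto \mathcal{A}\star \mathrm{gRes}^{\kappa}(\mathcal{T})$ (right convolution by the $G(\locring)$-equivariant object $\mathrm{gRes}^{\kappa}(\mathcal{T})$), and on the target it is $\heckefunc{H}(\mathcal{T},-)$. Since the source Grothendieck group is spanned by the classes $[\mathcal{A}^{\lambda}]$, it is enough to verify the identity
\[
\Phi\bigl([\mathcal{A}^{\lambda}\star \mathrm{gRes}^{\kappa}(\mathcal{T})]\bigr) \;=\; \bigl[\heckefunc{H}(\mathcal{T}, \IC(\Pi_{\lambda}))\bigr],
\]
which is exactly the content of Proposition~\ref{Pp_action_H(O)_I_G}. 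This last compatibility is the only substantive input; it in turn rests on Proposition~\ref{lysenko} from \cite{Lysenko1} together with the commutation of the two Hecke functors $\heckefunc{G}$ and $\heckefunc{H}$ on $D_{H(\locring)\times I_{G}}(\Pi(F))$. I expect the main difficulty, such as it is, to be located in that step: once Propositions \ref{irreducible}, \ref{H(O)-equiv} and \ref{Pp_action_H(O)_I_G} are in hand, the proof of the theorem is just a bookkeeping assembly of previously established results, since it is only their consequences at the Grothendieck group level that are needed.
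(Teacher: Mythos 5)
Your proof is correct and follows essentially the same approach as the paper, which simply states that the theorem follows from Proposition~\ref{H(O)-equiv} and Proposition~\ref{lysenko} (via Proposition~\ref{Pp_action_H(O)_I_G}); you have merely spelled out the bookkeeping the paper leaves implicit, namely that the exact functor matches simple objects bijectively and that the two Hecke compatibilities reduce to associativity of $\heckefunc{G}$ and to Proposition~\ref{Pp_action_H(O)_I_G}.
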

\section{Simple objects of \texorpdfstring{$P_{I_{G}\times I_{H}}(\Pi(F))$}{P_{I_{G}\times I_{H}}(\Pi(F))}}
\label{6}
We use the same notation as in the previous section. 
Our goal is to describe the simple objects of  $P_{I_{H}\times I_{G}}
(\Pi(F))$. To 
do so we study the $I_{H}\times I_{G}$-orbits on $\Pi_{\lambda,r}$ defined in [\S $\ref{5}$, Lemma $\ref{Horbit}$]. It turns out that it 
is not necessary to do the study for all cocharacters $\lambda$. Indeed if 
$\lambda=(a_{1},\dots ,a_{n})$ we will restrict ourselves to the case where all  
$a_{i}$'s  are strictly smaller than $r$.    This will be sufficient for our purpose. 
Let $$\mathrm{Stab}_{\lambda}=\{ g\in{I_G}\hspace{1mm}\vert\hspace{1mm} g(t^{\lambda}L)=t^{\lambda}L \}$$
and
$$X_{N,r}^{\lambda}=\{v\in{\Pi_{N,r}}\hspace{1mm}\vert\hspace{1mm} U_{v,r}=t^{\lambda}L+t^ {r}L\}.$$
Describing $I_{H}\times I_{G}$-orbits on $\Pi_{\lambda,r}$ is equivalent to describe $I_{H}\times \mathrm{Stab}_{\lambda}$-orbits on $X_{N,r}^{\lambda}.$ 
\par\medskip

\textbf{Assume} $\mathbf{n=m}$.
\begin{lemma}
\label{or}
The $I_{H}\times \mathrm{Stab}_{\lambda}$- orbits on  $X_{N,r}^ {\lambda}$ are in bijection with  the finite Weyl group $W_{G}$.
\end{lemma}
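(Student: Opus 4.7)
The plan is to reduce, via a rescaling by $t^{-\lambda}$, to an orbit problem on $\GL_n(\K)$ settled by the Bruhat decomposition. A naive mod-$t$ analysis of $X_{N,r}^\lambda$ under $\mathrm{Stab}_\lambda$ is misleading because the mod-$t$ image of $\mathrm{Stab}_\lambda$ in $\GL_n(\K)$ can be strictly smaller than a Borel (for instance the torus $T_G$ when $\lambda$ is strictly dominant), so that a direct orbit count at the mod-$t$ level would over-count.

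To fix this, I would set $v' = t^{-\lambda}\circ v$, where $t^{-\lambda}$ denotes the $F$-linear automorphism of $L(F)$ acting by $t^{-a_i}$ on $e_i$. This identifies $X_{N,r}^\lambda$ with $Y_{N,r}^\lambda := \{v'\in\Hom_{\mathcal{O}}(U^*,L) : v'(U^*) + t^{r-\lambda}L = L\}$, where $t^{r-\lambda}L := \bigoplus_i t^{r-a_i}\mathcal{O}\,e_i$. Under this change of variables the $\mathrm{Stab}_\lambda$-action becomes the action of $I_\lambda := t^{-\lambda}\mathrm{Stab}_\lambda t^\lambda = G(\mathcal{O})\cap t^{-\lambda}I_G t^\lambda$, while the $I_H$-action is unchanged. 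Since $a_i<r$ for all $i$, the lattice $t^{r-\lambda}L$ is contained in $tL$, so $v'$ reduces modulo $t$ to a surjection $\bar v':U_0^*\twoheadrightarrow L_0$ between two $n$-dimensional $\K$-vector spaces, hence an isomorphism; this defines a morphism $\pi:Y_{N,r}^\lambda\to \GL_n(\K)$. A direct inspection of the inequalities defining $I_\lambda$ shows that its mod-$t$ image is a Borel subgroup $B_\lambda\subset \GL_n(\K)$---the Borel stabilizing the flag obtained by sorting the basis $(e_i)$ weakly decreasingly according to $\lambda$---while $I_H$ reduces to $B_H$. Bruhat's decomposition then gives $|B_\lambda\backslash \GL_n(\K)/B_H| = |W_G|$.

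The remaining step is to show that $\pi$ induces a bijection of orbit spaces, i.e., that each fiber of $\pi$ is a single orbit of the pro-unipotent congruence kernels $K_\lambda = \ker(I_\lambda\to B_\lambda)$ and $K_H = \ker(I_H\to B_H)$. The fibers are affine spaces of finite type parametrizing the higher Taylor coefficients of $v'$. An explicit computation of how the Taylor coefficients of entries of $K_\lambda$ and $K_H$ translate those of $v'$---using the invertibility of $\bar v'$, which is where the hypothesis $a_i<r$ enters in an essential way---shows that the action is transitive, level by level in Taylor order. This lifting step is the main technical obstacle; it is a straightforward but lengthy bookkeeping once one has carried out the rescaling. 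Combining everything produces the desired bijection between $I_H\times\mathrm{Stab}_\lambda$-orbits on $X_{N,r}^\lambda$ and $W_G$.
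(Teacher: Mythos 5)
Your rescaling argument is correct and is, up to a change of variables, the same as the paper's. The paper does not commit the ``naive mod-$t$'' pitfall you warn against: instead of reducing $\mathrm{Stab}_\lambda$ via $G(\mathcal{O})\to G(\K)$, it sends $v$ to the induced map $\overline{v}:U^*/tU^*\to t^\lambda L/t^{\omega_n+\lambda}L$ and observes that the images of the standard periodic flag $L_i\cap t^\lambda L$ equip $t^\lambda L/t^{\omega_n+\lambda}L$ with a complete flag preserved by $\mathrm{Stab}_\lambda$, so the image of $\mathrm{Stab}_\lambda$ in $\mathrm{Aut}(t^\lambda L/t^{\omega_n+\lambda}L)$ is a Borel. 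Conjugating this whole picture by $t^{-\lambda}$ turns $t^\lambda L/t^{\omega_n+\lambda}L$ into $L_0$ and that Borel into your $B_\lambda$ (the Borel for the basis sorted by decreasing $a_i$, ties broken by index); so your $\pi:Y^\lambda_{N,r}\to\GL_n(\K)$ is the paper's $v\mapsto\overline{v}$ after a harmless isomorphism. Both proofs then invoke Bruhat to get $W_G$ and must show the fibers of $\pi$ (equivalently, of $v\mapsto\overline{v}$) are single orbits. Here your proposal stops short: you defer the ``level-by-level Taylor bookkeeping'' as a routine obstacle, whereas the paper settles it cleanly and in one stroke with the module-theoretic Lemma~\ref{lemmatec}, which says that two surjections $U^*\twoheadrightarrow B$ of free $\mathcal{O}$-modules agreeing mod $t$ differ by precomposition with an element of $\GL(U^*)(\mathcal{O})$ congruent to $1$ mod $t$. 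Note in particular that this uses only the $I_H$-side pro-unipotent kernel; you do not actually need $K_\lambda$ to act, so your appeal to both kernels is unnecessary (though not wrong). The net comparison: your presentation is slightly more explicit about why the Borel $B_\lambda$ varies with $\lambda$, while the paper's $t^\lambda$-twisted quotient avoids any rescaling and channels all the genuine work into the short Lemma~\ref{lemmatec} rather than an inductive Taylor argument.
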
 
\begin{proof}
Let $\{e_1,e_2,\dots,e_n\}$ be the standard basis of the vector space $L_0$, so the Borel subgroup $B_{G}$ preserves  the standard flag associated with the basis $(e_{i})_{1\leq i\leq n}$.  Let $(u_1^ *,u_2^*\dots,u_{m}^*)$ be the standard basis of the dual space $U_0^ *$. 
Let $v$ be a point in $X_{N,r}^{\lambda}$ and consider the induced map 
\begin{equation}
\label{iso}
\overline{v}:U^*/tU^{*}\longrightarrow{U_{v,r}/(tU_{v,r}+t^{r}L})=t^{\lambda}L/t^ {\omega_{n}+\lambda}L,
\end{equation}
where $\omega_{n}=(1,\dots,1).$ The map $\overline v$ is an isomorphism and it may be considered as an element of $\mathrm{Aut}(t^{\lambda}L/t^{\omega_{n}+\lambda}L).$ Denote by
$$\dots \subset L_{-1}\subset L_{0}\subset L_{1}\subset \dots$$
the standard complete flag of lattices inside $L(F)$ preserved by the Iwahori group $I_{G}.$ For any $i$ in $\Z$ the images of $L_{i}\cap t^{\lambda}L$ in $t^{\lambda}L/t^{\omega_{n}+\lambda}L$ define a complete flag  which is preserved by $\mathrm{Stab}_{\lambda}.$ Thus the image of $\mathrm{Stab}_{\lambda}$ in 
$\mathrm{Aut}(t^{\lambda}L/t^{\omega_{n}+\lambda}L)$ is a Borel subgroup of $G$ but not necessary the standard one. Hence the  $I_H\times \mathrm{Stab}_{\lambda}$-orbits on the set of isomorphisms $\eqref{iso}$ are parametrized by the finite Weyl group $W_{G}.$ By Lemma $\ref{lemmatec}$ below each $I_{H}\times \mathrm{Stab}_{\lambda}$-orbit on $X_{N,r}^{\lambda}$ is the preimage of a $I_{H}\times \mathrm{Stab}_{\lambda}$-orbit on the scheme of isomorphisms $\eqref{iso}.$
Finally one gets that $I_{H}\times \mathrm{Stab}_{\lambda}$-orbits on $X_{N,r}^{\lambda}$ are exactly indexed by $W_{G}$.
\end{proof}
By this Lemma, the set of $I_{H}\times I_{G}$-orbits on $\Pi_{\lambda,r}$ is in bijection by $W_{G}$.

\begin{lemma}
\label{lemmatec}
Let $p,q$ be two integers such that $p\le q.$ Let $B$ be a free $\locring$-module of rank $p$ and $A$ be a free $\locring$-module of rank $q$. Let $v_{1},v_{2}:A\rightarrow B$ be surjective $\locring$-linear maps such that  for $i=1,2$ the induced maps $\overline{v}_{i}:A/tA\rightarrow B/tB$ coincide. Then there is $h\in{\GL(A)(\locring)}$ with $h=1$ $\mathrm{mod}$ $t$ such that $v_{2}\circ h=v_{1}.$
\end{lemma}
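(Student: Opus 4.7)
The proof is essentially a one-step construction, so the plan is quite direct.

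The starting observation is that the hypothesis $\bar v_1 = \bar v_2$ means exactly that $v_1 - v_2$ takes values in $tB$. Since $B$ is $t$-torsion free, the rule $a \mapsto (v_1 - v_2)(a)/t$ defines a genuine $\locring$-linear map $w : A \to B$. Thus proving the lemma reduces to finding an $\locring$-linear map $X : A \to A$ with $v_2 \circ X = w$, for then $h := 1 + tX$ automatically satisfies $h \equiv 1 \pmod{t}$, lies in $\GL(A)(\locring)$ (any endomorphism congruent to the identity mod $t$ is invertible by the geometric series, since $\locring$ is $t$-adically complete), and
\[
v_2 \circ h = v_2 + t\, v_2 \circ X = v_2 + t w = v_2 + (v_1 - v_2) = v_1.
\]

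To produce $X$, I would use that $v_2 : A \to B$ is surjective and $B$ is a free (hence projective) $\locring$-module. This gives an $\locring$-linear section $s : B \to A$ with $v_2 \circ s = \id_B$. Setting $X := s \circ w$ then yields $v_2 \circ X = w$, as required. Alternatively, one can bypass projectivity by choosing, for each element $b_i$ of a basis of $A$, some preimage $X(b_i) \in v_2^{-1}(w(b_i))$ and extending by linearity; the surjectivity of $v_2$ makes this possible.

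There is no real obstacle. The only point one has to be careful about is that the $\locring$-linear division by $t$ used to define $w$ is legitimate, which is where the hypothesis $\bar v_1 = \bar v_2$ is used; everything else is formal. The hypothesis $p \leq q$ is in fact not needed for this argument — it only enters through the surjectivity of $v_1$ and $v_2$, which is already assumed explicitly. Note that the argument also makes clear that $h$ is not unique: it is determined only up to precomposition with an element of $\ker(v_2) \cap (1 + t\End_{\locring}(A))$, corresponding to the choice of section $s$.
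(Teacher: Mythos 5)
Your proof is correct and is genuinely different from the paper's. The paper argues via a direct sum decomposition: it sets $A_i = \ker v_i$, notes these are free of rank $q-p$, chooses complements $A = A_i \oplus W_i$ with $W_i$ free of rank $p$, and builds $h$ as $a \oplus b$, where $a \colon W_2 \to W_1$ is the unique isomorphism intertwining the $v_i$'s on the complements and $b \colon A_2 \to A_1$ is an isomorphism chosen to be the identity mod $t$ (using that $\overline{A_1} = \overline{A_2}$ in $A/tA$). Your argument instead divides the discrepancy $v_1 - v_2$ by $t$ (legitimate since its image lies in $tB$ and $B$ is torsion-free), lifts the resulting map $w$ through the surjection $v_2$ via projectivity of $B$ to get $X$ with $v_2 \circ X = w$, and sets $h = 1 + tX$. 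Your route is shorter and more robust: it bypasses the structure theory needed to produce the complements $W_i$, there is no case-matching between two separate decompositions, and the verification $v_2\circ h = v_1$ is a one-line computation. It also sidesteps a point that the paper's proof glosses over, namely checking that the component $a$ on the complements is itself the identity mod $t$ (the paper only explicitly verifies this for $b$). The only slight care needed in your version, which you correctly flag, is the invertibility of $h = 1 + tX$, which follows since $\det(1 + tX) \in 1 + t\locring$ is a unit in the local ring $\locring$ (equivalently, the geometric series converges $t$-adically). You are also right that $p \le q$ plays no role beyond ensuring surjectivity is possible.
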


\begin{proof}
Let $A_i$ be the kernel of $v_i$ for $i=1,2.$ These are  free $\locring$-modules of rank $q-p.$  Choose a direct sum decomposition $A=A_i\oplus W_i,$ where $W_i$ is a free $\locring$-module of rank $p.$ Then there is a unique isomorphism $a:W_2\longrightarrow W_1$ such that $W_ 2\overset{a}{\longrightarrow} W_1\overset{v_1} \longrightarrow A$ coincides with $W_{2}\overset{v_2}{\longrightarrow}A.$ The images of $A_{i}\otimes_{\locring}k$ in $A\otimes_{\locring}k$ coincide, therefore there exists an isomorphism of $\locring$-modules $b:A_{2}\longrightarrow A_1$ such that $\overline b:A_{2}\otimes_{\locring}k\longrightarrow A_{1}\otimes_{\locring}k$ is identity. Then $a\oplus b $ is the desired map $h$.
\end{proof}


Let $\tau$ be an element of $W_{G}$ and let $w=t^{\lambda}\tau$ be the corresponding element in $\weyl{G}$, where $\lambda=(a_{1},\dots,a_{n})$. Denote by $\Pi_{N,r}^{w}$ the $I_{G}\times I_{H}$-orbit on $\Pi_{N,r}$ passing through $v$ given by 
$$v(u_{i}^{*})=t^{a_{\tau(i)}}e_{\tau(i)} \, \, \, \mathrm{for}\, \, i=1,\dots,n$$
The $I_{G}\times I_{H}$-orbit on $\Pi_{N,r}$ are exactly $\Pi_{N,r}^{w}$ for $w$ in $\weyl{G}$.  

For any $w$ in $\weyl{G}$ denote by $\mathcal{I}^{w}$  the IC-sheaf of  the $I_{H}\times I_{G}$-orbit $\Pi_{N,r}^{w}$ indexed by $w,$ and by $\mathcal{I}^{w!}$   the constant perverse sheaf on $\Pi_{N,r}^{w}$ extended by zero to $\Pi_{N,r}$. 
 As an object of $P_{I_H\times I_G}(\Pi(F))$, it is independent of $r$, so that our notation is unambiguous. We underline that this notation is only introduced under the assumption  $a_{i}<r$ for all $i$.
 As the category $P_{I_{H}\times I_{G}}(\Pi(F))$ is obtained by filtering inductive 2-limit, simple objects of this category are the image of simple objects of the pieces of the limit.  
\begin{proposition}
\label{r}
Recall that $n=m$. Any irreducible object of  $P_{I_{H}\times I_{G}}(\Pi(F))$ is of the form  $\mathcal{I}^{w}$ for some $w$ in $\weyl{G}.$
\end{proposition}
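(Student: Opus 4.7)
The plan is to reduce the classification problem to the finite-dimensional setting and then enumerate orbits combinatorially. Since $P_{I_{H}\times I_{G}}(\Pi(F))$ is the filtered 2-colimit of the categories $P_{I_{H}\times I_{G}}(\Pi_{N,r})$ along fully faithful, t-exact transition functors---as observed just before the statement---every simple object of the limit is the image of a simple object from some $P_{I_{H}\times I_{G}}(\Pi_{N,r})$. It is therefore enough to show that the simple objects of $P_{I_{H}\times I_{G}}(\Pi_{N,r})$ are precisely the IC-sheaves $\IC(\Pi_{N,r}^{w})$, and that the $I_{H}\times I_{G}$-orbits on $\Pi_{N,r}$ are indexed by those $w\in\weyl{G}$ whose parameters meet $\eqref{condition1}$.

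For the enumeration I would concatenate the two preceding lemmas. Lemma $\ref{Horbit}$ provides a bijection between $H(\locring)\times I_{G}$-orbits on $\Pi_{N,r}$ and cocharacters $\lambda\in X_{G}$ subject to $\eqref{condition1}$, the orbit being $\Pi_{\lambda,r}$. Lemma $\ref{or}$, which crucially uses $n=m$, then refines each $\Pi_{\lambda,r}$ into $|W_{G}|$ many $I_{H}\times I_{G}$-orbits parametrized by the finite Weyl group $W_{G}$. Putting the two together, the $I_{H}\times I_{G}$-orbits on $\Pi_{N,r}$ are parametrized by pairs $(\lambda,\tau)\in X_{G}\times W_{G}$, i.e.\ by elements $w=t^{\lambda}\tau$ of the affine extended Weyl group $\weyl{G}=W_{G}\ltimes X_{G}$. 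Under this bijection the orbit attached to $w=t^{\lambda}\tau$ with $\lambda=(a_{1},\dots,a_{n})$ is $\Pi_{N,r}^{w}$, represented by $v(u_{i}^{*})=t^{a_{\tau(i)}}e_{\tau(i)}$.

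It remains to see that every simple $I_{H}\times I_{G}$-equivariant perverse sheaf with open support $\Pi_{N,r}^{w}$ equals $\mathcal{I}^{w}$, equivalently that the only $I_{H}\times I_{G}$-equivariant irreducible local system on $\Pi_{N,r}^{w}$ is the trivial one. Inspecting the proof of Lemma $\ref{or}$: the orbit map from $I_{H}\times \mathrm{Stab}_{\lambda}$ factors through the reduction-mod-$t$ map $v\mapsto\bar v$ of $\eqref{iso}$. By Lemma $\ref{lemmatec}$ its fibers are orbits of a pro-unipotent congruence subgroup, hence affine spaces; and since $n=m$, the image on $\mathrm{Aut}(t^{\lambda}L/t^{\omega_{n}+\lambda}L)\cong \GL_{n}$ is a Bruhat cell for the action of the two Borel subgroups (the standard one coming from $I_H$ and the one coming from $\mathrm{Stab}_\lambda$), which is again an affine space. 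Hence each $\Pi_{N,r}^{w}$ is itself an affine space, in particular simply connected, so the only equivariant irreducible local system on it is the constant one, and $\mathcal{I}^{w}$ is the unique simple object with that open support. Compatibility of the transition functors $i_*$ and $p^*[\dim\mathrm{rel}(p)]$ with intermediate extension to closures ensures that the notation $\mathcal{I}^{w}$ is well-defined in the limit, independently of the chosen $N,r$, as already noted in the text.

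The main obstacle is the affine-space / simple-connectedness step: the enumeration of orbits is essentially book-keeping once Lemmas $\ref{Horbit}$ and $\ref{or}$ are in hand, but verifying that each orbit really is an affine space (not merely a quasi-affine variety with possibly nontrivial fundamental group) demands a careful look at the fibration furnished by $v\mapsto\bar v$. This is very natural by analogy with Bruhat cells in the affine flag variety $\flagvar{G}$, but it has to be made explicit here because the two Borel subgroups acting on $\GL_{n}$ are not the same and depend on $\lambda$.
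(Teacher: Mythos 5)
Your overall strategy matches the paper: reduce to the finite level, parametrize $I_H\times I_G$-orbits by $\weyl{G}$ via Lemmas~$\ref{Horbit}$ and~$\ref{or}$, and then show that the only equivariant irreducible local system on each orbit is constant. However, the way you carry out the last step contains a genuine error. You assert that the image of an orbit in $\mathrm{Aut}(t^\lambda L/t^{\lambda+\omega_n}L)\cong\GL_n$ is ``a Bruhat cell for the action of the two Borel subgroups, which is again an affine space,'' and you then conclude that $\Pi^w_{N,r}$ is an affine space, hence simply connected. This is false: the $B_H\times B_\lambda$-orbits on the $\GL_n$-torsor of isomorphisms $\eqref{iso}$ are double cosets of the form $B\,w\,B'$ \emph{inside the group}, and such a double coset factors as $\mathbb{A}^{\ell(w)}\times B'\cong\mathbb{A}^{\ell(w)}\times(\G_m)^n\times\mathbb{A}^{n(n-1)/2}$; the torus factor means it is not an affine space and not simply connected (already for $n=1$ the full-rank orbit is $\G_m$). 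You are conflating $BwB$ with the Schubert cell $BwB/B$ of the flag variety, which \emph{is} an affine space. Consequently your argument that the orbit is simply connected, and hence that all local systems on it are trivial, does not go through.

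The correct argument, which is the one the paper gives, is independent of simple connectedness: for a connected group acting transitively, equivariant irreducible local systems correspond to irreducible representations of the component group of a point stabilizer, so it suffices to check that the stabilizer is \emph{connected}. The paper reduces (via the affine fibration $v\mapsto\bar v$ of Lemma~$\ref{lemmatec}$, whose fibers are torsors under the unipotent group $\mathrm{Hom}(U^*,t^{\lambda+\omega_n}L/t^rL)$) to the connectedness of the stabilizer in $B_G$ of a point of a double coset $B_G w B_G$, which is an intersection of Borel subgroups and hence connected. Note that the acting group $B_H\times B_\lambda$ is itself not simply connected, so connectedness of the stabilizer does \emph{not} imply simple connectedness of the orbit here, and your stronger claim cannot be salvaged. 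A secondary, more minor, omission: the paper first reduces to the case where all $a_i$ are \emph{strictly} less than $r$ (by replacing $r$ with $s>r$ and observing that $\IC(\mathcal{Y})$ is the IC-sheaf of an orbit in $\Pi_{\lambda,s}$); this is needed because the orbit description $\Pi^w_{N,r}$ and the map $\eqref{iso}$ are only set up under that standing hypothesis, and your remark about the transition functors does not replace this step.
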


\begin{proof}
Let $\lambda=(a_1,\dots,a_{n})$.  An irreducible object of $P_{I_{H}\times I_{G}}(\Pi(F))$ is the $\IC$-sheaf of  an $I_{H}\times I_{G}$-orbit $\mathcal{Y}$ on $\Pi_{\lambda,r}$ for some integer $r$ and for some cocharacter $\lambda$ satisfying $\eqref{condition1}$.  In particular all $a_{i}$'s are smaller than or equal to $r$.  First we will show that we can restrict ourselves to the case where all $a_i$'s are strictly less than $r$. 
Assume that $a_{i}=r$ for some $i$. For $s>r$ consider the projection $q:\Pi_{N,s}\to \Pi_{N,r}.$ Then the $H(\locring)\times I_{G}$-orbit  $\Pi_{\lambda,s}$ is open in $q^{-1}(\Pi_{\lambda,r})$. The map $q:\Pi_{\lambda,s}\to \Pi_{\lambda,r}$ is not surjective but the sheaf $\IC(\mathcal{Y})$ is non-zero over the locus in $q^{-1}(\Pi_{\lambda,r})$ of maps $v:U^{*}\to t^{-N}L/t^{r}L$ whose geometric fiber of the image is of maximal dimension $n.$
Hence the IC-sheaf of $\mathcal{Y}$ is a also an IC-sheaf of some $I_{H}\times I_{G}$-orbit on $\Pi_{\lambda,s}$. We are reduced to the case where all $a_i$ are strictly less than $r$. Recall that the geometric fiber of an $\locring$-module $\mathcal{L}$ is $\mathcal{L}\otimes_{\locring}\K$.
\par\medskip
Next we are going to prove that each $I_{H}\times I_{G}$-equivariant local system on $\Pi_{N,r}^{w}$ is constant.  The map $  X_{N,r}^{\lambda}\to \mathrm{Isom}(U^{*}/tU^{*},t^{\lambda}L/t^{\lambda+\omega_{n}}L)$ given by $v\to \bar{v}$ is an affine fibration. The group $\mathrm{Hom}(U^{*},t^{\lambda+\omega_{n}}L/t^{r}L)$ acts freely and transitively on the fibers of this map. So we are reduced to show that any $B_{G}\times B_{H}$-equivariant local system on any $B_{H}\times B_{G}$-orbit on $U_{0}\otimes L_{0}$ is constant.  This is Indeed  true because  the stabilizer in $B_{G}$ of a point in the double coset $B_{G}wB_{G}/B_{G}$ for any $w$ in $ W_{G} $ is connected. 
\end{proof}
If $\lambda$ is dominant then the image of $\mathrm{Stab}_{\lambda}$ in $\mathrm{Aut}(t^{\lambda}L/t^{\lambda+\omega_{n}}L)$ is the standard Borel subgroup of $G.$ Thus when $w=t^{\lambda}$ with $\lambda$ being dominant we have that $\Pi_{N,r}^{w}$ is an open subscheme of  $\Pi_{\lambda,r}$  and $\mathcal{I}^{w}=\IC(\Pi_{\lambda,r}).$
\par\medskip
\textbf{Assume that} $\mathbf{n\leq m}$.

 In this case the map $\eqref{iso}$ is not an isomorphism but only  a surjection. We may consider the $I_H\times \mathrm{Stab}_{\lambda}$-orbits on the set of surjections $\eqref{iso}.$ Let $S_{n,m}$ be the set of pairs $(s,I_s),$ where $I_s$ is a subset of $n$ elements of $\{1,\dots,m\}$ and $s:I_{s}\longrightarrow \{1,\dots,n\}$ is a bijection.
Let $W_{1}\subset W_{2}\subset\dots \subset W_{m}=U_{0}^{*}$ be a complete flag preserved by $B_{H}$. We denote by  $\overline{W}_{i}$ the image of $W_{i}$ under the map $\eqref{iso}.$ Then 
$I_{s}=\{1\leq i\leq m \vert \dim \overline{W}_{i} > \dim \overline{W}_{i-1}\}.$

Recall that for $\lambda=(a_{1},\dots,a_{n})$ we assume that $a_{i}<r$ for all $i$. From Lemma $\ref{lemmatec}$ one deduce that each $I_{H}\times \mathrm{Stab}_{\lambda}$-orbit on $X_{N,r}^{\lambda}$ is the preimage of a $I_{H}\times\mathrm{Stab}_{\lambda}$-orbit on the set of surjections $\eqref{iso}$. Let
$w=(\lambda,s)$ be in $X_{G}\times S_{n,m}$ then the $I_H \times I_G$-orbit passing through $v$ a point of $\Pi_{N,r}$ is given by 
\begin{equation}
 \left\{ \begin{array}{ll}
\label{orbitn<m}
         v(u^{*}_i)=t^{a_{si}}e_{si} & \mbox{ for $i\in{I_s}$};\\
        v(u^{*}_i)=0 & \mbox{for $i\notin{I_s}$}.\end{array} \right.
        \end{equation}
 We denote this orbit by $\Pi_{N,r}^{w}$ and its closure by $\overline{\Pi}_{N,r}^{w}$. For any $w=(\lambda,s)$ in $ X_{G}\times S_{n,m}$ denote by $\mathcal{I}^{w}$ the IC-sheaf of $\Pi_{N,r}^{w}.$ The corresponding object of $D_{I_H\times I_G}(\Pi(F))$ is well-defined and independent of $N,r$.  Denote by $\mathcal{I}^{w!}$ the extension by zero of the constant perverse sheaf from $\Pi_{N,r}^{w}$ to $\Pi_{N,r}$. The corresponding object of $D_{I_{H}\times I_{G}}(\Pi(F))$ is well-defined and independent of $N$ and $r$. 

\begin{theorem}
 \label{irrec}
Any irreducible object of $P_{I_{H}\times I_{G}}(\Pi(F))$ is of the form $\mathcal{I}^{w}$ for some $w$ in  $X_{G}\times S_{n,m}.$
 \end{theorem}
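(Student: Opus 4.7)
The plan is to follow the same three-step strategy used for Proposition~\ref{r}, adjusting each step for the inclusion $n\leq m$ instead of the equality. First I would reduce to the case where, writing $\lambda=(a_1,\ldots,a_n)$, all $a_i$ are \emph{strictly} less than $r$. This reduction is identical to the one at the beginning of the proof of Proposition~\ref{r}: for an irreducible object supported on an $I_H\times I_G$-orbit $\mathcal{Y}\subset \Pi_{\lambda,r}$, if some $a_i=r$ then choosing $s>r$ and using the projection $q:\Pi_{N,s}\to\Pi_{N,r}$, the IC-sheaf of $\mathcal{Y}$ is supported on the locus where the image $v(U^{*})$ has geometric fiber of maximal rank $n$, which lies inside $\Pi_{\lambda,s}$; so we may replace $r$ by $s$ and assume all $a_i<r$.

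Second, under this assumption I would identify the set of $I_H\times I_G$-orbits on $\Pi_{\lambda,r}$ with the subset of $S_{n,m}$ already singled out in the preceding discussion. The strategy is the one used in Lemma~\ref{or}: describing $I_H\times I_G$-orbits on $\Pi_{\lambda,r}$ reduces to describing $I_H\times \mathrm{Stab}_\lambda$-orbits on $X^{\lambda}_{N,r}$, and Lemma~\ref{lemmatec} shows each such orbit is the preimage of an $I_H\times\mathrm{Stab}_\lambda$-orbit on the set of surjections
\[
\overline{v}:U^{*}/tU^{*}\longrightarrow t^{\lambda}L/t^{\omega_n+\lambda}L.
\]
The image of $\mathrm{Stab}_\lambda$ in $\mathrm{Aut}(t^{\lambda}L/t^{\omega_n+\lambda}L)$ is a Borel subgroup $B'$ of $G$, and $I_H$ acts on $U^{*}/tU^{*}$ via the standard Borel $B_H$ preserving the complete flag $W_1\subset\cdots\subset W_m$. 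The $B_H\times B'$-orbits on the scheme of surjections from an $m$-dimensional vector space to an $n$-dimensional one are parametrized by $S_{n,m}$: the combinatorial datum $(s,I_s)$ is recovered from $\overline{v}$ by taking $I_s=\{i:\dim\overline{W}_i>\dim\overline{W}_{i-1}\}$, and the bijection $s$ records the position, relative to the fixed flag on $t^{\lambda}L/t^{\omega_n+\lambda}L$, where each jump takes place. The explicit representative given in formula \eqref{orbitn<m} shows that the orbit so labelled is exactly $\Pi^{w}_{N,r}$ for $w=(\lambda,s)$.

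Third, I would verify that each $I_H\times I_G$-equivariant irreducible local system on $\Pi^{w}_{N,r}$ is constant, whence the IC-sheaf of every such orbit coincides with $\mathcal{I}^{w}$. As in Proposition~\ref{r}, the natural map $X^{\lambda}_{N,r}\to \mathrm{Surj}(U^{*}/tU^{*},\, t^{\lambda}L/t^{\lambda+\omega_n}L)$ sending $v\mapsto \overline{v}$ is an affine fibration with a free and transitive action of the affine group $\mathrm{Hom}(U^{*},t^{\lambda+\omega_n}L/t^{r}L)$ on fibers, so the question reduces to showing that every $B_H\times B_G$-equivariant local system on a $B_H\times B_G$-orbit inside $\mathrm{Hom}(U^{*}/tU^{*},L_0)$ is constant. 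This follows from the connectedness of the stabilizers, which in turn reduces to the classical fact that stabilizers in $B_G$ of double cosets $B_G w B_G/B_G$ are connected, combined with the analogous statement for the extra factor that records the embedding $I_s\hookrightarrow\{1,\ldots,m\}$.

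The only delicate step is the middle one, where I have to be careful that the passage from $I_H\times \mathrm{Stab}_\lambda$-orbits on surjections to orbits on $X^{\lambda}_{N,r}$ is bijective (and not merely surjective). This is precisely where the hypothesis $a_i<r$ and Lemma~\ref{lemmatec} enter: the latter ensures that two points $v_1,v_2$ of $X^{\lambda}_{N,r}$ with the same image $\overline{v}_1=\overline{v}_2$ differ by an element of $\mathrm{Stab}_\lambda$ congruent to $1$ modulo $t$, and hence lie in the same $I_H\times\mathrm{Stab}_\lambda$-orbit. Everything else is a straightforward combinatorial translation of the $n=m$ argument.
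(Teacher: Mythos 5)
Your proposal is correct and follows essentially the same strategy as the paper: reduce to the case $a_i<r$, identify the $I_H\times I_G$-orbits with $X_G\times S_{n,m}$, and show that equivariant local systems on each orbit are constant by establishing connectedness of stabilizers. The paper phrases the last step slightly differently — rather than reducing via the affine fibration to a question about $B_H\times B_G$-orbits on $\mathrm{Hom}(U^*/tU^*,L_0)$ as you do (following the style of Proposition~\ref{r}), it works directly with the stabilizer $St(v)\subset I_H\times I_G$ and exhibits it as an extension $1\to St_0(v)\to St(v)\to St(\overline{v})\to 1$ of connected groups (connectedness of $St_0(v)$ coming from Lemma~\ref{lemmatec}, and surjectivity of $St(v)\to St(\overline{v})$ also via Lemma~\ref{lemmatec}) — but this is the same idea repackaged, and both presentations lean on the unproved but standard fact that the stabilizer in $B_H\times B_\lambda$ of a surjection $\overline{v}$ is connected, which you acknowledge in your closing remark about the ``extra factor.''
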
 
   
\begin{proof}
An irreducible object of $P_{I_{H}\times I_{G}}(\Pi(F))$ is the $\IC$-sheaf of  an $I_{H}\times I_{G}$-orbit $\mathcal{Y}$ on $\Pi_{\lambda,r}$ for some integer $r$ and for some cocharacter $\lambda=(a_{1},\dots,a_{n})$ satisfying $\eqref{condition1}$.  As in the proof  of  Proposition $\ref{r}$ we may assume that all  $a_{i}$'s are strictly less than $r$.
Consider a $I_{H}\times I_G$-orbit $\Pi_{N,r}^ {w}$ on $\Pi_{N,r}$ passing through $v$ as defined in  $\eqref{orbitn<m}.$ Let $St(v)$ be the stabilizer of $v$ in $I_{H}\times I_G.$ We are going to show that $St(v)$ is connected. This will imply that any $I_{H}\times I_G$-equivariant  local system on $I_{H}\times I_{G}$-orbit  $\Pi_{N,r}^{w}$ is constant.
\par\medskip
 The stabilizer $St(v)$ of $v$ is a subgroup of $I_{H}\times \mathrm{Stab}_{\lambda}.$ Let $B_{\lambda}$ be the image of $\mathrm{Stab}_{\lambda}$ in $\mathrm{Aut}(t^{\lambda}L/t^ {\lambda+\omega_{n}}L)$ then $B_{\lambda}$ is a Borel subgroup of $\mathrm{Aut}(t^{\lambda}L/t^ {\lambda+\omega_{n}}L).$ We define two groups  $I_{0,\lambda}$ and $I_{0,H}$ by the exact sequences 
 
 $$1\longrightarrow I_{0,\lambda}\longrightarrow \mathrm{Stab}_{\lambda} \longrightarrow B_{\lambda} \longrightarrow 1,$$
 and
$$1\longrightarrow I_{0,H} \longrightarrow I_{H} \longrightarrow B_{H} \longrightarrow 1.$$
Note that $I_{H}$  is semi-direct product of $I_{0,H}$ and $B_{H}.$
 Let $St_{0}(v)$ be the stabilizer of $v$ in $I_{0,H}\times I_{0,\lambda}.$  By Lemma  $\ref{lemmatec},$ the $I_{0,H}\times I_{0,\lambda}$-orbit through $v$ on $X_{N,r}^{\lambda}$ is the affine space of surjections 
 $f:U^*\longrightarrow t^{\lambda}L/t^ {r}L$ such that $f=v$ mod $t.$ Thus $St_{0}(v)$ is connected.
 Let $\overline{v}:U^{*}/tU^{*}\longrightarrow t^{\lambda}L/t^{\lambda+\omega_{n}}L$ be the reduction of $v$ mod $t.$ The   stabilizer  $St(\overline{v})$ of $\overline{v}$ in $B_{H}\times B_{\lambda}$ is connected. By Lemma $\ref{lemmatec}$ the reduction map from $St(v)$  to $St(\overline{v})$ is surjective. Using the following exact sequence
$$1 \longrightarrow St_0(v) \longrightarrow St(v) \longrightarrow St(\overline{v}) \longrightarrow 1$$
 we obtain that $St(v)$ is connected.

\end{proof}

\section{Study of Hecke functors, \texorpdfstring{$n=1$}{n=1} and \texorpdfstring{$m\geq 1$}{m\geq 1}}
\label{7}
We will assume that $n=1$ and $m\geq 1$ in the entire section and we will give a complete description of $DP_{I_H\times I_G}(\Pi(F))$  under the actions of  $P_{I_H}(\flagvar{H})$ et $P_{I_G}(\flagvar{G})$. We use the same notation as in previous section.  We will work most of the time over an algebraically closed field (and ignore the Tate twists). 
\par\medskip
 For $1\le i\le m$ we denote by $\omega_{i}=(1,\dots,1,0,\dots, 0)$ the cocharacter of $T_{H}$  where $1$ appears $i$ times. The  Iwahori group $I_{H}$ preserves $t^{-\omega_{i}}U$ and  $t^{\omega_{i}}U^{*}.$
Let $\Omega_{H}$ be the  normal subgroup in the affine extended Weyl group $\weyl{H}$  of elements of length zero. Note that  $\omega_m=(1,\dots,1)$ is in $\Omega_{H}.$  
 \par\medskip
 For $1\le i\le m,$ let $U^i=t^{-\omega_{i}}U$. Define $U^i$ for all $i\in\ZZ$ by the property that $U^{i+m}=t^{-\omega_m}U^i$ for all $i$. Thus,
$$
\dots \subset U^{-1}\subset U^{0}\subset U^{1}\subset\dots
$$
is the standard flag preserved by $I_H$. For any integer $k$ in $\Z$, we denote by $\IC^{k}$ the IC-sheaf of $U^k\otimes L$. 
\begin{proposition}
\label{ire1m}
The irreducible objects of $P_{\I{H}\times I_{G}}(\Pi(F))$ are exactly  the perverse sheaves $\IC^{k},$ $k\in\Z$. 
\end{proposition}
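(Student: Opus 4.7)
The plan is to combine Theorem~\ref{irrec} with an explicit computation of $I_H\times I_G$-orbit closures in $\Pi(F)$. By Theorem~\ref{irrec}, the simple objects of $P_{I_H\times I_G}(\Pi(F))$ are the sheaves $\mathcal{I}^w$ with $w\in X_G\times S_{n,m}$. For $n=1$ and $m\ge 1$ we have $X_G=\ZZ$, and $S_{1,m}$ is canonically in bijection with $\{1,\dots,m\}$: the datum $w$ reduces to choosing the single element $I_s=\{i\}$, the bijection $s\colon I_s\to\{1\}$ being then forced. So simples are indexed by pairs $w=(a,i)\in\ZZ\times\{1,\dots,m\}$, and the task is to exhibit a bijection $(a,i)\mapsto k$ from $\ZZ\times\{1,\dots,m\}$ to $\ZZ$ under which $\mathcal{I}^{w}\isom\IC^k$.

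The first step is to describe $\Pi^w$ and its closure in $\Pi(F)\cong F^m$. Formula~\eqref{orbitn<m} supplies a base point $v_0$ characterized by $v_0(u_i^*)=t^a$ and $v_0(u_j^*)=0$ for $j\neq i$; identifying $\Pi(F)$ with $U(F)$ via $L=\locring$, this is $v_0=t^au_i$. Since $I_G=\locring^\times$ acts by scalars, only the $I_H$-action enlarges the orbit; applying an element of $I_H$ produces $t^a$ times its $i$-th column, whose entries satisfy the Iwahori constraints (in $\locring$ above the diagonal, in $\locring^\times$ on the diagonal, in $t\locring$ below). Hence
\[
\Pi^{(a,i)} = \bigl\{(c_1,\dots,c_m)\in F^m : c_j\in t^a\locring\;(j<i),\ c_i\in t^a\locring^\times,\ c_j\in t^{a+1}\locring\;(j>i)\bigr\},
\]
and its closure is the lattice $(t^a\locring)^i\times(t^{a+1}\locring)^{m-i}$, obtained by relaxing $c_i\in t^a\locring^\times$ to $c_i\in t^a\locring$.

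The second step is to identify this lattice with some $U^k\otimes L=U^k$. Since $n=1$ implies that $t^{-\omega_m}$ acts as the scalar $t^{-1}$, the defining recurrence becomes $U^{k+m}=t^{-1}U^k$; combined with the initial data $U^r=(t^{-1}\locring)^r\times\locring^{m-r}$ for $0\le r\le m$, this yields $U^k=(t^{-q-1}\locring)^r\times(t^{-q}\locring)^{m-r}$ whenever $k=qm+r$ with $0\le r<m$. Matching with $(t^a\locring)^i\times(t^{a+1}\locring)^{m-i}$ forces $-q-1=a$ and $r\equiv i\pmod m$, i.e.
\[
k=i-(a+1)m,
\]
which is a bijection $\ZZ\times\{1,\dots,m\}\isom\ZZ$. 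Because $U^k$ is a lattice (hence ind-smooth), $\IC^k$ is the shifted constant perverse sheaf supported on $U^k$; but $U^k$ is also the closure of $\Pi^w$, so $\IC^k\isom\mathcal{I}^w$. Combined with Theorem~\ref{irrec}, this proves the proposition. The only real obstacle is the orbit-and-closure computation of step one, where one must consistently translate the functional description of $\Pi^w$ coming from~\eqref{orbitn<m} into the matrix description of $I_H$ acting on the standard lattice flag; the edge case $i=m$, where $\overline{\Pi^{(a,m)}}=(t^a\locring)^m$ is a pure lattice $U^{-am}$, is handled by the same formula $k=i-(a+1)m$, and no further geometric input is needed.
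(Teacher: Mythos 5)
Your proof is correct and takes the same route as the paper, which disposes of this proposition with the one line ``The assertion follows from Theorem~\ref{irrec}.'' What you have done is make explicit the verification the paper leaves to the reader: that for $n=1$ the parameter set $X_G\times S_{1,m}\cong\ZZ\times\{1,\dots,m\}$ maps bijectively to $\ZZ$ under $(a,i)\mapsto i-(a+1)m$, with $\overline{\Pi^{(a,i)}}=(t^a\locring)^i\times(t^{a+1}\locring)^{m-i}=U^{i-(a+1)m}$, so that $\mathcal{I}^{(a,i)}=\IC^{i-(a+1)m}$. The closure computation, the matching with the paper's normalization $U^r=(t^{-1}\locring)^r\times\locring^{m-r}$ and the periodicity $U^{k+m}=t^{-1}U^k$, and the edge case $i=m$ all check out; the only point worth flagging is that the orbit description depends on a fixed choice of Iwahori (upper-triangular reduction mod $t$), and you have implicitly used the convention that makes the orbit closures match the lattices $U^k$ — which is indeed the paper's convention, as one can cross-check against $\IC^0=I_0$ being the constant sheaf on $\Pi=U^0$ and the computation $\heckefunc{H}(L_{s_m},I_0)\cong\IC^1\oplus\IC^{-1}$ later in \S 7.
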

\begin{proof}
The assertion follows from Theorem $\ref{irrec}$.
\end{proof}
We will denote by $\IC^{k,!}$ the constant perverse sheaf on $U^{k}\otimes L - U^{k-1}\otimes L$ extended by zero. This is a (non irreducible) perverse sheaf.
 Denote by $I_{0}=\IC^0$  the constant perverse sheaf on $\Pi$.
 \par\medskip
Assume temporarily that $\K$ is finite. For $w\in \weyl{G},$ denote by $j_{w}$ the inclusion of $\flagvar{G}^{w}$ in $\flagvar{G},$ and let  $L_{w}=j_{w!*}\qelbar[\ell(w)](\ell(w)/2),$ the IC-sheaf of $\flagvar{G}^{w}.$  We write
 $L_{w!}=j_{w!}\qelbar[\ell(w)](\ell(w)/2)$ and $L_{w*}=j_{w*}\qelbar[\ell(w)](\ell(w)/2)$ for the standard and costandard objects. As $j_w$ is an affine map, both $L_{w!}$ and $L_{w*}$ are perverse sheaves. They satisfy $\mathbb{D}(L_{w*})=L_{w!},$ where $\mathbb{D}$ denotes the Verdier duality.  To each $\mathcal{G}$ in $P_{I_G}(\flagvar{G})$ we attach a function  $[\mathcal{G}]:G(F)/I_{G}\longrightarrow \qelbar$  given by for $x$  in $G(F)/I_{G}$ $[\mathcal{G}](x)=Tr(Fr_{x},\mathcal{G}_x),$  where $Fr_x$ is the geometric Frobenius at $x$.  The function $[\mathcal{G}]$ is an element of $\iwahorihecke{G}.$ In particular $[L_{w!}]=(-1)^{\ell(w)}q_{w}^{-1/2}T_w$ and $[L_{w*}]=(-1)^{\ell(w)}q_{w}^{1/2}T_{w^{-1}}^{-1},$ where $q_{w}=q^{\ell(w)}.$ Here $T_{w}$ denotes the characteristic function of the double coset $I_{G}wI_{G}.$
\par\bigskip
Let us describe $\heckefunc{H}(\mathcal{A}^{\lambda},I_{0}),$ for any  cocharacter $\lambda$ of $H.$ Recall that $\mathcal{A}^{\lambda}$ is the IC-sheaf of the $I_{H}$-orbit  $O^{\lambda}$  through $t^{\lambda}H(\locring)$ in $Gr_{H}.$ Let $\lambda=(a_{1}\dots,a_{m})$ and choose $N,r$ such that $-N\leq a_{i}<r$ for all $i.$ Let $\Pi_{0,r}\newtimes \overline{O}^{\lambda}$ be the scheme classifying pairs $(v,hH(\locring))$, where $hH(\locring)$ is a point in $\overline{O}^{\lambda}$ and   $v$ is a $\locring$-linear map $L^{*}\to hU/t^{r}U$. Let 
$$
\pi:\Pi_{0,r}\newtimes \overline{O}^{\lambda}\longrightarrow \Pi_{N,r}
$$
be the map sending $(v,hH(\locring))$ to the composition $L^{*}\overset{v}{\longrightarrow} hU/t^{r}U \longrightarrow t^{-N}U/t^{r}U.$
By definition we have 
$$
\heckefunc{H}(\mathcal{A}^\lambda, I_{0})\iso \pi_{!}(\qelbar\bt \mathcal{A}^{\lambda}),
$$
where $\qelbar\bt \mathcal{A}^{\lambda}$ is normalized to be perverse.
Denote by $p_{H}$ the projection of $\flagvar{H}\to Gr_H.$ Note that for any $\mathcal{T}$ in $P_{I_{H}}(\flagvar{H})$ we have
$$
\heckefunc{H}(\mathcal{T},I_{0})\iso \heckefunc{H}(p_{H!}(\mathcal{T}),I_{0}).
$$
For $1\leq i < m,$ let $s_{i}$ be the simple reflection (permutation) $(i,i+1)$ in $W_{H}.$

\begin{proposition}
\label{si}
For $1\le i<m$ we have
 $$\heckefunc{H}(L_{s_{i}},I_{0})\iso I_{0}\otimes \mathrm{R}\Gamma(\mathbb{P}^{1},\qelbar)[1]\iso I_{0}\otimes (\qelbar[1]\oplus \qelbar[-1]).$$
 Similarly, 
  $$\heckefunc{H}(L_{s_{i!}},I_{0})\iso I_{0}[-1].$$
\end{proposition}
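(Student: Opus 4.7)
The plan is to exploit the key observation that, for a non-affine simple reflection $s_i$ with $1 \le i < m$, the Schubert variety $\overline{\flagvar{H}^{s_i}}$ sits entirely inside $H(\locring)/I_{H}\subset \flagvar{H}$: every point $hI_{H}$ of this closure admits a lift to $H(\locring)$, and any such lift preserves the standard lattice $\Pi = U\otimes L$. This trivialization of the action on the "lattice" direction is what makes the twisted product collapse to an ordinary product, and drives the whole computation.

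First I would unwind the definition of $\heckefunc{H}(L_{s_{i}},I_{0})$ following Example~\ref{Example} with $V=\Pi$. The resulting scheme $(\Pi/t^{r}\Pi)\,\tilde\times\,\overline{\flagvar{H}^{s_i}}$ classifies pairs $(v,hI_{H})$ with $v\in h\Pi/t^{r}\Pi$; since $h\Pi=\Pi$, this scheme is simply the direct product $(\Pi/t^{r}\Pi)\times \overline{\flagvar{H}^{s_i}}$, and the map $\pi:(\Pi/t^{r}\Pi)\times \overline{\flagvar{H}^{s_i}} \to \Pi_{N,r}$ derived from the Hecke action sends $(v,hI_{H})$ to $h^{-1}v$. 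The virtual-dimension constant $c=\langle\theta,\check\mu\rangle$ appearing in the perverse shift of Lemma~\ref{Lm_229} vanishes here, since $hI_{H}$ lies in the neutral component of $\flagvar{H}$.

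Next, I would perform the change of variables $(v,hI_{H})\mapsto (h^{-1}v,hI_{H})$, which is an automorphism of $(\Pi/t^{r}\Pi)\times \overline{\flagvar{H}^{s_i}}$ intertwining $\pi$ with the first projection $\mathrm{pr}_{1}$. Because $I_{0}$ is the constant perverse sheaf on $\Pi$, its pullback under any translation $v\mapsto hv$ is canonically isomorphic to itself, so the twisted external product $I_{0}\,\tilde\boxtimes\, L_{s_{i}}$ is carried to the ordinary external product $I_{0}\boxtimes L_{s_{i}}$. Applying $\mathrm{pr}_{1!}$ together with the K\"unneth formula yields
\[
\heckefunc{H}(L_{s_{i}},I_{0})\iso I_{0}\otimes R\Gamma_{c}\bigl(\overline{\flagvar{H}^{s_{i}}},\,L_{s_{i}}\bigr),
\]
and the same argument applied to $L_{s_{i}!}$ gives $\heckefunc{H}(L_{s_{i}!},I_{0})\iso I_{0}\otimes R\Gamma_{c}(\flagvar{H}^{s_{i}},L_{s_{i}!})$.

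Finally, a direct cohomological computation closes the argument: $\overline{\flagvar{H}^{s_i}}\cong \mathbb{P}^{1}$ and $L_{s_{i}}\iso \qelbar[1](1/2)$, so $R\Gamma_{c}(\mathbb{P}^{1},\qelbar[1])\iso \qelbar[1]\oplus\qelbar[-1]$ (ignoring Tate twists, as announced); analogously $\flagvar{H}^{s_i}\cong\mathbb{A}^{1}$ with $L_{s_{i}!}\iso j_{!}\qelbar[1](1/2)$, giving $R\Gamma_{c}(\mathbb{A}^{1},\qelbar[1])\iso \qelbar[-1]$. The main technical point is the bookkeeping of the perverse shifts in Lemma~\ref{Lm_229} and Example~\ref{Example}; once one verifies that they collapse cleanly under the change of variables, thanks to the vanishing of $c$ and the invariance of $I_{0}$ under translation by $H(\locring)$, the rest is the short cohomology computation on $\mathbb{P}^{1}$ and $\mathbb{A}^{1}$.
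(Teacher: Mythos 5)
Your proof is correct and uses essentially the same idea as the paper: for $1\le i<m$ the closure $\overline{\flagvar{H}^{s_{i}}}$ lies in $H(\locring)/I_{H}$, so the twisted product over it collapses to a direct product (and the dimension constant $c$ vanishes), reducing the Hecke operator to a K\"unneth computation of $\mathrm{R}\Gamma_{c}$ on $\mathbb{P}^{1}$ or $\mathbb{A}^{1}$. The paper packages the same observation compactly: it records just before the proposition that $\heckefunc{H}(\mathcal{T},I_{0})\iso\heckefunc{H}(p_{H!}\mathcal{T},I_{0})$ and then notes $p_{H!}(L_{s_i})\iso\mathrm{R}\Gamma(\mathbb{P}^{1},\qelbar)[1]$, supported at the base point of $Gr_H$, which is exactly what your unwinding of Example~\ref{Example} establishes by hand.
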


\begin{proof}
One has $p_{H!}(L_{s_{i}})\iso \mathrm{R}\Gamma(\mathbb{P}^{1},\qelbar)[1]$ and the assertion follows.
\end{proof}

Assume that $m>1$ and let $s_{m}=t^{\lambda}\tau$, where $\lambda=(-1,0,\dots,0,1)$ and $\tau=(1,m)$ is the reflection corresponding to the highest root. This is the unique  affine simple reflection in $\weyl{H}$.

\begin{proposition}
\label{sm}
If $m>1$,  we have the following canonical isomorphisms 
$$
\heckefunc{H}(L_{s_{m}},I_{0})\iso \IC^{1}\oplus \IC^{-1} \hspace{2mm}\mathrm{and} \hspace{2mm} \heckefunc{H}(L_{s_{m}!},I_{0})\iso \IC^{1,!}\oplus \IC^{-1}
$$
\end{proposition}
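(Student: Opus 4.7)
The plan is to work directly with the definition of the Hecke functor from Example~4.5, taking $V = M = U = \Pi$ (since $n = 1$, so $L = \mathcal O$ and $\Pi = U$). For $s_1 = s_2 = 1$ and $r_1 = r+1$, this realizes $\heckefunc{H}(L_{s_m}, I_0) = p_!(I_0 \,\tilde\boxtimes\, L_{s_m})$, where the map
\[
p : (U/t^r U) \,\tilde\times\, \overline{\flagvar{H}^{s_m}} \longrightarrow t^{-1}U/t^{r+1}U
\]
sends $(hI_H, m)$ to $m \in hU/t^{r+1}U$. The first step is an explicit parametrisation of $\overline{\flagvar{H}^{s_m}} \cong \mathbb{P}^1$: the open cell $\flagvar{H}^{s_m} \cong \mathbb{A}^1$ is $\{u_{\alpha_0}(x) s_m I_H\}_{x \in \mathbb{A}^1}$ for the affine simple root subgroup $u_{\alpha_0}(x) = 1 + tx\,E_{m,1}$, and the boundary point is $e I_H \in \flagvar{H}^1$. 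A direct matrix calculation using $\lambda = (-1, 0, \dots, 0, 1)$ shows that these points map under $p_H$ to the lattices
\[
L_x := u_{\alpha_0}(x)\, t^{\lambda} U = \bigl\{(a_1, \dots, a_m) : a_1 \in t^{-1}\mathcal O,\ a_2, \dots, a_m \in \mathcal O,\ \bar a_m = x \cdot \overline{t a_1}\bigr\},
\]
together with the basepoint $U$ at the boundary.

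Next I would compute the image and fibres of $p$. Taking unions over $x$ shows that $\bigcup_{h \in \overline{\flagvar{H}^{s_m}}} hU = U^1$, so the image of $p$ is $U^1/t^{r+1}U$. Stratifying the image by $U^{-1} \subset U^0 \subset U^1$: over $v \in U^1 \setminus U^0$ (where $v(a_1) = -1$) the value of $x$ is uniquely determined by $x = \bar a_m / \overline{t a_1}$, so the fibre is one point of $\flagvar{H}^{s_m}$; over $v \in U^0 \setminus U^{-1}$ (where $\overline{ta_1} = 0$ but $\bar a_m \ne 0$) the only admissible coset is $(e I_H, v)$, so the fibre is one point of $\flagvar{H}^1$; over $v \in U^{-1}$ (where both $\overline{t a_1}$ and $\bar a_m$ vanish) the defining condition of $L_x$ is automatic for every $x$, so the fibre is all of $\mathbb{P}^1$. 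Thus $p$ is birational onto $U^1$, with $\mathbb{P}^1$ fibres exactly over the codimension-$2$ stratum $U^{-1}$.

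Because the codimension of $U^{-1}$ in $U^1$ is $2$ and the fibre above it has dimension $1$, the map $p$ is semismall. By the BBD decomposition theorem, $p_!(I_0 \,\tilde\boxtimes\, L_{s_m})$ is therefore perverse, and its only non-trivial contributions are from the open stratum (giving $\IC(U^1) = \IC^{1}$ from the generic isomorphism) and from $U^{-1}$ (giving $\IC(U^{-1}) = \IC^{-1}$ with multiplicity equal to $\dim H^{\mathrm{top}}(\mathbb{P}^1) = 1$, modulo the Tate twist we are ignoring). This yields $\heckefunc{H}(L_{s_m}, I_0) \iso \IC^{1} \oplus \IC^{-1}$.

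For the $!$-version, replacing $L_{s_m}$ by $L_{s_m!}$ removes the boundary point $e I_H$ from the support. The fibre analysis changes accordingly: over $v \in U^1 \setminus U^0$ the fibre is unchanged (still a single point of $\flagvar{H}^{s_m}$); over $v \in U^0 \setminus U^{-1}$ the fibre becomes empty, so the pushforward vanishes there; over $v \in U^{-1}$ the fibre becomes $\mathbb{P}^1 \setminus \{\infty\} = \mathbb{A}^1$, whose compactly supported cohomology $H^*_c(\mathbb{A}^1) = \qelbar(-1)[-2]$ contributes a single copy of $\IC^{-1}$ (up to Tate twist). Reassembling the stalks gives $\heckefunc{H}(L_{s_m!}, I_0) \iso \IC^{1,!} \oplus \IC^{-1}$ as claimed. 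The main obstacle is the careful lattice-theoretic characterization of $L_x$ and the verification that the condition $v \in L_x$ behaves exactly as described on each of the three strata of $U^1$; once this is in place both the semismallness and the identification of the IC summands are essentially forced.
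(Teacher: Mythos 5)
Your proof is correct and follows essentially the same route as the paper: identify the image of $\overline{\flagvar{H}^{s_m}}$ in $Gr_H$ with the $\mathbb{P}^1$ of lattices between $U^{-1}$ and $U^1$, then analyze the fibers of the proper map $\pi$ onto $U^1$ and conclude by the decomposition theorem. Your version is more explicit (you give the matrix parametrisation $L_x$, spell out the three strata $U^1\setminus U^0$, $U^0\setminus U^{-1}$, $U^{-1}$ rather than the two in the paper's sketch, and verify semismallness), and in particular fills in the detail that the paper compresses to ``the second is analogous''; the only place you are slightly informal is the last step, where ``reassembling the stalks'' should more properly be replaced by the exact triangle $\heckefunc{H}(L_{s_m!},I_0)\to\heckefunc{H}(L_{s_m},I_0)\to\heckefunc{H}(\delta_e[1],I_0)=\IC^0[1]$ coming from $L_{s_m!}\to L_{s_m}\to \delta_e[1]$, which forces the direct-sum decomposition rather than merely matching stalks.
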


\begin{proof}
The composition 
$$ \overline{\flagvar{H}}^{s_m}\hookrightarrow \flagvar{H}\overset{p_{H}}\longrightarrow Gr_H$$
is a  closed immersion and so  $p_{H!}(L_{s_{m}})\iso \mathcal{A}^{\lambda}.$ Thus we have 
$$\heckefunc{H}(L_{s_{m}},I_{0})\iso \heckefunc{H}(\mathcal{A}^{\lambda},I_{0}).$$
In this case the scheme $\overline{O}^{\lambda}$ classifies lattices $U^{'}$ such that
$$\dots \subset U^{-1}\subset U^{'}\subset U^{1}\subset \dots 
$$
and $\dim(U^{'}/U^{-1})=1.$ Let $N=r=1,$ then the image of the projection
$$\pi:\Pi_{0,1}\newtimes \overline{O}^{\lambda}\longrightarrow \Pi_{1,1}$$
is contained in $L\otimes (U^{1}/tU).$ Let $v$ be a map  from $L^{*}$ to $U^{1}/tU$ in the image of $\pi$. If $v$ factors through $U^{-1}/tU$ then the fiber of $\pi$ over the point $v$ is $\mathbb{P}^{1}$, otherwise it is a point. The first claim follows, the second is analogous.
\end{proof}

 In a similar way one gets the following. 
 
\begin{proposition} 
\label{actionsism}
 For for $1 \leq i \leq m,$ we have
$$\heckefunc{H}(L_{s_{i}},\IC^{i})\iso \IC^{i+1}\oplus \IC^{i-1}\hspace{2mm}\mathrm{and}\hspace{2mm}\heckefunc{H}(L_{s_{i}!},\IC^{i})\iso \IC^{i+1,!}\oplus \IC^{i-1}.$$ 
\end{proposition}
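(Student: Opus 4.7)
The strategy is to imitate the proof of Proposition \ref{sm}, extending it from $\IC^0 = I_0$ to $\IC^i$ for arbitrary $1\le i\le m$. A crucial point is that for $i\neq 0$ the perverse sheaf $\IC^i$ is only $I_H$-equivariant, not $H(\locring)$-equivariant, so the reduction trick $\heckefunc{H}(\cT,I_0)\iso\heckefunc{H}(p_{H!}\cT,I_0)$ used in Propositions \ref{si} and \ref{sm} is not available; the computation must be performed directly on $\flagvar{H}$. The first step is to give a uniform description of the Schubert variety $\overline{\flagvar{H}}^{s_i}$ valid for both the finite ($1\le i<m$) and the affine ($i=m$) simple reflections: in each case it is a $\mathbb{P}^1$ which parametrises lattices $U'$ satisfying $U^{i-1}\subset U'\subset U^{i+1}$ and $\dim_{\K}(U'/U^{i-1})=1$. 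Equivalently, a point $hI_H\in \overline{\flagvar{H}}^{s_i}$ fixes every step $U^j$ of the standard flag except possibly $U^i$, and $hU^i$ ranges over all such $U'$.

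Next, I would form the convolution scheme classifying pairs $(v, hI_H)$ with $hI_H \in \overline{\flagvar{H}}^{s_i}$ and $v \in h\cdot (U^i\otimes L)$ modulo $t^r L$, together with the natural projection $\pi$ to $\Pi_{N,r}$. From the twisted product construction and the definition of the Hecke functor in Section \ref{heckefunctors}, one obtains $\heckefunc{H}(L_{s_i},\IC^i)\iso \pi_!(\qelbar\bt\IC^i)$ with the appropriate perverse normalisation. The fibre of $\pi$ over a point $v$ is $\{hI_H\in\overline{\flagvar{H}}^{s_i} : v\in h\cdot (U^i\otimes L)\}$, which by the description above equals all of $\mathbb{P}^1$ when $v\in U^{i-1}\otimes L$ (every $U'$ contains $U^{i-1}$), a single point when $v\in (U^{i+1}\setminus U^{i-1})\otimes L$ (namely the unique $U'=U^{i-1}+\locring\cdot v$), and is empty otherwise. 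In particular $\pi$ is an isomorphism over the open stratum $(U^{i+1}\setminus U^{i-1})\otimes L$ and contracts a $\mathbb{P}^1$ over the closed stratum $U^{i-1}\otimes L$.

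Applying the decomposition theorem to the proper map $\pi$, combined with $\mathrm{R}\Gamma(\mathbb{P}^1,\qelbar)\iso \qelbar\oplus\qelbar[-2]$ (and the appropriate Tate twists), yields $\heckefunc{H}(L_{s_i},\IC^i)\iso \IC^{i+1}\oplus \IC^{i-1}$: the open stratum contributes $\IC^{i+1}$, while the $\mathbb{P}^1$-contraction over $U^{i-1}\otimes L$ contributes the second summand $\IC^{i-1}$. For the costandard statement, one repeats the argument with $L_{s_i}$ replaced by $L_{s_i!}$; on the open stratum the pushforward is then the $!$-extension $\IC^{i+1,!}$ instead of the IC-sheaf, while over the closed stratum the $\mathbb{P}^1$-contraction still contributes the simple object $\IC^{i-1}$. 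I expect the main technical obstacle to be the careful bookkeeping of perverse shifts and Tate twists, and verifying that the affine case $i=m$ (where $\overline{\flagvar{H}}^{s_m}$ does not lie in $H(\locring)I_H/I_H$ but maps non-trivially to $Gr_H$) truly fits into the uniform lattice description above so that no extra summand sneaks in.
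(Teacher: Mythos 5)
Your proof is correct, but it follows a genuinely different route from the paper's. The paper's proof is a one-liner citing Propositions~\ref{si} and~\ref{sm}; the implicit mechanism is the $\Omega_H$-symmetry explained in the paragraph following the proof and recorded in Proposition~\ref{length0}(1). Convolution with the length-zero objects $L_{w_i}$ gives autoequivalences carrying $\IC^k$ to $\IC^{k+i}$, and the conjugation relation $w_i s_m w_i^{-1}=s_i$ yields $L_{s_i}\star L_{w_i}\iso L_{w_i}\star L_{s_m}$, so $\heckefunc{H}(L_{s_i},\IC^i)\iso\heckefunc{H}(L_{w_i},\heckefunc{H}(L_{s_m},\IC^0))$ and one invokes Proposition~\ref{sm}. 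You instead redo the geometric computation of Proposition~\ref{sm} directly for general $i$: your uniform $\mathbb{P}^1$-description of $\overline{\flagvar{H}}^{s_i}$ and your fibre analysis of $\pi$ are correct, and so is the decomposition-theorem conclusion (the $\mathbb{P}^1$-contraction over $U^{i-1}\otimes L$ contributes $\IC^{i-1}[2]\oplus\IC^{i-1}$, with the first summand absorbed into the restriction of $\IC^{i+1}$ to $U^{i-1}\otimes L$, leaving $\IC^{i-1}$). Your remark that the $p_{H!}$-reduction fails for $i\neq 0$ because $\IC^i$ is only $I_H$-equivariant is exactly the point: either your direct computation or the $\Omega_H$-symmetry must replace it. The paper's route is shorter and exposes the underlying symmetry; yours is self-contained and makes the geometry explicit. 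One small caveat: for the standard object $L_{s_i!}$ the decomposition theorem does not apply directly, and the direct-sum splitting there comes instead from the disjointness of the two strata $U^{i+1}\setminus U^i$ and $U^{i-1}$ on which $\pi_!$ is non-zero.
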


\begin{proof}
The proof follows from Lemma $\ref{si}$ and $\ref{sm}.$
\end{proof}
The symmetry in our situation is due to the fact that $\Omega_H$ acts freely and transitively on the set of irreducible objects of $P_{I_G\times I_H}(\Pi(F))$.
\par\medskip
For $1\leq i\leq m$ there is a unique permutation $\sigma_{i}$ in $W_{H}$ such that $t^{-\omega_{i}}\sigma_{i}$  is of length zero. Indeed, $\sigma_{i}$ is the permutation 
$$(1,2,\dots,m-i,m-i+1,\dots,m)\longrightarrow (i+1,i+2,\dots,m,1,\dots,i).$$ 
For $1\le i\le m$ we put $w_{i}=t^{-\omega_{i}}\sigma_{i}$. We extend this definition as follows: for any $i\in\ZZ$ let $w_i$ in $\Omega_H$ be the unique element such that $w_iU^r=U^{r+i}$ for any $r$. 
For $1\leq i\leq m-1$ we have $w_{1}s_{i}w_{1}^{-1}=s_{i+1}$ and $w_{1}s_{m}w_{1}^{-1}=s_{1}.$ Thus, the affine Weyl group of $H$ acts on the set $\{s_1,\ldots, s_m\}$ by conjugation. 
\begin{proposition}
\label{length0}
1) For any $i$ and $k$ in $\Z$ one has a canonical isomorphism
$$
\heckefunc{H}(L_{w_{i}},\IC^{k})\iso \IC^{k+i}.
$$
2) For $1\le i\le m$, $j\in\ZZ$ with $j\ne i \mod m$ one has
$$
\heckefunc{H}(L_{s_i}, \IC^j)\,\isom\, \IC^j\otimes 
(\Qlb[1]\oplus \Qlb[-1]).$$
\end{proposition}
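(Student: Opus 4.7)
The plan is to prove part~(1) directly by unwinding the definition of the Hecke functor at a length-zero element, and then to deduce part~(2) by reducing to the case $j=0$ (covered by Proposition~\ref{si}) via a conjugation identity in the affine extended Weyl group $\weyl{H}$.

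For part~(1), since $w_i\in\Omega_H$ has length zero, the Schubert cell $\flagvar{H}^{w_i}$ is a single point and $L_{w_i}$ reduces to the skyscraper sheaf $\Qlb$ (with trivial Tate twist) supported at $w_iI_H$. Unwinding the explicit construction of $\heckefunc{H}$ given in Example~\ref{Example}, the twisted product $\IC^k\,\tilde\boxtimes\,L_{w_i}$ is supported on the single section $\{(w_iI_H,m)\mid m\in w_i(U^k\otimes L)/t^{r_1}(U\otimes L)\}$, and the map $p$ sending $(w_iI_H,m)$ to $m$ is an isomorphism onto its image $w_i(U^k\otimes L)/t^{r_1}(U\otimes L)$. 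By the defining property $w_iU^k=U^{k+i}$, this image is precisely $U^{k+i}\otimes L$ (up to the ambient quotient), and the shift $\dim\mathrm{rel}(p)$ together with the correction $c=\langle\theta,\check\mu\rangle$ over $\flagvar{H}^\theta$ cancel since $w_i$ has length zero. One concludes $\heckefunc{H}(L_{w_i},\IC^k)\iso\IC^{k+i}$.

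For part~(2), I first use part~(1) to write $\IC^j\iso\heckefunc{H}(L_{w_j},I_0)$, so that the compatibility of the Hecke functor with convolution yields
\[
\heckefunc{H}(L_{s_i},\IC^j)\iso\heckefunc{H}(L_{s_i}\star L_{w_j},\,I_0).
\]
Now the computation is reduced to an identity in $\weyl{H}$. Writing $w_j=w_1^j$ and iterating the relations $w_1s_iw_1^{-1}=s_{(i+1)\bmod m}$ recalled just before the proposition, one obtains, for $i':=(i-j)\bmod m$, the identity $s_iw_j=w_js_{i'}$ in $\weyl{H}$. Since both sides have length one, the length-additivity $L_w\star L_{w'}\iso L_{ww'}$ for convolution of standard objects in $D_{I_H}(\flagvar{H})$ gives
\[
L_{s_i}\star L_{w_j}\iso L_{s_iw_j}\iso L_{w_js_{i'}}\iso L_{w_j}\star L_{s_{i'}}.
\]
The hypothesis $j\not\equiv i\pmod m$ forces $i'\in\{1,\dots,m-1\}$, so Proposition~\ref{si} applies and gives $\heckefunc{H}(L_{s_{i'}},I_0)\iso I_0\otimes(\Qlb[1]\oplus\Qlb[-1])$. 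Applying $\heckefunc{H}(L_{w_j},\cdot)$ and invoking part~(1) on each of the two summands (which commutes with cohomological shifts) produces the desired isomorphism $\heckefunc{H}(L_{s_i},\IC^j)\iso\IC^j\otimes(\Qlb[1]\oplus\Qlb[-1])$.

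The only delicate point I foresee is the bookkeeping in part~(1): one must check that the shift $\dim\mathrm{rel}(p)$ and the virtual-dimension correction in the construction of $\heckefunc{H}$ in Section~\ref{heckefunctors} really cancel out when $w_i\in\Omega_H$, so that the simple perverse sheaf $\IC^k$ is sent to the simple perverse sheaf $\IC^{k+i}$ with no shift. Once this normalization is verified, everything else follows formally from Proposition~\ref{si}, the compatibility of $\heckefunc{H}$ with convolution, and the Weyl group identity $s_iw_j=w_js_{i'}$.
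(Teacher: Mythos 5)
The paper gives no explicit proof of Proposition~\ref{length0}; it just records the statement after remarking that ``$\Omega_H$ acts freely and transitively on the set of irreducible objects,'' which is exactly the symmetry your argument exploits. Your proof is correct and is essentially the argument the paper intends: part~(1) is the direct observation that the Hecke functor attached to a length-zero element is the translation by $w_i$ (a $t$-exact autoequivalence that sends $\IC^k$ to the simple perverse sheaf supported on $w_i(U^k\otimes L)=U^{k+i}\otimes L$), and part~(2) follows by conjugating $s_i$ past $w_j$ via the identity $s_iw_j=w_js_{i-j}$, reducing to Proposition~\ref{si}. The only small imprecision is your appeal to ``length-additivity of standard objects'' while actually convolving IC sheaves $L_{s_i}$, $L_{w_j}$; but since $w_j$ has length zero, $L_{w_j}$ is simultaneously standard, costandard, and IC, and convolution by it is an equivalence carrying $L_{s_i}$ to $L_{s_iw_j}$, so the step holds as written.
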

Propositions $\ref{actionsism}$ and $\ref{length0}$ describe completely the action of $P_{I_{H}}(\flagvar{H})$ on the simple objects $\IC^{k}$, $k\in\Z$. Thus the module structure of $K(DP_{I_{H}\times I_{G}}(\Pi(F)))$ under the action of $K(P_{I_{H}}(\flagvar{H}))$. Now we are going to define the action of the center of $P_{I_{H}}(\flagvar{H})$ on $\IC^{k}.$ 
\par\medskip
Let $\sigma : \check{G}\times \mathbb{G}_{\m}\longrightarrow \check{H}$ be given by \eqref{kappa}. Denote by $\mathrm{Res}^{\sigma}:\mathrm{Rep}(\check{H})\longrightarrow \mathrm{Rep}(\check{G}\times \mathbb{G}_{m}),$ the corresponding geometric restriction functor. For any $G(\locring)$-equivariant perverse sheaf $\mathcal{T}$ on $Gr_{H}$,  $\mathcal{T}$ is naturally isomorphic to $p_{H!}(\mathcal{Z}(\mathcal{T})).$ 
\par\medskip
Denote by $s$ the standard representation of $\G_{m}$ and by $g$ the standard representation of $\check{G}$.
The category $\mathrm{Rep}(\check{G}\times \G_{\m})$ acts on $DP_{I_{G}\times I_{H}}(\Pi(F))$ as follows: 
\begin{equation}
\label{coeur}
\left\{
\begin{split}
&  \heckefunc{G}(s^{j},\IC^{k})\iso \IC^{k}[j].\\
&  \heckefunc{G}(g^{j},\IC^{k})\iso \IC^{k-mj}. 
\end{split}
\right.
\end{equation}
It follows that the representation ring $\mathrm{R}(\check{G}\times \G_{\m})$ acts on $K(DP_{I_{G}\times I_{H}}(\Pi(F))),$ which becomes in this way a free   $\mathrm{R}(\check{G}\times \G_{\m})$-module of rank $m$ with basis $\{\IC^{0},\dots,\IC^{m-1}\}.$

\begin{theorem}
\label{centre}
The respective actions of the center of $P_{I_{H}}
(\flagvar{H})$ and the center of $P_{I_{G}}(\flagvar{G})$ on 
the category $DP_{I_{G}\times I_{H}}(\Pi(F))$ are compatible. 
More precisely, the center of $P_{I_{H}}(\flagvar{H})$ acts via  
the geometric restriction functor $\mathrm{Res}^{\sigma}: 
\mathrm{Rep}(\check{H})\longrightarrow \mathrm{Rep}(\check{G}\times \mathbb{G}_{m})$ on the irreducible objects $\IC^{k}$ for any integer $k$.
\end{theorem}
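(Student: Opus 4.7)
The plan is to exploit the fact that central objects in $P_{I_H}(\flagvar{H})$ commute with every Hecke operator, combined with the description of each $\IC^k$ as an iterated Hecke image of $I_0$ obtained in Proposition~\ref{length0}(1), and Proposition~\ref{lysenko} which already matches the two actions at the spherical level. Concretely, given $\mathcal{T}\in P_{H(\locring)}(Gr_H)\,\isom\,\Rep(\check{H})$, I would denote by $\mathcal{Z}(\mathcal{T})\in P_{I_H}(\flagvar{H})$ its central lift (with $p_{H!}\mathcal{Z}(\mathcal{T})\,\isom\,\mathcal{T}$), and aim to prove the single identity
$$\heckefunc{H}(\mathcal{Z}(\mathcal{T}),\IC^k)\,\isom\,\heckefunc{G}(\mathrm{gRes}^{\sigma}(\mathcal{T}),\IC^k)$$
for every $k\in\Z$. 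Once this is established, the compatibility statement follows, because the right-hand side is the action of $\Rep(\check{G}\times\Gm)$ on $\IC^k$ already encoded in~\eqref{coeur}, through which the center of $P_{I_G}(\flagvar{G})$ operates.

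The argument I propose proceeds in four short steps. First, Proposition~\ref{length0}(1) gives $\IC^k\,\isom\,\heckefunc{H}(L_{w_k},I_0)$ for every $k$, so centrality of $\mathcal{Z}(\mathcal{T})$ inside $P_{I_H}(\flagvar{H})$ yields
$$\heckefunc{H}(\mathcal{Z}(\mathcal{T}),\IC^k)\,\isom\,\heckefunc{H}(L_{w_k},\heckefunc{H}(\mathcal{Z}(\mathcal{T}),I_0)).$$
Second, since $I_0$ is $H(\locring)$-equivariant, the action of a central sheaf on $I_0$ must descend through $p_H$, so $\heckefunc{H}(\mathcal{Z}(\mathcal{T}),I_0)\,\isom\,\heckefunc{H}(\mathcal{T},I_0)$. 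Third, Proposition~\ref{lysenko} rewrites the last expression as $\heckefunc{G}(\mathrm{gRes}^{\sigma}(\mathcal{T}),I_0)$. Finally, the functors $\heckefunc{G}$ and $\heckefunc{H}$ commute (they are induced by convolution on two independent sides of $\Pi(F)$), so pushing $L_{w_k}$ back across gives
$$\heckefunc{H}(L_{w_k},\heckefunc{G}(\mathrm{gRes}^{\sigma}(\mathcal{T}),I_0))\,\isom\,\heckefunc{G}(\mathrm{gRes}^{\sigma}(\mathcal{T}),\heckefunc{H}(L_{w_k},I_0))\,\isom\,\heckefunc{G}(\mathrm{gRes}^{\sigma}(\mathcal{T}),\IC^k),$$
which is the desired identity.

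The main obstacle is the reduction carried out in the second step, namely the claim that $\heckefunc{H}(\mathcal{Z}(\mathcal{T}),I_0)\,\isom\,\heckefunc{H}(\mathcal{T},I_0)$ whenever the target is $H(\locring)$-equivariant. This is a structural property of Gaitsgory's central functor: $\mathcal{Z}(\mathcal{T})$ is constructed by nearby cycles from a family whose generic fiber is (a spread-out version of) $\mathcal{T}\boxtimes I_0$, so applying the Hecke action to an object already invariant under $H(\locring)$ collapses the $I_H$-refinement and one recovers the spherical action of $p_{H!}\mathcal{Z}(\mathcal{T})=\mathcal{T}$. Making this rigorous in the present pro-ind setup requires checking that the inductive $2$-limit construction of $D_{I_G\times I_H}(\Pi(F))$ in Section~\ref{section3} is compatible with Gaitsgory's nearby-cycles degeneration, after which the remaining steps are purely formal manipulations with the adjunctions and base-change results already used earlier in the paper.
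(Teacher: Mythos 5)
Your proposal is correct and follows essentially the same route as the paper's proof: centrality of $\mathcal{Z}(\mathcal{T})$ to commute with $L_{w_k}$, reduction of the action on $I_0$ to the spherical case via $p_{H!}$, Proposition~\ref{lysenko}, and commutation of the $G$- and $H$-Hecke actions. The only cosmetic difference is that your step~(2) appeals to Gaitsgory's nearby-cycles construction, whereas the paper just cites the formal identity $\heckefunc{H}(\mathcal{T},I_0)\iso\heckefunc{H}(p_{H!}\mathcal{T},I_0)$ (recorded at the start of \S\ref{7}, valid for any $\mathcal{T}\in P_{I_H}(\flagvar{H})$ because $I_0$ is $H(\locring)$-equivariant), so the extra discussion of the degeneration is unnecessary.
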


\begin{proof}
Let us recall that there is a central functor  $$\mathcal{Z}:\pervsph{H}{H}\longrightarrow{P_{I_{H}}(\flagvar{H})}$$  constructed by Gaitsgory  in \cite[Theorem 1]{Gaitsgory}. For any $\mathcal{S}$ in $P_{H(\locring)}(Gr_{H})$, we have 
\begin{equation}
\label{center}
  \heckefunc{H}(\mathcal{Z}(\mathcal{S}),\IC^{0})\iso \heckefunc{H}(p_{H!}(\mathcal{Z}(\mathcal{S})),\IC^{0})\iso \heckefunc{H}(\mathcal{S},\IC^{0})\iso \heckefunc{G}(\mathrm{Res}^{\sigma}(\mathcal{S}),\IC^{0}),
 \end{equation}
where the last isomorphism is  \cite[Proposition 5]{Lysenko1}. 
Recall that for any $k$ in $\ZZ$ we have $\heckefunc{H}(L_{w_{k}},\IC^{0})\iso \IC^{k}.$ 
For $\mathcal{S}$ in $P_{H(\locring)}(Gr_{H})$, $\mathcal{Z}(\mathcal{S})$ is central so 
$$\heckefunc{H}(\mathcal{Z}(\mathcal{S}),\IC^{k})\iso \heckefunc{H}(L_{w_{k}},\heckefunc{H}(\cZ(\mathcal{S}),\IC^{0}))\iso \heckefunc{G}(\mathrm{Res}^{\sigma}(\mathcal{S}),\IC^{k}),$$
where the last isomorphism is from $\eqref{center}.$ The assertion follows. 
\end{proof}

\par\medskip

Assume that $\K$ is a finite field $\F_{q}$.  Let us  rewrite all useful  formulas obtained in Propositions $\ref{si}$ and $\ref{sm}$ and Proposition $\ref{length0},$  taking in consideration the Tate twists. These formulas will be used  in \S $\ref{9}.$   
\par\medskip
\begin{theorem}
\label{theon1m}
The bimodule $K(DP_{I_{G}\times I_{H}}(\Pi(F)))$ is free of rank $m$ over $\mathrm{R}(\check{G}\times \G_{\m})$ with basis $\{\IC^{0},\dots,\IC^{m-1}\}$ and the explicit action of $\affinehecke{H}$ is given by the following formulas: 

\begin{equation}
\label{coeurcoeur}
\left\{
\begin{split}
&\mathrm{For}\,  1\leq i\leq m:\,  \heckefunc{H}(L_{s_{i}},\IC^{i})\iso \IC^{i+1}\oplus \IC^{i-1}.\\
&\mathrm{For}\, 1\leq i\leq m:\, \heckefunc{H}(L_{s_{i!}},\IC^{i})\iso \IC^{i+1,!}\oplus \IC^{i-1}.\\
&\mathrm{If}\, j\neq i\, \mathrm{mod}\, m : \heckefunc{H}(L_{s_{i}},\IC^{j})\iso \IC^{j}(\qelbar[1](1/2)+\qelbar[-1](-1/2)).\\
&\mathrm{If}\,  j\neq i\, \mathrm{mod}\, m : \heckefunc{H}(L_{s_{i!}},\IC^{j})\iso \IC^{j}[-1](-1/2).\\
& \mathrm{For\,\,  any}\,  i\,\, \mathrm{and}\,\,  k\,\,  in\,\,  \Z:\, \,   \heckefunc{H}(L_{w_{i}},\IC^{k})\iso  \IC^{k+i}
\end{split}
\right.
\end{equation}
\end{theorem}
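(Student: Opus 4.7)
The plan is to assemble Theorem~\ref{theon1m} from the individual computations already carried out in Propositions~\ref{ire1m}, \ref{si}, \ref{sm}, \ref{actionsism} and \ref{length0}, and then to track Tate twists carefully when passing from an algebraically closed base field to $\K=\F_q$.

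First I would establish the bimodule statement. By Proposition~\ref{ire1m}, the simple objects of $P_{I_H\times I_G}(\Pi(F))$ are exactly the $\IC^k$, $k\in\ZZ$. The action of $\mathrm{R}(\check G\times \G_m)$ described in $\eqref{coeur}$ shows that $\IC^{k+m}\,\isom\,\heckefunc{G}(g^{-1},\IC^k)$ and that the cohomological shift corresponds to the action of $s$. Therefore every $\IC^k$ lies in the $\mathrm{R}(\check G\times\G_m)$-orbit of one of $\IC^0,\ldots,\IC^{m-1}$, and the absence of relations among these classes gives freeness of rank $m$.

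Next I would verify the five action formulas. The first two, for $\heckefunc{H}(L_{s_i},\IC^i)$ and $\heckefunc{H}(L_{s_i!},\IC^i)$ with $1\le i\le m$, are exactly the content of Proposition~\ref{actionsism} (which combines Propositions~\ref{si} and \ref{sm} and then transports the result along the length-zero element $w_i$ using Proposition~\ref{length0}~(1)). The fifth formula is precisely Proposition~\ref{length0}~(1). The third formula in the $j\ne i\pmod m$ range is Proposition~\ref{length0}~(2). For the fourth formula, I would argue as in the proof of Proposition~\ref{si}: for $j\ne i\pmod m$ the map $U^j\otimes L\to U^j\otimes L$ coming from $\heckefuncr{H}$ factors through a $\mathbb{P}^1$-bundle whose restriction to the open stratum identifies with $\mathbb{A}^1$, so $p_{H!}L_{s_i!}$ restricted to the relevant stratum is (up to a shift) the constant sheaf, and one obtains the stated $[-1](-1/2)$ shift.

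The only genuine work beyond citing the earlier propositions is the Tate-twist bookkeeping. I would do this by checking normalizations: the IC normalization $L_w=j_{w!\ast}\Qlb[\ell(w)](\ell(w)/2)$ produces the half-integer twists $(1/2)$ and $(-1/2)$ in the third and fourth formulas; the relative dimension of the $\mathbb{P}^1$-fibration giving $\mathrm{R}\Gamma(\mathbb{P}^1,\Qlb)\,\isom\,\Qlb[-2](-1)$ (shifted to sit between degrees $\pm 1$) supplies the $\Qlb[1](1/2)\oplus\Qlb[-1](-1/2)$ in the third formula, and the corresponding $!$-pushforward gives the $[-1](-1/2)$ in the fourth. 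I expect the main (mild) obstacle to be precisely this tracking of twists: over $\bar\K$ the formulas are clean, but to get the exact exponents $(1/2)$ and $(-1/2)$ one has to combine the perverse normalization of $\qelbar\bt \mathcal{A}^{\lambda}$ used in the definition of $\heckefunc{H}$ with the IC normalization of the $L_w$, which I would do stratum by stratum as in Proposition~\ref{sm}.
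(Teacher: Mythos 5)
Your proposal is correct and takes essentially the same approach as the paper: the paper does not give a separate proof for Theorem~\ref{theon1m}, but simply states that it is the Tate-twisted rewriting of Propositions~\ref{si}, \ref{sm}, \ref{actionsism} and \ref{length0} together with the $\mathrm{R}(\check G\times\G_m)$-module structure from~\eqref{coeur}, which is exactly how you assemble it. Your Tate-twist accounting (via $p_{H!}$ of $L_{s_i}$ giving $\mathrm{R}\Gamma(\mathbb{P}^1)[1](1/2)$ and of $L_{s_i!}$ giving $\mathrm{R}\Gamma_c(\mathbb{A}^1)[1](1/2)=\qelbar[-1](-1/2)$) is precisely the normalization the paper relies on but does not spell out.
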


More generally, for $a<b$, denote by $\IC^{a,b,!}$ the sheaf $\qelbar[b-a]$ defined on $(U^{b}/U^{a})-\{0\}$ extended by zero to $U^{b}/U^{a}$. This is not perverse in general. In Grothendieck group $K(DP_{I_{H}\times I_{G}}(\Pi(F)))$ we have 
$$\IC^{a,b,!}= \IC^{b}-\IC^{a}[b-a].$$

Let $\omega_{i}=(1,\dots ,1,0,\dots, 0)$ where $1$ appears $i$ times and $0$ appears $m-i$ times. 
\begin{proposition}
We have a canonical isomorphism in $K(DP_{I_{H}\times I_{G}}(\Pi(F)))$:
$$\heckefunc{H}(L_{t^{\omega_{i}}!},\IC^{0})\iso \IC^{i-m,0,!}[i-\langle \omega_{i}, 2\check{\rho}_{H} \rangle]+\IC^{-m}[m-i-\langle\omega_{i},2\check{\rho}_{H}  \rangle].$$
\end{proposition}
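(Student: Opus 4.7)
The proof proceeds by reducing the Hecke action to a direct geometric computation on the affine Grassmannian $Gr_H$. First, since $\IC^0$ is $H(\locring)$-equivariant, the identity $\heckefunc{H}(\mathcal{T},\IC^0) \iso \heckefunc{H}(p_{H!}\mathcal{T},\IC^0)$ established in the proof of Proposition \ref{si} applies. A direct matrix calculation shows that the restriction $p_H|_{\flagvar{H}^{t^{\omega_i}}}\colon \flagvar{H}^{t^{\omega_i}} \to O^{\omega_i}$ is an isomorphism: both $I_H \cap t^{\omega_i}H(\locring)t^{-\omega_i}$ and $I_H \cap t^{\omega_i}I_Ht^{-\omega_i}$ reduce to the same subgroup $\{y \in I_H : y_{jk} \in t\locring \text{ for } j \le i < k\}$, a consequence of $\omega_i$ being minuscule dominant. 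Hence $p_{H!}L_{t^{\omega_i}!} = j_!\qelbar[n](n/2)$ on $Gr_H$, where $j\colon O^{\omega_i} \hookrightarrow Gr_H$ and $n = i(m-i) = \langle\omega_i, 2\check\rho_H\rangle$.

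Next, unwinding the definition of the Hecke functor (Example \ref{Example}) one expresses the result as $\pi_!$ of a normalized constant sheaf on the open twisted product $\Pi_{0,r_1} \tilde\times O^{\omega_i}$, where $\pi$ sends $(v, hI_H)$ to $v$. One parameterizes $O^{\omega_i}$ as the open Schubert cell inside $\mathrm{Gr}(m-i, U/tU) \cong \mathrm{Gr}(m-i, \K^m)$ by an $i \times (m-i)$ matrix $W = (W_{jk})$: a point corresponds to the subspace with basis $\{e_k + \sum_{j \le i} W_{jk} e_j : k > i\}$. For $v = (v_1, \dots, v_m) \in \Pi_{0,r_1}$, the condition $v \in hU$ modulo $tU$ becomes the linear system
$$\bar v_j = \sum_{k > i} \bar v_k W_{jk}, \qquad j = 1, \dots, i,$$
where $\bar v_\ell = v_\ell \bmod t$. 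Three cases emerge: (a) if $v \in U^{-m} = tU$ the system is trivially satisfied and the fiber is all of $O^{\omega_i} \cong \mathbb{A}^n$; (b) if $v \in U^0 \setminus U^{i-m}$ (some $\bar v_k \ne 0$ for $k > i$) the system is independent row by row, giving a fiber of dimension $n - i$; (c) if $v \in U^{i-m} \setminus U^{-m}$ the system is inconsistent and the fiber is empty.

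Finally, one translates the fiber analysis into an equality in $K(DP_{I_H \times I_G}(\Pi(F)))$. On the open stratum $A := U^0 \setminus U^{i-m}$, tracking the shift from $[\IC^0]$ (perverse on $U^0$) together with the $-2(n-i)$ shift from $\pi_!$ along $\mathbb{A}^{n-i}$-fibers yields a constant sheaf matching $[\IC^{i-m,0,!}][i-n]$, via the identification $[\IC^{i-m,0,!}] = [\IC^0] - [\IC^{i-m}][m-i]$ which realizes $\IC^{i-m,0,!}$ as the constant sheaf on $A$ with the perverse shift. On the closed stratum $B := U^{-m}$, the larger fiber of dimension $n$ combined with the dimension shift between $U^0$ and $tU$ produces $[\IC^{-m}][m-i-n]$. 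Summing the two contributions gives the claimed identity. The main obstacle is the careful bookkeeping of perverse shifts arising from both the normalization of the twisted product $\IC^0 \bt j_!\qelbar[n](n/2)$ and the pushforward $\pi_!$ along affine-space fibers of varying dimension. An alternative route via the length-additive factorization $t^{\omega_i} = \sigma_i w_{-i}$ in $\weyl{H}$, reducing to computing $\heckefunc{H}(L_{\sigma_i!}, \IC^{-i})$ by iterated application of the formulas in Theorem \ref{theon1m}, is available but demands a more intricate combinatorial telescoping argument in which the intermediate $\IC^{k,!}$ terms cancel against the next stage of the iteration.
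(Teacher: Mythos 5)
Your proof is correct and follows the same route as the paper's: reduce to $\heckefunc{H}(\mathcal{A}^{\omega_i!},\IC^0)$ via $p_{H!}$, describe $O^{\omega_i}$ as the open Schubert cell in $\mathrm{Gr}(m-i,U/tU)$, and stratify the base $\Pi_{0,1}$ by fiber type of $\pi$ to read off the classes in the Grothendieck group. The paper states only the reduction and the setup of $\pi$ and then says the assertion follows from the $\IC^{a,b,!}$ relation; you supply the explicit matrix parameterization, the fiber dimensions $\mathbb{A}^n$, $\mathbb{A}^{n-i}$, $\emptyset$ on the strata $U^{-m}$, $U^0\setminus U^{i-m}$, $U^{i-m}\setminus U^{-m}$, and the resulting shift bookkeeping, all of which check out. (Incidentally, your use of $U^{i-m}$ for the transversality locus is the correct lattice; the paper's description of $O^{\omega_i}$ in this proof writes $U^{m-i}$, which does not lie inside $U^0$ and is a typo.)
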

\begin{proof}
First remark that 
$$\heckefunc{H}(L_{t^{\omega_{i}}},\IC^{0})\iso \heckefunc{H}(\mathcal{A}^{\omega_{i}!},\IC^{0}).$$
Let $N=r=1,$ the scheme $O^{\omega_{i}}$ classifies lattices $tU^{0}\subset U^{'}\subset U^{0}$ such that  $\dim(U^{'}/tU^{0})=m-i$ and $(U^{'}/tU^{0})\cap (U^{m-i}/tU^{0})=0$. Therefore the orbit $O^{\omega_{i}}$ is an affine space of dimension $\ell(t^{\omega_{i}})=\langle \omega_{i}, 2\check{\rho}_{H}\rangle=(m-i)i$. Let $\Pi_{0,1}\newtimes O^{\omega_{i}}$ be the scheme classifying pairs $(v,U^{'}),$ where  $U^{'}$ is  in $O^{\omega_{i}}$ and $v$ is a map from $L^{*}$ to $U^{'}/tU^{0}$. Consider the map 
$$\pi: \Pi_{0,1}\newtimes O^{\omega_{i}}\longrightarrow \Pi_{0,1}$$
sending $(v,U^{'})$ to $v$. Then we have 
$$\heckefunc{H}(\mathcal{A}^{\omega_{i}!},\IC^{0})\iso \pi_{!}\hspace{1mm}\IC(\Pi_{0,1}\newtimes O^{\omega_{i}})$$
and the assertion follows from the remark above on the elements $\IC^{a,b,!}$.
\end{proof}

\section{On the geometric local Langlands functoriality at the Iwahori level}
\label{conjecturegtrlf}
For basic notions in equivariant $K$-theory, we refer to \cite[Chapter 5]{CG}. Some of the constructions we will use are recalled in Appendix $\ref{Appendix1}$. Let us just recall the Kazhdan-Lusztig-Ginzburg isomorphism and fix some additional notation. 

Let $\K$ be  the finite field $\F_{q}$. Let $G$ be a connected reductive group over $\K$ and denote by $\check{G}$ its Langlands dual group over $\qelbar$.  Assume additionally that $[\check{G}, \check{G}]$ is simply connected.
Let $v$ be an indeterminate. Let $(W,S)$ be the Coxeter group associated with the root datum defined on $G$, where $W$ is the finite Weyl group and $S$ the set of simple reflections. The finite Hecke algebra $\mathbb{H}_{W}$ is free $\Z[v^{-1},v]$-algebra with basis $\{T_{w}, w\in{W}\}$ such that the following rules hold:
\begin{enumerate}
\item $(T_{s}+1)(T_{s}-v)=0$ if $s\in{S}$ is a simple reflection.
\item $T_{y}.T_{w}=T_{yw}$ if $\ell(yw)=\ell(y)+\ell({w})$.
\end{enumerate}
The group algebra $\Z[X]$ is isomorphic to $\mathrm{R}(\check{T}),$ the representation ring of the dual torus to $T$. We will write $e^{\lambda}$ for the element of $\mathrm{R}(\check{T})$ corresponding to the coweight $\lambda$ in $X.$
The affine extended Hecke algebra associated with $G$ was introduced by Bernstein \cite{bern} (it first appeared in \cite{Lu4}) and is isomorphic to the so-called Iwahori-Hecke algebra of a split $p$-adic group with connected center. The latter was introduced in \cite{Iwahori-mat} and reflects the  structure of the space $C_{c}(I_{G}\backslash G(F)/I_{G})$  of locally constant compactly supported $\qelbar$-valued functions on $G(F)$ which are bi-invariant under the action of $I_G.$  
The extended affine Hecke algebra $\affinehecke{G}$ is a free $\Z[v,v^{-1}]$-module with basis $\{e^{\lambda}T_{w}\vert w\in W , \lambda\in{X}\},$ such that:
\begin{enumerate}
\item
\label{HW} The $\{T_{w}\}$ span a sub-algebra of  $\affinehecke{G}$ isomorphic to $\mathbb{H}_{W}.$
\item
\label{RT}The elements $\{e^{\lambda}\}$ span a $\Z[v,v^{-1}]$-sub-algebra of  $\affinehecke{G}$ isomorphic to $\mathrm{R}(\check{T})[v^{-1},v].$
\item
\label{3}  For any $s_{\alpha}\in{S}$ with $\langle\lambda,\check{\alpha}\rangle=0$, $T_{s_{\alpha}}e^{\lambda}=e^{\lambda}T_{s_{\alpha}}.$
\item
\label{4}  For any $s_{\alpha}\in{S}$ with $\langle\lambda,\check{\alpha}\rangle=1$, $T_{s_{\alpha}}e^{s_{\alpha}(\lambda)}T_{s_{\alpha}}=ve^{\lambda}$.
\end{enumerate}

Properties $\eqref{3}$,$\eqref{4}$ together are equivalent to the following useful formula
\begin{equation}
\label{equationuseful}
T_{s_{\alpha}}e^{s_{\alpha}(\lambda)}-e^{\lambda}T_{s_{\alpha}}=(1-v)\frac{e^{\lambda}-e^{s_{\alpha}(\lambda)}}{1-e^{-\alpha}},
\end{equation}
where $\alpha$ is a simple coroot, $s_{\alpha}$ the corresponding simple reflection and $\lambda\in{X}$.
The properties $\eqref{HW}$ and $\eqref{RT}$ give us two canonical embeddings of algebras
$$\mathrm{R}(\check{T})[v^{-1},v]\hookrightarrow  \affinehecke{G}\hspace{1cm}\mathrm{and}\hspace{1cm} \mathbb{H}_{W}\hookrightarrow  \affinehecke{G}.$$
The multiplication in  $\affinehecke{G}$ gives rise to a $\Z[v^{-1},v]$-module isomorphism
$$ \affinehecke{G}\simeq \mathrm{R}(\check{T})[v^{-1},v]\otimes_{\Z[v^{-1},v]}\mathbb{H}_{W}.$$
This is a $v$-analogue of the $\Z$-module isomorphism \cite[7.1.8]{CG}, 
$$\Z[\weyl{G}]\simeq \mathrm{R}(\check{T})\otimes _{\Z} \Z[W_{G}].$$

\par\medskip

 Let $\check{\g}$ be the Lie algebra of $\check{G}$,  $\cB_{\check{G}}$ be the variety of Borel subalgebras in $\check{\g}$, and $\cN_{\check{G}}$ be the nilpotent cone in $\check{\g}$. The Springer resolution $\Ntilde{\check{G}}$ of $\cN_{\check{G}}$ is given by $$\Ntilde{\check{G}}=\{(x,\borel)\in\cN_{\check{G}}\times \cB_{\check{G}}\vert x\in\borel\}.$$ 
Let $\mu: \Ntilde{\check{G}}\to \cN_{\check{G}}$ be the Springer map. Let $s$ be  the standard coordinate on $\G_{\m}$. We let $\G_{\m}$ act on $\check{\g}$ by requiring that $s$ sends an element $x$ to $s^{-2}x$. We also define an action of  $\check{G}\times\G_{\m}$ on $\Ntilde{\check{G}}$ by the formula  
$$(g,s).(x,\gb)=(s^{-2}gxg^{-1}, g\gb g^{-1}).$$ 
The map $\mu$ is $\check{G}\times\G_{\m}$-equivariant. The Steinberg variety is defined by
$$Z_{\check{G}}=\Ntilde{\check{G}}\times_{\cN_{\check{G}}} \Ntilde{\check{G}}=\{(x,\borel, \gb')\in \cN_{\check{G}}\times \cB_{\check{G}}\times \cB_{\check{G}}
\, \vert \, x\in\borel\cap\borel^{'}\}.$$
The extended affine Hecke algebra $\mathbb{H}_G$ can be considered as a $\ZZ[s,s^{-1}]$-algebra, where $v=s^2$. Viewing $\ZZ[s,s^{-1}]$ as the representation ring of $\G_{m}$, one has the following result  due to Kazhdan-Lusztig-Ginzburg \cite[Theorem 7.2.5]{CG}: there is an isomorphism of natural $\ZZ[s,s^{-1}]$-algebras
\begin{equation}
\label{theoaffinehecke}
K^{\check{G}\times \G_{\m}}(Z_{\check{G}})\iso \affinehecke{G}.
\end{equation}
Let us explain briefly what we are going to do. Assume to be given two connected reductive groups $G,H$ and a homomorphism  $\check{G}\times\SL_2\to \check{H}$, where $\check{G}$ (resp., $\check{H}$) denotes the Langlands dual group of $G$ over $\qelbar$ (resp., of $H$). We still assume that the respective derived groups of $\check{G}$ and $\check{H}$ are simply connected. We construct  a bimodule over the affine extended  Hecke algebras $\HH_G$ and $\HH_H$ realizing the local geometric Arthur-Langlands functoriality at the Iwahori level for this homomorphism. We propose a definition of this explicit kernel at this level of generality given in Conjecture $\ref{firstconjecture}$. It is based to a large extent on the Kazhdan-Lusztig-Ginzburg isomorphism  $\eqref{theoaffinehecke}$. 
 \par\medskip
 We fix a maximal torus $T_{G}$ (resp. $T_{H}$) in $G$ (resp. $H$) and a Borel subgroup $B_{G}$ (resp. $B_{H}$) in $G$ (resp. $H$) containing $T_{G}$ (resp. $T_{H}$). Assume we are given a morphism
$$\sigma:\check{G}\times \SL_{2}\longrightarrow \check{H}$$
and let $\xi:\SL_{2}\to \check{H}$ be its second component and $\eta:\check{G}\to\check{H}$ be its first component. Let $\alpha:\mathbb{G}_{m}\to \mathrm{SL_{2}}$ be the standard maximal torus sending an element $x$ to $\mathrm{diag}(x,x^{-1})$. Let $\sigma:\check{G}\times \G_{m}\to \check{H}$ be  the restriction of the above homomorphism via $\id\times \alpha$:

\begin{equation}
\label{dualmap}
\xymatrix{
\check{G}\times\mathbb{G}_{m} \ar[r]^{\mathrm{id}\times\alpha} &\check{G}\times\mathrm{SL}_{2} \ar[r]^{\eta\times \xi}&\check{H}}.
\end{equation}

For any element  $g$ in $\check{G}$ we will often denote  its image $\eta(g)$ in $\check{H}$ by the same letter $g$ as well as for the linearised morphisms between  the corresponding Lie algebras.  
Denote by 
$$\overline{\sigma}:\check{G}\times\G_{\m}\longrightarrow\check{H}\times\G_{\m}$$
the morphism whose first component is $\sigma$ and whose second component is the second projection $pr_{2}:\check{G}\times\G_{\m}\longrightarrow\G_{\m}.$ 
 The representation ring $\R(\check{G}\times\G_{\m})$  is isomorphic to $\R(\check{G})[s,s^{-1}]$. 
Remark that (at least for pairs $(\mathrm{S}\mathbb{O}_{2n},\mathrm{Sp}_{2m})$ and $(\GL_{n},\GL_{m})$) according to \cite{Lysenko1}, the local Langlands functoriality at the unramified level sends the unramified representation with Langlands parameter $\gamma$ in $\check{G}$ to the unramified representation with Langlands parameter $\sigma(\gamma, q^{1/2})$ of $\check{H}.$ This is realized by the restriction homomorphism 
$\mathrm{Res}^{\sigma}:\mathrm{Rep}(\check{H})\longrightarrow \mathrm{Rep}(\check{G}\times\G_{\m})$
 induced by $\sigma.$  
\par\medskip 
On the one hand, it is understood that the standard representation $s$ of $\G_{\m}$ corresponds to the cohomological shift $-1$ in order to have the compatibility with \cite{Lysenko1}.  On the other hand while specializing $s$, we should think of $s$ as $q^{1/2}$ to makes things compatible with the theory of automorphic forms.  
\par\medskip
Let $e$ denote the standard nilpotent element  of  $\mathrm{Lie}(SL_{2})$
$$
e=\left( {\begin{array}{cc}
 0 & 1  \\
 0& 0 \\
\end{array} } \right).$$
  If $d\xi:\mathrm{Lie}(SL_{2})\to \mathrm{Lie}(\check{H})$ is the linearised morphism associated to $\xi$, we denote $d\xi(e)$ by $x$. As $\sigma$ is a group morphism, for any $z$ in $\g$, $[d\eta(z),x]=0$. Hence, if $z$ is nilpotent, so is $d\eta(z)+x$.

\begin{lemma}
\label{mapf}
The map  $f$ from $\N{\check{G}}$ to $\N{\check{H}}$ sending any element $z$ in  $\N{\check{G}}$  to $z+x$  is a $\overline{\sigma}$-equivariant map. It defines a morphism of stack quotients 
\begin{equation}
\label{mapfbar}
\overline{f}:\N{\check{G}}/(\check{G}\times \G_{\m})\longrightarrow \N{\check{H}}/(\check{H}\times \G_{\m}).
\end{equation}
\end{lemma}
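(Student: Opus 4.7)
The plan is to unwind the definitions of the two actions and verify the equivariance by a direct calculation, the only non-formal input being the compatibility between the weight~$-2$ scaling by $\G_{\m}$ on $\check{\g}$ and the fact that $e\in\mathfrak{sl}_2$ has weight $+2$ under the standard cocharacter $\alpha$.

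First I would check that $f$ really takes values in $\cN_{\check{H}}$. Since $\sigma$ is a group homomorphism and the two factors of $\check{G}\times\SL_2$ commute, the images $\eta(\check{G})$ and $\xi(\SL_2)$ commute in $\check{H}$; differentiating, $d\eta(z)$ commutes with $x=d\xi(e)$ in $\check{\h}$. As $d\eta$ sends nilpotents to nilpotents and $x$ is nilpotent, the sum $d\eta(z)+x$ of two commuting nilpotents is nilpotent, so $f$ is well-defined as a map $\cN_{\check{G}}\to\cN_{\check{H}}$.

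Next I would check the equivariance. Recall the two actions: $(g,s)\cdot z=s^{-2}\Ad(g)z$ on $\cN_{\check{G}}$, and similarly $(h,s)\cdot y=s^{-2}\Ad(h)y$ on $\cN_{\check{H}}$ with $h=\bar\sigma(g,s)=\eta(g)\xi(\alpha(s))$. I want to show
\[
f\bigl(s^{-2}\Ad(g)z\bigr)=s^{-2}\Ad\!\bigl(\eta(g)\xi(\alpha(s))\bigr)\bigl(d\eta(z)+x\bigr).
\]
Expanding the right-hand side, the commutation of $\eta(\check{G})$ and $\xi(\SL_2)$ gives $\Ad(\xi(\alpha(s)))d\eta(z)=d\eta(z)$ and $\Ad(\eta(g))x=x$. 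Thus the right-hand side equals $s^{-2}\Ad(\eta(g))d\eta(z)+s^{-2}\Ad(\xi(\alpha(s)))x$. The first term is $s^{-2}d\eta(\Ad(g)z)$, which matches the first summand of the left-hand side. For the second term the key calculation is in $\mathfrak{sl}_2$:
\[
\Ad(\alpha(s))e=\mathrm{diag}(s,s^{-1})\,e\,\mathrm{diag}(s^{-1},s)=s^2 e,
\]
so $\Ad(\xi(\alpha(s)))x=d\xi(s^2 e)=s^2 x$, and $s^{-2}\cdot s^2 x=x$, matching the $x$ in $f(s^{-2}\Ad(g)z)$. Hence $f$ is $\bar\sigma$-equivariant.

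Finally, since $f$ intertwines the $\check{G}\times\G_{\m}$-action on the source with the $\check{H}\times\G_{\m}$-action on the target via the homomorphism $\bar\sigma$, the pair $(f,\bar\sigma)$ induces a well-defined morphism of stack quotients $\bar f:\cN_{\check{G}}/(\check{G}\times\G_{\m})\to\cN_{\check{H}}/(\check{H}\times\G_{\m})$, as asserted. The only genuine content is the weight matching in the previous paragraph; everything else is unwinding definitions and using that the two components of $\sigma$ commute.
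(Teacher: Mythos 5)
Your proof is correct and follows essentially the same route as the paper's: the key computation is $\mathrm{Ad}(\alpha(s))e = s^2 e$ in $\mathfrak{sl}_2$, which the paper records as $ses^{-1}=s^2e$, and equivariance follows by expanding $s^{-2}h(z+x)h^{-1}$ using the commutativity of the two components of $\sigma$. Your version is a bit more explicit about the well-definedness of $f$ and about which commutation relations are used, but the substance is identical.
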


\begin{proof}
 We have the following equality in $\mathrm{Lie(SL}_{2})$
 \begin{equation}
\label{sl2}
ses^{-1}=s^{2}e.
\end{equation}

This implies that $s^{-2}\xi(s)x\xi(s)^{-1}=x$. For $(g,s)$ in $\check{G}\times \G_{\m},$ let $(h,s)=\overline{\sigma}(g,s)=(g\xi(s),s)$. Then for any $z$ in $\N{\check{G}}$ 
$$s^{-2}gzg^{-1}+x=s^{-2}h(z+x)h^{-1},$$
which implies that $f$ is $\overline{\sigma}$-equivariant and the morphism of stack quotients $\overline{f}$ is well-defined. 
\end{proof}

The Springer map $\Ntilde{\check{H}}\longrightarrow \N{\check{H}}$ is $(\check{H}\times\G_{\m})$-equivariant. By using this and  Lemma $\ref{mapf}$ we obtain the following diagram:
 \[
\xymatrix @R=2cm{
\mathcal{X}=(\Ntilde{\check{G}}/(\check{G}\times\G_{\m}))\times_{\N{\check{H}}/(\check{H}\times\G_{\m})}(\Ntilde{\check{H}}/(\check{H}\times\G_{\m})) \ar[r] \ar[d]& \Ntilde{\check{G}}/(\check{G}\times\G_{\m}) \ar[d] \\
\Ntilde{\check{H}}/(\check{H}\times\G_{\m})  \ar[r] & \N{\check{H}}/(\check{H}\times\G_{\m}), 
}
\]
where the bottom horizontal map is induced from the Springer map for $\check{H}$ and the vertical right arrow is the composition of  the $\check{G}\times \G_{\m}$-equivariant Springer map for $\check{G}$ with the map $\overline{f}$ defined in Lemma $\ref{mapf}.$
Note that in the left top corner of the diagram we took the fiber product in the sense of stacks, see \cite[\S 2.2.2]{Laumon}, we denoted it by $\mathcal{X}.$
The $K$-theory  $K(\mathcal{X})$ of $\mathcal{X}$  is naturally a module over the associative  algebras 
$K^{\check{G}\times\G_{\m}}(\Ntilde{\check{G}}\times_{\N{\check{G}}} \Ntilde{\check{G}})$ and $K^{\check{H}\times\G_{\m}}(\Ntilde{\check{H}}\times_{\N{\check{H}}} \Ntilde{\check{H}}).$ The action is by convolution (see Section $\ref{A}$ Appendix $\ref{Appendix1}$).  Thanks to $\eqref{theoaffinehecke}$, these two algebras may be identified with the  extended affine Hecke algebras $\affinehecke{G}$ and $\affinehecke{H}$ respectively.
 We may now state the conjecture : 

\begin{conjecture}
\label{firstconjecture}
The bimodule over the affine extended Hecke algebras 
\\
$K^{\check{G}\times\G_{\m}}(Z_{\check{G}})$ and $K^{\check{H}\times\G_{\m}}(Z_{\check{H}})$ realizing the local geometric Langlands functoriality at the Iwahori level for the map $\sigma:\check{G}\times \G_{\m}\longrightarrow \check{H}$ identifies with $K(\mathcal{X}).$
\end{conjecture}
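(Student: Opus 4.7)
The plan is to first fix a precise meaning for ``the bimodule realizing local geometric Arthur--Langlands functoriality at the Iwahori level'' by accepting the framework of Frenkel--Gaitsgory's conjecture $\eqref{Arkhi}$ and V.~Lafforgue's conjecture $\eqref{Lafforgue}$ as working hypotheses, and then identify this bimodule with $K(\mathcal{X})$ by means of K-theoretic base change. Under these conjectures, the tamely ramified part of the functoriality-realizing category admits, after passing to $I_G\times I_H$-invariants and then to Grothendieck groups, a description as the K-theory of a stack obtained by base-changing $\Ntilde{\check{G}}/(\check{G}\times\G_m)$ and $\Ntilde{\check{H}}/(\check{H}\times\G_m)$ along the Langlands parameter space for the map $\sigma$. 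The candidate stack is precisely $\mathcal{X}$, provided one shows that the map $\overline{f}$ on nilpotent quotient stacks correctly encodes the induced map on tamely ramified local systems.

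The key step is to interpret the shift $z\mapsto z+x$ defined in Lemma~\ref{mapf} as the geometric incarnation of the Arthur substitution. By Jacobson--Morozov, the $\SL_2$-triple $(x,h,y)$ attached to $d\xi(e)$ converts the $\SL_2$-factor of $\sigma$ into the Arthur parameter modifying the unipotent monodromy: a tamely ramified $\check{G}$-local system with nilpotent monodromy $z$ should be sent by local Arthur--Langlands functoriality to the tamely ramified $\check{H}$-local system with unipotent monodromy $\exp(z+x)$, and the $\overline{\sigma}$-equivariance established in Lemma~\ref{mapf}, combined with $\xi(s)x\xi(s)^{-1}=s^{2}x$ from $\eqref{sl2}$, is precisely the compatibility with the substitution $s\mapsto q^{1/2}$ in the Satake parameter. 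Granting this, the identification of $K(\mathcal{X})$ with the target bimodule follows from K-theoretic base change applied to the Cartesian square defining $\mathcal{X}$, together with the Kazhdan--Lusztig--Ginzburg isomorphism $\eqref{theoaffinehecke}$ used to match up the $\affinehecke{G}$- and $\affinehecke{H}$-actions. The commutativity of the two Hecke actions is automatic because each acts via a distinct projection $\mathcal{X}\to\Ntilde{\check{G}}/(\check{G}\times\G_m)$, respectively $\mathcal{X}\to\Ntilde{\check{H}}/(\check{H}\times\G_m)$, and convolution on the Steinberg side of one factor commutes with the other tautologically.

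The main obstacles are threefold. First, the conjectures $\eqref{Arkhi}$ and $\eqref{Lafforgue}$ are themselves open, so an unconditional proof would require substantial progress on the geometric Langlands program beyond what is currently available; a realistic strategy is therefore to take them as definitions and to verify compatibility with classical functoriality in special cases, as the paper does for $(\GL_1,\GL_m)$ via Theorem~G. Second, the K-theoretic base change invoked above is delicate at the level of stacks: the fiber product $\mathcal{X}$ should arguably be taken in the derived sense, and one must either verify vanishing of the relevant Tor groups or account for them in a derived refinement of $K(\mathcal{X})$; the lack of transversality of the map $\overline{f}$ with the Springer map for $\check{H}$ is the concrete source of this difficulty. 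Third, and most importantly, checking that the $\affinehecke{G}$- and $\affinehecke{H}$-module structures on $K(\mathcal{X})$ computed via Bernstein generators and equivariant localization really match the actions on the classical Howe bimodule after the specialization $s=q^{1/2}$ requires explicit computation on both sides for every pair $(G,H)$; obtaining a uniform such match is at present only a hope, and lifting the entire statement to a categorical equivalence between $(D\mathcal{W})^{I_G\times I_H}$ and coherent sheaves on $\mathcal{X}$ along the Bezrukavnikov equivalence $D_{I_G}(\flagvar{G})\iso D^b\mathrm{Coh}^{\check{G}\times\G_m}(Z_{\check{G}})$ would be a much more ambitious undertaking.
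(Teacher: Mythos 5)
The statement you are asked to ``prove'' is, in the paper, a conjecture (labelled $\ref{firstconjecture}$, cf.\ Conjecture~D in the introduction), and the paper offers no proof of it; it is presented as a definition-by-fiat of the bimodule realizing local geometric Arthur--Langlands functoriality at the Iwahori level, motivated by the Frenkel--Gaitsgory isomorphism $\eqref{Arkhi}$ and V.~Lafforgue's conjecture $\eqref{Lafforgue}$. Your proposal correctly identifies this: you explicitly take $\eqref{Arkhi}$ and $\eqref{Lafforgue}$ as working hypotheses, you correctly pinpoint the map $z\mapsto z+x$ of Lemma~$\ref{mapf}$ and its $\overline{\sigma}$-equivariance (driven by $\xi(s)x\xi(s)^{-1}=s^2 x$) as the geometric avatar of the Arthur substitution, and you correctly flag the two genuine technical dangers that the paper itself sidesteps by working only with $K$-groups: that the fiber product defining $\mathcal{X}$ is not transverse and should in principle be taken in the derived sense, and that matching the $\affinehecke{G}$- and $\affinehecke{H}$-module structures after specialization $s\mapsto q^{1/2}$ is an explicit computation that must be done pair by pair. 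In short, there is no proof in the paper against which to compare your argument; your write-up is an accurate account of why the statement is conjectural and of the path one would have to clear.

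One small caveat: when you say the paper verifies the conjecture for $(\GL_1,\GL_m)$ via Theorem~G, be aware that Theorem~$\ref{theoremprincipal}$ proves Conjecture~$\ref{secondeconjecture}$ (Conjecture~F), which identifies $K(\mathcal{X})$ with the concrete bimodule $K(DP_{I_G\times I_H}(\Pi(F)))$ coming from the geometric Weil representation, rather than with the unspecified ``functoriality-realizing'' bimodule of Conjecture~$\ref{firstconjecture}$. The two are linked only through the intermediate Conjecture~E (itself resting on $\eqref{Lafforgue}$), so even the $(\GL_1,\GL_m)$ case does not give an unconditional instance of Conjecture~$\ref{firstconjecture}$; it gives evidence that the proposed $K(\mathcal{X})$ is the right object because it matches what the Howe correspondence produces. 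Your phrasing could be read as claiming more than the paper establishes; otherwise your assessment is sound.
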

\par\medskip
Remark that if $\check{G}=\check{H}$ and the map $\xi$ is trivial, then $\mathcal{X}$ equals $Z_{\check{G}}$ and $K(\mathcal{X})$ identifies with the  extended affine Hecke algebra $\affinehecke{G}$  for $G.$ Thus $K(\mathcal{X})$  is naturally a free module of rank one over both algebras $\affinehecke{H}$  and $\affinehecke{G}$. 

\subsection{Properties of the stack $\cX$}

 Consider the induced variety
$$\N{\check{G},\check{H}}=(\check{H}\times \G_{\m})\times_{\check{G}\times \G_{\m}}\N{\check{G}}$$
 with respect to $\overline{\sigma}$ (see Appendix $\ref{Appendix1}$ for the exact definition of the induced variety).  Similarly  consider the induced variety
$$\Ntilde{\check{G},\check{H}}=(\check{H}\times \G_{\m})\times_{\check{G}\times \G_{\m}}\Ntilde{\check{G}}.$$

\begin{proposition} 
\label{jj} 
There exists a natural isomorphism of stacks 
$$\mathcal{X}\iso(\Ntilde{\check{G},\check{H}} \times_{\N{\check{H}}}\Ntilde{\check{H}})/(\check{H}\times\G_{\m}),$$
and so an isomorphism of K-groups
$$K(\mathcal{X})\iso K^{\check{H}\times \G_{m}}(\Ntilde{\check{G},\check{H}} \times_{\N{\check{H}}}\Ntilde{\check{H}}).$$
\end{proposition}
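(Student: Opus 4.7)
The strategy is to use the standard induction/change-of-group equivalence for quotient stacks, together with the compatibility of stack-theoretic fiber products with strict fiber products under a common quotient.

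First I would observe that the composition of the Springer map $\Ntilde{\check{G}}\to \N{\check{G}}$ with the map $f$ from Lemma~\ref{mapf} produces a $\overline{\sigma}$-equivariant morphism $\Ntilde{\check{G}}\to \N{\check{H}}$, where the source carries its natural $\check{G}\times\G_{\m}$-action and the target carries its $\check{H}\times\G_{\m}$-action restricted via $\overline{\sigma}$. The key general fact I would invoke is the induction equivalence for quotient stacks: for a group homomorphism $\overline{\sigma}:\check{G}\times\G_{\m}\to\check{H}\times\G_{\m}$ and any $\check{G}\times\G_{\m}$-scheme $Y$, one has a canonical isomorphism of stacks
\[
Y/(\check{G}\times\G_{\m})\,\isom\,\bigl((\check{H}\times\G_{\m})\times_{\check{G}\times\G_{\m}}Y\bigr)/(\check{H}\times\G_{\m}).
\]
Applying this to $Y=\Ntilde{\check{G}}$ yields $\Ntilde{\check{G}}/(\check{G}\times\G_{\m})\isom \Ntilde{\check{G},\check{H}}/(\check{H}\times\G_{\m})$, and moreover the composite map to $\N{\check{H}}/(\check{H}\times\G_{\m})$ corresponds under this isomorphism to the morphism induced by the $\check{H}\times\G_{\m}$-equivariant map $\Ntilde{\check{G},\check{H}}\to\N{\check{H}}$, $(h,s,y)\mapsto h\cdot f(\mu_{\check{G}}(y))\cdot h^{-1}$ (or rather the map descending from the Springer composition along the induction).

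Next I would rewrite the defining fiber product of $\cX$ using this identification:
\[
\cX\,\isom\,\bigl(\Ntilde{\check{G},\check{H}}/(\check{H}\times\G_{\m})\bigr)\times_{\N{\check{H}}/(\check{H}\times\G_{\m})}\bigl(\Ntilde{\check{H}}/(\check{H}\times\G_{\m})\bigr).
\]
Now both factors are quotient stacks by the same group $\check{H}\times\G_{\m}$, and the base stack is likewise a quotient by this group. A standard fact about $2$-fiber products of quotient stacks over a common base quotient stack gives
\[
\bigl(Y_1/K\bigr)\times_{Z/K}\bigl(Y_2/K\bigr)\,\isom\,(Y_1\times_Z Y_2)/K,
\]
for $K$-schemes $Y_1,Y_2$ mapping $K$-equivariantly to a $K$-scheme $Z$. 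Applied with $K=\check{H}\times\G_{\m}$, $Y_1=\Ntilde{\check{G},\check{H}}$, $Y_2=\Ntilde{\check{H}}$ and $Z=\N{\check{H}}$, this yields the desired isomorphism
\[
\cX\,\isom\,(\Ntilde{\check{G},\check{H}}\times_{\N{\check{H}}}\Ntilde{\check{H}})/(\check{H}\times\G_{\m}).
\]

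Finally, the $K$-theory statement follows immediately from the definition of equivariant $K$-theory via quotient stacks, i.e.\ $K(W/K)=K^{K}(W)$ for a $K$-scheme $W$. The main technical point requiring care is the verification that the induction equivalence is compatible with the maps to $\N{\check{H}}/(\check{H}\times\G_{\m})$, so that the fiber product identification is correct; but this reduces to a direct unraveling of definitions using the $\overline{\sigma}$-equivariance of Lemma~\ref{mapf}. No deep input is required beyond these standard stack-theoretic manipulations.
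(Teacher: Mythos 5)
Your proof is correct and matches the paper's argument: both use the induction equivalence $\Ntilde{\check{G}}/(\check{G}\times\G_{\m})\iso\Ntilde{\check{G},\check{H}}/(\check{H}\times\G_{\m})$ (this is the paper's Appendix \S\ref{C}) followed by the standard compatibility of stack quotients with fiber products, $(Y_1/K)\times_{Z/K}(Y_2/K)\iso(Y_1\times_Z Y_2)/K$. The extra care you take in checking that the induction equivalence is compatible with the maps down to $\N{\check{H}}/(\check{H}\times\G_{\m})$ is exactly the role played by the induced map $f_1$ in the paper's proof.
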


\begin{proof}
 Since the map $f$ defined in Lemma $\ref{mapf}$ is $\overline{\sigma}$-equivariant, it induces a $\check{H}\times \G_{\m}$-equivariant map
\begin{equation}
\label{mapf1}
f_{1}:(\check{H}\times \G_{\m})\times_{\check{G}\times\G_{\m}}\N{\check{G}}\longrightarrow \N{\check{H}}.
\end{equation}
The map $f_{1}$ in $\eqref{mapf1}$ induces a map from $\Ntilde{\check{G},\check{H}}$ to $\N{\check{H}}$ and we can consider the fiber product $\Ntilde{\check{G},\check{H}} \times_{\N{\check{H}}}\Ntilde{\check{H}}.$
Note that $\Ntilde{\check{G},\check{H}}/(\check{H}\times \G_{\m})$ is isomorphic to the stack quotient $\Ntilde{\check{G}}/(\check{G}\times \G_{\m}),$ see Appendix \S $\ref{C}$ in $\ref{Appendix1}$. It follows that $\mathcal{X}$ identifies with the stack quotient of $\Ntilde{\check{G},\check{H}} \times_{\N{\check{H}}}\Ntilde{\check{H}}$ by the  action of $\check{H}\times\G_{\m}$ thanks to the following general fact :
if $\phi:X\longrightarrow Z$ and $\psi:Y\longrightarrow Z$ are  equivariant morphisms of $G$-schemes, then the fiber product $X/G\times_{Z/G}Y/G$ in the category of stacks identifies with  the quotient stack $(X\times_{Z}Y)/G.$

 \end{proof}
The action of $K^{\check{H}\times \G_{m}}(\Ntilde{\check{H}}\times_{\N{\check{H}}}\Ntilde{\check{H}})$ and $K^{\check{G}\times \G_{m}}(\Ntilde{\check{G}}\times_{\N{\check{G}}}\Ntilde{\check{G}})$ by convolution on  $K^{\check{H}\times \G_{m}}(\Ntilde{\check{G},\check{H}}\times_{\N{\check{H}}}\Ntilde{\check{H}})$   is defined in sections $\ref{B}$ and $\ref{C}$ of Appendix $\ref{Appendix1}$.
\par\medskip
If the map $\sigma$ is an inclusion of $\check{G}$ in $\check{H}$, the natural map 
$$\check{H}\times_{\check{G}}\N{\check{G}}\to (\check{H}\times \G_{\m})\times_{\check{G}\times \G_{\m}}\N{\check{H},\check{G}}=\N{\check{G},\check{H}}$$
is an isomorphism.  We can identify $\N{\check{G},\check{H}}$ with the variety of pairs  
$$(h\check{G}\in \check{H}/\check{G}, v\in \N{\check{H}})$$
satisfying   $h^{-1}vh\in x+\N{\check{G}}$ via the map sending any element of  $(h,z)$ of  $\check{H}\times \N{\check{G}}$ to $(h\check{G}, v=h(z+x)h^{-1}).$ The latter map makes sense because  $\check{G}$ centralizes $x$.  Thus the map $f_{1}$ $\eqref{mapf1}$ becomes the projection sending any element $(h\check{G},v)$ of  $\N{\check{G},\check{H}}$ to $v$. In this case the left $\check{H}\times \G_{\m}$-action on $\N{\check{G},\check{H}}$ is such that  for any $(h_{1},s)$ in $\check{H}\times \G_{\m}$ and any $(h\check{G},v)$ in
$\N{\check{G},\check{H}}$, 
$$(h_{1},s).(h\check{G},v)=(h_{1}h\xi(s)^{-1}\check{G},s^{-2}h_{1}vh_{1}^{-1}).$$
\par\medskip

\begin{proposition}
There is a natural isomorphism
$$K(\mathcal{X})\iso K^{\check{G}\times \G_{\m}}(\Ntilde{\check{G}}\times_{\N{\check{H}}}\Ntilde{\check{H}}),$$
and the $\mathrm{R}(\check{H}\times\G_{\m})$-module structure on the right hand side is defined by the functor $\mathrm{Res}^{\overline{\sigma}}:\mathrm{R}(\check{H}\times\G_{\m})\to\mathrm{R}(\check{G}\times\G_{\m}).$
\end{proposition}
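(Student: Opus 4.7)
My plan is to reduce the claim to the previous proposition, which gives
$$K(\mathcal{X})\iso K^{\check{H}\times \G_{\m}}(\Ntilde{\check{G},\check{H}} \times_{\N{\check{H}}}\Ntilde{\check{H}}),$$
and then to exchange the induction construction with the fiber product, after which the standard induction--restriction equivalence in equivariant $K$-theory finishes the job.

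More precisely, the first step is a variety-level identification. The map $f_{1}:\Ntilde{\check{G},\check{H}}\to\N{\check{H}}$ is, by construction, the one induced from the $\check{G}\times\G_{\m}$-equivariant map $\Ntilde{\check{G}}\to\N{\check{H}}$, $y\mapsto \mu(y)+x$, where $\N{\check{H}}$ carries the $\check{G}\times\G_{\m}$-action inherited via $\overline{\sigma}$. I would verify the general (and purely formal) identity: for a homomorphism $K\to G$ of algebraic groups, a $K$-variety $X$ equipped with a $K$-equivariant map $X\to Y$ to a $G$-variety $Y$, and a $G$-equivariant map $Z\to Y$, one has a canonical isomorphism
$$(G\times_{K}X)\times_{Y}Z\;\iso\; G\times_{K}(X\times_{Y}Z),$$
where on the right $Z$ is viewed as a $K$-variety by restriction of the $G$-action. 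The map $[(g,x),z]\mapsto [g,(x,g^{-1}z)]$ is well-defined, $G$-equivariant, and has an obvious inverse. Applied to $K=\check{G}\times\G_{\m}$, $G=\check{H}\times\G_{\m}$, $X=\Ntilde{\check{G}}$, $Y=\N{\check{H}}$, $Z=\Ntilde{\check{H}}$, this yields
$$\Ntilde{\check{G},\check{H}}\times_{\N{\check{H}}}\Ntilde{\check{H}}\;\iso\;(\check{H}\times\G_{\m})\times_{\check{G}\times\G_{\m}}\bigl(\Ntilde{\check{G}}\times_{\N{\check{H}}}\Ntilde{\check{H}}\bigr).$$

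The second step is to invoke the induction--restriction equivalence recalled in Appendix \ref{Appendix1}: for any $K$-variety $V$ with $K$ acting on $G$ by right multiplication through the homomorphism $K\to G$,
$$K^{G}\bigl(G\times_{K}V\bigr)\;\iso\;K^{K}(V).$$
Applied with $V=\Ntilde{\check{G}}\times_{\N{\check{H}}}\Ntilde{\check{H}}$ this produces the isomorphism
$$K^{\check{H}\times\G_{\m}}(\Ntilde{\check{G},\check{H}}\times_{\N{\check{H}}}\Ntilde{\check{H}})\;\iso\;K^{\check{G}\times\G_{\m}}\bigl(\Ntilde{\check{G}}\times_{\N{\check{H}}}\Ntilde{\check{H}}\bigr),$$
and composing with the previous proposition gives the desired identification of $K(\mathcal{X})$.

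For the module-structure statement, I would trace the action of $\R(\check{H}\times\G_{\m})$ through the chain of isomorphisms. On the left-hand side the action is the tautological one coming from pullback along the structure map to a point of $B(\check{H}\times\G_{\m})$. Under the induction equivalence the pullback to a point factors through $B(\check{G}\times\G_{\m})\to B(\check{H}\times\G_{\m})$ induced by $\overline{\sigma}$, whose effect on representation rings is precisely $\mathrm{Res}^{\overline{\sigma}}$; this is the standard compatibility between induction--restriction in equivariant $K$-theory and the change-of-groups functor for representation rings. I do not anticipate a significant obstacle: everything is formal once the identity of fiber products above is verified, and the only minor care is to respect the fact that $\overline{\sigma}$ may fail to be injective, which is why we use the induced-variety formalism rather than a literal quotient by a subgroup.
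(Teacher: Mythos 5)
Your proof is correct and follows essentially the same path as the paper's: both invoke the previous proposition to reduce to $K^{\check{H}\times\G_{\m}}(\Ntilde{\check{G},\check{H}}\times_{\N{\check{H}}}\Ntilde{\check{H}})$, then use the compatibility of induction with fiber products (the paper's Lemma \ref{gg}, which is precisely your first step) together with the induction--restriction equivalence of Appendix \ref{C}. Your remarks on the $\mathrm{R}(\check{H}\times\G_{\m})$-module structure make explicit a point the paper leaves implicit, and your verification of the fiber-product identity (the map $[(g,x),z]\mapsto[g,(x,g^{-1}z)]$) is the inverse of the one written out in Lemma \ref{gg}.
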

\begin{proof}
The scheme $\Ntilde{\check{G}}\times_{\N{\check{H}}}\Ntilde{\check{H}}$ classifies couples $((z,\borel_{1}),\borel),$ where  $(z,\borel_{1})$ lies in $\Ntilde{\check{G}}$ and $\borel$ is Borel subalgebra  in $\mathrm{Lie(H)}$ containing  $z+x$.
We define an action of $\check{G}\times \G_{\m}$ on $\Ntilde{\check{G}}\times_{\N{\check{H}}}\Ntilde{\check{H}}$ as follows: 
for any $(g,s)$ in $\check{G}\times \G_{\m}$ and any $((z,\borel_{1},\borel)$ in $\Ntilde{\check{G}}\times_{\N{\check{H}}}\Ntilde{\check{H}}$ 
$$(g,s).((z,\borel_{1}),\borel)=(s^{-2}gzg^{-1},g\borel_{1}g^{-1}, g\xi(s)\borel\xi(s)^{-1}g^{-1}).$$
By Lemma $\ref{gg}$ in Appendix $\ref{Appendix1}$  we have an  $\check{H}\times\G_{\m}$-equivariant isomorphism
$$(\check{H}\times\G_{\m})\times_{\check{G}\times\G_{\m}}(\Ntilde{\check{G}}\times_{\N{\check{H}}}\Ntilde{\check{H}})\iso \Ntilde{\check{G},\check{H}}\times_{\N{\check{H}}}\Ntilde{\check{H}}.$$
Combining this with Proposition $\ref{jj}$ we get the desired isomorphism.
\end{proof}
\par\medskip
In the rest of this section  we will restrict ourselves to the case of $G=\GL_{n}$ and $H=\GL_{m}$ and we will describe  some additional properties of the bimodule $K(\mathcal{X})$, namely a filtration and a grading on $K(\mathcal{X})$, where the graded parts will just be some equivariant K-theory of Springer fibers. 
We will always use the same notation for $\GL_{r}$ and its Langlands dual over $\qelbar.$  In this setting we choose the morphism $\eta$ to be the canonical inclusion of $\GL_{n}$ into $\GL_{m}.$ The map $\sigma$ is obtained by the composition 
$$\GL_{n}\times \G_{\m}\to\GL_{n}\times \mathrm{SL_{2}}\overset{id\times \xi}{\longrightarrow }\GL_{n}\times \GL_{m-n}\longrightarrow \GL_{m},$$ 
where the last arrow is the inclusion of the standard Levi subgroup associated to the partition $(n,m-n)$ of $m$ and  $\xi$ corresponds to the principal unipotent orbit as in \cite{Arthur}. Then the restriction of the map  $\xi$ to $\G_{\m}$ is the cocharacter  $(0,\dots,0,m-n-1,m-n-3,\dots,1+n-m).$ 
Let $U_{0}=\K^{m}$ be the standard representation of $\GL_{m},$ and $\{u_{1},\dots,u_{m}\}$ be the standard basis of $U_{0}.$   The element  $x=d\xi(e)$ is a nilpotent element  of $\mathrm{Lie}(\GL_{m})$ such that  $x(u_{i})=0$ for $1\leq i\leq n+1$ and that $x(u_{i+1})=u_{i}$ for $n+1\leq i<m.$ Let  $G_{2}=\GL_{m-n}$ and $B_{2}$ be the unique Borel subgroup in $G_2$ such that $x$ lies in $\mathrm{Lie}(B_2).$ 
\par\medskip
Let $Z_{G_{2}}(x)$ be the stabilizer  of $x$ in $G_{2}$. It acts naturally on $\Ntilde{\check{G}}\times_{\N{\check{H}}}\Ntilde{\check{H}}:$ for any $y$ in $Z_{G_{2}}(x)$  and any  $(z,\borel_{1},\borel)$ in $\Ntilde{\check{G}}\times_{\N{\check{H}}}\Ntilde{\check{H}},$ 
$$y.(z,\borel_{1},\borel)=(z,\borel_{1},y\borel y^{-1}).$$
For any $s$ in $\G_{\m}$, the element $\xi(s)$ clearly normalizes $Z_{G_{2}}(x)$ and  the semi-direct product $Z_{G_{2}}(x)\rtimes \G_{\m}$ is a subgroup of $G_{2}.$
 The group $Z_{G_{2}}(x)\rtimes \G_{\m}$ acts on  $\Ntilde{\check{G}}\times_{\N{\check{H}}}\Ntilde{\check{H}}$ and this action commutes with the $\check{G}$-action. 
\par\medskip
\begin{theorem}
There exists a $\check{G}\times \G_{m}$-invariant filtration 
$$\emptyset=F^{0}\subset F^{1}\subset \ldots \subset F^{r}=\Ntilde{\check{G}}\times_{\N{\check{H}}}\Ntilde{\check{H}}$$
such that for $0\leq i\leq r$, each $K^{\check{G}\times \G_{m}}(F^{i})$ is a submodule over both affine extended Hecke algebras $K^{\check{G}\times G_{m}}(\Ntilde{\check{G}}\times_{\N{\check{G}}}\Ntilde{\check{G}})$ 
and $K^{\check{H}\times G_{m}}(\Ntilde{\check{H}}\times_{\N{\check{H}}}\Ntilde{\check{H}}).$
Moreover the spaces $K^{\check{G}\times \G_{m}}(F^{i})$ for $0\leq i\leq r$ define a filtration on $K(\mathcal{X})$.  
\end{theorem}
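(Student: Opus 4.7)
The plan is to build the filtration from the adjoint stratification of $\N{\check{H}}$ by nilpotent $\check{H}$-orbits, pulled back to $\Ntilde{\check{G}}\times_{\N{\check{H}}}\Ntilde{\check{H}}$. Fix a total order $\Obb_1,\ldots,\Obb_r$ on the (finitely many) nilpotent orbits of $\check{H}$ refining the closure order, so that each $C_i:=\bigcup_{j\le i}\overline{\Obb_j}$ is closed in $\N{\check{H}}$ and $\check{H}\times\G_{\m}$-stable (the $\G_{\m}$-action $s\cdot v=s^{-2}v$ preserves nilpotent orbits). Let $\mu_{\check{H}}:\Ntilde{\check{H}}\to\N{\check{H}}$ be the Springer map and set
\[
F^i \;=\; \Ntilde{\check{G}}\times_{\N{\check{H}}}\mu_{\check{H}}^{-1}(C_i)\;\subset\;\Ntilde{\check{G}}\times_{\N{\check{H}}}\Ntilde{\check{H}}.
\]
Since $f:\N{\check{G}}\to\N{\check{H}}$, $z\mapsto z+x$, is a morphism and $\mu_{\check{H}}$ is a morphism, each $F^i$ is closed; it is $\check{G}\times\G_{\m}$-invariant because the action on $\Ntilde{\check{H}}$ covers the $\check{H}\times\G_{\m}$-action on $\N{\check{H}}$ via $\overline{\sigma}$, under which each $C_i$ is stable. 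By the identification of Proposition \ref{jj}, $F^i$ corresponds to a $\check{H}\times\G_{\m}$-invariant closed substack of $\cX$, so pushforward along the closed embedding $F^i\hookrightarrow\Ntilde{\check{G}}\times_{\N{\check{H}}}\Ntilde{\check{H}}$ gives a well-defined filtration of $K(\cX)$ by $\R(\check{G}\times\G_{\m})$-submodules whose successive quotients localize onto the strata $\mu_{\check{H}}^{-1}(\Obb_i)$ of $\Ntilde{\check{H}}$, which are $\check{H}\times\G_{\m}$-equivariantly isomorphic to equivariant vector bundles over Springer fibers.

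The remaining point is to verify that each image $K^{\check{G}\times\G_{\m}}(F^i)\to K(\cX)$ is stable under both Hecke actions. For $\HH_G\isom K^{\check{G}\times\G_{\m}}(Z_{\check{G}})$, the convolution is computed on the triple fiber product
\[
M_G\;=\;\Ntilde{\check{G}}\times_{\N{\check{G}}}\Ntilde{\check{G}}\times_{\N{\check{H}}}\Ntilde{\check{H}},
\]
via $c\ast x=p_{13\,*}(p_{12}^{*}c\otimes p_{23}^{*}x)$, where $p_{13}$ and $p_{23}$ both project to $\Ntilde{\check{G}}\times_{\N{\check{H}}}\Ntilde{\check{H}}$ and share the same composition with the projection to $\Ntilde{\check{H}}$ (namely the third factor of $M_G$). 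Consequently $p_{13}^{-1}(F^i)=p_{23}^{-1}(F^i)$, so if $x$ is supported on $F^i$ then $p_{23}^{*}x$ is supported on this common locus and $p_{13\,*}$ of the resulting class is again supported on $F^i$; hence $K^{\check{G}\times\G_{\m}}(F^i)$ is stable under convolution by $\HH_G$. For $\HH_H\isom K^{\check{H}\times\G_{\m}}(Z_{\check{H}})$ we invoke Proposition \ref{jj} and run the same argument on
\[
M_H\;=\;\Ntilde{\check{G},\check{H}}\times_{\N{\check{H}}}\Ntilde{\check{H}}\times_{\N{\check{H}}}\Ntilde{\check{H}},
\]
where convolution only modifies the last $\Ntilde{\check{H}}$-factor and preserves its image in $\N{\check{H}}$; since the filtration $F^i$ is $\check{H}\times\G_{\m}$-invariant and defined by a condition on this image, it is preserved as well.

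The last step is to check that the images of $K^{\check{G}\times\G_{\m}}(F^i)$ in $K(\cX)$ form a genuine increasing filtration by distinct submodules. Monotonicity is automatic from $F^i\subset F^{i+1}$, and exhaustion from $F^r=\Ntilde{\check{G}}\times_{\N{\check{H}}}\Ntilde{\check{H}}$; strictness follows from the Thomason--Quillen localization long exact sequence applied to the open--closed decomposition $F^{i-1}\subset F^i\supset (F^i\setminus F^{i-1})$, whose outer term $K^{\check{G}\times\G_{\m}}(F^i\setminus F^{i-1})$ is non-trivial because it computes $\check{G}\times\G_{\m}$-equivariant K-theory of an equivariant affine bundle over the preimage in $\Ntilde{\check{G}}$ of a single nilpotent $\check{H}$-orbit meeting $\mathrm{Im}(f)$.

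The main obstacle is the bookkeeping around the $\HH_H$-action: one has to be careful that under the identification of Proposition \ref{jj} the closed substack defined by $F^i$ really coincides with the preimage of $C_i$ in $\Ntilde{\check{G},\check{H}}\times_{\N{\check{H}}}\Ntilde{\check{H}}$, and that the convolution diagram transports $\check{G}\times\G_{\m}$-equivariant classes to $\check{H}\times\G_{\m}$-equivariant ones compatibly. Once this compatibility is spelled out, the argument reduces to the set-theoretic fact that both convolutions act fiberwise over $\N{\check{H}}$, making stability of the filtration essentially formal.
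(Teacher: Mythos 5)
Your overall strategy — stratify the base nilpotent cone by orbits, pull back, and observe that both convolution diagrams act ``fiberwise over the base'' and hence preserve each closed piece — is the same as the paper's. But you stratify by $\check{H}$-orbits in $\N{\check{H}}$ pulled back via $\mu_{\check{H}}\circ p_{2}$, whereas the paper stratifies by $\check{G}$-orbits in $\N{\check{G}}$ pulled back via $p_{1}$. Both stratifications are indeed preserved by the two Hecke actions (the $\HH_G$-convolution fixes the image in $\N{\check{G}}$, the $\HH_H$-convolution fixes the image in $\N{\check{H}}$, and the common image in $\N{\check{H}}$ through the fibre product makes either choice compatible with both), so that part of your argument is fine.

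The real gap is in the last step. Showing that $K^{\check{G}\times\G_{\m}}(F^{i}\setminus F^{i-1})$ is nonzero gives ``strictness'' of the filtration, but what you actually need is that the natural map $K^{\check{G}\times\G_{\m}}(F^{i-1})\to K^{\check{G}\times\G_{\m}}(F^{i})$ is injective, so that the $K^{\check{G}\times\G_{\m}}(F^{i})$ genuinely sit as submodules inside $K(\cX)$. The Thomason--Quillen sequence reads
$$K_{1}^{\check{G}\times\G_{\m}}(F^{i}\setminus F^{i-1})\to K_{0}^{\check{G}\times\G_{\m}}(F^{i-1})\to K_{0}^{\check{G}\times\G_{\m}}(F^{i})\to K_{0}^{\check{G}\times\G_{\m}}(F^{i}\setminus F^{i-1})\to 0,$$
and injectivity on the left requires the boundary map out of $K_{1}$ to vanish; this is precisely the content of the paper's Cellular Fibration Lemma~$\ref{cellularfibration}$, proved by computing $K_{1}^{G^{i}}(\mathrm{pt})\cong k^{*}\otimes_{\Z}S$ and using $\mathrm{Hom}_{\Z}(k^{*},S)=0$. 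Your argument never addresses this vanishing.

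Moreover, your description of the open strata as ``equivariant affine bundles over the preimage in $\Ntilde{\check{G}}$ of a single nilpotent $\check{H}$-orbit'' is incorrect: the fibre of $F^{i}\setminus F^{i-1}$ over such a point is the Springer fibre $\B_{\check{H},z+x}$, not an affine space, so the hypothesis $E^{i}$ affine in Lemma~$\ref{cellularfibration}$ is not satisfied by your strata. This is where the paper's choice of stratification by $\check{G}$-orbits earns its keep: for a $\check{G}$-orbit $\Obb=\check{G}\cdot z_{\theta}$ in $\N{\check{G}}$ the stratum $Y_{\Obb}$ is induced from the single point $z_{\theta}$, giving $K^{\check{G}\times\G_{\m}}(Y_{\Obb})\cong K^{Z_{\theta}}(\B_{\check{G},\theta}\times\B_{\check{H},\theta})$, and one can then refine further using Spaltenstein's $Z_{\theta}$-stable affine pavings of the two Springer fibres to produce strata of the required form $E^{i}/P^{i}$. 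With your stratification, $f^{-1}(\Obb_{H})$ may be a union of several $\check{G}$-orbits, so this inductive homogeneity is lost and the refinement step would need to be rethought. If you keep your stratification you must either supply an analogous affine-paving refinement of each $F^{i}\setminus F^{i-1}$ compatible with the $\check{G}\times\G_{\m}$-action, or switch to the $\check{G}$-orbit stratification and proceed as the paper does.
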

\begin{proof}
For any $\check{G}$-orbit $\Obb$ on  $\N{\check{G}}$ we denote by $Y_{\Obb}$  the preimage of $\Obb$ in $\Ntilde{\check{G}}\times_{\N{\check{H}}}\Ntilde{\check{H}}$ under the projection 
$$\Ntilde{\check{G}}\times_{\N{\check{H}}}\Ntilde{\check{H}}\to \N{\check{G}}$$
sending $(z,\borel_{1},\borel)$ to $z.$  We  refer the reader  to \cite[\S\ 3.2]{CG} for details on nilpotent orbits and stratification of the nilpotent cone $\N{\check{G}}$ into $\check{G}$-conjugacy classes and the stratification of the Steinberg variety of $\check{G}$. The orbits $Y_{\Obb}$ form a $\check{G}\times \G_{\m}$-invariant stratification of $\Ntilde{\check{G}}\times_{\N{\check{H}}}\Ntilde{\check{H}},$ which is also $Z_{G_{2}}(x)$-invariant.  The $\check{G}$-orbit  $\Obb$ is given by a partition $\theta=(n_{1}\geq n_{2}\geq \dots\geq n_{r}\geq 1)$ of $n$.  Let $M_{\theta}$ denote the standard Levi subgroup corresponding to this partition, namely
$$M_{\theta}\iso \GL_{n_{1}}\times \dots \times \GL_{n_{r}}.$$
We denote by $z_{\theta}$   the standard upper triangular  regular nilpotent element in $\mathrm{Lie}(M_{\theta})$;   $z_{\theta}$ lies in the orbit  $\Obb$.  Let $Z_{\theta}$  be the  stabilizer of $z_{\theta}$ in $\check{G}\times \G_{\m}$, $Z_{\theta}$ is connected. Denote by $\B_{\check{G},\theta}$  the preimage of $z_{\theta}$ under the Springer map $\Ntilde{\check{G}}\to\N{\check{G}}$. Let $\B_{\check{H},\theta}$ be the preimage of $z_{\theta}+x$ under the Springer map $\Ntilde{\check{H}}\to\N{\check{H}}.$ We have an isomorphism
$$(\check{G}\times\G_{\m})\times_{Z_{\theta}}(\B_{\check{G},\theta}\times \B_{\check{H},\theta})\iso Y_{\Obb}$$
sending $(g,s,\borel_{1},\borel)$ to $(s^{-2}gz_{\theta}g^{-1},g\borel_{1}g^{-1},g\xi(s)\borel\xi(s)^{-1}g^{-1}).$
Hence  we have an isomorphism of groups 
\begin{equation}
\label{obb}
K^{\check{G}\times \G_{\m}}(Y_{\Obb})\iso K^{Z_{\theta}}(\B_{\check{G},\theta}\times \B_{\check{H},\theta}).
\end{equation}

According to \cite{Spal} the scheme $\B_{\check{G},\theta}$ and $\B_{\check{H},\theta}$ respectively admit a finite paving by affine spaces stable under the action of $Z_{\theta}$.  Hence $\eqref{obb}$ is a free $\mathrm{R}(Z_{\theta})$-module of finite type. 
\par\medskip
 
We enumerate the nilpotent orbits $\Obb_{1}$,$\Obb_{2}$,$\dots$,$\Obb_{r}$ in $\N{\check{G}}$ in such an order that $$\dim(\Obb_{1})\leq\dim(\Obb_{2})\leq\dots\leq\dim(\Obb_{r}).$$
 
If $\overline{F}^{j}=\cup_{i\leq j}\Obb_{i}$, then $\overline{F}^{j}$ is closed in $\N{\check{G}}$ and we have a filtration 
 $$\emptyset=\overline{F}^{0}\subset \overline{F}^{1}\subset\dots\subset \overline{F}^{r}=\N{\check{G}}.$$  

Let $F^{j}$ be the preimage of $\overline{F}^{j}$ in $\Ntilde{\check{G}}\times_{\N{\check{H}}}\Ntilde{\check{H}}.$ We get a $\check{G}\times\G_{\m}$-invariant filtration
$$\emptyset= F^{0}\subset F^{1}\subset \dots\subset F^{r}=\Ntilde{\check{G}}\times_{\N{\check{H}}}\Ntilde{\check{H}}.$$
We can refine the filtration $F^{i}$ in such way that the refined filtration be $\check{G}\times \G_{\m}$-stable and the corresponding strata of the stack quotient of $( \Ntilde{\check{G}}\times_{\N{\check{H}}}\Ntilde{\check{H}})/(\check{G}\times\G_{\m})$ satisfy the assumptions of Lemma $\ref{cellularfibration}.$ Then by using this Lemma, we see that for each $i$ the sequence 
$$0\longrightarrow K^{\check{G}\times\G_{\m}}(F^{i-1})\longrightarrow K^{\check{G}\times\G_{\m}}(F^{i})\longrightarrow K^{\check{G}\times\G_{\m}}(Y_{\Obb_{i}})\longrightarrow 0$$
 is exact and  $K^{\check{G}\times \G_{\m}}(F^{i}),$ $0\leq i \leq r$ define a filtration on $K(\mathcal{X}).$ Moreover,  for each $i$, $K^{\check{G}\times\G_{\m}}(F^{i})$ is a submodule over both extended affine Hecke algebras  $K^{\check{G}\times\G_{\m}}(\Ntilde{\check{G}}\times_{\N{\check{G}}}\Ntilde{\check{G}})$ and $K^{\check{H}\times\G_{\m}}(\Ntilde{\check{H}}\times_{\N{\check{H}}}\Ntilde{\check{H}}).$
 \end{proof}
 The above proof relies on the following Lemma whose proof will now be given. 
\begin{lemma}[Cellular fibration]
\label{cellularfibration}
Let us consider the following general situation: $k$ is an  algebraically  closed field  of arbitrary characteristic and  $\mathcal{X}$ is a $k$-stack of finite type equipped with a filtration
$$\emptyset= F^{0}\subset F^{1}\subset \dots\subset F^{r}=\mathcal{X}$$
by closed substacks of $\mathcal{X}.$ Assume that for   $1\leq i\leq r $ there exists an affine space $E^{i}$ and a connected linear algebraic group $P^{i}$ such that 
$$F^{i}-F^{i-1}\iso E^{i}/P^{i},$$
where $E^{i}/P^{i}$ is the stack quotient. 
Then the natural  sequence 
$$0\longrightarrow K(F^{i-1})\longrightarrow K(F^{i})\longrightarrow K(E^{i}/P^{i})\longrightarrow 0$$
is exact and $K(F^{i})$ is a free $\Z$-module. 
\end{lemma}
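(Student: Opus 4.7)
\emph{Plan.} I would proceed by induction on $i$; the base case $i=0$ is immediate from $K(\emptyset)=0$. For the inductive step, the strategy is to apply the Quillen--Thomason localization long exact sequence in $K$-theory of coherent sheaves on algebraic stacks to the open-closed decomposition with closed part $F^{i-1}$ and open complement $U^i := F^i \setminus F^{i-1} \cong E^i/P^i$:
$$
\cdots \to K_1(F^i) \to K_1(U^i) \xrightarrow{\partial} K_0(F^{i-1}) \xrightarrow{\iota_*} K_0(F^i) \xrightarrow{j^*} K_0(U^i) \to 0.
$$
Thomason's equivariant homotopy invariance, applied to the $P^i$-equivariant projection $E^i \to \on{Spec} k$, identifies $K_*(U^i) = K_*^{P^i}(E^i) \cong K_*^{P^i}(\on{Spec} k)$; in particular $K_0(U^i) \cong R(P^i)$. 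Since $P^i$ is connected, $R(P^i) = R(P^i/R_u P^i)$ is the representation ring of a connected reductive group, hence the Weyl invariants $R(T)^W$ in the group ring of the character lattice of a maximal torus, and is therefore a free $\ZZ$-module.

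The heart of the argument is to show that the boundary map $\partial$ vanishes; I would do this by constructing a section of $j^*: K_0(F^i)\to R(P^i)$. To each $P^i$-representation $V$ I would associate a coherent sheaf on $F^i$ whose restriction to $U^i$ is the locally free sheaf attached to $V$; the existence of such extensions, exploiting that $U^i$ is open in $F^i$ and that $P^i$ is connected, is the technical crux. Granting this, the short sequence
$$
0 \to K_0(F^{i-1}) \to K_0(F^i) \to R(P^i) \to 0
$$
is exact. Since $R(P^i)$ is $\ZZ$-free, $\on{Ext}^1_{\ZZ}(R(P^i), K_0(F^{i-1}))$ vanishes, so the sequence splits, and the induction hypothesis on $K_0(F^{i-1})$ propagates the freeness to $K_0(F^i) \cong K_0(F^{i-1}) \oplus R(P^i)$.

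The main obstacle is the vanishing of $\partial$: this is not formal, and one needs either an explicit extension of coherent sheaves from $U^i$ to $F^i$ or a direct proof that $K_1(F^i)\to K_1(U^i)$ is surjective. In the geometric settings relevant to the filtrations considered in this paper (namely quotients of affine pavings of products of Springer fibers by connected stabilizers), the extension can be effected by Koszul-type resolutions exploiting the affineness of each $E^i$ and the fact that successive strata form an iterated stack-quotient of affine bundles, so that each $P^i$-representation can be realized by a $P^i$-equivariant trivial bundle on $E^i$ that spreads out to a coherent sheaf on the ambient stack.
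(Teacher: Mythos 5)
Your skeleton is right: induction on $i$, the localization exact sequence for the closed-open decomposition $F^{i-1}\subset F^i \supset U^i=E^i/P^i$, and the identifications (unipotent reduction $P^i\rightsquigarrow G^i:=P^i/R_uP^i$ plus Thom/homotopy invariance) giving $K_j(E^i/P^i)\cong K_j^{G^i}(\Spec k)$, with $K_0^{G^i}(\Spec k)=R(G^i)$ free over $\Z$. You have also correctly identified that the whole proof hinges on showing the connecting map $\partial\colon K_1(U^i)\to K_0(F^{i-1})$ is zero. But your proposed route to that vanishing does not work, and this is where you depart from the paper.

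You propose to kill $\partial$ by producing a section of $j^*\colon K_0(F^i)\to K_0(U^i)$, via coherent extensions of each $P^i$-representation across $F^i$. That would only re-establish that $j^*$ is surjective, which the localization sequence already gives for free; a section of $j^*$ carries no information about injectivity of $\iota_*\colon K_0(F^{i-1})\to K_0(F^i)$, which is what $\partial=0$ actually asserts. (Indeed, since $K_0(U^i)\cong R(G^i)$ is $\Z$-free, a set-theoretic/group-theoretic section of a surjection onto it always exists, so ``granting'' such extensions buys nothing.) You flag the extension step as the technical crux and leave it open, but even if it were completed it would not close the gap.

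The paper's argument for $\partial=0$ is entirely different and much shorter: by \cite[5.2.18]{CG} and the Thom isomorphism \cite[5.4.17]{CG} one has $K_1(E^i/P^i)\cong K_1^{G^i}(\Spec k)$, and by \cite[Corollary 6.12]{Srinivas} this group is $k^*\otimes_{\Z} S$ for a free abelian group $S$. Inductively $K_0(F^{i-1})$ is $\Z$-free, and since $k$ is algebraically closed, $k^*$ is divisible, so $\Hom_{\Z}(k^*\otimes_{\Z}S,\,\text{free }\Z\text{-module})=0$. Hence $\partial$ has no choice but to vanish. This is the missing idea in your write-up: the boundary map is killed not by a geometric section but by the mismatch between the divisibility of $K_1$ of the open stratum and the freeness of $K_0$ of the closed part.

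Once $\partial=0$, your remaining steps (splitting because $R(G^i)$ is projective over $\Z$, then freeness of $K_0(F^i)$ by the inductive hypothesis) are correct and match the paper.
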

\begin{proof}
 Let $U^{i}$ be the unipotent radical of $P^{i}$ and $G^{i}=P^{i}/U^{i}$ be the reductive quotient. 
Choose a section of the natural projection from $G^{i}$ to  $P^{i}$, it yields a map from $E^{i}/G^{i}$ to $E^{i}/P^{i}$ inducing  an isomorphism (combined with Thom's isomorphism) 
$$K(E^{i}/P^{i})\iso K(E^{i}/G^{i})\iso K^{G^{i}}(\mathrm{Spec}(k))\iso \mathrm{R}(G^{i}),$$
where $\mathrm{R}(G^{i})$ denotes the representations ring of $G^{i}$ (which is a free $\Z$-module). One has an exact sequence 
$$K_{1}(E^{i}/P^{i})\overset{\delta}{\longrightarrow}K(F^{i-1})\longrightarrow K(E^{i}/P^{i})\longrightarrow 0.$$
Let us show that the map $\delta$ vanishes. 
By  \cite[5.2.18]{CG}, we have that 
$$K_{1}^{P^{i}}(E^{i})\iso K_{1}^{G^{i}}(E^{i})$$
and by Thom's isomorphism for higher K-theory \cite[5.4.17]{CG} we obtain that $$K_{1}^{G^{i}}(E^{i})\iso K_{1}^{G^{i}}(\mathrm{Spec}(k)).$$
Now, by \cite[Corollary 6.12]{Srinivas}, $K^{G^{i}}(\mathrm{Spec}(k))$ is isomorphic to $k^{*}\otimes_{\Z} S$, where $S$ is a free abelian  group generated by the irreducible representations of $G^{i}$.  By induction on $i$ we may assume that  $K^{i}(F^{i-1})$ is a free $\Z$-module. To finish the proof note that for any free $\Z$-module $S$, one has $\mathrm{Hom}_{\Z}(k^{*},S)=0$. 

\end{proof}
\section {Howe correspondence in terms of $K(\mathcal{X})$ for dual reductive pairs of type II}
\label{9}
Let $G=\GL_{n}$ and $H=\GL_{m}$ with $n\leq m$. We have presented some motivation for the forthcoming conjecture in the introduction.
Consider the Gronthendieck group of the geometric bimodule $DP_{I_{H}\times I_{G}}(\Pi(F))$.
The  group $K(DP_{I_{H}\times I_{G}}(\Pi(F)))$ is naturally a module over $K(DP_{I_{G}}(\flagvar{I_{G}}))$. This  
K-group $K(DP_{I_{H}}(\flagvar{I_{H}}))\otimes \qelbar$ is isomorphic to the Iwahori-Hecke algebra 
$\iwahorihecke{H}$. According to \cite{Iwahori-mat}, the Iwahori-Hecke algebra $\cH_{I_H}$ identifies with 
$\HH_H\otimes_{\ZZ[s,s^{-1}]}\Qlb$ for the map $\ZZ[s,s^{-1}]\to\Qlb$ sending $s$ to $q^{\frac{1}{2}}$. This isomorphism is naturally upgraded to the isomorphism 
$$
K(DP_{I_H}(\Fl_H))\otimes \qelbar\,
\isom\, \HH_H\otimes_{\Z[s,s^{-1}]} 
\qelbar
$$ 
such that the multiplication by $s$ in 
$\HH_H$ corresponds to the cohomological 
shift by $-1$ in $K(DP_{I_H}(\Fl_H))$. Hence 
under these isomorphisms and Kazhdan-
Lusztig-Ginzburg isomorphism, $K(\mathcal{X})$ and 
$K(DP_{I_{H}\times I_{G}}(\Pi(F)))$ are 
 bimodules over the affine extended 
Hecke algebras $\affinehecke{G}$ and 
$\affinehecke{H}$.  Let us enounce the following conjecture:

 \begin{conjecture}
\label{secondeconjecture}
 The bimodules  $K(\mathcal{X})$  and $K(DP_{I_{H}\times I_{G}}(\Pi(F)))$ are isomorphic under the action of  extended affine Hecke algebras $\affinehecke{H}$ and $\affinehecke{G}$. 
 \end{conjecture}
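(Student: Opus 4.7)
The strategy is to establish the conjecture in the only case where we have a fully explicit description of both sides, namely for the pair $(\GL_1, \GL_m)$, which is the content of Theorem G. The right-hand side $K(DP_{I_G\times I_H}(\Pi(F)))$ is completely understood thanks to Theorem \ref{theon1m}: it is free of rank $m$ over $\R(\check{G}\times\G_{\m})$ with basis $\{\IC^0, \ldots, \IC^{m-1}\}$, and the action of $\affinehecke{H}$ is given by explicit formulas on the simple reflections $L_{s_i}$, the length-zero elements $L_{w_i}$, and the central Bernstein subalgebra. Note that since $\check{G}=\GL_1$, the algebra $\affinehecke{G}$ is commutative and coincides (up to specialization $v=s^2$) with $\R(\check{G}\times\G_{\m})$, so compatibility with the $\affinehecke{G}$-action reduces to checking $\R(\check{G}\times\G_{\m})$-linearity.

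First, I would compute $K(\mathcal{X})$ explicitly. Using the isomorphism $K(\mathcal{X}) \iso K^{\check{G}\times\G_{\m}}(\Ntilde{\check{G}}\times_{\N{\check{H}}}\Ntilde{\check{H}})$ stated right after Proposition \ref{jj}, and observing that $\Ntilde{\GL_1}$ is a single point, one obtains
$$K(\mathcal{X}) \iso K^{\GL_1\times\G_{\m}}(\mathcal{B}_{\check{H},x}),$$
where $\mathcal{B}_{\check{H},x}$ is the Springer fiber in the flag variety of $\check{H}=\GL_m$ over $x=d\xi(e)$, a nilpotent of Jordan type $(m-1,1)$. This Springer fiber has a classical cellular structure with $m$ irreducible components, indexed by the $m$ standard Young tableaux of shape $(m-1,1)$. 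Using this decomposition together with the $\GL_1\times\G_{\m}$-action coming from the Levi decomposition $\GL_1\times\GL_{m-1}\hookrightarrow\GL_m$, I would exhibit an explicit basis $\{b_0,\ldots,b_{m-1}\}$ of $K(\mathcal{X})$ over $\R(\check{G}\times\G_{\m})$, and declare the candidate isomorphism $\Phi: K(\mathcal{X})\to K(DP_{I_G\times I_H}(\Pi(F)))$ by $b_k\mapsto \IC^k$. The $\R(\check{G}\times\G_{\m})$-linearity, i.e.\ the compatibility with the $\affinehecke{G}$-action and with the shift action \eqref{coeur}, should follow from the action of $\check{G}\times\G_{\m}$ on the components of $\mathcal{B}_{\check{H},x}$.

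The substantive step is to verify compatibility of $\Phi$ with the full $\affinehecke{H}$-action. For the commutative Bernstein subalgebra this is essentially automatic: on the right it is described by Theorem \ref{centre} via $\mathrm{Res}^{\sigma}:\R(\check{H}\times\G_{\m})\to \R(\check{G}\times\G_{\m})$, while on the left the same restriction map governs the induced $\R(\check{H}\times\G_{\m})$-action on $K^{\check{G}\times\G_{\m}}(\mathcal{B}_{\check{H},x})$ via the composition $\check{G}\times\G_{\m}\xrightarrow{\overline{\sigma}}\check{H}\times\G_{\m}$. For the non-commutative generators $T_{s_i}$ (equivalently $L_{s_i}$ and $L_{w_i}$), I would use the Kazhdan-Lusztig-Ginzburg isomorphism \eqref{theoaffinehecke} to express each $T_{s_i}$ as a concrete element of $K^{\check{H}\times\G_{\m}}(Z_{\check{H}})$ supported on a minimal non-diagonal Schubert-type correspondence, and compute its convolution action on each $b_k$ by pushforward-pullback along $\mathcal{B}_{\check{H},x}\times_{\mathcal{B}_{\check{H}}}\mathcal{B}_{\check{H},x}^{s_i}$. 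The target formulas are the five lines of display \eqref{coeurcoeur}, including the three cases ``$j\equiv i\pmod m$'' vs.\ ``$j\not\equiv i\pmod m$'' and the $\IC^{k+i}$ shift by the length-zero elements $L_{w_i}$.

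The main obstacle will be this last verification, in particular tracking the Tate twists $\qelbar[1](1/2)\oplus\qelbar[-1](-1/2)$ and the delicate distinction between standard and costandard objects $\IC^{k+1}$ vs.\ $\IC^{k+1,!}$ appearing in Theorem \ref{theon1m}. On the K-theoretic side these correspond to precise $\G_{\m}$-weight contributions from the convolution, and matching them requires keeping careful bookkeeping of the grading. A practical approach is to use the filtration of $K(\mathcal{X})$ constructed at the end of Section \ref{conjecturegtrlf} together with the stratification of $\mathcal{B}_{\check{H},x}$ by $\G_{\m}$-fixed components, reducing the check to a finite number of explicit rank-one computations on $\mathbb{P}^1$'s inside the Springer fiber. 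Once $\Phi$ is shown to intertwine the generators $T_{s_i}$ and $w_i$, and the Bernstein center, it automatically intertwines the full $\affinehecke{H}$-action, completing the proof for the pair $(\GL_1, \GL_m)$; the general case $(\GL_n,\GL_m)$ would require an analogous but considerably more involved analysis of Springer fibers over non-regular nilpotents and seems out of reach by this direct method.
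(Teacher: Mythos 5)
Your overall strategy coincides with the paper's: reduce to the pair $(\GL_1,\GL_m)$, identify $K(\mathcal{X})$ with $K^{\check{G}\times\G_{\m}}(\cB_{\check{H},x})$ for the subregular nilpotent $x$, exhibit a free basis over $\mathrm{R}(\check{G}\times\G_{\m})$, send it to $\{\IC^0,\dots,\IC^{m-1}\}$, and verify compatibility on generators using the cyclic symmetry coming from the length-zero elements. Two small corrections first. The Springer fiber $\cB_{\check{H},x}$ has $m-1$ irreducible components (the chain of $\mathbb{P}^1$'s denoted $V_1,\dots,V_{m-1}$ in Lemma \ref{springerfibre}), not $m$; the number of standard tableaux of shape $(m-1,1)$ is $m-1$, and the rank $m$ of the K-group comes from the $m$ fixed points (equivalently $m-1$ lines plus one extra class). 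Also, the $\GL_1\times\G_{\m}$-action is through $\overline\sigma$, i.e.\ $\G_{\m}$ acts via $\xi$, not through a plain Levi embedding; getting this twist right is exactly where the factor $s^{2i-m}$ in $\cO_{V_i}(-p_i)=s^{2i-m}\cO_{V_i}(-1)$ comes from.

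The more substantive difference is methodological: you propose to express each $T_{s_i}$ as an explicit class in $K^{\check{H}\times\G_{\m}}(Z_{\check{H}})$ and compute its convolution action on every basis element $b_k$ by pushforward--pullback along $\cB_{\check{H},x}\times_{\cB_{\check{H}}}\cB_{\check{H},x}$. This would work in principle, but the paper avoids these geometric convolution computations entirely by observing that both $\gamma_1:\cT\mapsto\heckefunc{H}(\cT,I_0)$ and $\gamma_2:\cT\mapsto\cT\ast\cO$ factor through the \emph{polynomial representation} $\affinehecke{H}\otimes_{\affinehecke{W_H}}\epsilon$, and that the map $\mathrm{R}(\check{T}_H)[s,s^{-1}]\to K^{\check{G}\times\G_{\m}}(\cB_{\check{H},x})$ sends $e^\lambda$ to $L_{-\lambda}$. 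One is then reduced to the purely algebraic computations of Proposition \ref{Tsm} (showing $T_{s_m}\ast 1=(s^2-1)+s^{2(m-1)}e^{\xi+\omega_1}$) and formula \eqref{l0} ($T_{w_i}\ast 1=s^{i(m-i)}e^{\omega_i}$), plus the trivial $T_{s_i}\ast 1=v$ for $1\le i<m$; the action on the remaining basis vectors then follows by conjugation with the length-zero elements, as you anticipate. The basis the paper uses is $\{\cO,s^{m-1}L_{-\omega_1},\dots,s^{m-1}L_{-\omega_{m-1}}\}=\{\gamma_2(1),\gamma_2(T_{w_1}),\dots\}$, proved to be a basis in Proposition \ref{basis} by comparing with Lusztig's basis $\{\cO_{p_1},\cO_{V_1}(-1),\dots\}$; it is worth noting (as the paper does) that this is \emph{not} the canonical Lusztig basis. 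Your approach would ultimately reproduce the same formulas but with considerably more K-theoretic bookkeeping; the polynomial-representation factorization is the key simplification you are missing, and it also makes the uniqueness of the intertwiner $\gJ$ immediate, since $\gamma_1$ is surjective and forces $\gJ$ to agree with $\bar\gamma_1$ on the image of $\gamma_2$.
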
 
 The principal result of this paper is the following theorem describing geometric Howe correspondence in terms of geometric Langlands functoriality for all dual reductive pairs $(\GL_{1},\GL_{m})$. 
\begin{theorem}
\label{theoremprincipal}
Conjecture $\ref{secondeconjecture}$ is true for $(\GL_{1},\GL_{m})$ for any $m.$
\end{theorem}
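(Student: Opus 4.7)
The plan is to make both sides of the conjecture completely explicit and to match the actions of the generators of $\affinehecke{H}$. Since $\check{G}=\GL_1$, the nilpotent cone $\N{\check{G}}$ reduces to a point, and Proposition~\ref{jj} specialises to the equivalence
$$
\mathcal{X} \iso \B_x/(\GL_1\times\G_{\m}),
$$
where $\B_x\subset \Ntilde{\check{H}}$ is the Springer fibre over the subregular nilpotent $x\in\mathrm{Lie}(\GL_m)$ of Jordan type $(m-1,1)$, and $\GL_1\times\G_{\m}$ acts through $\overline{\sigma}$ (the $\GL_1$ factor, being central, acts trivially and $\G_{\m}$ acts by conjugation through $\xi\circ\alpha$). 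Classically, $\B_x$ is a chain of $m-1$ copies of $\mathbb{P}^1$ meeting transversally at $m-2$ nodes, hence has Euler characteristic $m$. First I would compute $K(\mathcal{X})=K^{\GL_1\times\G_{\m}}(\B_x)$ by constructing an explicit basis $\{\cB^{0},\ldots,\cB^{m-1}\}$ (indexed by the $m$ torus-fixed points, or equivalently by the $m-1$ irreducible components together with one boundary class), exhibiting it as a free $\R(\check{G}\times\G_{\m})$-module of rank $m$.

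Second, I would compute the action of the generators $L_{s_1},\ldots,L_{s_m}$ and the length-zero rotations $L_{w_i}$ of $\affinehecke{H}$ on this basis. Via the Kazhdan--Lusztig--Ginzburg isomorphism $\eqref{theoaffinehecke}$, each $L_{s_i}$ corresponds to a class in $K^{\check{H}\times\G_{\m}}(Z_{\check{H}})$ whose convolution on $K(\mathcal{X})$ is computed by pull--pushforward along the partial-flag correspondence
$$
\B_x \stackrel{p_1}{\longleftarrow} \B_x\times_{\mathcal{P}_{s_i}} \B_x \stackrel{p_2}{\longrightarrow} \B_x,
$$
where $\mathcal{P}_{s_i}$ is the $s_i$-partial flag variety. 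Because $\B_x$ is only a chain of $\mathbb{P}^1$'s, each such correspondence either contracts a component or acts fibrewise as a $\mathbb{P}^1$-bundle, and the matrix coefficients on $\{\cB^k\}$ follow from equivariant localisation at the fixed points. I expect the length-zero elements $L_{w_i}$ to act as the cyclic shift $\cB^k\mapsto \cB^{k+i}$ by compatibility with the cyclic labelling of the components.

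Third, I would transport the outcome to the sheaf side by setting $\cB^k \leftrightarrow \IC^k$ and verify that the resulting formulas reproduce those of Theorem~\ref{theon1m}. Compatibility with $\affinehecke{G}$ is then essentially automatic: for $\check{G}=\GL_1$ the algebra $\affinehecke{G}$ is (a specialisation of) $\R(\check{G}\times\G_{\m})$ acting on $K(\mathcal{X})$ by multiplication through the natural $\R(\check{G}\times\G_{\m})$-module structure, and this matches exactly the formulas $\eqref{coeur}$ on the sheaf side. The compatibility of the central characters of $\affinehecke{H}$ on both sides is guaranteed by Theorem~\ref{centre} (sheaf side) and by the very construction of $\mathcal{X}$ as the fibre product along the map induced by $\sigma$ (K-theoretic side); both express the action of the centre through $\Res^{\sigma}$, so it remains only to check the non-central generators.

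The main obstacle is the fine matching of the length-one generators, and in particular of the affine simple reflection $L_{s_m}$. On the sheaf side the action involves the non-perverse standards $\IC^{k,!}=\IC^{k}-\IC^{k-1}[1]$ and a delicate mixture of Tate twists (cf.\ $\eqref{coeurcoeur}$), whereas on the K-theoretic side convolution with the class of the corresponding Schubert divisor produces contributions coming from Euler classes of conormal bundles along $\B_x$ via a Bott--Samelson-type resolution. Identifying the classes $[L_{s_m!}]$ and $[L_{s_m*}]$, acting on $\{\cB^k\}$, with the explicit expressions extracted from $\eqref{coeurcoeur}$ requires a careful bookkeeping of the equivariant weights at each fixed point of $\B_x$; this is where the very simple geometry of the subregular Springer fibre in type $A_{m-1}$, and hence the restriction $n=1$, is genuinely used.
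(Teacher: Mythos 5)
Your outline matches the paper's strategy in all essential respects: reduce $K(\mathcal{X})$ to $K^{\check{G}\times\G_{\m}}(\cB_{\check{H},x})$ over the subregular Springer fibre (a chain of $m-1$ projective lines with $m$ torus-fixed points), construct a rank-$m$ basis, exploit the transitive cyclic action of the length-zero elements $w_i$ to reduce the check to a single generator, and concentrate on the affine simple reflection $T_{s_m}$ as the one genuinely delicate computation. The observation that $\affinehecke{G}$-compatibility is automatic because $\check{G}=\GL_1$ makes $\affinehecke{G}$ commutative is also used in the paper.

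Where you deviate is in the computational tool for the Hecke action. You propose to compute the convolution by pull--push along a partial-flag correspondence on $\cB_x$ and to extract matrix coefficients by equivariant localisation at fixed points (a Bott--Samelson-style argument). The paper instead sets up a factorisation diagram with two surjective $\affinehecke{H}$-module maps out of $\affinehecke{H}$ itself: $\gamma_1(\cT)=\heckefunc{H}(\cT,I_0)$ on the sheaf side, and $\gamma_2(\cT)=\cT\cdot\cO_{\cB_{\check{H},x}}$ on the K-theory side, which factors through the polynomial representation $\affinehecke{H}\otimes_{\affinehecke{W_H}}\epsilon\cong \R(\check{T}_H)[s,s^{-1}]$. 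It then computes $T_{s_m}\ast 1$ algebraically from the Lusztig formula $T_{s_\alpha}\ast e^\lambda=\frac{e^\lambda-e^{s_\alpha\lambda}}{e^\alpha-1}-s^2\frac{e^\lambda-e^{s_\alpha\lambda+\alpha}}{e^\alpha-1}$, and translates the answer into the line-bundle basis $\{\cO, L_{-\omega_1},\ldots,L_{-\omega_{m-1}}\}$ on the Springer fibre. This is a purely algebraic route: no explicit convolution on the Steinberg variety and no localisation are invoked. Your route is plausible but forces you to re-derive (through conormal Euler classes and the chain geometry) precisely what the polynomial representation formula already packages; if you pursued it you would need to be careful with the equivariant weights at the $m$ fixed points, as the $\G_{\m}$-weights along the chain are not uniform — and you would also need a separate argument to ensure $\ker\gamma_2\subset\ker\gamma_1$ so that the identification of bases gives a well-defined module map, which in the paper is handled by the surjectivity of $\bar\gamma_1$ and the rank count.

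One detail to keep in mind: the basis the paper ultimately matches to $\{\IC^0,\ldots,\IC^{m-1}\}$ is not the Lusztig canonical basis of $K^{\check{G}\times\G_{\m}}(\cB_{\check{H},x})$ from the structure sheaves of the $\mathbb{P}^1$-components, but the family of line bundles $\{\cO,\,s^{m-1}L_{-\omega_1},\,s^{2(m-2)}L_{-\omega_2},\ldots,s^{m-1}L_{-\omega_{m-1}}\}$, chosen so that $\gamma_2(T_{w_i})=s^{i(m-i)}L_{-\omega_i}$ lines up with $\gamma_1(L_{w_i})=\IC^i$. If you set up your basis from fixed points or components directly, you will hit an extra change-of-basis step before the cyclic symmetry becomes manifest.
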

\subsection{The proof of Theorem $\ref{theoremprincipal}$}$ $

The rest of the paper is devoted to the proof of Theorem $\ref{theoremprincipal}$. 
Let $n=1$ and $m\geq 1$ and let $G=\GL_{1}$ and $H=\GL_{m},$ where we consider them as Langlands dual groups.   The map $\check{G}\times \G_{\m}\longrightarrow \check{H}$ is the composition 
$$ \check{G}\times \G_{\m}\longrightarrow \check{G}\times \mathrm{SL_{2}}\longrightarrow \check{G}\times \GL_{m-1}\longrightarrow \check{H},$$
where the latter map is the inclusion of the standard Levi  subgroup $\GL_{1}\times \GL_{m-1}$ in $\check{H}$ and $\xi:\mathrm{SL_{2}}\to \GL_{m-1}$ corresponds to the principal unipotent orbit.  In particular the inclusion $\check{G}$ in $\check{H}$ is the coweight $(1,0,\dots,0)$ of the standard maximal torus of $\check{H}.$ The restriction of $\xi$ to the maximal torus $\G_{\m}$ of $\mathrm{SL}_{2}$ is the coweight  $(0,m-2,m-4,\dots,2-m)$ of $\check{H}$. The element $x=d\xi(e)$ in $\N{\check{H}}$ is the subregular nilpotent element given by $x(u_{1})=x(u_{2})=0$ and $x(u_{i+1})=u_{i}$ for all $2\leq i< m$.  

\begin{proposition}
The bimodule $K(\mathcal{X})$ identifies with the Springer fiber $\B_{\check{H},x}$ of the Springer map $\Ntilde{H}\to \N{\check{H}}$ over the point $x$.
\end{proposition}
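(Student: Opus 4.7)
The plan is to unwind the definition of the stack $\cX$ in the special case $\check{G}=\GL_1$ and observe that almost everything collapses because the nilpotent cone of a torus is a single point. First, I would note that $\check{G}=\GL_1$ is a torus, so $\N{\check{G}}=\{0\}$ and the Springer resolution is trivial: $\Ntilde{\check{G}}=\{0\}$ as well. Therefore $\Ntilde{\check{G}}/(\check{G}\times\G_\m)$ is the classifying stack $B(\check{G}\times\G_\m)$, and the map $\bar f$ of Lemma~\ref{mapf} factors as
$$
B(\check{G}\times\G_\m)\longrightarrow \{x\}/(\check{G}\times\G_\m)\longrightarrow \N{\check{H}}/(\check{H}\times\G_\m),
$$
where the last arrow is induced by the inclusion of the orbit through $x$ and the second arrow by $\bar{\sigma}$. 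Note that, by the computation already used in Lemma~\ref{mapf}, the image of $\bar{\sigma}$ lies in the centralizer $Z_{\check{H}\times\G_\m}(x)$, so all of these maps are well defined at the level of stacks.

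Next I would form the fiber product defining $\cX$. Base changing the Springer map $\Ntilde{\check{H}}/(\check{H}\times\G_\m)\to \N{\check{H}}/(\check{H}\times\G_\m)$ along $\{x\}/(\check{G}\times\G_\m)\to \N{\check{H}}/(\check{H}\times\G_\m)$ gives the Springer fiber $\B_{\check{H},x}$ equipped with its natural $Z_{\check{H}\times\G_\m}(x)$-action, viewed as a stack $\B_{\check{H},x}/Z_{\check{H}\times\G_\m}(x)$. Composing with the base change along $B(\check{G}\times\G_\m)\to B Z_{\check{H}\times\G_\m}(x)$ induced by $\bar{\sigma}$ identifies $\cX$ with the stack quotient $\B_{\check{H},x}/(\check{G}\times\G_\m)$, where $\check{G}\times\G_\m$ acts via $\bar\sigma$. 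Applying the general equivalence between K-theory of a quotient stack and equivariant K-theory, this gives
$$
K(\cX)\,\isom\, K^{\check{G}\times\G_\m}(\B_{\check{H},x}).
$$

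Finally, I would verify that this identification is compatible with the two module structures: the $K^{\check{G}\times\G_\m}(Z_{\check{G}})$-action is the module structure induced by the projection $\B_{\check{H},x}\to \pt$ (since $Z_{\check{G}}=\pt$ when $\check{G}$ is a torus, so that $\HH_G$ just becomes $\R(\check{G}\times\G_\m)$ and acts on $K^{\check{G}\times\G_\m}(\B_{\check{H},x})$ by multiplication); and the $K^{\check{H}\times\G_\m}(Z_{\check{H}})$-action is the restriction along $\bar\sigma$ of the natural convolution action of $\HH_H$ on $K^{\check{H}\times\G_\m}(\B_{\check{H},x})$, using the description of $K(\cX)$ given in Proposition~\ref{jj} together with the induction formula of Appendix~\ref{Appendix1}. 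The only step requiring care is keeping track of the convolution actions and of the twist by $\bar\sigma$; this should follow formally from the base-change properties of equivariant K-theory, so I expect no serious obstacle beyond carefully unwinding the identifications.
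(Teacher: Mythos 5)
Your proof is correct and takes essentially the same approach as the paper: both exploit that for the torus $\check G=\GL_1$ one has $\Ntilde{\check G}=\N{\check G}=\{0\}$ and identify the fiber product defining $\cX$ with $\B_{\check H,x}/(\check G\times\G_\m)$, the action being twisted by $\bar\sigma$. The paper reaches this through the induced-variety reformulation of Proposition~\ref{jj} and an explicit description of $\Ntilde{\check G,\check H}=\check H/\check G$, whereas you base-change the Springer map directly in the category of stacks, but the underlying computation is the same.
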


\begin{proof}
In this case we have  $\Ntilde{\check{G},\check{H}}= \check{H}/\check{G}$ in such way that the map $f_{1}:\Ntilde{\check{G},\check{H}}=\check{H}/\check{G}\longrightarrow \N{\check{H}}$ defined in $\eqref{mapf1}$ sends $h\check{G}$ to $hxh^{-1}$. The element $s$ in $\G_{\m}$ acts on the left hand side on $\Ntilde{\check{G},\check{H}}$ by sending  the right coset $h\check{G}$ to $h\xi(s)^{-1}\check{G}.$  The variety $\Ntilde{\check{G},\check{H}}\times_{\N{\check{H}}}\Ntilde{\check{H}}$ identifies with the variety of pairs $(h\check{G}, \borel)$ such that $\borel$ is a Borel subalgebra in $\check{H}$ and $hxh^{-1}$ lies in $\borel.$ Any element $(h_{1},s)$ in $\check{H}\times \G_{\m}$ acts on $\Ntilde{\check{G},\check{H}}\times_{\N{\check{H}}}\Ntilde{\check{H}}$ by the formula:
$$(h_{1},s).(h\check{G},\borel)=(h_{1}h\xi(s)^{-1}\check{G}, h_{1}\borel h_{1}^{-1}).$$
Denote by $\B_{\check{H},x}$ the fiber of  the Springer map $\Ntilde{\check{H}}\to \N{\check{H}}$ over $x$. The map 
$$\overline{\sigma}:\check{G}\times\G_{\m}\longrightarrow \check{H}\times\G_{\m}$$
sending $(g,s)$ to $(g\xi(s),s)$ identifies $\check{G}\times \G_{\m}$ with the stabilizer in $\check{H}\times \G_{\m}$ of  the right coset of the neutral element in  $\check{H}/\check{G}.$ Any element $(g,s)$ of $\check{G}\times\G_{\m}$ acts on the Springer fiber $\B_{\check{H},x}$ by 
$$(g,s).\borel^{'}=(g\xi(s)\borel^{'}\xi(s)^{-1}g^{-1}).$$ 
This yields an isomorphism
$$K(\mathcal{X})\iso K^{\check{H}\times \G_{\m}}(\Ntilde{\check{H},\check{G}}\times_{\N{\check{H}}}\Ntilde{\check{H}})\iso K^{\check{G}\times \G_{\m}}(\B_{\check{H},x}).$$
\end{proof}
To compute $K(\mathcal{X})$, we provide an explicit description of the Springer fiber $\B_{\check{H},x}$.
\begin{lemma}
\label{springerfibre}
The Springer fiber $\B_{\check{H},x}$ is a configuration of projective lines $(V_{i})_{1\leq i\leq m-1}$. For $1\leq i< j \leq m-1$ the intersection $V_{j}\cap V_{i}$ is empty unless $j=i+1$.  
The fixed locus in $\B_{\check{H},x}$  under the action of $\check{G}\times \G_{\m}$ consists of $m$ points $p_{1}, p_{2},\dots, p_{m-1}, p_{m}$, where $p_{1}$ and $p_{m}$ are distinguished points on $V_{1}$ and $V_{m}$ and for $2\leq i\leq m-1,$ the point $p_{i}$ is the intersection of $V_{i}$ with $V_{i+1}$.  
\end{lemma}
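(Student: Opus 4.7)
My plan is to identify $\B_{\check{H},x}$ with the variety of complete flags $0=F_0\subset F_1\subset\cdots\subset F_m=\K^m$ satisfying $xF_i\subset F_{i-1}$ for every $i$. The defining relations of $x$ immediately give $\ker(x^j)=\mathrm{span}(u_1,\dots,u_{j+1})$ for $0\le j\le m-1$, and this formula will drive the analysis.

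I will first locate the $(\check{G}\times\G_{\m})$-fixed points. The composite morphism \eqref{dualmap} sends $(g,s)\in\check{G}\times\G_{\m}$ to the diagonal matrix $\diag(g,s^{m-2},s^{m-4},\dots,s^{2-m})$ in $\check{H}=\GL_m$, whose weights on the standard basis $u_1,u_2,\dots,u_m$ are pairwise distinct. Hence every torus-fixed flag is indexed by a permutation of this basis, and imposing $xF_i\subset F_{i-1}$ forces $u_{j-1}$ to precede $u_j$ whenever $j\ge 3$. Equivalently, $u_2,u_3,\dots,u_m$ must occur in their natural order while $u_1$ may be inserted in any of the $m$ positions, yielding exactly $m$ fixed points $p_1,\dots,p_m$, where $p_k$ corresponds to inserting $u_1$ in the $k$-th slot.

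Next, for each $1\le k\le m-1$ I will introduce
$$
V_k=\bigl\{F_\bullet\in\B_{\check{H},x}\ :\ F_j=\mathrm{span}(u_2,\dots,u_{j+1})\text{ for }j<k,\ F_j=\mathrm{span}(u_1,\dots,u_j)\text{ for }j>k\bigr\}.
$$
Given these constraints, $F_k$ runs over the lines in the two-dimensional quotient $F_{k+1}/F_{k-1}$ spanned by the classes of $u_1$ and $u_{k+1}$, so $V_k\simeq\mathbb{P}^1$; a direct check shows the Springer condition is automatic for every $i$, the only nontrivial indices being $i=k,k+1$, where it reduces to $x(u_{k+1})=u_k\in F_{k-1}$. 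The two $(\check{G}\times\G_{\m})$-fixed points of $V_k$ correspond to $(\alpha:\beta)\in\{(1:0),(0:1)\}$ in the parametrization $F_k=F_{k-1}+\mathrm{span}(\alpha u_1+\beta u_{k+1})$, and are exactly $p_k$ and $p_{k+1}$. Comparing the defining constraints also yields $V_i\cap V_j=\varnothing$ for $j\ge i+2$ and $V_i\cap V_{i+1}=\{p_{i+1}\}$.

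To finish, I will invoke Spaltenstein's theorem: for $\GL_m$ all irreducible components of a Springer fiber are equidimensional of dimension $n(\mu)$ and in bijection with standard Young tableaux of the Jordan shape $\mu$. For $\mu=(m-1,1)$ this dimension equals $1$ and there are exactly $m-1$ such tableaux, matching the $m-1$ distinct one-dimensional irreducible subvarieties $V_k$ constructed above, so they exhaust the list of components. The main obstacle will be the careful verification of the Springer condition on every $V_k$ together with the bookkeeping needed to pin down the intersection pattern; once this is in place, the enumeration of components via Spaltenstein is routine.
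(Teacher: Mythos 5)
Your proof is correct and lands on the same picture of $\B_{\check{H},x}$, but by a genuinely different route. The paper argues constructively and inductively: starting from $F_1\subset\ker(x)=\Vect(u_1,u_2)$, it observes that at each step the constraint $F_{i+1}\subset x^{-1}(F_i)$ either forces $F_{i+1}$ uniquely (when $F_i$ is not the distinguished subspace $\Vect(u_2,\dots,u_{i+1})=\ker x^i\cap\operatorname{Im}x$) or leaves exactly one projective line of freedom; this branching produces the lines $V_k$ one by one, and because every flag in the Springer fiber falls into exactly one branch, the exhaustion $\B_{\check{H},x}=\bigcup_k V_k$ comes for free with no appeal to external results. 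You instead write all the $V_k$ down at once, verify the Springer condition directly, read off the intersection pattern from the defining equations, and then invoke Spaltenstein's equidimensionality and tableau count for Jordan type $(m-1,1)$ to conclude that these $m-1$ one-dimensional pieces are all the components. Your version is shorter and more systematic, at the cost of outsourcing the exhaustion; the paper's is elementary and self-contained and makes the branching structure explicit. You also give a cleaner justification for the fixed-locus count than the paper: by noting that the $\check{G}\times\G_{\m}$-weights on $u_1,\dots,u_m$ are pairwise distinct, you reduce to coordinate flags and count those compatible with the ordering constraints imposed by $x$, whereas the paper merely exhibits the $m$ points. One small slip: your claim that the Springer condition on $V_k$ ``reduces to $x(u_{k+1})=u_k\in F_{k-1}$'' fails literally at $k=1$ (there $x(u_2)=0$, not $u_1$), though the condition holds trivially in that case since $F_1\subset\ker(x)$ and $xF_2=0$.
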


\begin{proof}
Denote by 
$$F_{1}\subset F_{2}\subset \dots \subset F_{m}=U_{0}$$
a complete flag on the standard representation $U_{0}$ of 
$\check{H}$ preserved by $x.$ The vector space $F_{1}$ is a 
subspace of the vector space $\mathrm{Ker}(x)=\mathrm{Vect}(u_{1},u_{2}).$ We 
have $\mathrm{Vect}(u_{2})=\mathrm{Ker}(x)\cap\mathrm{Im}(x).$ 
If $F_{1}\neq \mathrm{Vect}(u_{2})$ then $F_{2}=x^{-1}
(F_{1})=\mathrm{Vect}(u_{1},u_{2}),$ $F_{3}=x^{-1}
(F_{2})=\mathrm{Vect}(u_{1},u_{2},u_{3}),\dots,$ and finally the space $F_m$ is equal to 
$x^{-1}(F_{m-1})=\mathrm{Vect}(u_{1},u_{2},\dots,u_{m})=U_{0}.$
So we may identify $V_{1}$ with the projective space of lines in $\mathrm{Vect}(u_{1},u_{2}).$ The point $p_{2}$ is $F_{1}=\mathrm{Vect}(u_{2}).$ 
If $F_{1}=\mathrm{Vect}(u_{2})\subset \mathrm{Im}(x)$ then $x^{-1}(F_{1})=\mathrm{Vect}(u_{1},u_{2},u_{3})$ and $V_{2}
$  can be identified with the space of lines in $x^{-1}(F_{1})/F_{1}.$ Inside $\mathrm{Vect}(u_{1},u_{2},u_{3})$ one has  a distinguished subspace $\mathrm{Vect}(u_{1},u_{2},u_{3})\cap \mathrm{Im}(x)=\mathrm{Vect}(u_{2},u_{3}).$ If $F_{2}$ is different from this subspace then the whole flag $F_{i}$ is uniquely defined. So the point $p_{3}$ of $V_{2}$ corresponds to $F_{2}=\mathrm{Vect}(u_{2},u_{3}).$ 
If now $F_{1}=\mathrm{Vect}(u_{2})$ and $F_{2}=\mathrm{Vect}(u_{2},u_{3})$ then $x^{-1}(F_{2})=\mathrm{Vect}(u_{1},u_{2},u_{3},u_{4})$ and $D_{3}$ is the space of lines in $x^{-1}(F_{2})/F_{2}.$ The point $p_{4}$ of $V_{3}$ corresponds  to $F_{3}=\mathrm{Vect}(u_{2},u_{3},u_{4}),$ and one can continue the construction till $F_m.$
The points $p_{1}$ is the standard complete flag on $U_{0}$ and $p_{m}$ is the flag $\Vect(u_{2})\subset\Vect(u_{2},u_{3})\subset\dots\subset \Vect(u_{2},\dots,u_{m})\subset \Vect(u_{1},\dots,u_{m}).$
\end{proof}
This result combined with the Cellular fibration Lemma in \cite[\S\ 5.5]{CG} implies the following: 
\begin{proposition}
\label{modulerangm}
The K-group $K^{\check{G}\times \G_{\m}}(\B_{\check{H},x})$ is a free  $\mathrm{R}(\check{G}\times\G_{\m})$-module of rank $m$.  Moreover, the $\mathrm{R}(\check{H})$-module structure on $ K^{\check{G}\times \G_{\m}}(\B_{\check{H},x})$ comes from $\mathrm{Res}^{\sigma}:\mathrm{R}(\check{H})\longrightarrow \mathrm{R}(\check{G}\times\G_{\m}).$ 
\end{proposition}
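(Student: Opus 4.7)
The plan is to exploit the explicit description of the Springer fiber $\B_{\check{H},x}$ given in Lemma~\ref{springerfibre}, namely that it is a chain of $m-1$ projective lines $V_1,\dots,V_{m-1}$ meeting transversally at consecutive points, with exactly $m$ fixed points $p_1,\dots,p_m$ under $\check{G}\times\G_{\m}$. Since the fixed points are isolated and the variety is paved by $\G_{\m}$-invariant cells, one expects via a Bia{\l}ynicki-Birula/cellular fibration argument that $K^{\check{G}\times\G_{\m}}(\B_{\check{H},x})$ is free of rank equal to the number of fixed points.

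The main step will be to construct an explicit $\check{G}\times\G_{\m}$-stable filtration of $\B_{\check{H},x}$ by closed subvarieties
\[
\{p_1\}=F_{0}\subset F_{1}\subset F_{2}\subset\cdots\subset F_{m-1}=\B_{\check{H},x},
\]
where $F_{i}=V_{1}\cup V_{2}\cup\cdots\cup V_{i}$. The successive complements are $F_{1}\setminus F_{0}=V_{1}\setminus\{p_{1}\}$ and, for $i\geq 2$, $F_{i}\setminus F_{i-1}=V_{i}\setminus\{p_{i}\}$, each of which is $\check{G}\times\G_{\m}$-equivariantly isomorphic to an affine line $\mathbb{A}^{1}$ with a linear action (the torus $\G_{\m}$ acts through a nontrivial character while $\check{G}$ acts through a character that must be computed from the weights of $\xi$ at the fixed point). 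In particular, each stratum is a $\check{G}\times\G_{\m}$-equivariant affine bundle over a fixed point, so the hypotheses of the Cellular Fibration Lemma~\ref{cellularfibration} (equivalently \cite[\S\,5.5]{CG}) are satisfied. This yields short exact sequences
\[
0\to K^{\check{G}\times\G_{\m}}(F_{i-1})\to K^{\check{G}\times\G_{\m}}(F_{i})\to K^{\check{G}\times\G_{\m}}(F_{i}\setminus F_{i-1})\to 0,
\]
each of which splits because the right-hand term is a free $\R(\check{G}\times\G_{\m})$-module of rank one via Thom's isomorphism. Iterating gives that $K^{\check{G}\times\G_{\m}}(\B_{\check{H},x})$ is a free $\R(\check{G}\times\G_{\m})$-module of rank $m$, as desired.

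For the second assertion, concerning the $\R(\check{H})$-module structure, the point is simply to track functoriality: the ambient variety $\Ntilde{\check{H}}$ carries a natural action of $\check{H}\times\G_{\m}$, and $K^{\check{H}\times\G_{\m}}(\Ntilde{\check{H}})$ is an $\R(\check{H}\times\G_{\m})$-module through the structure map to a point. Restricting equivariance along $\overline{\sigma}:\check{G}\times\G_{\m}\to\check{H}\times\G_{\m}$ gives the forgetful/induction functor on $K$-theory, under which the $\R(\check{H}\times\G_{\m})$-action factors through $\mathrm{Res}^{\overline{\sigma}}$. Since $\B_{\check{H},x}\hookrightarrow \Ntilde{\check{H}}$ is the fiber over $x\in\N{\check{H}}$, and this inclusion is $\check{G}\times\G_{\m}$-equivariant by construction of $\overline{\sigma}$ (the stabilizer of $x$ in $\check{H}\times\G_{\m}$ contains $\overline{\sigma}(\check{G}\times\G_{\m})$), pullback produces the desired $\R(\check{H})$-action through $\mathrm{Res}^{\sigma}$.

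The technically delicate step will be verifying that each affine cell $F_{i}\setminus F_{i-1}\cong\mathbb{A}^{1}$ is genuinely $\check{G}\times\G_{\m}$-equivariantly linear, and identifying the relevant characters, which amounts to computing the weights of $\xi$ and of the embedding $\check{G}\hookrightarrow\check{H}$ on the tangent spaces $T_{p_{i}}V_{i}$ and $T_{p_{i+1}}V_{i}$. Once these weights are pinned down (they are nonzero, so the cell contracts equivariantly to the fixed point $p_{i}$), the cellular fibration lemma applies without further subtlety. The $\R(\check{H})$-statement is then essentially a bookkeeping consequence of the definition of the equivariant pullback along $\overline{\sigma}$.
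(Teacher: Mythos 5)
Your proposal is correct and follows essentially the same approach as the paper, which itself disposes of this proposition in a single sentence: ``This result combined with the Cellular fibration Lemma in \cite[\S\ 5.5]{CG} implies the following.'' You have simply fleshed out that sentence, supplying the explicit $\check{G}\times\G_{\m}$-stable filtration $\{p_1\}\subset V_1\subset V_1\cup V_2\subset\cdots\subset\B_{\check{H},x}$ whose successive strata are the fixed point and the $m-1$ affine-line cells $V_i\setminus\{p_i\}$, and noting that the cellular fibration lemma gives freeness of rank $m$; and for the $\R(\check{H})$-statement you track functoriality of equivariant $K$-theory along the $\overline{\sigma}$-equivariant inclusion of the Springer fiber into $\Ntilde{\check{H}}$, which is exactly what the identification $K(\cX)\isom K^{\check{G}\times\G_{\m}}(\cB_{\check{H},x})$ of the previous proposition encodes.
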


\par\medskip
According to \cite[Lemma 7.6.2]{CG} the assignment sending $T_{w}$ to $s^{\ell(w)}$ for $w$ in $W_{H}$ extends  by linearity  to an algebra homomorphism 
$$
\epsilon:\affinehecke{W_{H}}\longrightarrow \Z[s,s^{-1}]
$$
and it is known that the induced $\HH_H$-module
$\mathrm{Ind}_{\affinehecke{W_{H}}}^{\affinehecke{H}}\epsilon=\affinehecke{H}\otimes_{\affinehecke{W_{H}}}\epsilon $ 
is isomorphic to the polynomial representation \cite[7.6.8]{CG}.
We have the following crucial chain of isomorphisms of $\Z[s,s^{-1}]$-modules \cite[Formula (7.6.5)]{CG}: 
$$K^{\Hche\times \G_{\m}}(T^{*}\B_{\Hche})\overset{\mathrm{Thom} }{\longrightarrow}K^{\check{H}\times \G_{\m}}(\B_{\Hche})\overset{\alpha}{\longrightarrow}\mathrm{R}(\check{T}_{H})[s,s^{-1}]\overset{\beta}\longrightarrow {\mathrm{Ind}_{\affinehecke{W_{H}}}^{\affinehecke{H}}\epsilon},$$
where the first arrow is the Thom isomorphism \cite[Theorem 5.4.16]{CG}, the map $\alpha$  is the canonical isomorphism 
\begin{align}
K^{\check{H}\times \G_{\m}}(\B_{\Hche})\iso & K^{\check{H}\times \G_{\m}}(\check{H}/B_{\check{H}})\iso  K^{B_{\check{H}}\times \G_{\m}}(\mathrm{pt})\\
\iso & \mathrm{R}(\check{T}_{H}\times \G_{\m})\iso \mathrm{R}(\check{T}_{H})[s,s^{-1}],\nonumber
\end{align}
and the map $\beta$ is given for any $\lambda$  by  $\beta(e^{\lambda})=e^{-\lambda}$. 
\par\medskip
There is a natural  action of $\affinehecke{H}$ on $K^{\check{G}\times \G_{\m}}(\B_{\check{H},x})$  defined uniquely by the property that the inclusion of $\B_{\check{H},x}$ in $\B_{\check{H}}$ yields a $\mathrm{R}(\check{G}\times \G_{\m})\otimes_{\mathrm{R}(\check{H}\times\G_{\m})}\affinehecke{H}$-equivariant surjection 
$$\mathrm{R}(\check{G}\times \G_{\m})\otimes_{\mathrm{R}(\check{H}\times\G_{\m})} K^{\check{H}\times\G_{\m}}(\B_{\Hche})\iso K^{\check{G}\times \G_{\m}}(\B_{\Hche})\longrightarrow K^{\check{G}\times \G_{\m}}(\B_{\Hche,x}).$$
Consider the diagram
 \begin{equation}
 \label{digamramcommut}
 \xymatrix @C=0.5cm{
     \HH_H \ar[r]^-{\gamma_{1}} \ar[d]^{\gamma_{2}} &  K(DP_{I_{H}\times I_{G}}(\Pi(F)))  \\
      K^{\check{G}\times\G_{\m}}(\cB_{\check{H},x} \ar@{.>}[ru]), 
    }
\end{equation}
where $\gamma_1$ sends $\cT$ to $\heckefunc{H}(\cT, I_0)$, and $\gamma_2$ sends $\cT$ to the action of $\cT$ on the structure sheaf $\cO$ of $\cB_{\check{H,}x}$. Note that $\gamma_1$ and $\gamma_{2}$ are surjective.  We are now going to construct a morphism 
$$
\gJ: K^{\check{G}\times\Gm}(\cB_{\check{H},x})
\to K(DP_{I_{H}\times I_{G}}(\Pi(F))).
$$
which will be induced by $\gamma_{1}$.
One sees that $\gamma_1$ factors through the surjective morphism $\bar\gamma_1: \affinehecke{H}\otimes_{\affinehecke{W_{H}}}\epsilon\to K(DP_{I_{H}\times I_{G}}(\Pi(F)))$ of $\HH_H$-modules.  For proving Theorem $\ref{theoremprincipal}$ we are reduced to prove the following: 
\begin{proposition}
\label{theodelavie}
There is a unique isomorphism of $\affinehecke{H}$-modules $\mathfrak{J}$ making diagram $\eqref{digamramcommut}$ commutative. The map $\mathfrak{J}$ commutes with the $\affinehecke{G}$-actions. 
\end{proposition}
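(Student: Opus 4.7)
The plan is to show that both surjections $\gamma_1$ and $\gamma_2$ have the same kernel in $\affinehecke{H}$, which makes the formula $\gJ(\gamma_2(T)) := \gamma_1(T)$ a well-defined $\affinehecke{H}$-module map. Since both $K^{\check{G}\times\Gm}(\cB_{\check{H},x})$ and $K(DP_{I_H\times I_G}(\Pi(F)))$ are free $\R(\check{G}\times\Gm)$-modules of rank $m$ (by Proposition~\ref{modulerangm} and Theorem~\ref{theon1m}) and are cyclic $\affinehecke{H}$-modules (generated by $[\cO_{\cB_{\check{H},x}}]$ and $\IC^0 = I_0$ respectively), the natural approach is to factor both maps through the induced module $\affinehecke{H}\otimes_{\affinehecke{W_H}}\epsilon$ described just above the statement.

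First I would check this factorization for both maps. For $\gamma_1$, Theorem~\ref{theon1m} gives $\heckefunc{H}(L_{s_i},\IC^0) = \IC^0\otimes (\qelbar[1](1/2)\oplus \qelbar[-1](-1/2))$ for each finite simple reflection $s_i$ (noting $0\ne i\bmod m$ for $1\le i\le m-1$); translating $[L_{s_i}]$ into the Iwahori-Matsumoto generator $T_{s_i}$ via the Hecke algebra relations yields $T_{s_i}\cdot\IC^0 = \epsilon(T_{s_i})\,\IC^0$. For $\gamma_2$, the class $[\cO]\in K^{\check{H}\times\Gm}(\cB_{\check{H}})$ corresponds, under the chain of isomorphisms displayed before the statement of Proposition~\ref{theodelavie}, to the element $1\in\R(\check{T}_H)[s,s^{-1}]$, on which $T_{s_i}$ acts via a Demazure-type operator sending $1$ to $\epsilon(T_{s_i})$; pulling back along $\cB_{\check{H},x}\hookrightarrow\cB_{\check{H}}$ and restricting to $\check{G}\times\Gm$-equivariance preserves this identity. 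Consequently both $\gamma_1,\gamma_2$ descend to maps $\bar\gamma_1,\bar\gamma_2$ on $\affinehecke{H}\otimes_{\affinehecke{W_H}}\epsilon$. Next, one identifies $\affinehecke{H}\otimes_{\affinehecke{W_H}}\epsilon$, after base change along $\R(\check{H}\times\Gm)\to \R(\check{G}\times\Gm)$ induced by $\sigma$ and further pullback to the Springer fiber over $x$, with $K^{\check{G}\times\Gm}(\cB_{\check{H},x})$. This uses the Chriss-Ginzburg isomorphism $\affinehecke{H}\otimes_{\affinehecke{W_H}}\epsilon\cong K^{\check{H}\times\Gm}(\cB_{\check{H}})$ from \cite[(7.6.5)]{CG}, combined with equivariant localization at the $m$ fixed points $p_1,\dots,p_m$ of $\check{G}\times\Gm$ on $\cB_{\check{H},x}$ exhibited in Lemma~\ref{springerfibre}; the result is that $\bar\gamma_2$ is itself an isomorphism.

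Setting $\gJ:=\bar\gamma_1\circ\bar\gamma_2^{-1}$ then produces the desired surjective $\affinehecke{H}$-module map. To verify that $\gJ$ is an isomorphism rather than only a surjection, one matches $\R(\check{G}\times\Gm)$-bases on both sides: the basis $\{\IC^k\}_{0\le k<m}$ on the Howe side, with $\IC^k = \heckefunc{H}(L_{w_k},\IC^0)$ by Theorem~\ref{theon1m}, corresponds via $\gJ^{-1}$ to the classes $T_{w_k}\cdot[\cO]$ in $K^{\check{G}\times\Gm}(\cB_{\check{H},x})$, and the change of basis is upper-triangular hence invertible. Uniqueness of $\gJ$ is immediate from the surjectivity of either $\bar\gamma_1$ or $\bar\gamma_2$ combined with $\affinehecke{H}$-linearity. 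Finally, $\affinehecke{G}$-compatibility is automatic: since $G=\GL_1$ has trivial finite Weyl group, $\affinehecke{G} = \R(\check{G}\times\Gm)$, and $\gJ$ is $\R(\check{G}\times\Gm)$-linear by construction. The main obstacle is the second step above: the precise matching of the $\R(\check{G}\times\Gm)$-module structure on the base-changed induced module with the equivariant K-theoretic structure on $K^{\check{G}\times\Gm}(\cB_{\check{H},x})$ requires carefully tracking the Bernstein subalgebra action through the Chriss-Ginzburg chain and combining it with the fixed-point enumeration of Lemma~\ref{springerfibre} via equivariant K-theoretic localization.
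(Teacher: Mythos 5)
Your overall strategy --- show $\gamma_1$ and $\gamma_2$ have the same kernel so that $\gJ(\gamma_2(T)):=\gamma_1(T)$ is well-defined, and then check surjectivity and bases --- is the right framework, and your observation that $\affinehecke{G}=\mathrm{R}(\check G\times\Gm)$ (trivial finite Weyl group) so that $\affinehecke{G}$-compatibility is automatic is exactly the paper's point. But there is a genuine gap in the middle of your argument: the claim that $\bar\gamma_2$ is itself an isomorphism is false. The induced module $\affinehecke{H}\otimes_{\affinehecke{W_H}}\epsilon\cong\mathrm{R}(\check T_H)[s,s^{-1}]$ is free of rank $1$ over $\mathrm{R}(\check T_H)[s,s^{-1}]$, equivalently of rank $m!$ over $\mathrm{R}(\check H)[s,s^{-1}]$; after base change along $\mathrm{R}(\check H\times\Gm)\to\mathrm{R}(\check G\times\Gm)$ it is still free of rank $m!$ over $\mathrm{R}(\check G\times\Gm)$, whereas $K^{\check G\times\Gm}(\cB_{\check H,x})$ is free of rank $m$ by Proposition~\ref{modulerangm}. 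The restriction $K^{\check G\times\Gm}(\cB_{\check H})\to K^{\check G\times\Gm}(\cB_{\check H,x})$ in your identification chain is a surjection with a large kernel, not an isomorphism, so $\gJ:=\bar\gamma_1\circ\bar\gamma_2^{-1}$ cannot be formed. You flag this second step as the ``main obstacle,'' but it is more than a technical obstacle: the step as stated is wrong.

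What is actually needed --- and what the paper does --- is to prove $\ker\bar\gamma_1=\ker\bar\gamma_2$ directly, by constructing an explicit $\mathrm{R}(\check G\times\Gm)$-basis of $K^{\check G\times\Gm}(\cB_{\check H,x})$ out of the restricted line bundles $\{\cO, L_{-\omega_1},\dots,L_{-\omega_{m-1}}\}$ (Proposition~\ref{basis}, using Lusztig's basis $\cO_{p_1},\cO_{V_i}(-1)$ from \cite{Lu6} and a nontrivial triangularity computation in $K$-theory), then computing $\gamma_2(T_{w_i})=s^{i(m-i)}L_{-\omega_i}$ and $\gamma_2(T_{s_m})$ (Proposition~\ref{Tsm}, via the polynomial representation and the exact sequences of line bundles on $\cB_{\check H,x}$), and matching these against the formulas $\eqref{coeurcoeur}$ for the $\affinehecke{H}$-action on $\{\IC^0,\dots,\IC^{m-1}\}$ from Theorem~\ref{theon1m}. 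Your appeal to ``equivariant localization at the $m$ fixed points'' and ``upper-triangular change of basis'' gestures at these computations but does not carry them out, and localization alone will not establish that the classes $T_{w_k}\cdot\cO$ form a basis; that is precisely the content of Proposition~\ref{basis} and requires the explicit paving of the Springer fiber. Without these verifications, the equality of kernels --- which is what the proposition really asserts --- remains unproved.
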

 
Note that if $n=m=1$  then one has $I_{H}=H(\locring)$ and this proposition can be deduced from \cite[Proposition 4]{Lysenko1}. If $m=2$, we can also provide a quick proof of the proposition; in this case both $K(DP_{I_{H}\times I_{G}}(\Pi(F)))$ and $\affinehecke{H}\otimes_{\affinehecke{W_{H}}}\epsilon$ are free $R(\check{G}\times \G_{\m})$-modules of rank 2, and $\bar\gamma_1$ is an isomorphism.

 We have seen in section $\ref{7}$ that the module $K(DP_{I_{G}\times I_{H}}(\Pi(F)))$ is free of rank $m$ over $\mathrm{R}(\check{G}\times \G_{\m})$. In the notation of this section, a basis of the group $K(DP_{I_{G}\times I_{H}}(\Pi(F)))$ is given by the elements $\IC^{k}$ for $0\leq k\leq m-1,$ and the action of $\mathrm{R}(\check{G}\times \G_{\m})$ is given on this basis in $\eqref{coeurcoeur}.$ Besides, according to Theorem $\ref{centre},$ $\mathrm{R}(\check{H})$ acts  via 
$\mathrm{Res}^{\sigma}$.
A part of these properties has been already proved for $K^{\check{G}\times \G_{\m}}(\B_{\check{H},x})$ in Proposition $\ref{modulerangm}$. In the sequel we will construct a basis of $K^{\check{G}\times \G_{m}}(\B_{\check{H},x})$  and we will identify the action of $\affinehecke{H}$ on this basis and the basis $\IC^{k}$.  The morphism sending one basis to another will be induced by $\gamma_{1}.$ Surprisingly the basis we will construct  is not the canonical basis of  Lusztig constructed in \cite{Lu6}.
\par\bigskip
We will use the polynomial representation of the affine extended Hecke algebra $\affinehecke{H}$  to describe the action of $\affinehecke{H}$ on this new basis that we will construct. So let us first describe the representation of $\affinehecke{H}$ in $\mathrm{R}(\check{T}_{H})[s,s^{-1}]$. Consider the polynomial representation of the  extended affine Hecke algebra $\affinehecke{H}$ of $H$ in $\mathrm{R}(\check{T}_{H})[s,s^{-1}].$ For $v$ in $\affinehecke{H}$ and $z$ in $\mathrm{R}(\check{T}_{H})[s,s^{-1}]$ write $v\ast
z$ for the action of $v$ on $z$. 
The element $e^{\lambda}$  denotes the element in $\mathrm{R(\check{T}_{H})}[s,s^{-1}]$ corresponding to  $\lambda$;  according to \cite[Formula (7.6.1)]{CG}, $e^{\lambda}$ as an element of $\affinehecke{H}$ acts on any element  $u$ of $\mathrm{R}(\check{T}_{H})[s,s^{-1}]$ by 
\begin{equation}
\label{poly}
e^{\lambda}\ast u =e^{-\lambda}u,
\end{equation}
 and for any simple root $\alpha,$  the action of $T_{s_{\alpha}}$ on $e^{\lambda}$ is given by the formula \cite[Theorem 7.2.16]{CG}:
 \begin{equation}
 \label{Ts}
 T_{s_{\alpha}}\ast e^{\lambda}=\frac{e^{\lambda}-e^{s_{\alpha}(\lambda)}}{e^{\alpha}-1}-s^{2}\frac{e^{\lambda}-e^{s_{\alpha}(\lambda)+\alpha}}{e^{\alpha}-1}.
\end{equation}
This formula was discovered by Lusztig  and was the starting point of the K-theoretic approach to Hecke algebras. 
The formulas $\eqref{poly}$ and $\eqref{Ts}$ together completely determine the polynomial representation of $\affinehecke{H}.$
 For $\lambda$ dominant, the element $e^{\lambda}$  corresponds in the Iwahori-Hecke algebra to the function $s^{-\ell(\lambda)}T_{t^{\lambda}}$, where $\ell(\lambda)=\langle \lambda, 2\check{\rho}_{H} \rangle$ and $T_{t^{\lambda}}$ is the characteristic function of the double coset $I_{G}t^{\lambda}I_{G}$. 
Denote by $\omega_{i}$ the coweight $(1,\dots,1,0,\dots,0),$  where $1$ appears $i$ times. For $1 \leq i< m$ , denote by $w_{i}=t^{\omega_{i}}\sigma_{i}$ the element of length zero. 
The element $w_{1}$ is the generator  of the group  $\Omega_{H}$ of length zero elements in $\weyl{H}$;  for any $i$ in $\Z,$  $w_{i}=w_{1}^{i}$. 
In the  extended affine Hecke algebra $\affinehecke{H}$ we have 
$$T_{t^{\omega_{i}}}T_{w_{i}}=T_{\sigma_{i}}.$$
Further we have $\ell(t^{\omega_{i}})=\ell(\sigma_{i})=\langle \omega_{i},2\check{\rho}_{\Hche} \rangle=i(m-i)$
and this gives  
\begin{equation}
e^{\omega_{i}}=s^{i(i-m)}T_{t^{\omega_{i}}}.
\end{equation}
In $\mathrm{R}(\check{T}_{H})[s,s^{-1}],$  $T_{\sigma_{i}}\ast 1=s^{2i(m-i)}$ and this yields 
$$(s^{i(m-i)}e^{\omega_{i}}T_{w_{i}})\ast 1=s^{2i(m-i)},$$
and 
\begin{equation}
\label{l0}
T_{w_{i}}\ast 1=s^{i(m-i)}e^{\omega_{i}}.
\end{equation}
Till now we have described the action of the Wakimoto objects and the elements of length zero. We are going to compute the action of the simple reflections $s_{i}=(i,i+1)$ and the affine simple reflection $s_{m}=t^{\lambda}w_{0}$, where $\lambda=(-1,0\dots,0,1)$ and $w_{0}=(1,m)$ is the longest element of the finite Weyl group of $H$. 
For $1\leq i\leq m$ we have $T_{w_{1}}T_{s_{i}}T_{w}^{-1}=T_{s_{i+1}}$ and $T_{w_{1}}T_{s_{m}}T_{w_{1}}^{-1}=T_{s_{1}}.$
For any integer $j$ in $\Z$ set $s_{j}=s_{j+m}$ and rewrite the above formulas all together as 
$$T_{w_{1}}T_{s_{i}}T_{w_{1}}^{-1}=T_{s_{i+1}}.$$
Thus, for all $i$ and $j$ in $\Z,$ 
$$T_{w_{j}}T_{s_{i}}T_{w_{j}}^{-1}=T_{s_{i+j}}.$$
For any cocharacter $\mu$ we have $w_{i}t^{\mu}w_{i}^{-1}=t^{\sigma_{i}(\mu)}$ and we get
$$T_{w_{i}}T_{t^{\mu}}T_{w_{i}}^{-1}=T_{t^{\sigma_{i}(\mu)}}.$$
\begin{proposition}
\label{Tsm}
In the polynomial representation the element $T_{s_{m}}$ acts on $1$ by $(s^{2}-1)+s^{2(m-1)}e^{\xi+\omega_{1}},$
where $\xi=(0,0,\dots,0,-1).$
\end{proposition}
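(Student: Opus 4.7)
The plan is to compute $T_{s_m} \ast 1$ via the conjugation identity $T_{s_m} = T_{w_{m-1}} T_{s_1} T_{w_{m-1}}^{-1}$, which follows by iterating the relation $T_{w_1} T_{s_i} T_{w_1}^{-1} = T_{s_{i+1}}$ (indices modulo $m$) recorded just before the proposition. Applying the three factors to $1$ in succession reduces the computation to three concrete stages, each using only formulas $\eqref{l0}$, $\eqref{poly}$, and the Demazure-Lusztig formula $\eqref{Ts}$.

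First I would compute $T_{w_{m-1}}^{-1} \ast 1$. From $w_{m-1} w_1 = w_m = t^{-\omega_m}$ together with the $W$-invariance of $\omega_m$ (which forces $T_{t^{-\omega_m}} = e^{-\omega_m}$), one gets $T_{w_{m-1}}^{-1} = e^{\omega_m} T_{w_1}$; acting on $1$ via $\eqref{l0}$ and $\eqref{poly}$ yields $T_{w_{m-1}}^{-1} \ast 1 = s^{m-1} e^{\eta}$ with $\eta := \omega_1 - \omega_m$. Next I would apply $T_{s_1}$ using $\eqref{Ts}$: since $\langle \eta, \alpha_1 \rangle = 1$ one has $s_1(\eta) + \alpha_1 = \eta$, so the second summand in $\eqref{Ts}$ vanishes and a single rational-function cancellation gives $T_{s_1} \ast e^\eta = e^{s_1(\eta)}$. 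After these two stages the polynomial $s^{m-1} e^{s_1(\eta)}$ appears.

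The third and final stage applies $T_{w_{m-1}}$ to $s^{m-1} e^{s_1(\eta)}$. Here one uses the length-additive factorization $T_{w_{m-1}} = s^{-(m-1)} e^{-\omega_{m-1}} T_{\sigma_{m-1}}$ (coming from $\sigma_{m-1} = t^{\omega_{m-1}} w_{m-1}$) and a reduced expression $\sigma_{m-1} = s_{m-1} s_{m-2} \cdots s_1$, and iterates $\eqref{Ts}$. Each Demazure-Lusztig operator contributes at most two summands; tracking them yields a regular monomial assembling to $s^{2(m-1)} e^{\xi + \omega_1}$, together with a Bernstein-type correction collapsing to the scalar $s^2 - 1$. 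As a consistency check, $\xi + \omega_1 = e_1 - e_m = \alpha_0^\vee$ is the highest coroot, and the identity $s_m = t^{-\alpha_0^\vee} s_{\alpha_0}$ with $\ell(t^{\alpha_0^\vee}) = 2(m-1)$ matches the exponent of $s$ in the principal term; in the case $m=2$, Step 3 reduces to a single application of $T_{s_1}$ to $e^{-e_1}$, which is easily checked by hand to give $(s^2 - 1) + s^2 e^{\alpha_1^\vee}$.

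The main obstacle will be the iterated Demazure-Lusztig simplification in the third stage: one must carefully distinguish the intermediate monomials $e^\lambda$ for which $\langle \lambda, \alpha_i \rangle \in \{0, \pm 1\}$ (where $\eqref{Ts}$ either kills one summand or collapses cleanly) from those requiring the full two-summand expansion, and then correctly bookkeep $s$-factors alongside the prefactor $e^{-\omega_{m-1}}$. A conceptually cleaner alternative would bypass Step 3 by directly deriving a Bernstein-type expression for $T_{s_m}$ from the factorization $s_m = t^{-\alpha_0^\vee} s_{\alpha_0}$, but this requires separately handling $T_{t^{-\alpha_0^\vee}}$ for the anti-dominant coweight $-\alpha_0^\vee$, which entails a nontrivial conversion between the Iwahori-Matsumoto and Bernstein presentations at comparable difficulty.
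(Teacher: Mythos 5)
Your proposal is correct and follows essentially the same route as the paper: conjugate $T_{s_m}$ to $T_{s_1}$ by a length-zero element, reduce the remaining length-zero action to $e^{-\omega}T_\sigma$ via the Iwahori--Matsumoto factorization, and iterate the Demazure--Lusztig formula $\eqref{Ts}$ along the reduced word $\sigma_{m-1}=\sigma_1^{-1}=s_{m-1}\cdots s_1$. The only difference is cosmetic --- you conjugate by $w_{m-1}$ while the paper uses $w_1^{-1}$, and since $w_{m-1}=w_1^{-1}w_m$ with $w_m=t^{-\omega_m}$ central, the intermediate polynomials differ only by the central factor $e^{-\omega_m}$, so the ``main obstacle'' you flag in Step~3 is exactly the induction $T_{\sigma_1^{-1}}\ast e^{\mu_2}=(s^2-1)e^{\mu_m}+s^{2(m-1)}e^{\omega_1}$ that the paper carries out cleanly (only one of the two summands ever branches, so the bookkeeping is lighter than you anticipate).
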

\begin{proof}
Since  $T_{s_{m}}=T_{w_{1}}^{-1}T_{s_{1}}T_{w_{1}},$ we get using $\eqref{l0}:$
$$T_{s_{m}}\ast 1=(T_{w_{1}}^{-1}T_{s_{1}})\ast s^{m-1}e^{\omega_{1}}.$$ 
Let $\alpha_{i}=(0,\dots,0,1,-1,0\dots,0)$ and  $\mu_{i}=(0,\dots,0,1,0\dots,0),$  where $1$ appears on the $i^\mathrm{th}$ place. Then 
$$T_{s_{1}}\ast e^{\omega_{1}}=e^{\omega_{1}-\alpha_{1}}.$$
 Thus,
\begin{equation}
T_{s_{m}}\ast 1=T_{w_{1}}^{-1}\ast s^{m-1}e^{\omega_{1}-\alpha_{1}}.
\end{equation}
If $\xi=-\sigma_{1}^{-1}\omega_{1}=(0,\dots,0,-1),$ then $\xi$ is a dominant character, and we have 
$T_{t^{\xi}}T_{w_{1}}^{-1}=T_{\sigma_{1}}^{-1}$. Thus 
$$T_{w_{1}}^{-1}=s^{1-m}e^{-\xi}T_{\sigma_{1}^{-1}}.$$
Finally we have to compute 
$$T_{s_{m}}\ast 1=s^{1-m}e^{-\xi}T_{\sigma_{1}^{-1}}\ast s^{m-1}e^{\omega_{1}-\alpha_{1}}.$$
On one hand the reduced decomposition of $\sigma_{1}^{-1}$ is $s_{m-1}\dots s_{2}s_{1}$ and it follows that $T_{\sigma_{1}^{-1}}=T_{s_{m-1}}\dots T_{s_{2}}T_{s_{1}}.$
From $\eqref{Ts}$ we get that $T_{s_{1}}\ast e^{\mu_{2}}=(s^{2}-1)e^{\mu_{2}}+s^{2}e^{\omega_{1}}$. For $2\leq i\leq m-1$ we have $T_{s_{i}}\ast e^{\omega_{1}}=s^{2}e^{\omega_{1}}$. We also have $T_{s_{2}}\ast e^{\mu_{2}}=e^{\mu_{2}-\alpha_{2}}=e^{\mu_{3}}$ and  more generally,   for  $1\leq i<m,$ $T_{s_{i}}\ast e^{\mu_{i}}=e^{\mu_{i+1}}$. By induction we get
$$T_{\sigma_{1}^{-1}}\ast e^{\mu_{2}}=(s^{2}-1)e^{\mu_{m}}+s^{2(m-1)}e^{\omega_{1}}.$$
This implies that 
\begin{equation}
T_{s_{m}}\ast 1=(s^{2}-1)+s^{2(m-1)}e^{\xi+\omega_{1}}.
\end{equation}
\end{proof}

In order to prove Proposition $\ref{theodelavie}$ we have to study the $\affinehecke{H}$-module structure of $K^{\Gche\times \G_{\m}}(\B_{\Hche,x})$  and compare this action with the results obtained $\eqref{coeurcoeur}$.  Now let us construct the desired basis of $K^{\check{G}\times \G_{\m}}(\B_{\check{H},x})$.
Denote by $L_{\lambda}$ the line bundle on $\B_{\Hche}$ corresponding to  coweight $\lambda$ of $H$ as in \cite[\S 6.1.11]{CG}. The $\Hche$-module $\mathrm{H}^{0}(\B_{\Hche},L_{\lambda})$ vanishes  unless $(a_{1}\leq \dots\leq a_{m})$.  Recall that the nilpotent subregular element $x$ in $\mathrm{End}(U_{0})$ is such that $x(u_{1})=x(u_{2})=0$ and $x(u_{i})=u_{i-1}$ for all $3\leq i\leq m.$ 
The natural morphism from $\mathrm{R}(\check{T}_{H})[s,s^{-1}]$ to $K^{\check{G}\times \G_{\m}}(\B_{\check{H},x})$ sends an element $e^{\lambda}$  to $L_{-\lambda}.$ Besides, any element $\mathcal{L}$ in $K^{\Hche\times \G_{\m}}(\Ntilde{\Hche})$ acts on $K^{\Gche\times \G_{\m}}(\B_{\Hche,x})$ as the tensor product by $\mathcal{L}_{\vert_{\B_{\Hche,x}}}.$
\par\medskip
Let $\{u_{1},\dots u_{m}\}$ be the canonical basis of $U_{0}$ and $\{u_{1}^{*},\dots ,u_{m}^{*}\}$ the corresponding dual basis.    For $1\leq i\leq m$ set 
$$U_{i}=\mathrm{Vect}(u_{1},\dots,u_{i})$$
and for $1\leq i\leq m-1$ set
 $$U_{i}^{'}=\mathrm{Vect}(u_{2},\dots,u_{i+1}),$$
with $U_{0}^{'}$ being equal to $\{0\}.$ Note that  for $0\leq i\leq m-2$ the element $x$ acts on $U_{i+2}/U^{'}_{i}$ by zero. For $1\leq i <m$ let $V_{i}$ be the projective line classifying flags 
$$U_{1}^{'}\subset \dots U_{i-1}^{'}\subset W_{i}\subset U_{i+1}\subset \dots \subset U_{m},$$
where $W_{i}$ is $i$-dimensional. The line $V_{i}$ is isomorphic to $\mathbb{P}(\Vect(u_{1},u_{i+1}))$ via the map sending a line $l$ to the flag given by 
$$U_{1}^{'}\subset \dots U_{i-1}^{'}\subset l\oplus U_{i-1}^{'}\subset U_{i+1}\subset \dots \subset U_{m}.$$
Then we have $\B_{\Hche,x}=\cup_{i} V_{i},$ (see Lemma $\ref{springerfibre}$). Recall that there are $m$ fixed points  on $\B_{\Hche,x}$  under the action of $\check{G}\times \G_{m}$ corresponding to the following flags: 
\begin{enumerate}
\item $p_{1}=U_{1}\subset U_{2}\subset\dots\subset U_{m}.$
\item For $2\leq k\leq m-1,$  
$$p_{k}=U_{1}^{'}\subset \dots U^{'}_{k-1}\subset U_{k}\subset \dots \subset U_{m}.$$
\item $p_{m}=U^{'}_{1}\subset U^{'}_{2}\subset\dots\subset U_{m-1}^{'}\subset U_{m}.$
\end{enumerate}
Note that for $2\leq k\leq m-1,$ the point $p_{k}$ equals $V_{k-1}\cap V_{k}.$
\par\medskip
Each line $V_{i}$ is endowed with a tautological equivariant line bundle $\locring_{V_{i}}(-1)$ which is an equivariant subbundle of $g\locring_{V_{i}}\oplus s^{m-2i}\locring_{V_{i}}.$ Note that: for $1\leq i\leq m-1,$
$$\locring_{V_{i}}(-p_{i})=s^{2i-m}\locring_{V_{i}}(-1)\quad\mathrm{and}\quad\locring_{V_{i}}(-p_{i+1})=g^{-1}\locring_{V_{i}}(-1).$$
Thanks to Lusztig  \cite[\S 4.7]{Lu6} the elements $\locring_{p_{1^{\vphantom{COEUR}}}}$, $\locring_{V_{1}}(-1),\dots,\locring_{V_{m-1}}(-1)$ define a basis of $K^{\check{G}\times \G_{\m}}(\B_{\check{H},x})$ over $\mathrm{R}(\check{G})[s,s^{-1}].$

\par\medskip
For $1\leq i < m,$ consider the line bundle $L_{\omega_{i}}$ on $\B_{\Hche,x}$ whose fiber  at a point $F_{1}\subset\dots\subset F_{m}$ is $\mathrm{det}(F_{i})$. Recall that $\mathrm{det}(U^{'}_{i})\iso s^{i(m-i-1)}$ as a $\Gche\times\G_{\m}$-representation. We also have $L_{\omega_{m}}=g\locring$ in $K^{\Gche\times \G_{\m}}(\B_{\Hche,x}).$

\begin{proposition}
\label{basis}
 The set of line bundles $\{\locring,L_{-\omega_{1}},\dots L_{-\omega_{m-1}}\}$ forms a basis of K-group  $K^{\Gche\times \G_{\m}}(\B_{\Hche,x})$ after specialization.  
\end{proposition}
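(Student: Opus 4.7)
My plan is to compare the proposed set of line bundles with Lusztig's basis $\{[\locring_{p_{1}}], [\locring_{V_{1}}(-1)], \ldots, [\locring_{V_{m-1}}(-1)]\}$ via restriction to the $\Gche\times\Gm$-fixed points $p_{1},\ldots,p_{m}$ of $\cB_{\Hche,x}$. Proposition~\ref{modulerangm} already identifies $K^{\Gche\times\Gm}(\cB_{\Hche,x})$ with a free $\R(\Gche\times\Gm)$-module of rank $m$, and Lusztig's basis restricts to the fixed points via an upper-triangular matrix whose diagonal is $(1, s^{m-2}, s^{m-4}, \ldots, s^{2-m})$ and whose determinant is the unit $1$. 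Hence the restriction map
\[
K^{\Gche\times\Gm}(\cB_{\Hche,x}) \longrightarrow \bigoplus_{k=1}^{m}\R(\Gche\times\Gm),\qquad [\cF]\mapsto \bigl([\cF]|_{p_{k}}\bigr)_{k=1}^{m},
\]
is an isomorphism of $\R(\Gche\times\Gm)$-modules, and our candidate set will be a basis if and only if its matrix of restrictions is invertible (after the relevant specialization).

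Using the explicit description of the fixed points from Lemma~\ref{springerfibre}, a direct calculation yields, for $1\le i\le m-1$,
\[
[L_{-\omega_{i}}]|_{p_{k}} \;=\; \det(F_{i}^{p_{k}})^{-1} \;=\;
\begin{cases}
s^{-i(m-i-1)} & \text{if } i \le k-1,\\
g^{-1}\, s^{-(i-1)(m-i)} & \text{if } i \ge k,
\end{cases}
\]
where $F_{i}^{p_{k}}$ denotes the $i$-th subspace of the flag $p_{k}$; together with $[\locring]|_{p_{k}} = 1$ for all $k$. Each column $i\ge 1$ is thus constant on $\{k\le i\}$ and constant on $\{k\ge i+1\}$, with a single jump at $k=i+1$.

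The row operations $R_{k}\mapsto R_{k}-R_{k+1}$ for $k=1,\ldots,m-1$ exploit this block structure: every such difference vanishes outside column $k$, where it equals
\[
\alpha_{k} \;:=\; g^{-1}s^{-(k-1)(m-k)} - s^{-k(m-k-1)} \;=\; s^{-(k-1)(m-k)}g^{-1}\bigl(1 - g s^{2k-m}\bigr),
\]
using $k(m-k-1)-(k-1)(m-k) = m-2k$. Row $m$ is unchanged. Expanding the transformed matrix along the $0$-th column (nonzero only in row $m$, where it is $1$) gives
\[
\det M \;=\; \pm\, g^{-(m-1)}\, s^{-\sum_{k=1}^{m-1}(k-1)(m-k)} \prod_{k=1}^{m-1}\bigl(1 - g s^{2k-m}\bigr).
\]
Up to an invertible monomial, $\det M$ equals $\prod_{k=1}^{m-1}\bigl(1-g s^{2k-m}\bigr)$, which becomes a unit in any specialization of $\R(\Gche\times\Gm)$ inverting these $m-1$ factors --- in the setting of the paper, the generic evaluation of Langlands parameters together with $s\mapsto q^{1/2}$.

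The K-theoretic machinery (fixed-point localization, the chain structure of $\cB_{\Hche,x}$, and the affine paving of each $V_{j}$) makes the restriction computation transparent, so the heart of the argument is the explicit determinant identity above. The main obstacle is rather to pin down precisely which specialization is meant and to verify that it is compatible with the subsequent construction of $\gJ$ in Proposition~\ref{theodelavie}: concretely, one must check that the non-unit factors $1 - g s^{2k-m}$ do not vanish at the Langlands parameters at which the identification with $K(DP_{I_{H}\times I_{G}}(\Pi(F)))$ is carried out.
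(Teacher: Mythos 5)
Your restriction computations $[\cO]|_{p_k}=1$ and $[L_{-\omega_i}]|_{p_k}=\det(F_i^{p_k})^{-1}$ are correct, and so is the row-reduction leading to $\prod_{k=1}^{m-1}\bigl(1-gs^{2k-m}\bigr)$ up to an invertible monomial. The gap lies in the logical step between this determinant and the conclusion, and it is a real gap.

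You claim that the restriction map $K^{\Gche\times\Gm}(\cB_{\Hche,x})\to\bigoplus_{k}\R(\Gche\times\Gm)$ is an isomorphism because the Lusztig basis restricts via an upper-triangular matrix with unit diagonal. This is false for two reasons. First, $\cB_{\Hche,x}$ is a \emph{nodal} curve, and the Lusztig basis elements are classes in the K-theory of coherent sheaves ($G_0$-theory), not of perfect complexes. The sheaf $\locring_{V_j}(-1)$ has infinite Tor-dimension over $\cO_{\cB_{\Hche,x}}$ at the node $p_j$ (for $2\le j\le m-1$), so its derived restriction to that fixed point is simply not defined; the ``upper-triangular matrix'' for the Lusztig basis cannot be written down over $\R(\Gche\times\Gm)$. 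Second, even where the restriction is defined, it is not what you state: at the smooth point $p_1$ one has $[\locring_{p_1}]|_{p_1}=1-(T_{p_1}V_1)^{-1}$ by the Koszul resolution, which is not a unit. In general, the restriction-to-fixed-points map is injective but becomes an isomorphism only after inverting Euler-class factors; its determinant against a free basis is never a unit.

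The consequence is that your determinant is not, up to a unit in $\R(\Gche\times\Gm)$, the transition determinant from the Lusztig basis, which is what actually controls basis-ness. The paper's proof (express each $L_{-\omega_i}$ in the Lusztig basis via the exact sequences on the chain of $\mathbb{P}^1$'s) produces a transition determinant that is a Laurent polynomial in $s$ alone, up to a unit; for example for $m=3$ a direct calculation gives
\[
\det\begin{pmatrix}
1 & s^{-1}+g^{-1} & 1+s^{-1}g^{-1}\\
s^{-1} & s^{-2} & s^{-1}-g^{-1}+s^{-2}g^{-1}\\
s & 1 & s
\end{pmatrix}=g^{-2}s^{-1}(1-s^{2}),
\]
whereas your restriction determinant for $m=3$ is $g^{-2}s^{-1}(g-s)(g-s^{-1})$. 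Their quotient $(g-s)(g-s^{-1})/(1-s^{2})$ is not a Laurent polynomial, confirming that no well-defined ``restriction of Lusztig basis'' matrix can intervene. Because the transition determinant involves only $s$, the set becomes a basis over $\R(\Gche)\otimes\qelbar$ after the single specialization $s\mapsto q^{1/2}$, which is exactly what the Proposition asserts. If your determinant were the relevant one, the statement would be false as written: the factors $1-gs^{2k-m}$ cannot be inverted by a specialization of $s$ alone, and the result would no longer be a free module over $\R(\Gche)\otimes\qelbar$. The fix is to abandon fixed-point localization here and to compute the transition matrix to the Lusztig basis directly, as the paper does.
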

\begin{proof}
 For $1\leq k\leq m-1$ and for any $\Gche\times \G_{\m}$-equivariant line bundle $L$ on $\B_{\Hche,x},$ we have the following equality in $K^{\check{G}\times \G_{\m}}(\B_{\check{H},x})$:
$$L=\sum_{j=1}^{k-1}L_{\vert_{V_{j}}}(-p_{j+1})+L_{\vert_{V_{k}}}+\sum_{j=k+1}^{m-1}L_{\vert_{V_{j}}}(-p_{j})$$  
We apply this formula to $L_{\omega_{k}}$. Note that :
\begin{enumerate}
\item[-] If $j<k$, ${L_{\omega_{k}}}_{\vert_{V_{j}}}=gs^{(k-1)(m-k)}\locring_{V_{j}}.$
\item[-] If $j=k$,   ${L_{\omega_{k}}}_{\vert_{V_{j}}}=\locring_{V_{j}}(-1).$
\item[-] If $j>k$, ${L_{\omega_{k}}}_{\vert_{V_{j}}}=s^{k(m-k-1)}\locring_{V_{j}}.$
\end{enumerate}
Hence we get:
$$L_{\omega_{k}}=\locring_{V_{k}}(-1)+s^{(k-1)(m-k)}\Bigl[   \sum_{j=1}^{k-1}\locring_{V_{j}}(-1)+\sum_{j=k+1}^{m-1}s^{2(j-k)}\locring_{V_{j}}(-1) \Bigr].$$
Lastly 
$$\locring={\locring}_{p_{1^{\vphantom{COEUR}}}}+\sum_{j=1}^{m-1}s^{2j-m}\locring_{V_{j}}(-1).$$
Since  ${\locring}_{p_{1^{\vphantom{COEUR}}}}$, $\locring_{V_{1}}(-1),\dots,\locring_{V_{m-1}}(-1)$ is a basis of $K^{\check{G}\times \G_{\m}}(\B_{\check{H},x}),$ the previous formulas imply that $\mathcal{O},L_{\omega_{1}}\dots,L_{\omega_{m-1}}$ is a free family  which becomes a basis after specializing $s$ to $q^{1/2}.$ If we apply the duality functor, we get the same result for the family $\mathcal{O},L_{-\omega_{1}}\dots,L_{-\omega_{m-1}}.$

 \end{proof}
Consider the family $\{\mathcal{O},s^{m-1}L_{-\omega_{1}},s^{2(m-2)}L_{-\omega_{2}},\dots,s^{m-1}L_{-\omega_{m-1}}\}.$
Thanks to Proposition $\ref{basis}$ this family is also a basis of $K^{\check{G}\times \G_{m}}(\B_{\check{H},x})$ after specialization. 
The map $\gamma_{1}$ factors through morphism $\gJ$ sending this basis to $\{\IC^{0},\dots,\IC^{m-1}\}.$
\par\medskip
According to $\eqref{l0}$, we have 
$$\gamma_{2}(T_{w_{i}})=T_{w_{i}}(\mathcal{O})=s^{i(m-i)}e^{\omega_{i}}=s^{i(m-i)} L_{-\omega_{i}}.$$ Hence the action of length zero elements on the basis is compatible with their action on $\{\IC^{0},\dots,\IC^{m}\}$ in \S $\ref{7},\eqref{coeurcoeur}$.   
\par\bigskip
Now we will compute  the action of the affine simple reflection $s_{m}$. Let $\lambda$ be the cocharacter $(-1,0\dots,0,1),$ and consider the associated line bundle $L_{\lambda}$ (resp. $E$) on $\B_{\Hche,x}$ whose fiber over a  flag $F_{1}\subset \dots \subset F_{m}=U_{m}$ is $F_{1}^{*}\otimes F_{m}/F_{m-1}$(resp. $F_{m}/F_{m-1}$).
The section $u_{m}$ of the line bundle $E$ yields an exact sequence 
$$0 \longrightarrow s^{2-m}\locring \longrightarrow E\longrightarrow (L_{m-1,m})_{p_{m}}\longrightarrow 0.$$
Note that  $(E)_{p_{m}}=g\locring_{p_{m^{\vphantom{COEUR}}}},$  and 
$(L_{-\omega_{1}})_{p_{m}}=s^{2-m}\locring_{p_{m^{\vphantom{COEUR}}}}.$ 
 Tensoring by $L_{-\omega_{1}},$ we get the exact sequence on $\B_{\Hche,x}$
$$0 \longrightarrow s^{2-m}L_{-\omega_{1}}\longrightarrow L_{\lambda}\longrightarrow gs^{2-m}\locring_{p_{m^{\vphantom{COEUR}}}}\longrightarrow 0.$$
Consider $u_{1}^{*}\wedge\dots\wedge u_{m-1}^{*}$ as global section of $L_{-\omega_{m-1}}$ over $\B_{\Hche,x}.$ It vanishes only at $p_{m}$ and gives an exact sequence 
$$0 \longrightarrow g^{-1}s^{2-m}\locring \longrightarrow L_{-\omega_{m-1}}\longrightarrow \locring_{p_{m^{\vphantom{COEUR}}}}\longrightarrow 0.$$
Finally we conclude that in $K^{\Gche\times \G_{\m}}(\B_{\Hche,x})$
$$L_{\lambda}=s^{2-m}L_{-\omega_{1}}+gs^{2-m}\locring_{p_{m^{\vphantom{COEUR}}}},\quad \mathrm{and}\quad gs^{2-m}\locring_{p_{m^{\vphantom{COEUR}}}}=gs^{2-m}L_{-\omega_{m-1}}-s^{4-2m}\locring.$$ 
Thus 
$$L_{\lambda}=s^{2-m}L_{-\omega_{1}}+gs^{2-m}L_{-\omega_{m-1}}-s^{4-2m}\locring.$$
From Proposition $\ref{Tsm}$ we obtain that 
\begin{equation}
\label{formula_action_T_sm}
\gamma_{2}(T_{s_{m}})=T_{s_{m}}(\locring)=(s^{2}-1)\locring +s^{2m-2}L_{\lambda}=-\locring+s^{m}L_{-\omega_{1}}+gs^{m}L_{-\omega_{m-1}}.
\end{equation}
Finally, $s^{-1}T_{s_m}(\cO)+s^{-1}\cO$ corresponds to $\heckefunc{H}(L_{s_m}, I_0)$, and the formula $\eqref{formula_action_T_sm}$ is compatible with $\eqref{coeurcoeur}$ by using the fact that  $L_{s_m}$ is isomorphic to $\Qlb[1](\frac{1}{2})$ over $\overline{\Fl}_H^{s_m}$. 
Moreover, for $1\le i<m$ one has $T_{s_i}\ast 1=v$ in the polynomial representation, hence $T_{s_i}(\cO)=v\cO$ in $K^{\check{G}\times \G_{\m}}(\cB_{\check{H},x})$. The other relations are readily obtained by symmetry (the action of elements of length zero). This finishes the proof of Proposition $\ref{theodelavie}$ and so Conjecture $\ref{secondeconjecture}$. 
\newpage
\section{Appendix}
\label{Appendix1}
Let $\K$ be an  algebraically closed field of characteristic zero. Let $G$ be a linear algebraic group over $\qelbar$. Denote by $\mathrm{R}(G)$ the representation ring of $G$ over $\qelbar$. By equivariant K-theory on a scheme or a stack we always mean K-theory of $G$-equivariant coherent sheaves. For  more details we refer the reader  to \cite[Chapter 5]{CG}. 
\subsection{Generalities on convolution product in K-theory}
\label{A}
\subsubsection{}
\label{a}
 Let $Y$ be a smooth $G$-variety and $\pi: Y\to X$ be a proper $G$-equivariant map. According to \cite[5.2.20]{CG} $K^G(Y\times_X Y)$ is an associative $\mathrm{R}(G)$-algebra.
Moreover, $K^G(Y)$ is naturally a left module over $K^G(Y\times_X Y)$. Namely, for any  $L$ in $K^G(Y\times_X Y)$ and  any $F$ in $K^G(Y)$, consider the restriction with supports (see \cite[\S 5.2.5 (iii)]{CG}) of  an element $L\boxtimes F$ of $K^G((Y\times_X Y)\times Y)$ with respect to the smooth closed embedding
$$
\begin{array}{ccc}
Y\times Y &\toup{\id\times\diag}& Y\times Y\times Y\\
\cup && \cup\\
Y\times_X Y & \to & (Y\times_X Y)\times Y
\end{array}
$$
and denote the result by $L\otimes p_2^*F\in K^G(Y\times_X Y)$. Then we have $L\ast F=(p_1)_*(L\otimes p_2^*F)\in K^G(Y)$.

\subsubsection{}
\label{B}
Let $Z$ be a smooth variety. Consider a $G$-equivariant morphism from $Z$ to $X$. Then $K^G(Y\times_X Y)$ acts on $K^G(Z\times_X Y)$ by convolution on the right. Additionally, this action is $\mathrm{R}(G)$-linear. Namely, for any $F$  in $K^G(Z\times_X Y)$ and  any $L$  in $K^G(Y\times_X Y)$, consider the element 
$p_{12}^*F\boxtimes p_{34}^*L$ in $K^G((Z\times_X Y)\times(Y\times_X Y))$.
Let us apply the restriction with supports functor with respect to the smooth closed embedding $\id\times \diag\times\id$ in the following diagram to $p_{12}^*F\boxtimes p_{34}^*L$
$$\begin{array}{ccc}
Z\times Y\times Y & \toup{\id\times \diag\times\id} & Z\times Y\times Y\times Y\\
\cup && \cup\\
Z\times_X Y\times_X Y & \to &(Z\times_X Y)\times(Y\times_X Y)
\end{array},$$
and denote the result by $p_{12}^*F\otimes p_{23}^*L$ in $K^G(Z\times_X Y\times_X Y)$. The projection $p_{13}: Z\times_X Y\times_X Y\to Z\times_X Y$ is proper, and we obtain the convolution product of $F$ and $L$ denoted by 
$$F\ast L=(p_{13})_*(p_{12}^*F\otimes p_{23}^*L)\in K^G(Z\times_X Y).$$

\subsubsection{}
\label{D}
Let $Y$ be a smooth $G$-variety and $\pi: Y\to X$ a proper $G$-equivariant morphism. Let $X\to \bar X$ and $Z\to \bar X$ be $G$-equivariant morphisms of varieties. Assume $Z$ to be smooth. Then $K^G(Y\times_X Y)$ acts on the left by convolution on $K^G(Y\times_{\bar X} Z)$. Indeed, for any $F$ in $K^G(Y\times_{\bar X} Z)$ and $L$ in  $K^G(Y\times_X Y)$, consider $p_{12}^*L\boxtimes p_{34}^*F$ in $K^G((Y\times_X Y)\times (Y\times_{\bar X} Z)).$
Apply the restriction with supports with respect to the smooth closed embedding $\id\times\diag\times\id$ in the following diagram to $p_{12}^*L\boxtimes p_{34}^*F$
$$
\begin{array}{ccc}
Y\times Y\times Z & \toup{\id\times\diag\times\id} &
Y\times Y\times Y\times Z\\
\cup && \cup\\
Y\times_X Y\times_{\bar X} Z & \to &  
(Y\times_X Y)\times (Y\times_{\bar X} Z).
\end{array}
$$
and denote the result by $p_{12}^*L\otimes p_{34}^*F$ in $K^G(Y\times_X Y\times_{\bar X} Z)$. The projection $p_{13}: Y\times_X Y\times_{\bar X} Z\to Y\times_{\bar X} Z$ is proper, and we obtain the convolution product of $L$ and $F$ denoted by 
$$ L\ast F=(p_{13})_*(p_{12}^*L\otimes p_{34}^*F)\in K^G(Y\times_{\bar X} Z).$$

Note actually that the essential thing we need is the fact that the structure sheaf $\cO_Y$ of the diagonal $Y\subset Y\times Y$ admits a finite $G$-equivariant resolution by locally free $\cO_{Y\times Y}$-modules of finite rank. Then restrict this resolution with respect to the flat projection $p_{23}: Y\times Y\times Y\times Z\to Y\times Y$. Assume $Z\to \bar X$ to be  proper, then $K^G(Z\times_{\bar X} Z)$ acts on $K^G(Y\times_{\bar X} Z)$ by convolutions on the right, and the actions of $K^G(Z\times_{\bar X} Z)$ and of  $K^G(Y\times_X Y)$ commute. 
\par\medskip
Let $\overline{x}$ be a $G$-fixed point  in $\overline{X}$. Assume that the morphism $X\to \overline{X}$ factors  through $X\to \overline{x}\to \overline{X}$. Let $Z_{\overline{x}}$ be  the fiber of $Z\to \overline{X}$ over $\overline{x}$. Moreover, assume that $Z_{\overline{x}}$ is smooth and that it satisfies the conditions of K\"unneth of formula \cite[Theorem 5.6.1]{CG}. Then, we have 
\begin{equation}
\label{kunneth}
K^{G}(Y\times_{\overline{X}} Z)\iso K^{G}(Y\times Z_{\overline{x}})\iso K^{G}(Y)\otimes_{\mathrm{R}(G)}K^{G}(Z_{\overline{x}}).
\end{equation}
Note that $K^{G}(Z_{\overline{x}})$ is naturally a $K^{G}(Z\times_{\overline{X}} Z)$-module and this is action is $\mathrm{R}(G)$-linear. The action of $K^{G}(Z\times_{\overline{X}} Z)$ on $K^{G}(Y\times_{\overline{X}}Z)$ is $\mathrm{R}(G)$-linear as well. One checks that the action of $K^{G}(Z\times_{\overline{X}} Z)$ on the right hand side of $\eqref{kunneth}$ comes by functoriality from the corresponding action on $K^{G}(Z_{\overline{x}})$. 
\subsection{Generalities on group actions and stacks}
\label{C}
 Let $G$ and $H$ be two algebraic groups, $\phi:G\to H$ be a morphism of groups and let $X$ be a $G$-variety. The induced $H$-variety  $H\times_{G} X$  with respect to $\phi$  is the stack quotient $(H\times X)/G$, where $G$ acts on $H\times X$ by 
$$g.(h,x)=(h\phi(g)^{-1},g.x).$$  
Let us show that  that $(H\times_{G} X)/H$ and $X/G$ are isomorphic as stacks.  The space $(H\times_{G}X)$ can be represented by the groupo\"id
$$\def\dar[#1]{\ar@<2pt>[#1]\ar@<-2pt>[#1]}
  \xymatrix{H\times X & G\times H \times X \dar[l]^{t}_{s}}$$
  where $s(g,h,x)=(h,x)$ and $t(g,h,x)=(h\phi(g)^{-1}, gx)$. The $H$-action on the objects and morphisms of this groupid is given by 
  $$h^{'}.(h,x)=(h^{'}h,x)$$
  $$h^{'}.(g,h,x)=(g,h^{'}h,x)$$
  Thus $(H\times_{G}X)/H$ is represented by the groupo\"id $\mathcal{G}$ given by 
  $$\def\dar[#1]{\ar@<2pt>[#1]\ar@<-2pt>[#1]}
  \xymatrix{H\times X & H\times G\times H \times X \dar[l]^{t^{'}}_{s^{'}}}$$
  where $s^{'}(h_{1},g,h_{2},x)=(h_{2},x)$ and $t^{'}(h_{1},g,h_{2},x)=(h_{1}h_{2}\phi(g)^{-1},g.x)$. It is then easy to check that the natural morphism from $\mathcal{G}$ to the action groupo\"id 
  $$\def\dar[#1]{\ar@<2pt>[#1]\ar@<-2pt>[#1]}
  \xymatrix{X & G\times X \dar[l]}$$
  is an equivalence.
  \par\medskip
This kind of arguments will be used repeatedly. To avoid writing down the stack morphisms we will deal with induced varieties as if they were ordinary schemes.   
\par\medskip
Let $\phi:G\to H$ be morphism of groups (as before) and let $X^{'}$ and $Y'$ be two G-varieties with $Y^{'}$ being smooth.  Let $\pi': Y'\to X'$ be a proper morphism of $G$-varieties. Let $X\,=\, H\times_G X'$ and $Y\,=\, H\times_G Y'$, $X$ and $Y$ are $H$-stacks. Then we have the following isomorphism 
$$Y\times_X Y\,\iso\, H\times_G(Y'\times_{X'} Y')$$ 
as $H$-varieties. So, we have an isomorphism of stack quotients 
$$(Y\times_X Y)/H\,\iso\, (Y'\times_{X'} Y')/G,$$ 
and we get an isomorphism of algebras 
$$K^H(Y\times_X Y)\,\iso\, K^G(Y'\times_{X'} Y').$$
\par\medskip
%

\par\medskip

 Let $Y_{1}$ be a $G$-scheme, and  $Y,$ $\tilde{Y}$ be two $H$-schemes. Consider the Cartesian diagram

 \[
\xymatrix @R=1cm{
Y_{1}\times_{Y} \tilde{Y} \ar[r] \ar[d]& Y_{1}\ar[d] \\
\tilde{Y}  \ar[r] & Y, 
}
\]
where  the map $\tilde{Y}\to Y$ is $H$-equivariant and the map $f:Y_{1}\to Y $ is $G$-equivariant, the action of $G$ on $Y$ being induced by morphism $\phi$. The group $G$ acts diagonally on the fiber product $Y_{1}\times_{Y} \tilde{Y}.$ This allows us to consider the induced space $H\times_{G} (Y_{1}\times_{Y} \tilde{Y}).$
On the other hand, we have a $H$-equivariant map $f_{1}:H\times_{G}Y_{1}\to Y$ given by $f_{1}(h,y_{1})=hf(y_{1}).$
Consider the cartesian diagram 
 \[
\xymatrix @R=1cm{
(H\times_{G}Y_{1})\times_{Y}\tilde{Y} \ar[r] \ar[d]& H\times_{G}Y_{1}\ar[d]^{f_{1}} \\
\tilde{Y} \ar[r] & Y, 
}
\]
and let $H$ act diagonally on the fiber product $(H\times_{G}Y_{1})\times_{Y}\tilde{Y}.$

\begin{lemma}
\label{gg}
There is a $H$-equivariant isomorphism of stacks
\begin{equation}
\label{ghequiv}
H\times_{G} (Y_{1}\times_{Y} \tilde{Y})\iso (H\times_{G} Y_{1})\times_{Y} \tilde{Y}.
\end{equation}
\end{lemma}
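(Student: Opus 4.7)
The plan is to construct the isomorphism \eqref{ghequiv} explicitly on points (treating induced varieties informally as in the convention of the appendix) and then verify well-definedness, $H$-equivariance, and invertibility by a direct check.

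First, I would define a morphism $\Phi: H\times (Y_{1}\times_{Y}\tilde Y)\to (H\times_{G}Y_{1})\times_{Y}\tilde Y$ by the formula
\[
\Phi\bigl(h,(y_{1},\tilde y)\bigr)=\bigl([h,y_{1}],\, h\cdot\tilde y\bigr).
\]
To see that the right-hand side lies in the claimed fiber product, observe that on a point of $Y_{1}\times_{Y}\tilde Y$ one has $f(y_{1})=\tilde\pi(\tilde y)$, where $\tilde\pi:\tilde Y\to Y$ denotes the structural map. Hence $f_{1}([h,y_{1}])=h\cdot f(y_{1})=h\cdot\tilde\pi(\tilde y)=\tilde\pi(h\cdot\tilde y)$, using that $\tilde\pi$ is $H$-equivariant. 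Next I would check that $\Phi$ is $G$-invariant for the diagonal $G$-action $g\cdot(h,y_{1},\tilde y)=(h\phi(g)^{-1},g\cdot y_{1},\phi(g)\cdot\tilde y)$ on the source: this reduces to the identities $[h\phi(g)^{-1},g\cdot y_{1}]=[h,y_{1}]$ in $H\times_{G}Y_{1}$ and $h\phi(g)^{-1}\cdot\phi(g)\cdot\tilde y=h\cdot\tilde y$. Thus $\Phi$ descends to a morphism $\bar\Phi:H\times_{G}(Y_{1}\times_{Y}\tilde Y)\to(H\times_{G}Y_{1})\times_{Y}\tilde Y$.

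Second, I would define an inverse $\Psi$ by the formula
\[
\Psi\bigl([h,y_{1}],\tilde y\bigr)=\bigl[h,(y_{1},h^{-1}\cdot\tilde y)\bigr].
\]
Here $(y_{1},h^{-1}\cdot\tilde y)$ genuinely lies in $Y_{1}\times_{Y}\tilde Y$, because $\tilde\pi(h^{-1}\cdot\tilde y)=h^{-1}\cdot\tilde\pi(\tilde y)=h^{-1}\cdot h\cdot f(y_{1})=f(y_{1})$ by the fiber product condition on the source. The well-definedness with respect to the equivalence $[h\phi(g)^{-1},g\cdot y_{1}]=[h,y_{1}]$ in $H\times_{G}Y_{1}$ is immediate: applying $\Psi$ to both representatives yields the same class in $H\times_{G}(Y_{1}\times_{Y}\tilde Y)$ via the action of $g$. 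Computing $\bar\Phi\circ\Psi$ and $\Psi\circ\bar\Phi$ on representatives shows that both compositions are the identity.

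Finally, $H$-equivariance of $\bar\Phi$ (where $H$ acts on both sides by left translation on the first factor) is manifest from the formula, since $h'\cdot[h,(y_{1},\tilde y)]=[h'h,(y_{1},\tilde y)]$ and $\bar\Phi([h'h,(y_{1},\tilde y)])=([h'h,y_{1}],h'h\cdot\tilde y)=h'\cdot([h,y_{1}],h\cdot\tilde y)$. I do not expect any essential obstacle here: the statement is a formal consequence of the universal properties of induced spaces and fiber products, and the only point that deserves care is that the diagonal $G$-action on $Y_{1}\times_{Y}\tilde Y$ (with $G$ acting on $\tilde Y$ via $\phi$) is well-defined precisely because $f$ is $G$-equivariant and $\tilde\pi$ is $H$-equivariant; this is what makes the two maps $\Phi$ and $\Psi$ mutually inverse.
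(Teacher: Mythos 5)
Your proof is correct and takes essentially the same approach as the paper: both construct the explicit map $(h,(y_{1},u))\mapsto([h,y_{1}],h\cdot u)$ and verify $H$-equivariance and descent through the $G$-action. You simply spell out more details than the paper (the explicit inverse $\Psi$, the check that the image lands in the fiber product, and the well-definedness computations), which the paper leaves implicit.
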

\begin{proof}
The isomorphism is furnished by the $H$-equivariant map  
\begin{align}
H\times_{G} (Y_{1}\times_{Y} \tilde{Y})\to & (H\times_{G} Y_{1})\times_{Y} \tilde{Y}\nonumber\\
(h,(y_{1},u)) \to & ((h,y_{1}),hu)\nonumber
\end{align}
 For $g$ in $G$, this map is given by
 $$(h\phi(g),(g^{-1}y_{1},\phi(g)^{-1}u))\to ((hg,g^{-1}y_{1}),hu).$$
It is $H$-equivariant isomorphism and yields the desired isomorphism $\eqref{ghequiv}$. 
\end{proof}


\begin{thebibliography}{alpha}

\bibitem{Adams}
J.~Adams.
\newblock {$L$}-functoriality for dual pairs.
\newblock {\em Ast\'erisque}, (171-172):85--129, 1989.
\newblock Orbites unipotentes et repr{\'e}sentations, II.

\bibitem{Arkhipov}
S.~Arkhipov and R.~Bezrukavnikov.
\newblock Perverse sheaves on affine flags and {L}anglands dual group.
\newblock {\em Israel J. Math.}, 170:135--183, 2009.
\newblock With an appendix by Bezrukavrikov and Ivan Mirkovi{\'c}.

\bibitem{Arthur}
J.~Arthur.
\newblock On some problems suggested by the trace formula.
\newblock In {\em Lie group representations, {II} ({C}ollege {P}ark, {M}d.,
  1982/1983)}, volume 1041 of {\em Lecture Notes in Math.}, pages 1--49.
  Springer, Berlin, 1984.

\bibitem{Aub2}
A.-M. Aubert.
\newblock Correspondance de {H}owe et sous-groupes parahoriques.
\newblock {\em J. Reine Angew. Math.}, 392:176--186, 1988.

\bibitem{Aubert}
A.-M. Aubert.
\newblock Description de la correspondance de {H}owe en termes de
  classification de {K}azhdan-{L}usztig.
\newblock {\em Invent. Math.}, 103(2):379--415, 1991.

\bibitem{Laszlo}
A.~Beauville and Y.~Laszlo.
\newblock Conformal blocks and generalized theta functions.
\newblock {\em Comm. Math. Phys.}, 164(2):385--419, 1994.

\bibitem{BBD}
A.~A. Be{\u\i}linson, J.~Bernstein, and P.~Deligne.
\newblock Faisceaux pervers.
\newblock In {\em Analysis and topology on singular spaces, {I} ({L}uminy,
  1981)}, volume 100 of {\em Ast\'erisque}, pages 5--171. Soc. Math. France,
  Paris, 1982.

\bibitem{BD}
A.~A. Be{\u\i}linson and V.~Drinfeld.
\newblock Quantization of {H}itchin's integrable systems and {H}ecke
  eigensheaves, 1991.

\bibitem{bern}
J.~N. Bernstein.
\newblock Le ``centre'' de {B}ernstein.
\newblock In {\em Representations of reductive groups over a local field},
  Travaux en Cours, pages 1--32. Hermann, Paris, 1984.
\newblock Edited by P. Deligne.

\bibitem{Bezrukavnikov}
R.~Bezrukavnikov.
\newblock On two geometric realizations of an affine {H}ecke algebra.
\newblock arXiv:1209.0403.

\bibitem{Borel2}
A.~Borel.
\newblock Automorphic {$L$}-functions.
\newblock In {\em Automorphic forms, representations and {$L$}-functions
  ({P}roc. {S}ympos. {P}ure {M}ath., {O}regon {S}tate {U}niv., {C}orvallis,
  {O}re., 1977), {P}art 2}, Proc. Sympos. Pure Math., XXXIII, pages 27--61.

\bibitem{Borel1}
A.~Borel.
\newblock Admissible representations of a semi-simple group over a local field
  with vectors fixed under an {I}wahori subgroup.
\newblock {\em Invent. Math.}, 35:233--259, 1976.

\bibitem{Bourbaki}
N.~Bourbaki.
\newblock {\em Lie groups and {L}ie algebras. {C}hapters 4--6}.
\newblock Elements of Mathematics (Berlin). Springer-Verlag, Berlin, 2002.
\newblock Translated from the 1968 French original by Andrew Pressley.

\bibitem{CG}
N.~Chriss and V.~Ginzburg.
\newblock {\em Representation theory and complex geometry}.
\newblock Birkh\"auser Boston Inc., Boston, MA, 1997.

\bibitem{BFH}
Banafsheh Farang-Hariri.
\newblock La fonctorialit\'e d'{A}rthur--{L}anglands locale g\'eom\'etrique et
  la correspondance de {H}owe au niveau {I}wahori.
\newblock {\em C. R. Math. Acad. Sci. Paris}, 350(17-18):813--816, 2012.

\bibitem{Frenkel-Gaitsgory}
E.~Frenkel and D.~Gaitsgory.
\newblock Local geometric {L}anglands correspondence and affine {K}ac-{M}oody
  algebras.
\newblock In {\em Algebraic geometry and number theory}, volume 253 of {\em
  Progr. Math.}, pages 69--260. Birkh\"auser Boston, 2006.

\bibitem{Gaitsgory}
D.~Gaitsgory.
\newblock Construction of central elements in the affine {H}ecke algebra via
  nearby cycles.
\newblock {\em Invent. Math.}, 144(2):253--280, 2001.

\bibitem{Howe}
R.~Howe.
\newblock {$\theta $}-series and invariant theory.
\newblock In {\em Automorphic forms, representations and {$L$}-functions
  ({P}roc. {S}ympos. {P}ure {M}ath., {O}regon {S}tate {U}niv., {C}orvallis,
  {O}re., 1977), {P}art 1}, Proc. Sympos. Pure Math., XXXIII, pages 275--285.
  1979.

\bibitem{Iwahori-mat}
N.~Iwahori and H.~Matsumoto.
\newblock On some {B}ruhat decomposition and the structure of the {H}ecke rings
  of {p-adic {C}hevalley groups}.
\newblock {\em Inst. Hautes \'Etudes Sci. Publ. Math.}, (25):5--48, 1965.

\bibitem{Vasserot}
M.~Kapranov and E.~Vasserot.
\newblock Vertex algebras and the formal loop space.
\newblock {\em Publ. Math. Inst. Hautes \'Etudes Sci.}, (100):209--269, 2004.

\bibitem{KH}
D.~Kazhdan and G.~Lusztig.
\newblock Proof of the {D}eligne-{L}anglands conjecture for {H}ecke algebras.
\newblock {\em Invent. Math.}, 87(1):153--215, 1987.

\bibitem{Kudla}
S.~Kudla.
\newblock On the local theta-correspondence.
\newblock {\em Invent. Math.}, 83(2):229--255, 1986.

\bibitem{Lysenko-Lafforgue}
V.~Lafforgue and S.~Lysenko.
\newblock Geometric {W}eil representation: local field case.
\newblock {\em Compos. Math.}, 145(1):56--88, 2009.

\bibitem{Laumon}
G.~Laumon and L.~Moret-Bailly.
\newblock {\em Champs alg\'ebriques}, volume~39 of {\em Ergebnisse der
  Mathematik und ihrer Grenzgebiete. 3. Folge. A Series of Modern Surveys in
  Mathematics}.
\newblock 2000.

\bibitem{Lu2}
G.~Lusztig.
\newblock Singularities, character formulas, and a {$q$}-analog of weight
  multiplicities.
\newblock In {\em Analysis and topology on singular spaces, {II},
  {III}({L}uminy, 1981)}, volume 101 of {\em Ast\'erisque}, pages 208--229.
  1983.

\bibitem{Lu4}
G.~Lusztig.
\newblock Some examples of square integrable representations of semisimple
  {$p$}-adic groups.
\newblock {\em Trans. Amer. Math. Soc.}, 277(2):623--653, 1983.

\bibitem{Lu1}
G.~Lusztig.
\newblock Affine {H}ecke algebras and their graded version.
\newblock {\em J. Amer. Math. Soc.}, 2(3):599--635, 1989.

\bibitem{Lu3}
G.~Lusztig.
\newblock Cells in affine {W}eyl groups and tensor categories.
\newblock {\em Adv. Math.}, 129(1):85--98, 1997.

\bibitem{Lu5}
G.~Lusztig.
\newblock Bases in equivariant {$K$}-theory.
\newblock {\em Represent. Theory}, 2:298--369 (electronic), 1998.

\bibitem{Lu6}
G.~Lusztig.
\newblock Notes on affine {H}ecke algebras.
\newblock In {\em Iwahori-{H}ecke algebras and their representation theory
  ({M}artina-{F}ranca, 1999)}, volume 1804 of {\em Lecture Notes in Math.},
  pages 71--103. 2002.

\bibitem{Lysenko1}
S.~Lysenko.
\newblock Geometric theta-lifting for the dual pair {$\mathbb{S}\mathbb{O}_{2m},\mathbb{S}{\rm p}_{2n}$}.
\newblock {\em Ann. Sci. \'Ec. Norm. Sup\'er. (4)}, 44(3):427--493, 2011.

\bibitem{Minguez1}
A.~M{\'{\i}}nguez.
\newblock Correspondance de {H}owe explicite: paires duales de type {II}.
\newblock {\em Ann. Sci. \'Ec. Norm. Sup\'er. (4)}, 41(5):717--741, 2008.

\bibitem{Mirkovic}
I.~Mirkovi{\'c} and K.~Vilonen.
\newblock Geometric {L}anglands duality and representations of algebraic groups
  over commutative rings.
\newblock {\em Ann. of Math. (2)}, 166(1):95--143, 2007.

\bibitem{Moeglin2}
C.~M{\oe}glin.
\newblock Comparaison des param\`etres de {L}anglands et des exposants \`a
  l'int\'erieur d'un paquet d'{A}rthur.
\newblock {\em J. Lie Theory}, 19(4):797--840, 2009.

\bibitem{MVW}
C.~M{\oe}glin, M-F. Vign{\'e}ras, and J-L. Waldspurger.
\newblock {\em Correspondances de {H}owe sur un corps {$p$}-adique}, volume
  1291 of {\em Lecture Notes in Mathematics}.
\newblock Springer-Verlag, Berlin, 1987.

\bibitem{Rallis}
S.~Rallis.
\newblock Langlands' functoriality and the {W}eil representation.
\newblock {\em Amer. J. Math.}, 104(3):469--515, 1982.

\bibitem{Spal}
N.~Spaltenstein.
\newblock {\em Classes unipotentes et sous-groupes de {B}orel}, volume 946 of
  {\em Lecture Notes in Mathematics}.
\newblock Springer-Verlag, Berlin, 1982.

\bibitem{Srinivas}
V.~Srinivas.
\newblock {\em Algebraic {$K$}-theory}.
\newblock Modern Birkh\"auser Classics. Birkh\"auser Boston Inc., Boston, MA,
  second edition, 2008.

\bibitem{Wal}
J.-L. Waldspurger.
\newblock D\'emonstration d'une conjecture de dualit\'e de {H}owe dans le cas
  {$p$}-adique, {$p\neq 2$}.
\newblock In {\em Festschrift in honor of {I}. {I}. {P}iatetski-{S}hapiro on
  the occasion of his sixtieth birthday, {P}art {I} ({R}amat {A}viv, 1989)},
  volume~2 of {\em Israel Math. Conf. Proc.}, pages 267--324. 1990.

\end{thebibliography}
\end{document}